\let\oldtocsection=\tocsection
\let\oldtocsubsection=\tocsubsection
\renewcommand{\tocsection}[2]{\hspace{0em}\oldtocsection{#1}{#2}}
\renewcommand{\tocsubsection}[2]{\hspace{1em}\oldtocsubsection{#1}{#2}}
\numberwithin{equation}{section}
\newtheorem{prop}{Proposition}
\newtheorem{lemma}[prop]{Lemma}
\newtheorem{thm}[prop]{Theorem}
\newtheorem{cor}[prop]{Corollary}
\numberwithin{prop}{section}
\theoremstyle{definition}
\newtheorem{defn}[prop]{Definition}
\newtheorem{ex}[prop]{Example}
\newtheorem{rmk}[prop]{Remark}
\renewcommand{\geq}{\geqslant}
\renewcommand{\leq}{\leqslant}
\renewcommand{\Re}{\mathrm{Re}}
\newcommand{\del}{\partial}
\newcommand{\dt}{\frac{\partial}{\partial t}}
\newcommand{\brs}[1]{\left| #1 \right|}
\newcommand{\gG}{\Gamma}
\renewcommand{\gg}{\gamma}
\newcommand{\gd}{\delta}
\newcommand{\gs}{\sigma}
\newcommand{\gl}{\lambda}
\newcommand{\gw}{\omega}
\newcommand{\ga}{\alpha}
\newcommand{\gb}{\beta}
\renewcommand{\ge}{\epsilon}
\newcommand{\N}{\nabla}
\newcommand{\til}[1]{\widetilde{#1}}
\renewcommand{\hat}{\widehat}
\renewcommand{\bar}[1]{\overline{#1}}
\renewcommand{\i}{\sqrt{-1}}
\newcommand{\C}{\mathbb{C}}
\newcommand{\R}{\mathbb{R}}
\newcommand{\Z}{\mathbb{Z}}
\newcommand{\mc}{\mathcal}
\newcommand{\bj}{\bar{j}}
\DeclareMathOperator{\spn}{span}
\DeclareMathOperator{\Rc}{Rc}
\DeclareMathOperator{\Rm}{Rm}
\DeclareMathOperator{\tr}{tr}
\DeclareMathOperator{\Ker}{Ker}
\DeclareMathOperator{\Id}{Id}
\DeclareMathOperator{\Isom}{Isom}
\newcommand\w[1]{\makebox[2em]{$#1$}}
\begin{document}

\title[The Gibbons-Hawking ansatz in generalized K\"ahler geometry]{The Gibbons-Hawking ansatz\\in generalized K\"ahler geometry}

\begin{abstract} We derive a local ansatz for generalized K\"ahler surfaces with nondegenerate Poisson structure and a biholomorphic $S^1$ action which generalizes the classic Gibbons-Hawking ansatz for invariant hyperK\"ahler manifolds, and allows for the choice of one arbitrary function.  By imposing the generalized K\"ahler-Ricci soliton equation, or equivalently the equations of type IIB string theory, the construction becomes rigid, and we classify all complete solutions with the smallest possible symmetry group.
\end{abstract}

\author{Jeffrey Streets}
\address{Rowland Hall\\
         University of California, Irvine, CA}
\email{\href{mailto:jstreets@uci.edu}{jstreets@uci.edu}}

\author{Yury Ustinovskiy}
\address{Chandler-Ullman Hall, Lehigh University, Bethlehem, PA}
\email{\href{mailto:yuraust@gmail.com}{yuraust@gmail.com}}

\date{\today}

\maketitle

\maketitle

\tableofcontents

\section{Introduction}
%
%
%
%
%
The classification of Einstein metrics and Ricci solitons is a central problem in geometry. Gradient shrinking and steady Ricci solitons play a crucial role in the study of Ricci flow since they arise as blow-up limits of singularities. Recently, motivated by research in mathematical physics and complex geometry, a generalization of Ricci solitons was introduced.
A smooth manifold $M^n$ equipped with a Riemannian metric $g$, closed 3-form $H$ and function $f$ define a \emph{generalized Ricci soliton} if
\begin{align}\label{f:gk_soliton_intro}
\Rc - \tfrac{1}{4} H^2 + \N^2 f =&\ 0,\\
d^* H + i_{\N f} H =&\ 0,
\end{align}
where $\Rc$ is the Ricci curvature of $(M,g)$ and $(H^2)_{ij}:=H_{ikl}H_{j}^{\ kl}$.  These equations generalize the Ricci soliton system, arising as the Euler-Lagrange equations for Einstein-Hilbert functional
\[
\mc{EH}(g,H,f):=\int_M\left(R-\frac{1}{12}|H|_g^2+|\N f|^2_g\right)e^{-f}dV_{g},
\]
where $R$ is the scalar curvature of $g$.

The generalized Ricci soliton system~\eqref{f:gk_soliton_intro} describes canonical geometric structures involving torsion (cf. \cite{gf-st-20}), and arises in physics as the type IIB string equations \cite{strings-85}.  While in~\eqref{f:gk_soliton_intro} the metric $g$ and 3-form $H$ are unrelated, there many natural geometric settings in which $g$ and $H$ are tied together.  One important special case occurs when $(M,g,I)$ is a complex Hermitian manifold with an integrable almost complex structure $I$ such that $H = -d^c \gw_I$, where $\gw_I:=g(I\cdot,\cdot)$. Since $H$ is closed, we necessarily have $dd^c\gw_I=0$, i.e. $(M,g,I)$ is a \textit{pluriclosed} manifold. There is a more refined case where $g$ is part of a generalized K\"ahler (GK) structure (cf.\,\S \ref{ss:GK_structures}), with the data then referred to as a \emph{generalized K\"ahler-Ricci soliton}, or simply GK soliton.  In the context of string theory this corresponds to imposing supersymmetric constraints on the solution of~\eqref{f:gk_soliton_intro}, and the corresponding structures were first recognized in~\cite{ga-hu-ro-84}.
Mathematically, generalized K\"ahler structures were discovered by Gualtieri~\cite{gu-10,gu-14} in the context of Hitchin's \textit{generalized geometry}~\cite{hi-10}.
These extra geometric assumptions on $g$ and $H$ relate the classification of generalized Ricci solitons to deep questions in complex geometry. For instance, in~\cite{PCF} and~\cite{GKRF} Tian and the first-named author introduced natural geometric flows~--- pluriclosed flow and generalized K\"ahler-Ricci flow~--- extending the K\"ahler-Ricci flow to the non-K\"ahler setting. Similarly to the Ricci flow setup, pluriclosed and GK solitons appear as limits of the respective flows. Thus it is essential to classify all solutions to~\eqref{f:gk_soliton_intro} as the first step in understanding the long-term behavior of a relevant flow. In \cite{st-19-soliton}, \cite{SU} we gave a conjecturally exhaustive list of generalized K\"ahler-Ricci solitons on compact complex surfaces, by constructing GK solitons on all diagonal Hopf surfaces. Since the pointed limits of the above flows are not necessarily compact, we are forced to consider also solutions to the GK soliton system with a \textit{complete} background $(M,g)$. This is the primary motivation for the present paper.

In this work we give a partial classification of complete generalized K\"ahler-Ricci solitons in dimension four.  The first step is to observe the presence of two commuting Killing fields associated to a given soliton.  The case of primary interest is when these two Killing fields are aligned, and generate a biholomorphic $S^1$ action.  Furthermore, associated to any generalized K\"ahler structure is a real Poisson tensor (cf.\,\S \ref{ss:Poisson}), which we will assume does not vanish identically.  These structural hypotheses roughly speaking represent the generic case, while the other cases are more rigid, and will be treated elsewhere.

In the locus where the Poisson tensor in nondegenerate, it is possible to describe a GK structure in terms of a triple of symplectic forms which includes hyperK\"ahler triples as a special case \cite{AGG} (cf.\,Proposition \ref{p:holo_symplectic-acs}).  In the hyperK\"ahler case, the (tri-Hamiltonian) $S^1$ action determines locally a moment map $\pmb \mu$ to $\mathbb R^3$, and we use the notation $\mathbb R^3_{\pmb \mu}$ to denote the image space of such a moment map.  Using these moment map coordinates, the hyperK\"ahler metric has an explicit local description given by the famous Gibbons-Hawking ansatz \cite{GibbonsHawking}:
\begin{align*}
g = W \left( dx^2 + dy^2 + dz^2 \right) + W^{-1} \eta^2,
\end{align*}
where $W$ is a harmonic function on flat $\mathbb R^3_{\pmb\mu}$, and $\eta$ is a principal $S^1$ connection with curvature $d \eta = *d W$.  This construction is reversible, and by choosing $W$ to be a linear combination of a constant and suitably normalized Green's functions based at points in $\mathbb R^3_{\pmb \mu}$, one obtains a hyperK\"ahler $4$-manifold admitting a free isometric circle action, which is incomplete due to the poles of $W$.  These singularities are however removable, and by adding a single point for each pole of $W$, one obtains finally a complete hyperK\"ahler $4$-manifold admitting an effective $S^1$ action, with the added points being the fixed point set of the action.

As shown by Bielawski \cite{Bielawski}, the construction described above yields an exhaustive list of all possible simply connected hyperK\"ahler 4-manifolds admitting an effective circle action.  A key point in the analysis, described in more detail below, is to determine the global structure of the moment map for an arbitrary hyperK\"ahler manifold with effective $S^1$ action.  It is shown that the assumption of completeness implies that the image of the moment map, endowed with the \emph{background} horizontal metric, by which we mean the Euclidean metric on $\mathbb R^3_{\pmb\mu}$, is also \emph{complete}.  This implies that the image must be all of $\mathbb R^3$, thus the harmonic function with poles $W$ is defined globally, and results from elliptic theory imply that it is a linear combination of a constant and Green's functions, as expected.

In the more general setting of generalized K\"ahler structures, the symplectic triple together with $S^1$ action still allows us to locally define a moment map $\pmb \mu$ to $\mathbb R^3_{\pmb \mu}$. We use this moment map to derive an explicit local description of the generalized K\"ahler structure.  As above we let $W^{-1}$ denote the square length of the canonical vector field generating the $S^1$ action. Then one can express the GK metric locally as 
\begin{align*}
g = W h + W^{-1} \eta^2,
\end{align*}
where $W$ satisfies an explicit linear second order elliptic PDE, and $\eta$ is a principal $S^1$ connection whose curvature is determined by $W$.  The background horizontal metric $h$ now depends on an extra scalar function $p$, which is known as the \emph{angle function} associated to the GK structure. Specifically we have the following (cf. Theorem~\ref{t:nondegenerate_gk_description} below):
\begin{thm}[Generalized-K\"ahler Gibbons-Hawking ansatz] \label{t:mainGH}
	Fix a smooth 3-dimensional manifold $N$ and consider
	\begin{enumerate}
		\item an open map $\iota\colon N\to \R^3_{\pmb\mu}$,
		\item smooth functions
		\[
		p\colon N\to (-1,1),\quad W\colon N\to (0,+\infty)
		\]
		solving the equation
		\begin{equation*}
		W_{11}+W_{22}+W_{33}+2(pW)_{23}=0,
		\end{equation*}
		such that the closed differential form $\beta\in\Lambda^2(N,\R)$
		\[
		\gb=(W_3+(pW)_2)d\mu_1\wedge d\mu_2-(W_2+(pW)_3)d\mu_1\wedge d\mu_3+W_1d\mu_2\wedge d\mu_3
		\]
		represents a class in $H^2(N,2\pi\Z)$,
		\item a connection form $\eta$ with curvature $\beta$ in the principal $S^1$-bundle $\pi\colon M\to N$ determined by $[\beta]$.
	\end{enumerate}
	Then the total space of the principal $S^1$-bundle $M$
	admits a nondegenerate GK structure
	\[
	(M,g,I,J)
	\]
	with
	\[
	g=Wh+W^{-1}\eta^2,\quad h=(1-p^2)d\mu_1^2+d\mu_2^2+d\mu_3^2-2p\,d\mu_2d\mu_3,
	\]
	and $I$, $J$ the unique almost complex structures such that the complex-valued 2-forms
	\[
	\begin{split}
	\Omega_{I}:=&
	(-d\mu_1+\sqrt{-1}d\mu_2)\wedge(\eta+\sqrt{-1}W(d\mu_3-pd\mu_2)),\\
	\Omega_{J}:=&
	(-d\mu_1+\sqrt{-1}d\mu_3)\wedge(\eta+\sqrt{-1}W(-d\mu_2+pd\mu_3)),
	\end{split}
	\]
	are holomorphic with respect to $I$ and $J$ respectively.
	
	Conversely any nondegenerate GK manifold $(M,g,I,J)$ with a free isometric tri-Hamiltonian $S^1$ action arises via this construction.
\end{thm}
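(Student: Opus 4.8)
The plan is to prove the two implications separately, using the symplectic-triple characterization of nondegenerate GK structures in Proposition \ref{p:holo_symplectic-acs} as the bridge between the explicit tensorial ansatz and the abstract GK condition. For the forward direction I would first pull $h$, the $d\mu_i$, and $\gb$ back to $M$ along $\pi$, so that $\Omega_I$, $\Omega_J$, and $\eta$ are all forms on $M$. The first step is to verify that $\Omega_I$ and $\Omega_J$ are closed. Since each is an explicit wedge of a $d\mu$-valued $1$-form with a factor $\eta + \sqrt{-1}\,W(\cdots)$, and since $d\eta = \gb$ while the $d\mu_i$ are closed, $d\Omega_I$ and $d\Omega_J$ reduce to a $3$-form computation on $N$: the contribution of $d\eta=\gb$ cancels precisely against the contribution from differentiating the $W(d\mu_3 - p\,d\mu_2)$ factor, so that $d\Omega_I$ and $d\Omega_J$ vanish identically once $d\eta = \gb$ is used. (The separate hypothesis that $\gb$ be closed, which is exactly the displayed PDE $W_{11}+W_{22}+W_{33}+2(pW)_{23}=0$, together with integrality, is what guarantees that a connection $\eta$ with $d\eta=\gb$ exists in the first place.) Because each form is manifestly decomposable and, using $W>0$, satisfies $\Omega_\bullet\wedge\bar\Omega_\bullet\ne 0$, its kernel defines an almost complex structure --- $I$ and $J$ respectively --- for which $\Omega_\bullet$ has type $(2,0)$; closedness then forces integrability, so $\Omega_I$, $\Omega_J$ are genuine holomorphic symplectic forms.

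The second step is signature and compatibility. Positivity of $g = Wh + W^{-1}\eta^2$ follows from $W>0$ together with positive-definiteness of $h$, which holds precisely when $|p|<1$ since the $(d\mu_2,d\mu_3)$-block of $h$ has determinant $1-p^2$ and its $d\mu_1$-coefficient is $1-p^2$. I would then check directly that $I$ and $J$ are $g$-orthogonal, compute the fundamental forms $\gw_I$, $\gw_J$, and verify that $\Omega_I$, $\Omega_J$ and $g$ assemble into the symplectic-triple data required by Proposition \ref{p:holo_symplectic-acs}; in particular the angle function of the resulting pair should come out equal to $p$, which $|p|<1$ keeps in the nondegenerate (genuinely GK) regime. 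Invoking the proposition then produces the nondegenerate GK structure $(M,g,I,J)$.

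For the converse, given a nondegenerate GK manifold $(M,g,I,J)$ with a free isometric tri-Hamiltonian $S^1$ action generated by $X$, I would apply Proposition \ref{p:holo_symplectic-acs} to obtain the holomorphic symplectic forms and the associated symplectic triple. Tri-Hamiltonicity supplies moment maps $\mu_1,\mu_2,\mu_3$, normalized by $i_X\Omega_I = d(\mu_1 - \sqrt{-1}\,\mu_2)$ and $i_X\Omega_J = d(\mu_1 - \sqrt{-1}\,\mu_3)$, which descend to the quotient to give the open map $\iota\colon N\to\R^3_{\pmb\mu}$. Setting $W^{-1}:=\brs{X}_g^2$ and letting $\eta:=W\,g(X,\cdot)$ be the $g$-dual connection $1$-form (normalized so that $\eta(X)=1$ with horizontal distribution the $g$-orthogonal complement of $X$), I would split $g$ into horizontal and vertical parts to recover $g = Wh + W^{-1}\eta^2$ and read off $h$; the cross term $-2p\,d\mu_2 d\mu_3$ identifies $p$ with the GK angle function, which nondegeneracy of the Poisson tensor confines to $(-1,1)$. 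Finally, computing the curvature $d\eta$ and matching it against the symplectic triple expresses it as the stated $\gb$, whereupon $d\gb = d(d\eta) = 0$ recovers the PDE, showing that $(M,g,I,J)$ indeed arises from this data.

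The main obstacle is the compatibility bookkeeping tying the explicit ansatz to the GK angle function. In the forward direction one must confirm that the $I$- and $J$-Hermitian conditions for $g$ hold simultaneously and that the specific cross-term coefficient $p$ in $h$ is precisely the angle between $I$ and $J$; in the converse one must invert this to obtain $h$ in the displayed normal form. Once the angle function is correctly identified, the integrability and closedness inputs and the curvature identity $d\eta=\gb$ are essentially forced, and Proposition \ref{p:holo_symplectic-acs} packages everything into the GK conclusion.
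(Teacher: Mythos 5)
Your proposal is correct and follows essentially the same route as the paper: both directions hinge on Proposition \ref{p:holo_symplectic-acs}, with the converse recovered exactly as you describe from the moment map, the definitions $W^{-1}=g(X,X)$ and $\eta$, and the explicit coframe expressions for $g$, $\Omega$, $I\Omega$, $J\Omega$. The only organizational difference is that the paper dispatches the compatibility bookkeeping you flag as the main obstacle by noting that the ansatz is built to reproduce the pointwise normal form of Proposition \ref{p:matrices} in the basis $\{X,IX,JX,KX\}$, rather than verifying orthogonality and the angle function by hand.
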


This is a local description of any nondegenerate GK structure with a tri-Hamiltonian $S^1$ action. If we further impose the soliton equations, then function $p$ is determined explicitly up to two real parameters $a_{\pm}$, yielding explicit choices of metric $h$ defined globally on $\mathbb R^3_{\pmb \mu}$.  Crucially however, these metrics are not complete, with the completion points corresponding to the possible zeros of the background Poisson tensor.  Furthermore, the $S^1$ action on the nondegeneracy locus will only be tri-Hamiltonian after lifting to a certain infinite covering space.  Thus to determine the possible image spaces of the moment map for a complete GK soliton, we must determine the relevant deck transformation group $\gG$ and its action on $\mathbb R^3_{\pmb \mu}$, and understand the completions of the quotient spaces, i.e. $\bar{\mathbb R^3_{\pmb \mu} / \gG}$.  We prove that there are three possibilities:

\begin{enumerate}
\item One has $a_- = 0$, $a_+ = \frac{2}{k_+}$, $k_+\in\Z$ and $\bar{\mathbb R^3_{\pmb \mu} / \gG} \cong N(a_+, 0)$ is an orbifold diffeomorphic to a global quotient of $\mathbb C \times \mathbb R$ by a linear $\mathbb Z_{|k_+|}$-action on $\mathbb C$.  The metric $h$ extends as a smooth orbifold metric with cone angle $2\pi/ |k_+|$ along $\{0\} \times \mathbb R$.
\item One has $a_- = 0$, $a_+ = \frac{2}{k_+}$, and $\bar{\mathbb R^3_{\pmb \mu} / \gG} \cong N(a_+, 0) / \mathbb Z$, where $N(a_+, 0)$ is as in item (1) and the $\mathbb Z$ action is generated by a translation in the $\mathbb R$ factor and (possibly trivial) rotation in $\C_{|k_+|}$ factor.
\item One has $a_+=2/k_+$, $a_-=2/k_-$, $k_{\pm}\in\Z$ and $\bar{\mathbb R^3_{\pmb \mu} / \gG} \cong N(a_+, a_-)$ is an orbifold diffeomorphic to a product $S^2(k_+,k_-)\times \R$, where $S^2(k_+,k_-)$ is a \emph{spindle} 2-sphere with cone angles $2\pi/ |k_+|$ and $2\pi/|k_-|$.
\end{enumerate}

With this in hand we can now state the main existence theorem for GK solitons:

\begin{thm} \label{t:existence} The following hold:
\begin{enumerate}
\item Let $(k_+, l_+)$ be a pair of coprime integers, $0\leq l_+<|k_+|$ and denote $a_+ = \frac{2}{k_+}$. Fix a collection of points $\{z_1, \dots, z_n\}$ in the smooth locus of $N(a_+, 0)$.  There exists a one-parameter family of complete rank one generalized K\"ahler-Ricci solitons $(M^4, g, I, J)$ admitting an isometric $S^1$ action, such that $M / S^1 \cong N(a_+, 0)$, and $M^{S^1} / S^1 = \{z_1, \dots, z_n\}$.  The manifold $M \backslash M^{S^1}$ is a Seifert fibration over $N(a_+, 0) \backslash \{z_1, \dots, z_n\}$, and the lift of the orbifold locus in this domain consists of points of type $(k_+, l_+)$.
\item Let $(k_+,l_+)$ be a pair of coprime integers, $0\leq l_+<|k_+|$ and denote $a_+ = \frac{2}{k_+}$. There exists a two-parameter family of complete rank one generalized K\"ahler-Ricci solitons $(M^4, g, I, J)$ admitting an effective isometric $S^1$ action, such that $M / S^1 \cong N(a_+, 0) / \mathbb Z$.  The manifold $M$ is a Seifert fibration over $N(a_+, 0) / \mathbb Z$, and the lift of the orbifold locus in this domain consists of points of type $(k_+, l_+)$.
\item Let $(k_+, l_+), (k_-, l_-)$ be two pairs of coprime integers, $0\leq l_\pm<|k_\pm|$ and denote $a_\pm = \frac{2}{k_\pm}$. Fix a collection of points $\{z_1, \dots, z_n\}$ in the smooth locus of $N(a_+, a_-)$.
There exists a two-parameter family of complete rank one generalized K\"ahler-Ricci solitons $(M^4, g, I, J)$ admitting an effective isometric $S^1$ action, such that $M / S^1 \cong N(a_+, a_-)$, and $M^{S^1} / S^1 = \{z_1, \dots, z_n\}$.  The manifold $M \backslash M^{S^1}$ is a Seifert fibration over $N(a_+, a_-) \backslash \{z_1, \dots, z_n\}$, and the lift of the two orbifold loci in this domain consists of points of type $(k_{\pm}, l_{\pm})$, respectively.
\end{enumerate}
\end{thm}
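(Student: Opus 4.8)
The strategy is to build, for each of the three cases, the data $(N,p,W,\eta)$ demanded by Theorem~\ref{t:mainGH} and then run its forward construction, choosing $p$ to be the explicit soliton angle function so that the output solves the soliton equations automatically. For $N$ I take the relevant completed quotient $\overline{\mathbb R^3_{\pmb\mu}/\gG}$ identified above — $N(a_+,0)$, $N(a_+,0)/\mathbb Z$, or $N(a_+,a_-)$ — with its fixed orbifold metric $h$ and the associated $p\in(-1,1)$, and I write $N^\circ$ for its smooth locus. Under the ansatz the soliton system reduces to the prescribed form of $p$ together with the linear elliptic equation $W_{11}+W_{22}+W_{33}+2(pW)_{23}=0$ of Theorem~\ref{t:mainGH}, so the construction comes down to: (a) producing a positive $W$ with the correct singularities, (b) quantizing $\beta$ so that the bundle $M\to N$ and the connection $\eta$ exist, and (c) showing that $g=Wh+W^{-1}\eta^2$ completes to a smooth metric.

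For (a) I solve the equation on $N^\circ$ punctured at the prescribed points $\{z_1,\dots,z_n\}$ (in cases (1) and (3); in case (2) there are no punctures and $W$ is a global bounded positive solution). After conjugating by the soliton weight the operator is uniformly elliptic with the explicit $p$, and a positive solution with a Green's-function singularity $W\sim c\,d_h(\cdot,z_i)^{-1}$ at each puncture is produced by standard elliptic theory, positivity coming from the maximum principle. The remaining freedom is a finite-dimensional space of admissible homogeneous solutions — an additive constant and, case by case, one further mode or the translation length defining the $\mathbb Z$-quotient — which matches the stated one- and two-parameter families.

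For (b) I must check $[\beta]\in H^2(N,2\pi\Z)$. The flux of $\beta$ through a small linking $2$-sphere about each $z_i$ is controlled by the residue of $W$ and is quantized to $\pm2\pi$, fixing the local model near the fixed point that will be added; the period of $\beta$ around the orbifold locus is likewise quantized, and equating it with the cone angle $2\pi/|k_\pm|$ is exactly the condition that the Seifert data be of coprime type $(k_\pm,l_\pm)$ with $0\le l_\pm<|k_\pm|$. With $[\beta]$ integral I form the principal $S^1$-bundle over $N^\circ\setminus\{z_1,\dots,z_n\}$, pick $\eta$ with $d\eta=\beta$, and apply the forward direction of Theorem~\ref{t:mainGH} to obtain a nondegenerate GK soliton on the resulting total space.

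The hard part is (c), and it splits into three regimes. Over each puncture $z_i$, since $z_i\in N^\circ$ the analysis is a deformation of the classical hyperK\"ahler removable-singularity computation: in a radial coordinate $\rho\sim d_h(\cdot,z_i)^{1/2}$ the metric and the structures $I,J$ extend smoothly across an added fixed point of the $S^1$ action, giving $M^{S^1}/S^1=\{z_1,\dots,z_n\}$. Toward infinity in the noncompact direction of $N$, geodesic completeness follows from uniform lower bounds on $W$ and the product structure of $h$. The genuinely new and most delicate regime is the Poisson-degeneracy locus, namely the orbifold locus of $N$ where $h$ has a cone singularity of angle $2\pi/|k_\pm|$: there I must show the total space closes up as a \emph{smooth} manifold along an exceptional Seifert fiber, the $\mathbb Z_{|k_\pm|}$ stabilizer of the fiber compensating the cone angle. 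This hinges on the precise asymptotics of $p$, $W$, and $\eta$ near the degeneracy locus and is where the coprimality of $(k_\pm,l_\pm)$ is indispensable; combining it with the two easier regimes produces the complete GK-Ricci solitons and finishes the proof.
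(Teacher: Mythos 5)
Your overall strategy---fix $N$, $h$, and the soliton angle function $p$, solve the linear equation for $W$ with Green's-function poles, quantize $\beta$, and then complete---is the same skeleton as the paper's proof, but two of the steps you dispose of in a sentence are precisely where the real work lies, and as written they do not go through. First, your claim that the extension over each puncture $z_i$ is \emph{smooth} because ``the analysis is a deformation of the classical hyperK\"ahler removable-singularity computation'' is false in this generality: the removable-singularity result (Propositions~\ref{p:removable_singularity} and~\ref{p:removable_singularity_GK}) only yields a $C^{1,1}$ extension of $(g,I,J)$, and the upgrade to $C^\infty$ in the classical case relies on the $SO(3)$ rotational symmetry of the flat background $h$, which fails here since $h$ is not flat. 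The paper recovers smoothness only \emph{after} the structure is known to be a soliton, by writing the soliton system in harmonic coordinates as a strictly elliptic system for $(g,H)$ and bootstrapping with Schauder estimates; without that step your construction produces only $C^{1,1}$ solitons at the fixed points.

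Second, your appeal to ``standard elliptic theory'' and the maximum principle to produce and count the positive solutions $W$ is not available as stated: the relevant equation is $\Delta_{\tilde h}V=0$ for $V=W/\tilde W$ on $(N(a_+,a_-),\tilde h)$, which is an \emph{incomplete} orbifold whose metric completion is not an orbifold, so neither the existence of positive Green's functions with the required global behavior nor the Liouville-type classification giving the finite-dimensional parameter space is standard. The paper's essential device is to realize $N(a_+,a_-)$ as the orbit space of a Seifert fibration of $\C\times\C^*$ (resp.\ $\C^2\setminus\{0\}$) and to identify $\tilde h$-harmonic functions with $S^1$-invariant harmonic functions for a complete \emph{flat} four-dimensional metric, where the Riesz decomposition and Liouville and Cheng--Yau parabolicity arguments apply; this is also what produces the anomalous mode $G_0$ in case (3) (the $\R^4$ Green's function at the completion point) and the parabolicity of $N(a_+,0)/\Z$ forcing $W=\lambda\tilde W$ in case (2). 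Relatedly, in case (3) the base $N(a_+,a_-)\simeq S^2\times\R$ has nontrivial $H_2$, so integrality of the Seifert data imposes the additional constraint $S(W)-l_+/k_+-l_-/k_-\in\Z$ on the coefficients of $W$; your quantization discussion only accounts for the linking spheres of the $z_i$ and the cone angles, and misses this global period. Finally, you flag the gluing over the degeneracy locus as ``the hard part'' but do not carry it out; in the paper this follows from the smooth orbifold extension of $W$, $h$, $\beta$ and $\theta_I$ combined with Proposition~\ref{p:s1_bundle_seifert_dg} and a parallel-transport argument for $I$ and $J$.
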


The situation is summarized in Figures \ref{f:fig1} and \ref{f:fig2}.  The quantity $\mu_1$ is a certain coordinate in $\mathbb R^3_{\pmb \mu}$ whose periods generate a primitive subgroup $\mathbb Z \subset \gG$, so that $\mu_1$ descends to an $S^1$-valued coordinate on $\R^3_{\pmb\mu}/\Z$.

\begin{figure}[ht]
\begin{tikzpicture}

\draw  (-2.75,0) ellipse (0.18 and 0.6);
\draw [white,fill=white] (-2.75,1) rectangle (-2.5,-1);
\draw  [dotted] (-2.75,0) ellipse (0.18 and 0.6);

\draw[->] (-3.5,0) .. controls (-2,1.5) and (0.25,1.85) .. (0.5,1.9);
\draw[->] (-3.5,0) .. controls (-2,-1.5) and (0.25,-1.85) .. (0.5,-1.9);

\draw[<->] (-4.1,0) to (-3.1,0);
\node at (-3.7175,0.1881) {\tiny{$\mathbb R$}};

\draw [->](-2.5,0.3) .. controls (-2.46,0.1) and (-2.46,-0.1) .. (-2.5,-0.3);
\node at (-2.25,0) {\tiny{$\mu_1$}};

\node (v1) at (-3.8537,-1.1406) {\tiny{$\mathbf{T_+} = \{p = 1\}$}};
\draw[dotted,->] (v1) to (-3.7386,-0.1001);

\draw [<->](-0.6084,1.0546) to (-0.9459,2.1143);
\node at (-1.0865,1.7506) {\tiny{$\mathbb R$}};

\node (v3) at (-3.8,1.3) {\tiny{$(D^2/\mathbb Z_{k_+})\times \R$}};
\draw[dotted,->] (v3) to (-3.2,0.4);

\node (v5) at (1.0,-0.8) {\tiny{$\{z_1, \dots, z_n\}$}};

\draw[fill=black] (-0.5,0.4) circle (0.03);
\draw[fill=black] (-1.1,-0.5) circle (0.03);
\draw[fill=black] (-2.2,0.5) circle (0.03);
\draw [dotted,->] (v5) to (-0.42,0.32);
\draw [dotted,->] (v5) to (-0.99,-0.52);
\draw [dotted,->] (v5) to (-2.1,0.5);
\end{tikzpicture}
\caption{Configuration space $N(a_+, 0)\simeq \C/{\Z_{k_+}}\times \R$}
\label{f:fig1}

\bigskip
\bigskip

\begin{tikzpicture}

\draw (0.4,0) ellipse (0.2 and 0.88);
\draw [white,fill=white] (0.4,1) rectangle (1,-1);
\draw [dotted] (0.4,0) ellipse (0.2 and 0.88);

\draw  (-2.75,0) ellipse (0.18 and 0.6);
\draw [white,fill=white] (-2.75,1) rectangle (-2.5,-1);
\draw  [dotted] (-2.75,0) ellipse (0.18 and 0.6);

\draw (-3.5,0) .. controls (-2,1.5) and (0.25,2) .. (1,0);
\draw (-3.5,0) .. controls (-2,-1.5) and (0.25,-2) .. (1,0);

\draw[<->] (-4.1,0) to (-3.1,0);
\node at (-3.7175,0.1881) {\tiny{$\mathbb R$}};

\draw [->](-2.5,0.3) .. controls (-2.46,0.1) and (-2.46,-0.1) .. (-2.5,-0.3);
\node at (-2.25,0) {\tiny{$\mu_1$}};

\draw[<->] (0.7,0) to (1.6,0);
\node at (1.2175,0.2092) {\tiny{$\mathbb R$}};

\draw [<-](0.1,0.3) .. controls (0.06,0.1) and (0.06,-0.1) .. (0.1,-0.3);
\node at (-0.1,0) {\tiny{$\mu_1$}};

\node (v1) at (-3.8537,-1.1406) {\tiny{$\mathbf{T_+} = \{p = 1\}$}};
\draw[dotted,->] (v1) to (-3.7386,-0.1001);

\node (v2) at (1.5216,-1.1054) {\tiny{$\mathbf{T}_- = \{p = -1\}$}};
\draw[dotted,->] (v2) to (1.2474,-0.0931);

\draw [<->](-0.8,0.8) to (-0.8,1.8);
\node at (-1.0251,1.5097) {\tiny{$\mathbb R$}};

\node (v3) at (-4.5,1.1) {\tiny{$(D^2/\mathbb Z_{k_+})\times \R$}};
\draw[dotted,->] (v3) to (-3.2,0.4);

\node (v4) at (2.3,1.3) {\tiny{$(D^2/\mathbb Z_{k_-})\times \R$}};
\draw[dotted,->] (v4) to (0.801,0.6521);

\node (v5) at (-2.5,2.0) {\tiny{$\{z_1,\dots,z_n\}$}};

\draw[fill=black] (-0.5,0.4) circle (0.03);
\draw[fill=black] (-1.1,-0.5) circle (0.03);
\draw[fill=black] (-2.2,0.5) circle (0.03);
\draw [dotted,->] (v5) to (-0.58,0.48);
\draw [dotted,->] (v5) to (-1.16,-0.42);
\draw [dotted,->] (v5) to (-2.22,0.58);
\end{tikzpicture}
\caption{Configuration space $N(a_+, a_-)\simeq S^2(k_+,k_-)\times \R$}
\label{f:fig2}
\end{figure}

\begin{rmk}
\begin{enumerate}
\item For the definition of points of type $(k, l)$ see Remark \ref{r:local_s1_quotient}.  For the definition of rank of a soliton see Definition \ref{d:GKsoliton}.
\item One parameter in cases (1) and (2) corresponds to the weight on the constant function in the decomposition of $W$. This phenomenon also occurs in the original Gibbons-Hawking construction, where one may include an arbitrary nonnegative constant in the choice of $W$. Depending on whether this constant is zero or not, one obtains multi-Eguchi-Hanson or multi-Taub-NUT metrics.
\item The second parameter in case (2) corresponds to a choice of a connection $\eta$ on the Seifert bundle over $N(a_+,0)/\Z$ with a prescribed curvature form.
\item In case (3) there are two real parameters, given by weights on the constant function and an anomalous smooth solution, which further satisfy an integrality constraint  (see~\eqref{f:W_quantization}).
\item The global topology of these examples is implicit in the construction, with the manifolds arising via completion of explicit Seifert bundles by adding finitely many points.
\end{enumerate}
\end{rmk}

Next, crucially, we show that our construction classifies all possible solitons in four dimensions with the \emph{smallest} possible isometry groups.  Those with larger symmetry groups arise via constructions in toric geometry and will be treated elsewhere.

\begin{thm} \label{t:uniqueness} Let $(M^4, g, I, J)$ be a complete generalized K\"ahler-Ricci soliton with generically nondegenerate Poisson tensor, Ricci curvature bounded below and $\dim H^*(M,\R)<\infty$. If $\dim \Isom(g) \leq 1$, then exactly one of the following holds.
\begin{enumerate}
\item $(M^4, g, I)$ is hyperK\"ahler.
\item $\Isom(g) = S^1$ and $(M^4, g, I, J)$ is isomorphic to one of the examples constructed in Theorem \ref{t:existence}.
\item $\Isom(g)=\R$ and $(M^4, g, I, J)$ is isomorphic to a $\Z$-cover of an example constructed in Theorem \ref{t:existence} with a trivial $S^1$-bundle structure.
\end{enumerate}
\end{thm}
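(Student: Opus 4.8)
The plan is to reduce an arbitrary such soliton to the symmetric Gibbons--Hawking picture of Theorem \ref{t:mainGH}, and then classify the global structure of the moment map image following the strategy Bielawski used in the hyperK\"ahler case. I would begin from the two commuting Killing fields canonically attached to the soliton, obtained by applying $I$ and $J$ to the soliton vector field $X$. These span a subalgebra of the Killing algebra, and since $\dim \Isom(g) \leq 1$ they must be aligned; thus either they both vanish or they generate a single one-parameter group $G$. If the torsion $H$ vanishes the soliton is Ricci-flat K\"ahler, hence in real dimension four hyperK\"ahler, and we are in alternative (1), where the classification with circle symmetry is Bielawski's theorem. In the remaining case $H\neq 0$, the aligned Killing field $\xi$ is nonvanishing on the nondegeneracy locus and generates an isometric, biholomorphic action of $G$, with $G\cong S^1$ when the generic orbit is closed and $G\cong \R$ otherwise, producing the dichotomy between alternatives (2) and (3).

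On the open dense set where the Poisson tensor is nondegenerate, the action of $G$ becomes tri-Hamiltonian after passing to the covering on which the moment map $\pmb\mu$ is globally defined. Proposition \ref{p:holo_symplectic-acs} and Theorem \ref{t:mainGH} then apply and present the structure in the normal form $g = Wh + W^{-1}\eta^2$, with $W^{-1}$ the squared length of $\xi$, the connection $\eta$, and the angle function $p$ determined by the soliton equations up to the two parameters $a_\pm$. So the geometry over the nondegeneracy locus is pinned down by $(a_\pm, W)$, with $W$ solving the linear elliptic equation of Theorem \ref{t:mainGH}(2).

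The central step, adapting Bielawski's completeness argument to the present setting, is to recover the global image of $\pmb\mu$. I would show that completeness of $(M,g)$, together with $\Rc$ bounded below, forces the quotient $N = M/G$ with its background horizontal metric $h$ to be complete away from the degeneration locus $T_\pm = \{p = \pm 1\}$ and the poles of $W$, which correspond to fixed points. Classifying the complete metrics $h$---now the explicit $p$-dependent metric rather than the flat one---yields exactly the trichotomy $\bar{\R^3_{\pmb\mu}/\gG} \cong N(a_+,0)$, $N(a_+,0)/\Z$, or $N(a_+,a_-)$ recorded in the introduction; the finiteness hypothesis $\dim H^*(M,\R)<\infty$ enters to guarantee finitely many poles and that $W$ is a finite linear combination of a constant and Green's functions, matching the data of Theorem \ref{t:existence}. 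Using the explicit forms of $\Omega_I,\Omega_J$ I would then check that the metric completes smoothly by adjoining the cone/spindle loci along $T_\pm$ and a single point over each pole of $W$, recovering $M^{S^1}/S^1 = \{z_i\}$; when $G\cong S^1$ this identifies $(M,g,I,J)$ with an example of Theorem \ref{t:existence}, giving alternative (2), while when $G\cong \R$ the $\mu_1$-period lattice degenerates and the structure is the pullback to a $\Z$-cover, with trivialized circle bundle, of one of the $S^1$ models, giving alternative (3). Mutual exclusivity follows by comparing $H$ and $\Isom(g)$ across the three cases.

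The hard part is this third step: transporting Bielawski's principle---that completeness of the total space forces completeness of the moment-map base---into the generalized K\"ahler setting, where $h$ is non-flat and governed by the angle function $p$, and where the genuinely new phenomenon is the degeneration $p\to\pm 1$ along $T_\pm$, rather than a pole of $W$, controlling the global shape of $N$.
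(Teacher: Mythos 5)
Your proposal is correct and follows essentially the same route as the paper: the dichotomy on the soliton Killing fields $IV_I, JV_J$, reduction to the rank one Gibbons--Hawking ansatz, the Bielawski-style completeness argument for the horizontal metric $h$ (with the non-flat $h$ handled by more robust elliptic estimates), the classification of the moment map images $N(a_+,0)$, $N(a_+,0)/\Z$, $N(a_+,a_-)$, and the Liouville/Green's-function classification of $W$. The outline matches the paper's proof in both structure and the identification of where the genuine difficulty lies.
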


\begin{rmk}
The hypothesis of a lower bound on Ricci curvature is needed for technical reasons described below, and it should be possible to remove this. The assumption on the finite topological type of $M$ is used to ensure that an $S^1$-action on $M$ necessarily has finitely many fixed points in the same way as in the work of Bielawski~\cite{Bielawski}. It should be noted that the existence statement of Theorem~\ref{t:existence} holds even for an infinite number of poles $\{z_\alpha\}\subset N(a_+,a_-)$ as long as $\{z_\alpha\}$ are sparse enough ensuring a uniform convergence of the sum of the corresponding Green's functions (see Proposition~\ref{p:elliptic_aneq0}) on compact sets. This observation is well-known in the hyperK\"ahler case, see e.g.,~\cite{an-kr-le-89}.
\end{rmk}

Let us give a brief description of the proofs of Theorems \ref{t:existence} and \ref{t:uniqueness}, expanding on the discussion above, which will also serve as an outline for the remainder of the paper.  In \S \ref{s:background} we recall fundamental properties and examples of generalized K\"ahler structures, which are biHermitian triples $(g, I, J)$ satisfying certain integrability conditions.  We recall the associated Poisson tensor $\gs = \tfrac{1}{2} g^{-1} [I,J]$, and briefly indicate the geometric significance of the rank of this tensor.  In the nondegeneracy locus we obtain a symplectic form $\Omega = \gs^{-1}$, and recall that in dimension $4$ the entire GK structure is described in terms of $\Omega$, $I \Omega$ and $J \Omega$.  We also recall some fundamental topological and geometric results on Seifert fibrations, specialized to the case of almost free $S^1$ actions on 4-manifolds.  

In \S \ref{s:GKsymm} we study generalized K\"ahler 4-manifolds admitting a free $S^1$ action.  As GK structures on 4-manifolds are described by a triple of symplectic forms, in \S \ref{ss:inv_gk_str} we are able to locally define a moment map generalizing that of the Gibbons-Hawking ansatz (Theorem~\ref{t:nondegenerate_gk_description}).  In the image of the moment map, the horizontal geometry is determined by a single arbitrary function $p$, which determines the angle between the complex structures $I$ and $J$.  The length of the circle fiber is determined by a function $W$ which solves a certain linear elliptic PDE depending on $p$.  Turning to the case of effective $S^1$ actions in \S \ref{ss:NGKeff}, we first derive the local structure of the fibration near fixed points, which must be that of the standard Hopf fibration, then derive the local blowup rate of $W$ near these fixed points.  Given this we prove removable singularities results in Propositions \ref{p:removable_singularity} and \ref{p:removable_singularity_GK} which say roughly that given the data of our ansatz on a punctured $3$-ball satisfying the necessary blowup rates, we can extend the total space of the fibration topologically by adding a point, with the generalized K\"ahler structure extending in $C^{1,1}$ sense across this point. Having established these basic structural results, we end \S \ref{s:GKsymm} by reviewing some known examples, including diagonal Hopf surfaces and a new description of LeBrun's generalized K\"ahler structures on parabolic Inoue surfaces \cite{LeBrun}.

So far the discussion has allowed for the angle function $p$ to be essentially arbitrary.  In \S \ref{s:GKRS} we impose the generalized K\"ahler-Ricci soliton equation, and observe that it determines $p$.  For K\"ahler-Ricci solitons, it is well-known that the associated soliton vector field $\N f$ is real holomorphic, with $J \N f$ a Killing field.  Through a local analysis of the generalized K\"ahler-Ricci soliton equation, we show that aside from some exceptional and rigid cases, either the metric has a two-dimensional isometry group or there is a vector field $X$ preserving all the generalized K\"ahler structure, leading to the definition of a \emph{rank one} soliton (cf.\,Definition \ref{d:GKsoliton}).  We next derive a kind of scalar reduction for the soliton equation, showing that the angle function $p$ must be determined by $\Omega$-Hamiltonian potential functions for the vector fields $I X$ and $JX$, and thus in the rank one case, in the image of the moment map we obtain an explicit formula for $p$ depending on two real parameters $a_{\pm}$.

The description of GK structures and solitons in sections \S \ref{s:GKsymm}--\S\ref{s:GKRS} is purely local and \textit{flexible}. To turn it into a reasonable classification problem, we need to introduce the key assumption of $(M,g)$ being \textit{complete}.
In \S \ref{s:completion} we provide a description of the topological and geometric structure of the possible completions of the quotient space $M/S^1$ for rank one solitons under natural geometric hypotheses on $M$.  To begin we analyze the blowup behavior of the meromorphic $(2,0)$-form $\Omega_I$, showing in particular that it has a pole of order $1$ at the degeneracy locus. Using this we begin our analysis of the global properties of the moment map. We establish a key completeness property for the induced horizontal geometry in \S \ref{ss:compquot}. In particular, as derived earlier, the horizontal metric is naturally of the form $W h$, where $W^{-1}$ is the squared length of the circle fiber.  Completeness of $M$ immediately implies completeness of $W h$, however, a priori we do not know the global structure of $h$.  As discussed above, establishing the completeness of the domain with respect to the metric $h$ plays a key role in determining the global structure of the so-far only locally defined moment map.  For hyperK\"ahler metrics with symmetry, this property was established by Bielawski \cite{Bielawski}, and plays a key role in proving their classification.  Bielawski's proof exploits structural results of Schoen-Yau \cite{SchoenYau} on conformal immersions of manifolds of nonnegative scalar curvature, relying in a delicate way on the fact that $h$ is flat, which holds in the hyperK\"ahler setting.  In our setup the metric $h$ is fairly explicit, but not flat, and thus we must rely on more robust elliptic theory.  We establish the relevant completeness property in Proposition \ref{p:s1_bundle_complete}, relying on the gradient estimate of Cheng-Yau \cite{ChengYau} and certain ideas from the previously mentioned work of Schoen-Yau \cite{SchoenYau}.  These arguments are where the technical condition of a lower Ricci curvature bound and finite topological type enter.

With this completeness property at hand we can analyze the global properties of the moment map.  In particular, a priori the moment map is defined only after we remove the degeneracy loci of $\gs$, and take an appropriate $\mathbb Z^k$ cover to render the $S^1$ action tri-Hamiltonian.  Using the structure of $\Omega_I$ and our technical result on the completeness of the horizontal geometry, in Proposition \ref{p:deck_transform_mu} we analyze the $\mathbb Z^k$ covering action, and show that the moment map is naturally defined on the $\mathbb Z^k$ quotient, with the image space an explicit quotient of $\mathbb R^3_{\pmb \mu}$.  Next in \S \ref{ss:metcomp} we explicitly describe the completions of these $\mathbb R^3_{\pmb \mu}$ quotients.  Specifically we observe that by adding the degeneracy loci of $\gs$, the metric extends smoothly in the orbifold sense, with cone angles determined by the parameters $a_{\pm}$.  Given this, we finish in \S \ref{ss:globalmoment} by proving that the moment map is globally defined on $M$, with the images given by one of the spaces $N(a_+, 0), N(a_+, 0) / \mathbb Z$, or $N(a_+, a_-)$ described above.

Having now determined the possible geometry and topology of the images of the moment map, it remains to describe the possible choices of $W$.  In \S \ref{s:construction} we first observe that solutions to the linear elliptic PDE for $W$ are in one-to-one correspondence with solutions to the Laplace equation for a metric conformal to the given background $h$.  Using this interpretation together with a description of the spaces $N$ as global orbifold quotients, we show that the relevant solutions must be a linear combination of Green's functions, the constant function, and one exceptional solution in the case $a_- \neq 0$.  Having established the characterization of entire solutions $W$ to the relevant PDE, we finish the proof of Theorem \ref{t:existence}. At this point, all that remains is to use the soliton equation to (1) show that the GK structure, which is only known to be $C^{1,1}$ at the poles of $W$, in fact extends smoothly, and (2) prove that the whole structure extends smoothly over the degeneracy loci.  Finally, relying primarily on the prior classification of the moment map images and possible functions $W$, we finish the proof of Theorem \ref{t:uniqueness}.

\vskip 0.1in

\textbf{Acknowledgements:} The authors are grateful to anonymous referees for useful remarks and suggestions.
We would also like to thank Vestislav Apostolov and Connor Mooney for helpful discussions.  The first author acknowledges support from the NSF via DMS-1454854.
\section{Background}\label{s:background}

The aim of this section is to provide the necessary background for the tools and notions used in this paper. In \S \ref{ss:GK_structures} we give a very brief overview of the biHermitian interpretation of generalized K\"ahler geometry with an emphasis on the $4$-dimensional case. We discuss the relation to Poisson geometry, and give an explicit pointwise description of a \emph{nondegenerate} generalized K\"ahler structure in terms of a preferred vector and \emph{angle function} $p$. We refer the reader to~\cite{ASNDGKCY} and references therein for more details.
In \S \ref{ss:s1_actions} we discuss the topology of $S^1$ actions on 4-dimensional manifolds, and review the notions of Seifert fibrations and orbifolds.

\subsection{Generalized K\"ahler structures}\label{ss:GK_structures}

Let $M$ be a smooth $2n$-dimensional manifold equipped with a Riemannian metric $g$ and two integrable almost complex structures $I$ and $J$ compatible with $g$. Denote by
\[
\gw_I:=g(I\cdot,\cdot)\quad \mathrm{and}\quad \gw_J:=g(J\cdot,\cdot)
\]
the associated fundamental 2-forms, and let
\[
d^c_I\colon \Lambda^k(M; \R)\to \Lambda^{k+1}(M; \R),\quad 
d^c_J\colon \Lambda^k(M; \R)\to \Lambda^{k+1}(M; \R)
\]
be the real differential operators, also known as \textit{twisted differentials}
\[
d^c_I=\sqrt{-1}(\bar\del_{I}-\del_I),\quad d^c_J=\sqrt{-1}(\bar\del_{J}-\del_J).
\]

\begin{defn} \label{def:gk structure}
	The data $(M,g,I,J)$ is called a \emph{generalized K\"ahler} (GK) structure on a manifold $M$, if
	\[
	d_I^c\gw_I=-d_J^c\gw_J
	\]
	and the form $H:=- d_I^c\gw_I$ is closed:
	\[
	dH=0.
	\]
	In particular, the Hermitian manifolds $(M,g,I)$ and $(M,g,J)$ are \emph{pluriclosed} (sometimes also called SKT~--- \emph{strong K\"ahler with torsion}):
	\[
	dd^c_I\gw_I=dd^c_J\gw_J=0.
	\]
\end{defn}
For a four-dimensional GK structure we furthermore define the two associated Lee forms $\theta_I$ and $\theta_J$ by the equations
\begin{align*}
d \gw_I = \theta_I \wedge \gw_I, \qquad d \gw_J = \theta_J \wedge \gw_J.
\end{align*}
Equivalently, in the four dimensional case, $\theta_I=-(*H)$. Thus in this case, when $I$ and $J$ induce the same orientation the generalized K\"ahler condition implies that $\theta_I=-\theta_J$. 
In general there are two natural connections associated to a GK structure, 
\begin{align*}
\N^{I} = D + \tfrac{1}{2} g^{-1} H, \qquad \N^{J} = D - \tfrac{1}{2} g^{-1} H
\end{align*}
where $D$ denotes the Levi-Civita connection.  These are the Bismut connections associated to the Hermitian structures $(g, I)$ and $(g, J)$ \cite{Bismut}.  Call the associated curvature tensors $R_I, R_J$ and then we obtain the associated Bismut-Ricci tensors
\begin{align*}
\rho_I = \tfrac{1}{2} \tr_{\gw_I} R_I, \qquad \rho_J = \tfrac{1}{2} \tr_{\gw_J} R_J.
\end{align*}
In \cite{PCF, GKRF} the first named author and Tian introduced a parabolic flow of generalized K\"ahler structures (generalized K\"ahler-Ricci flow) which can be expressed as
\begin{align*}
\dt \gw_I =&\ - \rho_I^{1,1}, \qquad \dt J = L_{\tfrac{1}{2} \left(\theta_J^{\#} - \theta_I^{\#}\right) } J.
\end{align*}
The equation for $\gw_I$ is the pluriclosed flow, which is defined for pluriclosed structures $(g, I)$.  With generalized K\"ahler initial data $(g, I, J)$ and the evolution equation for $J$ above, the flow preserves the GK conditions.  It turns out that pluriclosed flow is a gradient flow for a Perelman-type energy functional \cite{PCFReg}, and this energy is fixed on steady solitons:
\begin{defn}\label{d:soliton} A pluriclosed structure $(M^{2n}, g, I)$ is a \emph{steady soliton} if there exists a function $f\in C^\infty(M,\R)$ such that
\begin{align*}
\Rc - \tfrac{1}{4} H^2 + \N^2 f =&\ 0,\\
d^* H + i_{\N f} H =&\ 0.
\end{align*}
\end{defn}
It follows from \cite{PCFReg} that these equations are satisfied by a solution to pluriclosed flow evolving purely by diffeomorphisms.

\subsubsection{Poisson geometry} \label{ss:Poisson}

Hitchin/Pontecorvo observed~\cite{HitchinPoisson, PontecorvoCS} that there is a real Poisson tensor associated to any generalized K\"ahler structure:
\begin{equation*}
\gs = \tfrac{1}{2}g^{-1}[I, J].
\end{equation*}
\begin{rmk}
	In order to match the common conventions in hyperK\"ahler geometry, our definition of the Poisson tensor associated to a GK structure differs by a factor of $2$ from the one in~\cite{ASNDGKCY}. This discrepancy will affect some of the general identities from~\cite{ASNDGKCY} which we will use in the present paper.
\end{rmk}
We call the generalized K\"ahler structure \emph{commuting} if $\gs \equiv 0$, and \emph{nondegenerate} if $\gs$ defines a nondegenerate pairing.  In dimension $4$ these are the only two possibilities at a given point.  In general the rank of $\gs$ can vary at different points.  Fix $(M,g,I,J)$ a generalized K\"ahler manifold with nondegenerate Poisson tensor, and define symplectic form
\begin{equation*}
\Omega = \gs^{-1}=2[I,J]^{-1}g,
\end{equation*}
It is elementary to show that $\Omega$ is of holomorphic type $(2,0) + (0,2)$ with respect to both $I$ and $J$, and thus we can define the (respectively $I$- and $J$-) $(2,0)$-type holomorphic symplectic forms
\begin{align*}
\Omega_I:=\Omega-\sqrt{-1}I\Omega,\quad \Omega_J:=\Omega-\sqrt{-1}J\Omega,
\end{align*}
where $(I\Omega)(X,Y):=\Omega(IX,Y)$.

From here forward we will be dealing with 4-dimensional GK manifolds with generically nonzero Poisson tensor $\gs$. In this case $\gs$ is invertible on the complement of the locus
\[
\mathbf{T}=\{x\in M\ |\ I_x=\pm J_x\}.
\]
The set $\mathbf{T}$ is analytic with respect to both $I$ and $J$ and has complex dimension one. By definition the GK structure $(M\backslash\mathbf{T}, g, I, J)$ on the complement of $\mathbf{T}$ is nondegenerate.  Furthermore, in this nondegeneracy locus, one can recover $I$ and $J$ from a pair of compatible complex-valued symplectic forms. Specifically we have the following:
\begin{prop}[{\cite[Thm.\,2]{AGG} and \cite[Lemma\,2.14]{ASNDGKCY}}]\label{p:holo_symplectic-acs}
	Given a 4-dimensional manifold $M$ and two closed complex-valued forms $\Omega_I,\Omega_J\in\Lambda^2(M,\C)$ satisfying
	\[
	\begin{split}
	\Omega_I^2=\Omega_J^2=0,\\
	\Re(\Omega_I)=\Re(\Omega_J),
	\end{split}
	\]
	there are two unique integrable complex structures $I$ and $J$ such that $\Omega_I$ and $\Omega_J$ are holomorphic $(2,0)$-forms with respect to $I$ and $J$ respectively.  Furthermore, if the symmetric pairing $g$ defined by
	\[
	g(X,X):=\Omega(JX,IX)
	\]
	is positive definite, then $(M,g,I,J)$ is a nondegenerate GK structure.
\end{prop}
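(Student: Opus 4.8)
The plan is to first reconstruct $I$ and $J$ pointwise as almost complex structures from the kernels of $\Omega_I$ and $\Omega_J$, promote them to integrable structures using the closedness hypotheses, and only then bring in the metric to verify the generalized K\"ahler conditions. At any point a nonzero complex $2$-form $\Omega_I$ on a $4$-manifold with $\Omega_I^2=0$ is decomposable, so its kernel $L_I:=\{v\in T_\C M:\iota_v\Omega_I=0\}$ is a complex $2$-plane; assuming (as is implicit, and as positive-definiteness of $g$ forces through $\Omega_I\wedge\bar\Omega_I\neq 0$) that $\Omega_I$ is nondegenerate, one has $L_I\oplus\bar{L}_I=T_\C M$, and there is a unique almost complex structure $I$ with $T^{0,1}_I=L_I$. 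For this $I$ the form $\Omega_I$ is of type $(2,0)$, and uniqueness is automatic since any $I'$ making $\Omega_I$ a $(2,0)$-form must have $T^{0,1}_{I'}=\ker\Omega_I=L_I$. The identical construction applied to $\Omega_J$ produces $J$.

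Next I would establish integrability. Since $T^{0,1}_I=L_I$, the structure $I$ is integrable exactly when $L_I$ is involutive, by Newlander--Nirenberg. For $v,w\in L_I$ we have $\iota_v\Omega_I=\iota_w\Omega_I=0$, and Cartan's formula together with $d\Omega_I=0$ gives $\mathcal{L}_v\Omega_I=d\iota_v\Omega_I+\iota_v d\Omega_I=0$; then the identity $\iota_{[v,w]}=\mathcal{L}_v\iota_w-\iota_w\mathcal{L}_v$ applied to $\Omega_I$ yields $\iota_{[v,w]}\Omega_I=0$, so $[v,w]\in L_I$. Thus $L_I$ is involutive and $I$ is integrable, and likewise for $J$. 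This finishes the first assertion.

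For the second assertion I would bring in $g(X,X)=\Omega(JX,IX)$ with $\Omega:=\Re\Omega_I=\Re\Omega_J$, which is symmetric by construction. The key pointwise input is that $\Omega$, being the real part of the $(2,0)$-form $\Omega_I$, is of type $(2,0)+(0,2)$ for $I$ (equivalently $I^*\Omega=-\Omega$), and symmetrically for $J$. A pointwise linear-algebra computation---most transparent after putting $I$ and $J$ into a normal form in which they differ by the angle function $p$, so that $\{\gw_I,\Omega,I\Omega\}$ and $\{\gw_J,\Omega,J\Omega\}$ become (orthogonal, equal-length) bases of the self-dual forms $\Lambda^+$ sharing the element $\Omega$---shows that $g$ is invariant under both $I$ and $J$. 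Hence, when $g$ is positive definite, $(g,I)$ and $(g,J)$ are Hermitian with fundamental forms $\gw_I,\gw_J$, and $I,J$ induce the same orientation (otherwise $\Omega$ would be both self-dual and anti-self-dual, forcing $\Omega=0$).

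The remaining, and hardest, step is to verify the differential conditions $d^c_I\gw_I=-d^c_J\gw_J$ and $d(-d^c_I\gw_I)=0$. Here I would use that closedness of $\Omega_I$ and $\Omega_J$ is equivalent to the three real forms $\Omega$, $I\Omega$, $J\Omega$ all being closed. Expressing $\gw_I$ and the torsion $H=-d^c_I\gw_I$ through $\Omega$, $I\Omega$ and the angle function, substituting the normal-form relations from the previous step, and differentiating, the contributions should organize---after using $d\Omega=d(I\Omega)=d(J\Omega)=0$---into the desired identity, with the $dp$-terms matching on the two sides. I expect this reduction of $d^c_I\gw_I+d^c_J\gw_J$ to combinations of $d\Omega$, $d(I\Omega)$, $d(J\Omega)$ to be the genuinely delicate computation (this is precisely the content drawn from \cite{AGG} and \cite{ASNDGKCY}); the positivity hypothesis enters to guarantee $p\in(-1,1)$ throughout, so that the self-dual basis and the normal form remain valid and the resulting data is a bona fide nondegenerate GK structure.
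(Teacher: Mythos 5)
The paper does not actually prove this proposition: it is imported from \cite[Thm.\,2]{AGG} and \cite[Lemma\,2.14]{ASNDGKCY}, so there is no internal argument to compare against. Judged on its own, your proof of the first assertion is complete and correct: for a nowhere-vanishing closed form with $\Omega_I^2=0$ and $\Omega_I\wedge\bar{\Omega}_I\neq 0$, the kernel bundle $L_I=\ker\Omega_I$ determines the unique almost complex structure with $T^{0,1}_I=L_I$, and the computation $\iota_{[v,w]}\Omega_I=\mathcal{L}_v\iota_w\Omega_I-\iota_w\mathcal{L}_v\Omega_I=0$ (using $d\Omega_I=0$) shows $L_I$ is involutive, so $I$ is integrable by Newlander--Nirenberg. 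You are also right to flag that the pointwise nondegeneracy $\Omega_I\wedge\bar{\Omega}_I\neq 0$ is needed and only implicit in the statement.

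The gap is in the second assertion, which is the substantive content of the proposition. You correctly identify the strategy --- express $\gw_I$, $\gw_J$ and $H=-d^c_I\gw_I$ through $\Omega$, $I\Omega$, $J\Omega$ and the angle function $p$, then use $d\Omega=d(I\Omega)=d(J\Omega)=0$ --- but you do not carry it out; ``the contributions should organize into the desired identity'' is a statement of intent, and this reduction is precisely the content of the cited lemmas, not something that can be waved through. There is also a mild circularity to repair in the intermediate step: you invoke the self-dual forms $\Lambda^+$ and a normal form for the pair $(I,J)$ of the type in Proposition \ref{p:matrices} before having established that $g$ is a metric compatible with both structures, whereas $\Lambda^+$ and that normal form presuppose exactly such a $g$. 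One must first verify directly from $I^*\Omega=-\Omega$, $J^*\Omega=-\Omega$ and the definition $g(X,X):=\Omega(JX,IX)$ that $g$ is symmetric and $I$- and $J$-invariant (a pointwise computation in a basis adapted to $I$ alone), and only then is the normal form available. As written, the proposal proves the first half of the proposition and outlines, but does not prove, the second half.
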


\begin{ex}[Generalized K\"ahler structures on the standard Hopf surface]\label{ex:Hopf1}
Consider the standard Hopf surface
\begin{align*}
M^4 = \left(\mathbb C^2 - \{0\} \right) / \left< (z_1,z_2) \to (2 z_1, 2 z_2) \right> \cong S^3 \times S^1.
\end{align*}
Let $I$ denote the natural complex structure inherited from $\mathbb C^2$, and consider the \emph{round} metric
\begin{align*}
g=\Re\left(\frac{dz_1\otimes d\bar{z_1}+dz_2\otimes d\bar{z_2}}{|z_1|^2+|z_2|^2}\right).
\end{align*}
This is the natural cylindrical metric on $S^3 \times \mathbb R$, which is Hermitian, and descends to the quotient to give a direct sum of the round metric on $S^3$ with the standard metric on $S^1$.  This is a pluriclosed structure, and we obtain a second complex structure $J$ by pulling it back via an orientation-preserving involution: $J:=j^*I$, where
\begin{equation*}
\begin{split}
j&\colon \C^2\backslash\{0\}\to \C^2\backslash\{0\}\\
j&(z_1,z_2)=\left(\frac{\bar{z_2}}{|z_1|^2+|z_2|^2}, \frac{z_1}{|z_1|^2+|z_2|^2}\right).
\end{split}
\end{equation*}
The corresponding holomorphic symplectic forms are
\begin{align*}
\Omega_I =&\
\frac{dz_1}{z_1} \wedge \frac{dz_2}{z_2},\\
\Omega_J =&\ \left(d\log(\brs{z_1}^2+\brs{z_2}^2)-\frac{d \bar z_2}{\bar z_2}\right)\wedge\left(d\log(\brs{z_1}^2+\brs{z_2}^2)-\frac{d z_1}{z_1}\right)
\end{align*}
and it is straightforward to check that $\Re(\Omega_I)=\Re(\Omega_J)$ and $\Omega(JX,IX)$ is given by metric $g$. Thus by Proposition~\ref{p:holo_symplectic-acs}, $(M,g,I,J)$ is a GK structure. In particular, Poisson tensor is given by a real part of a holomorphic Poisson tensor:
\begin{align*}
\gs = -\Re\left(z_1 \frac{\del}{\del z_1} \wedge z_2\frac{\del}{\del z_2}\right).
\end{align*}
The degeneracy loci are the elliptic curves $\{z_1 = 0\}$, $\{z_2= 0\}$ where $I = - J$ and $I = J$, respectively. A schematic picture of the fundamental domain in $\mathbb C^2\backslash\{0\}$ is given in Figure \ref{fig:Hopf}.
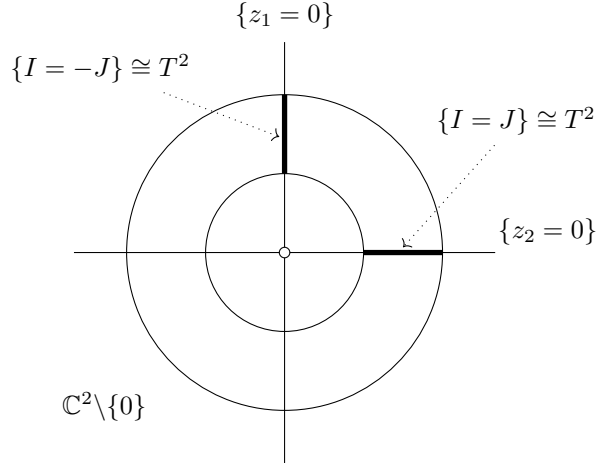
\begin{figure}
\begin{tikzpicture} [scale=0.7]

\draw (0,-4) edge (0,4);
\draw (-4,0) edge (4,0);

\draw[line width=2pt] (3,0) edge (1.5,0);
\draw[line width=2pt] (0,3) edge (0,1.5);

\draw (0,0) circle (1.5);
\draw (0,0) circle (3);

\node at (0,4.5) {\small{$\{z_1 = 0\}$}};
\node at (5,0.4) {\small{$\{z_2 = 0\}$}};

\node (v1) at (-3.5,3.5) {\small{$\{I = -J\} \cong T^2$}};
\draw [dotted,->] (v1) edge (-0.1,2.20);

\node (v2) at (4.4,2.5) {\small{$\{I = J\} \cong T^2$}};
\draw [dotted,->] (v2) edge (2.25,0.1);

\draw[fill=white] (0,0) circle (0.1);

\node at (-1.5,-1,5) {\small{$\C^2\backslash\{0\}$}};

\end{tikzpicture}
\caption{Fundamental domain of standard Hopf surface with GK structure}
\label{fig:Hopf}
\end{figure}

The above GK structure $(M,g,I,J)$ on a standard Hopf surface has a large automorphism group: it admits an effective action of a 3-dimensional torus $(S^1)^3$, where the first two factors act via coordinate-wise complex multiplication, and the last factor $\R/\Z\simeq S^1$ acts by dilations: $t\cdot(z_1,z_2)=(e^tz_1,e^tz_2)$.
\end{ex}

\subsubsection{GK structures with a preferred vector}

Next we show that, given a preferred vector $Z\in T_xM$, it is possible to give an explicit description of a 4-dimensional GK structure in terms of one function.  The key linear algebraic fact special to four dimensions is the pointwise identity~\cite[Lemma 3.2]{StreetsNDGKS}
\begin{equation*}
IJ+JI=-2p\Id,
\end{equation*}
where $p\colon M\to(-1,1)$, $p=-\frac{1}{4}\tr(IJ)$ is the \emph{angle function}. We also have a third natural endomorphism, which we can rewrite using $p$:
\begin{equation}\label{f:K}
K:=\tfrac{1}{2}[I,J]=IJ+p\Id=-JI-p\Id.
\end{equation}
Note that in the hyperK\"ahler setting, the function $p$ is identically zero and $K$ is another integrable complex structure. More generally, if $p\colon M\to(-1,1)$ is constant, then $I$ is a part of a hyperK\"ahler triple $(I,J',K')$ with $J'=\sqrt{1-p^2}I+pJ$, $K'=[I,J']$.

Thus, in the presence of a preferred vector $Z\in T_xM$, we can use the operators $I,J$ and $K$ to obtain a basis of $T_xM$, thus yielding a complete description of all structures in terms of the function~$p$.  Specifically we have the following proposition:

\begin{prop}[Linear description of a GK structure on $M^4$] \label{p:matrices}
	For $(M^4,g,I,J)$ as above, given a unit vector $Z\in TM$, the vectors $Z,IZ,JZ,KZ$ form a basis of $TM$ with $g,I,J,K$ and $\Omega$ given by the matrices
	\begin{equation}\label{f:g_matrix}
	g=\left[\begin{matrix}1 & 0 & 0 & 0\\0 & 1 & p & 0\\0 & p & 1 & 0\\0 & 0 & 0 & 1 - p^{2}\end{matrix}\right],
	\end{equation}
	\begin{equation}\label{f:I_matrix}
	I=\left[\begin{matrix}0 & -1 & - p & 0\\1 & 0 & 0 & p\\0 & 0 & 0 & -1\\0 & 0 & 1 & 0\end{matrix}\right],
	\end{equation}
	\begin{equation}\label{f:J_matrix}
	J=\left[\begin{matrix}0 & - p & -1 & 0\\0 & 0 & 0 & 1\\1 & 0 & 0 & - p\\0 & -1 & 0 & 0\end{matrix}\right],
	\end{equation}
	\begin{equation}\label{f:K_matrix}
	K=\left[\begin{matrix}0 & 0 & 0 & p^{2} - 1\\0 & - p & -1 & 0\\0 & 1 & p & 0\\1 & 0 & 0 & 0\end{matrix}\right],
	\end{equation}
	\begin{equation}\label{f:Omega_matrix}
	\Omega=\left[\begin{matrix}0 & 0 & 0 & -1\\0 & 0 & -1 & 0\\0 & 1 & 0 & 0\\1 & 0 & 0 & 0\end{matrix}\right].
	\end{equation}
\end{prop}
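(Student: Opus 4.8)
The plan is to work pointwise on $T_xM$, treating $p=p(x)$ as a constant, and to verify all five matrices by computing the relevant inner products and the action of the operators $I,J,K,\Omega$ on the proposed frame $Z,IZ,JZ,KZ$, using only the pointwise facts that $I^2=J^2=-\Id$, that $I$ and $J$ are compatible with $g$, and the four-dimensional relation $IJ+JI=-2p\,\Id$ together with the definition \eqref{f:K} of $K$. The first step is to record that compatibility of an orthogonal complex structure with $g$ is equivalent to skew-adjointness, $g(IX,Y)=-g(X,IY)$ and likewise for $J$; from $K=IJ+p\,\Id$ and \eqref{f:K} one checks directly that $g(KX,Y)=-g(X,KY)$ as well. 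This skew-adjointness of $K$ is exactly what is needed for the bilinear form $\Omega(\cdot,\cdot):=(p^2-1)^{-1}g(K\cdot,\cdot)$ to be genuinely antisymmetric, and it will be reused below.

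Next I would assemble the Gram matrix. Skew-adjointness and $g(Z,Z)=1$ immediately give $g(Z,IZ)=g(Z,JZ)=0$ and $g(IZ,IZ)=g(JZ,JZ)=1$, while the off-diagonal entry follows by symmetrization, $2g(IZ,JZ)=-g(Z,(IJ+JI)Z)=2p$, so $g(IZ,JZ)=p$. Writing $KZ=IJZ+pZ$ and expanding similarly yields $g(KZ,Z)=g(KZ,IZ)=g(KZ,JZ)=0$ and $g(KZ,KZ)=1-p^2$, reproducing \eqref{f:g_matrix}. Since $p\in(-1,1)$ this matrix is positive definite, hence nonsingular, which shows that $Z,IZ,JZ,KZ$ are linearly independent and therefore a basis of the four-dimensional space $T_xM$.

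The algebraic engine for the remaining three endomorphism matrices is the identity $K^2=(p^2-1)\,\Id$. I would derive it from $K=IJ+p\,\Id$ and $JI=-IJ-2p\,\Id$ by expanding $K^2=IJIJ+2p\,IJ+p^2\,\Id$ and substituting $IJIJ=I(JI)J=-2p\,IJ-\Id$. With this in hand, computing the columns of $I,J,K$ is a direct application of the operators to the frame: for instance $I(JZ)=IJZ=KZ-pZ$, $I(KZ)=I(IJZ)+p\,IZ=-JZ+p\,IZ$, $J(IZ)=JIZ=-KZ-pZ$, $K(IZ)=-p\,IZ+JZ$, and $K(KZ)=(p^2-1)Z$. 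Each image lands in the span of the frame and its coordinates reproduce the corresponding column of \eqref{f:I_matrix}, \eqref{f:J_matrix}, and \eqref{f:K_matrix}; the columns given by $I^2Z=-Z$ and the like are immediate.

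Finally, for $\Omega$ I would use the relation $\Omega=2[I,J]^{-1}g$, which since $[I,J]=2K$ and $K^2=(p^2-1)\Id$ reads $\Omega(X,Y)=g(K^{-1}X,Y)=(p^2-1)^{-1}g(KX,Y)$; the entries $\Omega_{ab}$ then follow by feeding the already-computed vectors $KZ,K(IZ),K(JZ),K(KZ)$ into the Gram matrix, giving \eqref{f:Omega_matrix}. The computation is essentially routine once the two structural facts—skew-adjointness of $K$ and $K^2=(p^2-1)\Id$—are established, so there is no serious obstacle. The only point demanding care is sign bookkeeping, particularly in interpreting $\Omega=\gs^{-1}=2[I,J]^{-1}g$ correctly as the $2$-form $g(K^{-1}\cdot,\cdot)$ and confirming that the scalar $(p^2-1)^{-1}$, which is negative on $(-1,1)$, produces precisely the signs displayed in \eqref{f:Omega_matrix}.
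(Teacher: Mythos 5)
Your proposal is correct and follows exactly the route the paper intends: the paper's proof simply asserts that the entries of $g,I,J,K$ follow from compatibility and the identity \eqref{f:K}, and obtains $\Omega$ as $(K^{-1})^t\cdot g$, which is precisely your $\Omega(X,Y)=(p^2-1)^{-1}g(KX,Y)$ via $K^2=(p^2-1)\Id$. Your write-up just supplies the details (skew-adjointness of $K$, the Gram matrix, the column-by-column computation) that the paper leaves implicit, and all the signs check out against the displayed matrices.
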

\begin{proof}
	The formulas for matrices of $g,I,J,K$ are straightforward from the compatibility of the metric with $I$ and $J$ and the identity~\eqref{f:K} defining $K$. Finally, for the matrix of $\Omega$ we have
	\[
	\Omega=(K^{-1})^t\cdot g
	\]
	which is given by the last matrix.
\end{proof}

\subsection{Manifolds with \texorpdfstring{$S^1$}{} action}\label{ss:s1_actions}

	In this subsection we review basic facts about circle actions on manifolds, and discuss the related concepts of \emph{Seifert fibrations} and \emph{orbifolds}. We will usually think of the circle $S^1$ as the group of complex numbers of the unit norm.
	
	\begin{defn}
	Let $M$ be an $n$-dimensional manifold with an effective circle action
	\[
	S^1\times M\to M.
	\]
	Throughout this paper we keep the following notation
	\begin{enumerate}
		\item $\pi\colon M\to N$ is the projection onto the orbit space;
		\item $M^{S^1}\subset M$ is the fixed point set;
		\item $N_0:=(M\backslash M^{S^1})/S^1$ is the orbit space of the complement of the fixed point set.
	\end{enumerate}
	The action is called \emph{free} if the stabilizer $G_x\subset S^1$ of any point $x\in M$ is trivial. In this case $M\to N$ is the total space of a principal $S^1$ bundle.
	The action $S^1\times M\to M$ is called \emph{almost free} if $M^{S^1}\subset M$ is empty, or, equivalently, the stabilizer of any point in $M$ is finite. In this case we call $M\to N$ a \emph{Seifert fibration}.  The action is called \emph{effective} if no nontrivial element of $S^1$ acts as the identity on $M$.
	\end{defn}
	
	Given any point $x\in M$ there is a representation of its stabilizer $G_x\subset S^1$ in the normal space to the orbit $\nu_x:=T_xM/T_x(S^1x)$:
	\[
	\rho_x\colon G_x\to GL(\nu_x)
	\]
	If $M$ is oriented then the representation $\rho_x$ preserves the induced orientation on $\nu_x$. We have the following equivariant tubular neighbourhood theorem:
	
	\begin{thm}[{Slice theorem \cite[\S B.2]{gu-gi-ka}}] \label{t:slice}
		Let $M$ be a manifold with an effective $S^1$ action, $x\in M$. Then the representation $\rho_x$ in $\nu_x$ is faithful and there exists an $S^1$-invariant neighbourhood $U_x$ of $x$ and an equivariant diffeomorphism
		\[
		U_x\simeq S^1\times_{G_x} \nu_x:=(S^1\times \nu_x)/G_x.
		\]
		We will call $U_x$ a \emph{canonical neighbourhood} of $x\in M$.
	\end{thm}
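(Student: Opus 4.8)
The plan is to run the standard compact-group slice argument, specialized to $S^1$, with the main tool being an invariant exponential map. First I would average over the compact group to fix an $S^1$-invariant Riemannian metric on $M$; every element of $S^1$ then acts by isometries, and the orthogonal complement $(T_x(S^1x))^{\perp}$ gives a canonical realization of $\nu_x$ inside $T_xM$. Throughout I assume $M$ connected, as in the applications.

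For the faithfulness of $\rho_x$ I would use that $S^1$ is abelian: any $h\in G_x$ fixes the whole orbit pointwise, since $h\cdot(s\cdot x)=s\cdot(h\cdot x)=s\cdot x$, so $dh_x$ is the identity on $T_x(S^1x)$ and equals $\rho_x(h)$ on the complementary summand $\nu_x$. Hence $\rho_x(h)=\Id$ forces $dh_x=\Id$ on all of $T_xM$. Because $h$ is an isometry fixing $x$ with trivial $1$-jet there, the rigidity of isometries (an isometry is determined near a fixed point by its differential, via the exponential map) shows that $h$ acts as the identity on the component of $x$, so effectiveness gives $h=e$.

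The construction itself proceeds by defining $\Phi\colon S^1\times\nu_x\to M$, $\Phi(s,v)=s\cdot\exp_x(v)$. Since the $G_x$-action is isometric and fixes $x$, the exponential map is equivariant, $\exp_x(dh_x v)=h\cdot\exp_x(v)$, and therefore $\Phi(sh^{-1},\rho_x(h)v)=\Phi(s,v)$ for all $h\in G_x$, so $\Phi$ descends to the associated bundle $S^1\times_{G_x}\nu_x$ (a smooth manifold, as $G_x$ acts freely on the $S^1$-factor). The descended map is $S^1$-equivariant, and its differential at $[e,0]$ is an isomorphism onto $T_x(S^1x)\oplus\nu_x=T_xM$: the $S^1$-direction maps onto the orbit tangent via the (nonvanishing) fundamental vector field, and $\nu_x$ maps identically. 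By the inverse function theorem $\Phi$ is a local diffeomorphism near $[e,0]$; restricting $\nu_x$ to a small $G_x$-invariant ball $B_\epsilon$, I would obtain a diffeomorphism onto an $S^1$-invariant neighbourhood $U_x$, which is the desired canonical neighbourhood. The fixed-point case $G_x=S^1$ is included automatically: there the orbit is the point $x$, $\nu_x=T_xM$, $S^1\times_{S^1}\nu_x=\nu_x$, and $\Phi$ is simply $v\mapsto\exp_x(v)$ with $S^1$ acting via $\rho_x$.

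The main obstacle I anticipate is upgrading the local diffeomorphism at $[e,0]$ to a genuine diffeomorphism onto a full invariant tube, i.e.\ establishing global injectivity on $S^1\times_{G_x}B_\epsilon$ for $\epsilon$ small. This is the usual tube-lemma argument: were injectivity to fail for every $\epsilon$, one extracts sequences $[s_n,v_n]\neq[s_n',v_n']$ with a common image and $v_n,v_n'\to 0$, and uses compactness of $S^1$ to pass to a limit violating the already established local injectivity at $[e,0]$. I would also take care to check that equivariance makes $U_x$ genuinely $S^1$-invariant and that the $G_x$-action used to form the associated bundle is exactly $\rho_x$, so that the identification matches the faithfulness statement.
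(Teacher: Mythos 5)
The paper does not prove this statement; it is quoted as a standard background result with a citation to Guillemin--Ginzburg--Karshon. Your argument is the standard proof of the slice theorem for a compact (here abelian) group — invariant metric, equivariant exponential map, local inversion at $[e,0]$, and the compactness/tube-lemma step for injectivity on a small invariant tube — and it is correct, including the connectedness caveat needed to deduce faithfulness of $\rho_x$ from effectiveness.
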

	
	From now on we restrict our attention to a special case when $M$ is oriented, $\dim M=4$ and the action of $S^1$ has only isolated fixed points $M^{S^1}\subset M$. Note that every isotropy group $G_x$ is abelian, therefore its linear orientation-preserving representation is a sum of irreducible representations in $\C$ and a trivial representation.  Considering separately the situations when $x\in M$ is fixed and when $x$ has a finite isotropy subgroup, we have the following.
	\begin{cor}\label{c:slice}
		Let $M$ be a 4-dimensional manifold with an effective $S^1$ action with isolated fixed points. Given $x\in M$, exactly one of the following holds:
		\begin{enumerate}
			\item $\nu_x\simeq \C\times\R$,  $G_x=\Z_k\subset S^1$ and $\rho_x$ is given by
			\[
			\rho_x\colon \Z_k\to GL(\C)\simeq \C^*,\quad
			\rho_x\colon \exp\frac{2\pi\sqrt{-1}}{k}\mapsto \exp\frac{2\pi\sqrt{-1}l}{k},\quad \mathrm{gcd}(k,l)=1;
			\]
			\item $\nu_x\simeq \C^2$, $G_x=S^1$ and $\rho_x$ is given by
			\[
			\rho_x\colon S^1\to GL(\C^{2}),\quad \rho_x\colon z\mapsto \mathrm{diag}(z^{w_1},z^{w_{2}})
			\]
			where $\mathbf{w}=(w_1,w_2)\in \Z^{2}$ is the weight vector such that $w_1,w_2\neq 0$ and $\mathrm{gcd}(w_1,w_2)=1$.
		\end{enumerate}
	\end{cor}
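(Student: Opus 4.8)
The plan is to read off both cases directly from the slice theorem (Theorem~\ref{t:slice}), combined with the elementary representation theory of compact abelian groups, using the orientation of $M$ together with the two standing hypotheses (isolated fixed points and effectiveness, hence faithfulness) to pin down the remaining data. First I would record the basic dichotomy: since $G_x$ is a closed subgroup of $S^1$, it is either all of $S^1$, which occurs precisely when $x\in M^{S^1}$, or a finite cyclic group $\Z_k$, which occurs precisely when $x$ has finite isotropy. These alternatives are mutually exclusive and correspond to cases (2) and (1) respectively. In either situation $G_x$ is compact abelian, so by complete reducibility---equivalently, after averaging to a $G_x$-invariant inner product---the normal space $\nu_x$ decomposes as an orthogonal sum of irreducibles, each a $2$-dimensional rotation (weight) representation or a $1$-dimensional representation; and, as noted just before the statement, $\rho_x$ preserves the induced orientation on $\nu_x$.

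For case (1) I would argue as follows. Here the orbit $S^1\cdot x$ is one-dimensional, so $\nu_x\cong\R^3$. The rotation summands are even-dimensional, so the number of $1$-dimensional summands is odd; orientation-preservation forces the determinant of the generator to be $+1$, which excludes a lone nontrivial (sign) summand and, in every remaining configuration, collapses to a single trivial summand together with one $2$-dimensional rotation. This yields $\nu_x\cong\C\times\R$ with $\Z_k$ acting by $\exp(2\pi\sqrt{-1}/k)\mapsto\exp(2\pi\sqrt{-1}l/k)$ on the $\C$ factor and trivially on $\R$. Faithfulness of $\rho_x$ from Theorem~\ref{t:slice} is exactly the statement that $\exp(2\pi\sqrt{-1}l/k)$ has order $k$, i.e.\ $\gcd(k,l)=1$.

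For case (2), $\nu_x=T_xM\cong\R^4$ and $G_x=S^1$. Since $S^1$ is connected its action is automatically orientation-preserving, so $\nu_x$ splits into $2$-dimensional weight spaces, giving $\nu_x\cong\C^2$ with $z\mapsto\mathrm{diag}(z^{w_1},z^{w_2})$. A vanishing weight would produce an $S^1$-fixed $2$-plane, hence a positive-dimensional fixed locus through $x$, contradicting the hypothesis of isolated fixed points; thus $w_1,w_2\neq0$. Finally, the kernel of $z\mapsto\mathrm{diag}(z^{w_1},z^{w_2})$ is $\Z_{\gcd(w_1,w_2)}$, so faithfulness gives $\gcd(w_1,w_2)=1$.

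The only steps carrying any content are the orientation bookkeeping in case (1) that forces exactly one trivial summand, and the use of the isolated-fixed-point hypothesis in case (2) to exclude zero weights. Both are elementary, so I anticipate no genuine obstacle: the corollary is essentially a direct specialization of the slice theorem to oriented four-manifolds.
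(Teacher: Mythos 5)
Your proposal is correct and follows essentially the same route as the paper, which derives the corollary from the slice theorem by decomposing the orientation-preserving representation of the abelian isotropy group into irreducibles and treating the fixed and finite-isotropy cases separately. The only details you add beyond the paper's one-line justification are the (correct) orientation bookkeeping ruling out sign summands in case (1) and the use of the isolated-fixed-point hypothesis to exclude zero weights in case (2).
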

	
	\begin{rmk}\label{r:local_s1_quotient}
		If $G_x\simeq \Z_k$ and $\rho_x$ is given by $l\in\Z_k^{\times}$, we will say that the point $x\in M$ is of type $(k,l)$. If we change the orientation of $\C$ in the identification $\nu_x\simeq \C\times \R$, then $l$ is substituted by $k-l$. In particular, if $M$ admits an $S^1$-invariant complex structure, $l$ is well-defined.		
		If $k=1$, then the stabilizer $G_x$ is trivial and we will say that point $x$ is \emph{regular}. Clearly the type is constant along the $S^1$ orbit, so we can also talk about the type of a point $[x]\in M/S^1$.
	\end{rmk}
	
	In general, when the action of $S^1$ on $M$ is not free, the orbit space $N=M/S^1$ is not a manifold. However, in our case when $\dim M=4$ and there are only isolated fixed points,
	thanks to the slice theorem we have a good understanding of the geometry and local structure of $N$:
	\begin{enumerate}
		\item The set of regular points $M^{\mathrm{reg}}\subset M$ is open and dense, and the quotient $M^{\mathrm{reg}}/S^1$ is a smooth manifold, such that $M^{\mathrm{reg}}\subset M$ is the total space of a principal $S^1$-bundle over $N^{\mathrm{reg}}:=M^{\mathrm{reg}}/S^1$;
		\item If $x\in M$ is a point of type $(k,l)$, then a neighbourhood of $[x]\in N$ is naturally isomorphic to $\C/\Z_k\times\R$. The latter can be identified with $\C\times\R$ via the map $\C\to\C$, $z\mapsto z^k$, providing a smooth structure near $[x]\in N$;
		\item Given a fixed point $x\in M$ with weights $\mathbf{w}=(w_1,w_2)$ there are two disjoint subsets of points of types $(w_1,w_2)$ and $(w_2,w_1)$ in a punctured canonical neighbourhood $U_x$ of $x$. These subsets are given by complex coordinate planes in $U_x\simeq \C^2$;
		\item If a fixed point $x\in M$ has weights $\mathbf w=(\pm 1,\pm 1)$, then there is a neighbourhood $U_x$ such that the projection $U_x\to U_x/S^1$ is equivalent to the Hopf fibration projection $\C^2\to \R^3$. In particular there is a natural smooth structure in a neighbourhood of $[x]\in N$;
	\end{enumerate}
	
	Consider the complement of the fixed point set $M_0:=M\backslash M^{S^1}$. The action $S^1\times M_0\to M_0$ is almost free, and $M_0$ is a Seifert fibration. In this case the orbit space $N_0$ comes equipped with an extra structure:
	\begin{enumerate}
		\item The orbit space $N_0$ inherits a natural \emph{cyclic orbifold} structure. That is, given a point $x\in N_0$, its open neighbourhood $V_x$ can be realized as $V_x\simeq \nu_x/G_x\simeq \C/\Z_k\times \R$ and the triples $(\nu_x, G_x, V_x)$ satisfy the usual gluing compatibility properties~\cite[\S 1.3]{ad-le-ru-07}.
		We will call a triple $(\nu_x,G_x,V_x)$ together with an identification $\nu_x/G_x\simeq V_x$ an \emph{orbifold chart} at a point $x\in N_0$. Note that in general an orbifold chart $\nu_x$ is not naturally embedded into the orbifold $N_0$.
		\item Additionally, since in our case the orbifold structure on $N_0$ is very special, the space $N_0$ admits a canonical smoothing: every quotient $\C/\Z_k$ can be identified with $\C$. With this identification, the set of points of any given type $(k,l)$ forms a smooth submanifold $Z\subset N_0$ of codimension~2.
	\end{enumerate}
	
	If the action $S^1\times M\to M$ is free, then we can recover $M$ from the orbit space $N=M/S^1$ and a \emph{characteristic class}
	\[
	e(M\to N)\in H^2(N,\Z)
	\]
	representing the Euler class of the bundle $M\to N$. Specifically, we have the following standard topological characterization of principal bundles:
	\begin{prop}\label{p:s1_bundle_top}
		Given a topological space $N$ and a class $e\in H^2(N,\Z)$ there exists a unique (up to equivalence) principal $S^1$ bundle $M\to N$ such $e(M\to N)=e$.
	\end{prop}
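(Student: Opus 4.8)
The plan is to deduce this from the standard classification of principal bundles by a classifying space. First I would recall that $S^1 = U(1)$ admits a universal principal bundle $ES^1 \to BS^1$ with contractible total space, and that for any paracompact space $N$ having the homotopy type of a CW complex---a condition satisfied by all the manifolds and orbifold quotient spaces that arise in this paper---the pullback construction $f \mapsto f^* ES^1$ induces a bijection between the set $[N, BS^1]$ of homotopy classes of maps and the set of isomorphism classes of principal $S^1$-bundles over $N$.

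The key input is then the identification $BS^1 \simeq \mathbb{CP}^\infty$, whose only nonvanishing homotopy group is $\pi_2 \cong \Z$, so that $BS^1$ is an Eilenberg--MacLane space $K(\Z,2)$. By the representability of singular cohomology, evaluation on the canonical generator of $H^2(K(\Z,2),\Z)$ gives a natural bijection $[N, K(\Z,2)] \cong H^2(N,\Z)$. Composing the two bijections identifies isomorphism classes of principal $S^1$-bundles over $N$ with $H^2(N,\Z)$.

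It then remains to verify that this composite is precisely the Euler class map $M \mapsto e(M \to N)$. Here I would use that the generator of $H^2(BS^1,\Z)$ is by definition $e(ES^1 \to BS^1)$, and that the Euler class is natural under pullback, so that the class assigned to $f^* ES^1$ equals $f^* e(ES^1 \to BS^1) = e(f^* ES^1 \to N)$. Existence of a bundle with prescribed Euler class is then the surjectivity of this bijection, and uniqueness up to equivalence is its injectivity.

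The only real subtlety is the hypothesis on $N$: the bijection between bundles and $[N, BS^1]$ requires $N$ to be paracompact and of CW homotopy type, which I expect to be the main (and essentially only) point needing a remark, since it always holds in the generality needed here. If one prefers to avoid classifying-space technology, an equivalent route is sheaf-theoretic: for paracompact $N$ principal $S^1$-bundles are classified by $\check{H}^1(N, \underline{S^1})$, and the exponential sequence $0 \to \Z \to \underline{\R} \to \underline{S^1} \to 0$ together with the vanishing of the higher cohomology of the soft sheaf $\underline{\R}$ yields $\check{H}^1(N, \underline{S^1}) \cong H^2(N,\Z)$, with the isomorphism again realized by the Euler class via the connecting homomorphism.
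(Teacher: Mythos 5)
The paper states this proposition without proof, citing it as a standard classification result, so there is no argument to compare against; your classifying-space proof (and the sheaf-theoretic alternative via the exponential sequence) is exactly the standard justification and is correct. The one caveat is the one you already identify: as literally stated for an arbitrary topological space $N$ the classification can fail, and the hypothesis should be read as $N$ paracompact with the homotopy type of a CW complex, which holds for every base space ($3$-manifolds and their orbifold quotients) actually used in the paper.
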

	While the above proposition holds if $N$ is merely a topological space, if $N$ is a manifold there is a differential-geometric refinement.
	\begin{prop}\label{p:s1_bundle_dg}
		Given any manifold $N$ and a closed form $\beta\in \Lambda^2(N,\R)$ such that $\beta/2\pi$ represents a class $e\in H^2(M,\Z)$, let $\pi\colon M\to N$ be the principal $S^1$ bundle corresponding to the class $e$. Then there exists a principal connection $\eta$ in $M$ such that its curvature form is $\beta$. If $\eta'$ is another connection with the same curvature, then
		\[
		\eta-\eta'=\pi^* \alpha
		\]
		where $\alpha\in \Lambda^1(N,\R)$ is a closed one-form.
	\end{prop}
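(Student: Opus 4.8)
The plan is to run the standard Chern--Weil argument, using Proposition~\ref{p:s1_bundle_top} to fix $\pi\colon M\to N$ as the bundle with Euler class $e$. Recall that a principal connection is an $S^1$-invariant $1$-form $\eta\in\Lambda^1(M,\R)$ normalized by $\eta(\xi)=1$, where $\xi$ denotes the fundamental vector field of the $S^1$-action. For any such $\eta$, the $2$-form $d\eta$ is horizontal: by Cartan's formula $i_\xi d\eta = L_\xi\eta - d(i_\xi\eta)=0$, since $\eta$ is invariant and $i_\xi\eta\equiv 1$ is constant. As $d\eta$ is also invariant, it is basic, so there is a unique closed $F\in\Lambda^2(N,\R)$ with $d\eta=\pi^*F$, the curvature. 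The one cohomological input I need is the Chern--Weil identity that $[F/2\pi]\in H^2(N,\R)$ equals the image of the integral class $e$ under $H^2(N,\Z)\to H^2(N,\R)$, independently of the chosen connection.

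For existence, first I would produce \emph{some} connection $\eta_0$ by patching local trivializing connections with a partition of unity, which is always possible on a principal bundle over a manifold. Let $F_0=\pi^{*-1}d\eta_0$ be its curvature. By the identity above, $[F_0/2\pi]=e=[\beta/2\pi]$ in de Rham cohomology, so $\beta-F_0$ is exact, say $\beta-F_0=d\gamma$ for some $\gamma\in\Lambda^1(N,\R)$. I then set
\[
\eta:=\eta_0+\pi^*\gamma.
\]
Since $\pi^*\gamma$ is invariant and annihilates $\xi$, the form $\eta$ is again a principal connection, and
\[
d\eta=d\eta_0+\pi^*d\gamma=\pi^*(F_0+d\gamma)=\pi^*\beta,
\]
so $\eta$ has curvature exactly $\beta$, as required.

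For uniqueness, suppose $\eta,\eta'$ are principal connections with the common curvature $\beta$. Their difference $\eta-\eta'$ annihilates $\xi$ (both equal $1$ there) and is invariant, hence basic, so $\eta-\eta'=\pi^*\alpha$ for a unique $\alpha\in\Lambda^1(N,\R)$. Moreover $d(\eta-\eta')=\pi^*\beta-\pi^*\beta=0$, and since $\pi^*$ is injective on forms this forces $\pi^*d\alpha=0$, hence $d\alpha=0$; thus $\alpha$ is closed, which finishes the proof.

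The only genuinely nontrivial ingredient is the Chern--Weil identity relating the de Rham class of the curvature to the integral Euler class; everything else is linear algebra on forms plus invariance of $d\eta$. I expect the point requiring care to be the sign and $2\pi$ normalization conventions---matching the integrality $[\beta]\in H^2(N,2\pi\Z)$ used in Theorem~\ref{t:mainGH}, and the convention $\eta(\xi)=1$---so that the existence step produces curvature equal to $\beta$ itself rather than a nonzero constant multiple of it.
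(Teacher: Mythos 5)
Your proof is correct and is exactly the standard Chern--Weil argument; the paper states Proposition~\ref{p:s1_bundle_dg} without proof as a classical fact, but its proof of the Seifert-fibration analogue (Proposition~\ref{p:s1_bundle_seifert_dg}) follows precisely your strategy: take any connection, use that its curvature represents the Euler class, correct by the pullback of an antiderivative of the exact difference, and observe that two connections with equal curvature differ by a closed basic one-form. Your attention to the $2\pi$ normalization is also the right thing to flag, since the paper's integrality convention is $[\beta]\in H^2(N,2\pi\Z)$.
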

	Any function $f\in C^\infty(N,S^1)\simeq C^\infty(N,\R)\times H^1(N,\Z)$ defines a gauge equivalence on $M$ such that $\eta \sim \eta+\pi^*df$. Hence, modulo gauge equivalence, the connection $\eta$ is defined up to an element in $H^1(N,\R)/H^1(N,\Z)$.
	
	Remarkably, the above classical results can be generalized to the case of almost free actions and Seifert fibrations. To formulate the generalization of Proposition~\ref{p:s1_bundle_top} we need to extend the definition of the Euler class to Seifert fibrations.
	
	\begin{defn}[Euler class]
		Let $M\to N$ be a Seifert fibration with points of types $\{(k_i,l_i)\}$. Define $k:=\mathrm{lcm}(\{k_i\})$. Then there is a free action of $S^1/\Z_k$ on the quotient space $M/\Z_k$ with the orbit space $N$. Then we can define the usual Euler class $e(M/\Z_k\to N)\in H^2(N,\Z)$ of the principal bundle $M/\Z_k\to N$. The \emph{Euler class} of the fibration $M\to N$ is
		\[
		e(M\to N):=\frac{1}{k}e(M/\Z_k\to N)\in H^2(N,\mathbb Q).
		\]
	\end{defn}
	
	If $M\to N$ is a Seifert fibration, then points of its orbit space $N$ can be marked with a type $(k,l)$ according to the local structure provided by the slice theorem (see Remark~\ref{r:local_s1_quotient}). It turns out that, conversely, given a manifold $N$ with marked submanifolds of codimension two and a cohomology class $e\in H^2(N,\Z)$, we can recover the Seifert fibration (uniquely if $H^2(N,\Z)$ has no torsion). Namely, we have the following specialization of {\cite[Prop.30]{ko-05}} to the case $\dim N=3$.
	\begin{prop}\label{p:s1_bundle_seifert_t}
		Consider a manifold $N$, $\dim N=3$ and $\{Z_i\subset N\}$ a collection of cooriented pairwise non-intersecting one-dimensional submanifolds. Let $1\leq l_i<k_i$ be integers, $\mathrm{gcd}(l_i,k_i)=1$ and $e\in H^2(N,\Z)$.
		
		Then there exists a Seifert fiber bundle $\pi\colon M\to N$ such that $\pi^{-1}(Z_i)\subset M$ is the set of points of type $(k_i,l_i)$, all other points are regular, and
		\[
		e(M\to N)=e+\sum_i \frac{l_i}{k_i}[Z_i]\in H^2(N,\mathbb Q).
		\]
		If $H^2(N,\Z)$ has no torsion, $M$ is uniquely defined.
	\end{prop}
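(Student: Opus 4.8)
The plan is to reduce everything to honest principal $S^1$-bundles, where Propositions \ref{p:s1_bundle_top} and \ref{p:s1_bundle_dg} already classify everything, and to create each singular fibre of type $(k_i,l_i)$ by an equivariant logarithmic transformation using the canonical local model of the slice theorem. First I fix disjoint tubular neighbourhoods $U_i\cong Z_i\times D^2$ of the $Z_i$, with $D^2\subset\C$ oriented by the coorientation; this is possible since the normal bundle of the codimension-two $Z_i$ is an oriented, hence trivial, plane bundle over a one-complex. Let $P\to N$ be the principal $S^1$-bundle with $e(P)=e$ from Proposition \ref{p:s1_bundle_top}. For each $i$ the restriction $P|_{U_i}\cong Z_i\times D^2\times S^1$ is a fibred solid torus; I excise it and reglue $Z_i\times D^2\times S^1$ by an $S^1$-equivariant diffeomorphism of the boundary $Z_i\times\partial D^2\times S^1=Z_i\times T^2$ sending the meridian to a $(k_i,l_i)$-curve, i.e.\ realizing the $\Z_{k_i}$-action $\zeta\cdot(w,\theta)=(\zeta^{l_i}w,\zeta\theta)$, $\zeta=\exp(2\pi\sqrt{-1}/k_i)$, of Corollary \ref{c:slice}(1). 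Such a regluing always exists and is compatible with the $S^1$-action, so the result $M\to N$ is a Seifert fibration whose points of type $(k_i,l_i)$ are exactly $\pi^{-1}(Z_i)$, all others regular.

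\emph{Euler class.} It remains to evaluate $e(M\to N)=\tfrac1k\,e(M/\Z_k\to N)$ for $k=\mathrm{lcm}(\{k_i\})$. Away from $\bigcup_i U_i$ the transformation is trivial, so the class agrees with $e(P)=e$; the contribution of the $i$-th surgery is local and is computed by the holonomy around a small linking circle of $Z_i$, which in the model winds $1/k_i$ of the way around the central fibre while picking up the rotation $\zeta^{l_i}$. This yields precisely $\tfrac{l_i}{k_i}$ times the Poincar\'e dual $[Z_i]\in H^2(N,\Z)$, so $e(M\to N)=e+\sum_i\tfrac{l_i}{k_i}[Z_i]$.

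\emph{Uniqueness.} Suppose $M,M'$ carry the same type data and equal rational Euler class. By the slice theorem the germ of each fibration along $Z_i$ is canonically the above model, so after an isomorphism $M$ and $M'$ agree on $\bigcup_i U_i$. The common type data fixes the fractional summands $\tfrac{l_i}{k_i}[Z_i]$, whence the two integral background classes $e,e'\in H^2(N,\Z)$, characterised by $e(M\to N)-\sum_i\tfrac{l_i}{k_i}[Z_i]$ and its analogue, have equal image in $H^2(N,\Q)$. Thus $e-e'$ is torsion, and when $H^2(N,\Z)$ is torsion-free it vanishes; Proposition \ref{p:s1_bundle_top} then identifies the backgrounds $P\cong P'$, the surgeries coincide, and $M\cong M'$.

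\emph{Main obstacle.} The crux is the Euler-class bookkeeping: confirming that each equivariant logarithmic transformation shifts $\tfrac1k\,e(M/\Z_k\to N)$ by exactly $\tfrac{l_i}{k_i}[Z_i]$, and recognising torsion-freeness of $H^2(N,\Z)$ as precisely the hypothesis that lets one recover the integral background class from the rational Euler class in the uniqueness step. The remaining gluing arguments are routine, and the whole construction can alternatively be packaged as the classification of orbifold $S^1$-bundles over the cyclic orbifold structure on $N$ introduced in \S\ref{ss:s1_actions}, which is the viewpoint of \cite{ko-05}.
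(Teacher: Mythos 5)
The paper does not actually prove this proposition: it is quoted as a specialization of \cite[Prop.~30]{ko-05}, so there is no in-text argument to compare against. Your surgery construction --- start from the principal bundle $P$ with $e(P)=e$, trivialize it over tubular neighbourhoods of the $Z_i$, and replace each $Z_i\times D^2\times S^1$ by the model fibered solid torus $Z_i\times\bigl(S^1\times_{\Z_{k_i}}\C\bigr)$ --- is exactly the construction underlying that citation, and the existence/uniqueness architecture (triviality of $P|_{U_i}$, recovery of the integral background class from the rational Euler class modulo torsion) is sound.

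The genuine gap is the step you yourself flag as the crux: that the $i$-th replacement shifts the rational Euler class by $\tfrac{l_i}{k_i}[Z_i]$, with $l_i$ the weight of the slice representation on the \emph{normal} space as in Corollary~\ref{c:slice}. Your holonomy sentence (``winds $1/k_i$ of the way around the fibre while picking up $\zeta^{l_i}$'') conflates the weight on the normal coordinate with the winding in the fibre direction, and these are related by inversion mod $k_i$. Concretely, in the model $(S^1\times\C)/\Z_{k}$ with $\zeta\cdot(u,w)=(u\zeta^{-1},\zeta^{l}w)$, the boundary torus has fibre class $f$, meridian $w$ (the class bounding the transverse disc), and the arc from $(1,1)$ to $\zeta\cdot(1,1)$ closes to a class $\alpha$ with $k\alpha=-f+lw$; a class projecting once onto the linking circle is $\lambda\alpha+\mu w$ with $\lambda l+\mu k=1$, i.e.\ $\lambda\equiv l^{-1}\pmod k$, and comparing it with the meridian of the model in $M/\Z_k$ shows the local contribution to $\tfrac1k e(M/\Z_k\to N)$ is $\equiv \pm l^{-1}/k_i \pmod{\Z}$, not $l/k_i$. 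A sanity check: for $u\cdot(z_1,z_2)=(u^{k_1}z_1,u^{k_2}z_2)$ on $S^3\times S^1$ the slice weights are $l_1\equiv k_2\ (\mathrm{mod}\ k_1)$, $l_2\equiv k_1\ (\mathrm{mod}\ k_2)$ while the rational Euler number of $S^3\to S^2$ is $\pm 1/(k_1k_2)$; the formula with the slice weights would force $k_2^2\equiv\pm1\pmod{k_1}$, which already fails for $(k_1,k_2)=(7,2)$, whereas it holds with the inverses. So either the $l_i$ appearing in the Euler-class formula must be read as the weight of the isotropy action on the \emph{fibre} (Koll\'ar's convention), which is inverse mod $k_i$ to the paper's slice weight, or your computation has to produce this inversion explicitly; as written, your argument asserts the formula rather than deriving it, and a careful derivation in your own local model does not give the stated numerator. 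This bookkeeping is the entire mathematical content of the proposition; the gluing and uniqueness parts of your write-up are otherwise fine.
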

	
	\begin{rmk}
		Proposition~\ref{p:s1_bundle_seifert_t} states that the Seifert fibration $M\to N$ can be recovered from the marked orbit space $(N,\{Z_i,(k_i,l_i)\})$ and a cohomology class $e\in H^2(N,\Z)$. If we realize $N$ as the orbit space $M/S^1$, then we can think of $N$ as an orbifold with orbifold charts $V_x$ of the form $\C\times\R\to\C/\Z_{k_i}\times\R\simeq V_x$, where $x\in Z_i$ and $Z_i\cap V_x=\{0\}\times \R$. This orbifold structure depends only on the underlying smooth manifold $N$ and the collection $\{Z_i, k_i\}$.
	\end{rmk}
	
	Our next goal is to formulate and prove an analogue of Proposition~\ref{p:s1_bundle_dg} for Seifert fibrations. To do that, we first need to define smooth tensors on orbifolds and introduce the notions of connections and basic differential forms in Seifert fibrations.  
	
	\begin{defn}[Smooth tensors on orbifolds]
	Let $N$ be a smooth 3-dimensional manifold, and $\{Z_i\subset N\}$ a collection of pairwise non-intersecting one-dimensional manifolds, together with integers $\{k_i\}$, defining an orbifold structure $N_{\mathrm{orb}}$ on $N$.  A \emph{smooth tensor field} $T$ on the orbifold $N_{\mathrm{orb}}$, is a collection of tensors $T_\alpha$ in the orbifold charts $(W_\alpha, G_\alpha, V_\alpha)$ such that $T_\alpha$ is a smooth $G_\alpha$-invariant tensor in $V_\alpha$, compatible with the gluing maps. In particular, $T$ defines a smooth tensor field in the usual sense on the regular open dense part $N^{\mathrm{reg}}=N\backslash \cup_i Z_i$.
	\end{defn}
	
	There is a natural one-to-one correspondence between the set of smooth Riemannian metrics on the orbifold $N_{\mathrm{orb}}$ as above and the set of Riemannian metrics on the underlying manifold $N$ with cone singularities along $\{Z_i\}$ of angles $2\pi/k_i$.
	
	\begin{defn}[Basic differential forms]
		Let $X\in \Gamma(M,TM)$ be the vector field generating an almost free $S^1$ action on $M$. A differential form $\alpha\in \Lambda^q(M,\R)$ is \emph{basic} with respect to the vector field $X$ if
		\[
		i_X\alpha=\mathcal L_X\alpha=0.
		\]
		We will denote the space of basic $q$-forms by $\Lambda^q_b(M,\R)$. The de Rham differential preserves the subcomplex $\Lambda^*_b(M,\R)\subset \Lambda^*(M,\R)$, thus we can define \emph{basic cohomology} $H^*_b(M,\R)$ to be the cohomology of $(\Lambda^*_b(M,\R), d)$.
	\end{defn}
	The basic cohomology ring can be defined for any foliation on a manifold $M$. However, in our case the vector field $X$ generates an almost free $S^1$ action, and there is a natural isomorphism
	\[
	\pi^*\colon \Lambda^q(N_{\mathrm{orb}},\R)\to \Lambda^q_b(M,\R).
	\]
	The de Rham cohomology of $(\Lambda^q(N_{\mathrm{orb}},\R), d)$ is isomorphic to the usual singular cohomology $H^*(N,\R)$. Combining this observation with a general result of Koszul \cite{kosz} we see that map $\pi^*$ induces an isomorphism
	\begin{equation*}
	\pi^*\colon H^*(N,\R)\to H_b^*(M,\R).
	\end{equation*}
	
	\begin{defn}
		A \emph{connection} on a Seifert bundle $\pi\colon M\to N$ is an $S^1$-invariant differential form $\eta\in\Lambda^1(M,\R)$ such that $\eta(X)=1$. The \emph{curvature} of a connection $\eta$ is the basic differential form $\beta:=d\eta$.
	\end{defn}
	Clearly, the difference of two connections is a basic one-form. Furthermore, we can always construct a connection $\eta$ by choosing an $S^1$-invariant Riemannian metric on $M$, and taking the orthogonal projection $TM\to \R\cdot X$ onto the vertical space spanned by $X$. Therefore, the set of connections on $M$ is an affine space modeled on $\Lambda^1_b(N,\R)$. The orbifold version of Chern-Weil theory implies that the closed basic differential form  $\beta/2\pi$ represents the Euler class $e(M\to N)$ in $H^2_b(M,\R)\simeq H^2(N,\R)$.
	
	\begin{prop}\label{p:s1_bundle_seifert_dg}
		Let $N$ be a smooth 3-dimensional manifold, and $\{Z_i\subset N\}$ a collection of cooriented pairwise non-intersecting one-dimensional manifolds. Let $1\leq l_i<k_i$ be integers, $\mathrm{gcd}(k_i, l_i)=1$ and $N_{\mathrm{orb}}$ be the corresponding orbifold. Then given a closed form $\beta\in \Lambda^2(N_{\mathrm{orb}},\R)$ such that
		\[
		[\beta/2\pi]=e+\sum_i \frac{l_i}{k_i}[Z_i]\in H^2(N,\mathbb Q)
		\]
		where $e\in H^2(N,\Z)$ is an integral class, there exists a Seifert fibration $\pi\colon M\to N$ with a connection $\eta$ and curvature $d\eta=\beta$.
		
		The Seifert fibration $\pi\colon M\to N$ is unique if $H^2(N,\Z)$ has no torsion. Modulo gauge equivalence, the connection $\eta$ is unique if $H^1(N,\R)=0$.
	\end{prop}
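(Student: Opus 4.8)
The plan is to prove Proposition~\ref{p:s1_bundle_seifert_dg} by running the exact same argument used for principal bundles in Proposition~\ref{p:s1_bundle_dg}, but with ordinary forms replaced by \emph{basic} forms and ordinary Chern--Weil theory replaced by its orbifold counterpart recorded above. Accordingly I would separate the statement into a purely topological part --- the existence and uniqueness of the Seifert fibration $\pi\colon M\to N$ --- and a differential-geometric part --- the construction of a connection $\eta$ with prescribed curvature.

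For the topological part I would simply feed the data into Proposition~\ref{p:s1_bundle_seifert_t}. By hypothesis the integral class $e\in H^2(N,\Z)$ satisfies
\[
[\beta/2\pi]=e+\sum_i\frac{l_i}{k_i}[Z_i]\in H^2(N,\Q),
\]
so Proposition~\ref{p:s1_bundle_seifert_t} produces a Seifert fibration $\pi\colon M\to N$ whose points of type $(k_i,l_i)$ lie over $Z_i$, all other points regular, and whose Euler class is exactly $e(M\to N)=e+\sum_i\frac{l_i}{k_i}[Z_i]=[\beta/2\pi]$. The same proposition yields uniqueness of $M$ whenever $H^2(N,\Z)$ is torsion-free, which settles the second assertion.

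For the connection I would first produce \emph{some} connection $\eta_0$ by choosing an $S^1$-invariant metric on $M$ and taking $\eta_0$ to be the normalized dual of the generating vector field $X$; its curvature $\beta_0:=d\eta_0$ is then a closed basic $2$-form. By the orbifold Chern--Weil theory, $[\beta_0/2\pi]$ represents $e(M\to N)$ in $H^2_b(M,\R)\simeq H^2(N,\R)$, hence equals $[\beta/2\pi]$. Therefore $\beta-\beta_0$ is basic-exact, i.e.\ $\beta-\beta_0=d\alpha$ for some $\alpha\in\Lambda^1_b(M,\R)$, by the isomorphism $\pi^*\colon H^*(N,\R)\to H^*_b(M,\R)$. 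Since the space of connections is affine over $\Lambda^1_b(M,\R)$, the form $\eta:=\eta_0+\alpha$ is again a connection with $d\eta=\beta_0+d\alpha=\beta$. For uniqueness modulo gauge, if $\eta,\eta'$ both have curvature $\beta$ then $\eta-\eta'$ is $S^1$-invariant, annihilates $X$, and is closed, so it defines a class in $H^1_b(M,\R)\simeq H^1(N,\R)$; when $H^1(N,\R)=0$ this class vanishes, whence $\eta-\eta'=\pi^* df$ for a smooth $f$ on $N_{\mathrm{orb}}$, and the gauge transformation associated to $e^{\sqrt{-1}f}\in C^\infty(N,S^1)$ carries $\eta'$ to $\eta$.

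The argument is short precisely because the genuinely Seifert-specific input has already been established in the preceding discussion. The only delicate point --- and the true content distinguishing this from the smooth case of Proposition~\ref{p:s1_bundle_dg} --- is the behaviour of all these forms across the singular loci $Z_i$: one must work throughout with smooth orbifold (equivalently, basic) forms and invoke the orbifold Chern--Weil identity together with the basic cohomology isomorphism $\pi^*$ to legitimize the affine-space argument in the presence of the fractional contributions $\tfrac{l_i}{k_i}[Z_i]$. Thus I expect no essential obstacle beyond carefully citing these structural results and checking orbifold regularity along the $Z_i$.
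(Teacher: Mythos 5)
Your proposal is correct and follows essentially the same route as the paper: construct $M$ via Proposition~\ref{p:s1_bundle_seifert_t}, take an arbitrary connection, compare its curvature to $\beta$ in basic cohomology via the orbifold Chern--Weil identity and the isomorphism $\pi^*\colon H^*(N,\R)\to H^*_b(M,\R)$, correct by a basic $1$-form, and settle uniqueness of the connection by the same $H^1(N,\R)=0$ argument. No substantive differences.
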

	\begin{proof}
		First, let us use Proposition~\ref{p:s1_bundle_seifert_t}, to construct a Seifert fibration $\pi\colon M\to N$ associated to the data $(N,\{Z_i,k_i,l_i\},e)$. Let $\eta'$ be any connection in the fibration, and denote by $\beta':=d\eta'$ its curvature form.  The form $\beta'/2\pi$ represents the class $e(M\to N)=e+\sum_{i}l_i/k_i[Z_i]\in H^2(N,\mathbb Q)$. Then the basic forms $\beta$ and $\beta'$ represent the same cohomology class in $H^2_b(M,\R)\simeq H(N,\R)$. Therefore, there exists a basic 1-form $\alpha\in \Lambda^1_b(M,\R)$ such that
		\[
		\beta=\beta'+d\alpha.
		\]
		Then $\eta:=\eta'+\alpha$ is the required connection with curvature $\beta$.
		
		Uniqueness of $N$ follows from Proposition~\ref{p:s1_bundle_seifert_t}. If $\eta_0$ is another connection with curvature $\beta$, then $\eta_0=\eta+\alpha$, where $\alpha$ is a closed basic 1-form. Since $H^1(N,\R)=0$, we have $\alpha=df$ where $f$ is a smooth function on the orbifold $N_{\mathrm{orb}}$. The function $f\in C^\infty(N_{\mathrm{orb}},\R)$ generates a gauge equivalence on $M\to N$, transforming $\eta$ into $\eta_0$.
	\end{proof}

\section{Generalized K\"ahler structures with \texorpdfstring{$S^1$}{} symmetries} \label{s:GKsymm}

In this section we will study generalized K\"ahler structures $(M^4,g,I,J)$ with a generically nonzero Poisson tensor $\gs$, admitting an $S^1$ symmetry generated by a vector field $X$.  First, we focus on the nondegeneracy locus, where the GK structure is recovered by a symplectic triple as in Proposition \ref{p:holo_symplectic-acs}.  Where the action is free, it is locally tri-Hamiltonian, and we define the moment map $\pmb{\mu}$, which is a local submersion onto a domain in $\mathbb R^3_{\pmb \mu}$.  The GK structure defines a metric $h$ on this domain, determined explicitly in terms of the angle function $p$, and furthermore the original GK structure is recovered in terms of $h$ and $W^{-1} = g(X,X)$, yielding Theorem \ref{t:mainGH} (cf. Theorem \ref{t:nondegenerate_gk_description}).

We then turn to addressing the case of an effective $S^1$ action.  We derive the necessary asymptotics for $W$ at a fixed point of the action, then prove a removable singularity result, showing that a real analytic metric in our ansatz, defined on a punctured ball and satisfying the necessary asymptotics, will extend in a $C^{1,1}$ way across the missing point.  Using the explicit form of the torsion we are able to then show that the entire GK structure extends in $C^{1,1}$ sense across the puncture.  This leads to Theorem \ref{t:nondegenerate_gk_description_v2}, a version of Theorem \ref{t:nondegenerate_gk_description} which allows for fixed points of the action.

We finish this section with a discussion of some fundamental examples.  First we recast the GK structures on the standard Hopf surface in this language, then do the same for the examples of GK structures on diagonal Hopf surfaces constructed in \cite{SU}.  Lastly we recover the anti-self-dual metrics of LeBrun on parabolic Inoue surfaces~\cite{LeBrun} in this ansatz.  This requires two key further steps, namely showing that the extensions across fixed points of the action are not just $C^{1,1}$ but smooth, and also extending across the degeneracy loci for the associated Poisson tensor.  These points were treated explicitly by LeBrun, and we briefly indicate how to translate his setup to ours.  We note that we are not able to give a general answer to these two points, namely the smooth extension across fixed points and the gluing in of degeneracy loci.  However, starting in the next section, we will see that in the case of \emph{solitons}, we have further structure which allows for a complete answer to these questions.

\subsection{Nondegenerate GK structures with free \texorpdfstring{$S^1$}{} action}\label{ss:inv_gk_str}

Assume that a nondegenerate GK manifold $(M^4,g,I,J)$ admits a \emph{free} $S^1$-action preserving $g$, $I$, and $J$ and let $X$ be the vector field generating this action. In this case we have a smooth orbit space $N=M/S^{1}$ and $M$ is realized as a total space of a principal $S^1$-bundle:
\begin{equation}\label{f:pi_def}
\pi\colon M\to M/S^1 = N.
\end{equation}
Our first aim in this section is to give a local description for $(M,g,I,J)$ in terms of two scalar functions on the orbit space $N$. To this end we start with the construction of a moment map $\pmb\mu\colon M\to \R^3$ generalizing the moment map of the Gibbons-Hawking ansatz \cite{GibbonsHawking} in hyperK\"ahler geometry. 

The vector field $X$ preserves the symplectic forms $\Omega$, $I\Omega$ and $J\Omega$, hence we have:
\[
d(i_X\Omega)=d(i_XI\Omega)=d(i_XJ\Omega)=0.
\]
If we furthermore assume the vanishing of the corresponding cohomology classes:
\begin{align*}
[i_X\Omega] = [i_XI\Omega] = [i_XJ\Omega] = 0 \in H^1_{dR}(M),
\end{align*}
then the vector field $X$ is \emph{Hamiltonian} with respect to each of three symplectic forms, and we can define the momentum map
\[
\pmb\mu\colon M\to\R^3,\quad \pmb\mu=(\mu_1,\mu_2,\mu_3),
\]
such that
\[
d\mu_1=i_X\Omega, \quad d\mu_2=i_X I\Omega, \quad d\mu_3=i_X J\Omega.
\]
In this case we say that the action $S^1\times M\to M$ is \emph{tri-Hamiltonian}. Since the target of the moment map~--- the space $\R^3$ with distinguished coordinates $(\mu_1,\mu_2,\mu_3)$~--- will appear in this paper many times, we reserve for it the special notation $\R^3_{\pmb\mu}$.

If the action of $X$ is not tri-Hamiltonian, i.e., some of the forms $i_X\Omega, i_XI\Omega$ and $i_XJ\Omega$ are not exact, then one can take an appropriate $\Z^k$-cover $\til M\to M$, such that the pullbacks of these forms are exact and the action of $S^1$ on $M$ lifts to a tri-Hamiltonian action on $\til M$. From now on in this section, we assume that the action of $S^1$ on $M$ is tri-Hamiltonian.

\begin{prop}\label{p:principal_bundle}
	Given a nondegenerate GK structure $(M,g,I,J)$ with a free tri-Hamiltonian $S^1$ action, there exists a local diffeomorphism
	\begin{equation}\label{f:iota_def}
	\iota\colon M/S^1\to \pmb\mu(M)\subset \R^3_{\pmb\mu}
	\end{equation}
	such that 
	\[
	\iota\circ \pi=\pmb\mu.
	\]
\end{prop}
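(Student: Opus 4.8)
The plan is to prove the statement in two steps: first, that the momentum map $\pmb\mu$ is invariant under the $S^1$-action and hence descends to a map $\iota$ on the quotient $N=M/S^1$ satisfying $\iota\circ\pi=\pmb\mu$; and second, that the resulting $\iota$ is a local diffeomorphism onto $\pmb\mu(M)$, which by the inverse function theorem reduces to checking that its differential is everywhere invertible.

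For the invariance, I would simply note that each component of $\pmb\mu$ is constant along the flow of $X$: since $\Omega$, $I\Omega$, and $J\Omega$ are $2$-forms, we have $\mathcal L_X\mu_1=i_Xd\mu_1=i_Xi_X\Omega=0$, and likewise $i_Xd\mu_2=i_Xi_X(I\Omega)=0$ and $i_Xd\mu_3=i_Xi_X(J\Omega)=0$. Therefore $\pmb\mu$ factors through $\pi$, producing a smooth $\iota\colon N\to\R^3_{\pmb\mu}$ with image $\pmb\mu(M)$.

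For the local diffeomorphism property, since $\dim N=\dim\R^3_{\pmb\mu}=3$ it suffices to show that at every $x\in M$ the differential $d\pmb\mu_x\colon T_xM\to\R^3$ is surjective with kernel exactly the vertical line $\R\cdot X=\ker d\pi_x$; then $d\pmb\mu_x$ descends to a linear isomorphism $T_{\pi(x)}N\xrightarrow{\sim}\R^3$, namely $d\iota_{\pi(x)}$. This rank computation is the heart of the argument, and it is exactly where the nondegeneracy of $\Omega$ enters. Since the action is free, $X$ never vanishes, so I may set $Z=X/|X|$ and invoke Proposition~\ref{p:matrices}, which provides the basis $Z,IZ,JZ,KZ$ and the explicit matrix~\eqref{f:Omega_matrix} of $\Omega$ in it. Reading off the first three rows, one finds that $i_Z\Omega$, $i_Z(I\Omega)=\Omega(IZ,\cdot)$ and $i_Z(J\Omega)=\Omega(JZ,\cdot)$ are, up to sign, the covectors dual to $KZ$, $JZ$, and $IZ$ respectively; consequently $d\mu_1,d\mu_2,d\mu_3$, being nonzero scalar multiples of these, are pointwise linearly independent (giving surjectivity) and annihilate $X$ (so their common kernel is the single line $\R\cdot X$).

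An equivalent, coordinate-free way to carry out this last step---useful as a sanity check---is to observe that $v\in\ker d\pmb\mu_x$ precisely when $\Omega(X,v)=\Omega(IX,v)=\Omega(JX,v)=0$, i.e.\ when $v$ is $\Omega$-orthogonal to $X,IX,JX$. These three vectors are linearly independent by Proposition~\ref{p:matrices}, so by nondegeneracy of $\Omega$ their $\Omega$-orthogonal complement is one-dimensional; since $X$ itself lies in this complement (using the antisymmetry of $\Omega,I\Omega,J\Omega$), the kernel is exactly $\R\cdot X$. Either route yields that $d\iota$ is everywhere an isomorphism, and the inverse function theorem completes the proof. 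I expect no serious obstacle here: the only point requiring genuine care is the pointwise linear independence of $d\mu_1,d\mu_2,d\mu_3$, which rests entirely on the nondegeneracy of the Poisson tensor guaranteeing that $\Omega$ is nondegenerate.
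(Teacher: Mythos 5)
Your proposal is correct and follows essentially the same route as the paper: the invariance of $\pmb\mu$ under the flow of $X$ (via antisymmetry of the three $2$-forms) gives the descent to $\iota$, and the pointwise linear independence of $d\mu_1,d\mu_2,d\mu_3$ together with $X\in\ker d\pmb\mu$ — which the paper asserts from the nondegeneracy and independence of $\Omega, I\Omega, J\Omega$ and which you verify explicitly from the matrix \eqref{f:Omega_matrix} — shows $\pmb\mu$ is a submersion with kernel $\R X$, so $\iota$ is a local diffeomorphism. Your version merely fills in the linear-algebra details that the paper leaves implicit.
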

\begin{proof}
	Since $X\in\ker d\pmb\mu$, the moment map is constant on the $S^1$-orbits of $X$, hence the map $\pmb\mu\colon M\to \R^3_{\pmb\mu}$ descends to a map $\iota\colon M/S^1\to \R^3_{\pmb\mu}$.  Since the symplectic forms $\Omega,I\Omega$ and $J\Omega$ are linearly independent, nondegenerate, and $X$ is nonzero, the moment map $\pmb\mu\colon M\to \pmb\mu(M)$ is a submersion with the kernel generated by $X$, therefore $\iota$ must be a local diffeomorphism.
\end{proof}

Consider, in addition to the angle function
\[
p\colon M\to (-1,1),\quad p:=-\tfrac{1}{4}\tr(IJ),
\]
a function $W\colon M\to (0,+\infty)$
\begin{equation}
W:= g(X, X)^{-1}.
\end{equation}
Both functions $p$ and $W$ are invariant under the $S^1$ action and thus descend to the orbit space $M/S^1$. Since the discussion below has a local nature, we will use $\iota\colon M/S^1\to \R^3_{\pmb\mu}$ to identify a neighbourhood of a point in $M/S^1$ with an open subset in $\pmb\mu(M)$. In particular in a neighbourhood of any point $x\in M/S^1$ functions $\{\mu_1,\mu_2,\mu_3\}$ provide local coordinates.

The principal bundle~\eqref{f:pi_def} carries with it a decomposition of the metric and complex structures according to the discussion of \S \ref{s:background}. Given a given point $x\in M$ we have the orthogonal decomposition of the tangent space into the vertical and the horizontal subspaces
\begin{equation}
T_xM=\R X\oplus\mathcal H_x,\quad \mathcal H_x=\langle IX,JX,KX\rangle.
\end{equation}
Thus we can define a connection 1-form $\eta \in \Lambda^{1}(M,\R)$ by setting $\eta(X)=1$ and prescribing its kernel:
\begin{equation}
\eta \colon T_xM\to \R,\quad \Ker\eta=\mathcal{H}_x.
\end{equation}

Now using Proposition \ref{p:matrices} we get explicit descriptions of the metric, almost complex structures and symplectic forms. In particular, using the definition of $W$, by~\eqref{f:g_matrix} the metric in this basis is given by
\begin{equation}\label{f:g_matrix_X}
g=W^{-1}\left[
\begin{matrix}
\w1 & \w0 & \w0 & \w0\\
0 & 1 & p & 0\\
0 & p & 1 & 0\\
0 & 0 & 0 & 1-p^2
\end{matrix}
\right].
\end{equation}
Note that we have two natural coframes
\[
\{X^*,(IX)^*,(JX)^*,(KX)^*\}\quad\mbox{and}
\quad\{\eta,d\mu_1,d\mu_2,d\mu_3\}.
\]
According to the matrix expression~\eqref{f:Omega_matrix} for $\Omega$ we have
\begin{align*}
\Omega=W^{-1}((KX)^*\wedge X^*+(JX)^*\wedge (IX)^*).
\end{align*}
Hence, by the definitions of $d\mu_i$, we find that
\begin{gather} \label{f:coframes}
\begin{split}
\eta&=X^*,\\
d\mu_1&=i_X\Omega=-W^{-1}(KX)^*,\\
d\mu_2&=i_{IX}\Omega=-W^{-1}(JX)^*,\\
d\mu_3&=i_{JX}\Omega=W^{-1}(IX)^*,\\
\end{split}
\end{gather}
Therefore, the metric $g$ can be expressed as
\begin{gather} \label{f:hdef}
\begin{split}
g =&\ W\pi^* h+W^{-1}\eta^2,\\
h=&\ (1-p^2)d\mu_1^2+d\mu_2^2+d\mu_3^2-2pd\mu_2d\mu_3.
\end{split}
\end{gather}

It remains to understand what restrictions on the functions $p$, $W$ and connection 1-form $\eta$ are imposed by the closedness of $\Omega,I\Omega,J\Omega$. We first express these forms in the basis $\{d\mu_1,d\mu_2,d\mu_3,\eta\}$ as
\begin{gather} \label{f:Omegaexplicit}
\begin{split}
\Omega&=W^{-1}\left((KX)^*\wedge X^*+(JX)^*\wedge(IX)^*\right) \\
&=-(d\mu_1\wedge \eta+Wd\mu_2\wedge d\mu_3),\\
\\
I\Omega&=W^{-1}\left(
(JX)^*\wedge X^*+((IX)^*+p(JX)^*)\wedge (KX)^*
\right)\\
&=-d\mu_2\wedge \eta+W(d\mu_3-pd\mu_2)\wedge (-d\mu_1),\\
\\
J\Omega&=W^{-1}\left(
-(IX)^*\wedge X^*+(p(IX^*)+(JX)^*)\wedge (KX)^*
\right)\\
&=-d\mu_3\wedge\eta+W(pd\mu_3-d\mu_2)\wedge (-d\mu_1).
\end{split}
\end{gather}
To determine the necessary condition we first recall that the curvature of the principal $S^1$-connection $\eta$ is a closed \emph{basic} 2-form in $\Lambda_{\mathrm{bas}}^2(M,\R)$, which we identify with a pullback of a closed 2-form $\gb\in\Lambda^2(M/S^{1},\R)$ under projection $\pi\colon M\to M/S^1$. In particular, we have that $\gb$ represents a class in $H^2(M/S^1,2\pi\Z)\subset H^2(M/S^1,\R)$. We omit the pullback in the notation and express
\begin{align} \label{f:betadef}
\beta:=d\eta=\beta_{ij}d\mu_i\wedge d\mu_j.
\end{align}
By taking the exterior derivative of the equations in (\ref{f:Omegaexplicit}), we obtain that in the coframe $d\mu_i$ the components of $\beta$ are given by
\begin{equation}\label{f:star_W}
\begin{split}
\beta_{23}&=W_1,\\
\beta_{31}&=W_2+(pW)_3,\\
\beta_{12}&=W_3+(pW)_2,
\end{split}
\end{equation}
where here and in the sequel $f_i$ means the partial derivative of a function $f$ with respect to $\mu_i$.

Now, if $W$ and $\eta$ satisfy system~\eqref{f:star_W}, then computing $d(d\eta)=0$ we find
\begin{equation}\label{f:W_laplace}
W_{11}+W_{22}+W_{33}+2(pW)_{23}=0.
\end{equation}
Furthermore the form $\gb$ defined by~\eqref{f:star_W} can be expressed as
\begin{equation}\label{f:beta0_def}
\gb=*_hdW+ W \gb_0,\quad \gb_0:=d\mu_1\wedge(p_2d\mu_2-p_3d\mu_3),
\end{equation}
where $*_h$ is computed with respect to the orientation of $\R^3_{\pmb\mu}$ given by $d\mu_1\wedge d\mu_2\wedge d\mu_3$.

The discussion above proves that any nondegenerate GK manifold $(M,g,I,J)$ with a free $S^1$ action has a principal $S^1$-bundle structure $\pi\colon M\to M/S^1$ together with an open map $\iota\colon M/S^1\to \R^3_{\pmb\mu}$ and the metric and complex structure are completely determined by the two scalar functions $W$ and $p$ solving equation~\eqref{f:W_laplace}.  Crucially, the converse is also true: given an open map $\iota\colon N\to \R^3_{\pmb\mu}$, $\dim N=3$, functions $p,W$ on $N$ solving~\eqref{f:W_laplace} and a compatible connection 1-form $\eta$, one can construct a GK structure on the principal $S^1$ bundle $P\to N$ defined by the above data.  Summarizing, we have the following theorem:

\begin{thm}[Generalized-K\"ahler Gibbons-Hawking ansatz]\label{t:nondegenerate_gk_description}
Fix a smooth 3-dimensional manifold $N$ and consider
	\begin{enumerate}
		\item an open map $\iota\colon N\to \R^3_{\pmb\mu}$,
		\item smooth functions
		\[
		p\colon N\to (-1,1),\quad W\colon N\to (0,+\infty)
		\]
		solving the equation
		\begin{equation*}
		W_{11}+W_{22}+W_{33}+2(pW)_{23}=0,
		\end{equation*}
		such that the closed differential form $\beta\in\Lambda^2(N,\R)$
		\[
		\gb=(W_3+(pW)_2)d\mu_1\wedge d\mu_2-(W_2+(pW)_3)d\mu_1\wedge d\mu_3+W_1d\mu_2\wedge d\mu_3
		\]
		represents a class in $H^2(N,2\pi\Z)$,
		\item a connection form $\eta$ with curvature $\beta$ in the principal $S^1$-bundle $\pi\colon M\to N$ determined by $[\beta]$.
	\end{enumerate}
	Then the total space of the principal $S^1$-bundle $M$
	admits a nondegenerate GK structure
	\[
	(M,g,I,J)
	\]
	with
	\[
	g=Wh+W^{-1}\eta^2,\quad h=(1-p^2)d\mu_1^2+d\mu_2^2+d\mu_3^2-2p\,d\mu_2d\mu_3,
	\]
	and $I$, $J$ the unique almost complex structures such that the complex-valued 2-forms
	\[
	\begin{split}
	\Omega_{I}:=&
	(-d\mu_1+\sqrt{-1}d\mu_2)\wedge(\eta+\sqrt{-1}W(d\mu_3-pd\mu_2)),\\
	\Omega_{J}:=&
	(-d\mu_1+\sqrt{-1}d\mu_3)\wedge(\eta+\sqrt{-1}W(-d\mu_2+pd\mu_3)),
	\end{split}
	\]
	are holomorphic with respect to $I$ and $J$ respectively.
	
	Conversely any nondegenerate GK manifold $(M,g,I,J)$ with a free isometric tri-Hamiltonian $S^1$ action arises via this construction.
\end{thm}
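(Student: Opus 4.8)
\emph{Plan.} The converse direction requires no new work: it is exactly the content of the computations \eqref{f:g_matrix_X}--\eqref{f:beta0_def} carried out just before the statement. Starting from a nondegenerate GK manifold with a free tri-Hamiltonian $S^1$ action, those computations produce the orbit space $N=M/S^1$, the open map $\iota$ of Proposition~\ref{p:principal_bundle}, the $S^1$-invariant functions $p$ and $W$, and the connection $\eta=X^*$, and they verify that $p,W$ solve \eqref{f:W_laplace} while $d\eta=\beta$ has the form asserted in item (2) and represents a class in $H^2(N,2\pi\Z)$. So in writing the proof I would only need to supply the forward direction, and my strategy there is to reduce everything to the reconstruction result of Proposition~\ref{p:holo_symplectic-acs}.

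For the forward direction I would first invoke Proposition~\ref{p:s1_bundle_dg}: since $\beta$ is closed and $[\beta]\in H^2(N,2\pi\Z)$, there is a principal $S^1$-bundle $\pi\colon M\to N$ carrying a connection $\eta$ with curvature $\beta$. Pulling $d\mu_1,d\mu_2,d\mu_3$ back to $M$, I then \emph{define} the complex $2$-forms $\Omega_I,\Omega_J$ by the formulas in the statement and check the hypotheses of Proposition~\ref{p:holo_symplectic-acs} one by one. Two of them are immediate: each of $\Omega_I,\Omega_J$ is a wedge of two $1$-forms, hence decomposable, so $\Omega_I^2=\Omega_J^2=0$ automatically; and expanding the products gives $\Re(\Omega_I)=\Re(\Omega_J)=-(d\mu_1\wedge\eta+W\,d\mu_2\wedge d\mu_3)=:\Omega$, with the imaginary parts reproducing $-I\Omega$ and $-J\Omega$ of \eqref{f:Omegaexplicit}.

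The one genuinely nontrivial point is closedness of $\Omega_I=\Omega-\sqrt{-1}\,I\Omega$ and $\Omega_J=\Omega-\sqrt{-1}\,J\Omega$. Using $d(d\mu_i)=0$ and $d\eta=\beta$, I would compute $d\Omega$, $d(I\Omega)$, $d(J\Omega)$; each collapses to a single top-degree term whose vanishing is precisely one of the three component identities in \eqref{f:star_W} (for instance $d\Omega=(\beta_{23}-W_1)\,d\mu_1\wedge d\mu_2\wedge d\mu_3$). As those identities are built into the definition of $\beta$, all three forms are closed. The PDE \eqref{f:W_laplace} enters one step earlier, as the integrability condition $d\beta=(W_{11}+W_{22}+W_{33}+2(pW)_{23})\,d\mu_1\wedge d\mu_2\wedge d\mu_3=0$ that legitimizes $\beta$ as a curvature form. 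Proposition~\ref{p:holo_symplectic-acs} then delivers unique integrable $I,J$ making $\Omega_I,\Omega_J$ holomorphic of type $(2,0)$. Finally I would identify the induced metric $g(\cdot,\cdot)=\Omega(J\cdot,I\cdot)$ by reading the frame relations \eqref{f:coframes} in reverse: from $\Omega,I\Omega,J\Omega$ one recovers the coframe $\{X^*,(IX)^*,(JX)^*,(KX)^*\}$ in terms of $\{\eta,d\mu_i\}$ (with $K=\tfrac12[I,J]$ and $X$ dual to $\eta$), and substitution into \eqref{f:g_matrix_X} yields exactly $g=Wh+W^{-1}\eta^2$. Positivity is then clear: $W>0$, and $h$ is positive definite since both its $d\mu_1^2$ coefficient and the determinant of its $(d\mu_2,d\mu_3)$ block equal $1-p^2>0$ because $|p|<1$. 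Thus $(M,g,I,J)$ is a nondegenerate GK structure.

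I expect the main obstacle to be bookkeeping rather than conceptual: the closedness computation and the reverse frame identification are literally the pre-theorem derivation run backwards, so the real care lies in keeping the orientation convention of $\R^3_{\pmb\mu}$ and the signs in \eqref{f:star_W}--\eqref{f:Omegaexplicit} consistent, and in checking cleanly that the $I,J$ produced by Proposition~\ref{p:holo_symplectic-acs} are exactly the structures rendering the \emph{prescribed} $\Omega_I,\Omega_J$ of type $(2,0)$ — which is guaranteed by the uniqueness clause of that proposition.
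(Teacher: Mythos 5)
Your proposal is correct and follows essentially the same route as the paper: the forward direction is a verification of the hypotheses of Proposition~\ref{p:holo_symplectic-acs} (decomposability giving $\Omega_I^2=\Omega_J^2=0$, equality of real parts, closedness via the identities \eqref{f:star_W} built into $\beta$ with \eqref{f:W_laplace} as the integrability condition $d\beta=0$, and positivity of $g=Wh+W^{-1}\eta^2$ from $W>0$ and $|p|<1$), while the converse is exactly the pre-theorem derivation \eqref{f:g_matrix_X}--\eqref{f:beta0_def}. The paper's own proof is just a terser statement of the same argument, so no further comment is needed.
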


\begin{proof}
The forms $\Omega_{I}$ and $\Omega_{J}$ satisfy the assumptions of Proposition~\ref{p:holo_symplectic-acs} defining the corresponding integrable complex structures $I$ and $J$. Furthermore, the tensors $\Omega=\Re(\Omega_I)$, $I$, $J$ and $g$ are chosen to satisfy the presentation in Proposition~\ref{p:matrices} in the basis $\{X,IX,JX,KX\}$. In particular, 
\[
\Omega^{-1}=\frac{1}{2}g^{-1}[I,J]
\]
does not vanish identically, since $I$ and $J$ do not commute.  Therefore $(M, g, I, J)$ is a nondegenerate GK structure with a free $S^1$ symmetry.

Conversely, given any nondegenerate GK structure $(M,g,I,J)$ with a free $S^1$ symmetry, we can identify $M$ with the total space of an $S^1$-bundle and express $g$, $\Omega_I$ and $\Omega_J$ through functions $W$ and $p$ as in equations~\eqref{f:hdef} and \eqref{f:Omegaexplicit}.
\end{proof}

\begin{rmk}
	Given any smooth function $p\colon \R^3_{\pmb\mu}\to (-1,1)$ one can always solve the corresponding elliptic equation for $W$ at least locally. Therefore the germ of any function $p\colon \R^3_{\pmb\mu}\to (-1,1)$ can be realized on some nondegenerate GK manifold with $S^1$ symmetry.
\end{rmk}

\begin{rmk}[Relation to T-duality]
	A surprising feature underlying both the Gibbons-Hawking ansatz and Theorem~\ref{t:nondegenerate_gk_description} is that a solution to a linear PDE yields a non-linear geometric structure on $M^4$. We would like to thank an anonymous referee for kindly bringing to our attention that this is not a coincidence, and can be explained through the notion of \textit{T-duality}. Specifically, as was observed by Cavalcanti and Gualtieri in~\cite{ca-gu-10}, given a non-degenerate GK structure with a free $S^1$-symmetry
	\[
	M^4\to M^4/S^1=N^3
	\]
	there is a natural T-dual $S^1$-bundle $\hat M^4\to N^3$ endowed with a \textit{commuting} (odd) GK structure $(\hat M^4, \hat g, \hat I,\hat J)$, $\hat I\hat J=\hat J\hat I$, and by a result of Apostolov and Gualtieri~\cite{ap-gu-07} any such structure determines a local holomorphic orthogonal splitting
	\[
	\hat M^4\simeq_{\mathrm{loc}}\C\times \C
	\]
	rendering a \textit{linear} description of the underlying GK structure, which can be translated back to a linear description of a nondegenerate GK structure on $M^4\to N^3$ via the inverse T-duality. We expect that one could push this idea even further and use the T-duality to derive a local ansatz for \textit{GK solitons} (see section \S\ref{s:GKRS} for the precise definitions) similar to an alternative description of the Gibbons-Hawking anstaz presented in~\cite[Ex.\,5.2]{ca-gu-10}. We, however, take a different approach in this paper and derive the soliton equations directly on a nondegenerate GK manifold $(M^4,g,I,J)$, see Proposition~\ref{p:invGKsoliton}.
\end{rmk}

To simplify further computations, we observe a useful change of coordinates in $\R^3_{\pmb\mu}$ suggested by the structure of $h$. \[
\mu_+:=\tfrac{1}{2}(\mu_2+\mu_3),\quad \mu_-:=\tfrac{1}{2}(\mu_2-\mu_3).
\]
In these coordinates the metric $h$ of (\ref{f:hdef}) diagonalizes and takes the form
\begin{equation}\label{f:h_diagonal}
h=(1-p^2)d\mu_1^2+2(1-p)d\mu_+^2+2(1+p)d\mu_-^2.
\end{equation}
Furthermore, the equation ~\eqref{f:W_laplace} for $W$ takes form
\begin{equation}\label{f:W_laplace2}
W_{11}+\tfrac{1}{2}((1+p)W)_{++}+\tfrac{1}{2}((1-p)W)_{--}=0.
\end{equation}
In the rest of this paper, we will use $(\mu_1,\mu_+,\mu_-)$ as the coordinates on $\R^3_{\pmb\mu}$.

\subsection{Nondegenerate GK structures with effective \texorpdfstring{$S^1$}{} action} \label{ss:NGKeff}

Theorem~\ref{t:nondegenerate_gk_description} provides a classification of nondegenerate GK structures with \emph{free} $S^1$ action. Our next step is to relax this assumption and allow the $S^1$ action to have nontrivial isotropy subgroups. The following lemma shows that in the nondegenerate case the only new possibility is the presence of fixed points.

\begin{lemma}\label{l:free_nondegenerate}
	Let $(M,g,I,J)$ be a nondegenerate GK manifold, $\dim_\R M=4$. Then an effective action of $S^1$ on $(M,g,I,J)$ is free outside of a discrete set of points $M^{S^1}\subset M$. If $y\in M$ is a fixed point, then the isotropy representation of $S^1$ in $T_yM$ has weights $(\pm 1, \pm 1)$.
\end{lemma}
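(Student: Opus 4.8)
The plan is to use the slice theorem (Theorem~\ref{t:slice}) together with its $4$-dimensional refinement (Corollary~\ref{c:slice}), which says that every orbit is modelled either on $S^1\times_{\Z_k}(\C\times\R)$ with finite isotropy, or is a fixed point with isotropy representation $z\mapsto\mathrm{diag}(z^{w_1},z^{w_2})$ on $T_yM\cong\C^2$. So I would argue two things: (i) every point with finite isotropy is in fact free, which leaves the fixed set $M^{S^1}$ as the only non-free locus; and (ii) at a fixed point the weights satisfy $|w_1|=|w_2|=1$, which in particular forces the weights to be nonzero and hence $M^{S^1}$ to be discrete.

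\textbf{Freeness away from the fixed set.} Let $x$ be a point with $X_x\neq 0$, where $X$ generates the action, and let $g$ lie in the stabilizer $G_x$ (a closed proper, hence finite, subgroup of $S^1$). Since the action preserves $g$, $I$ and $J$, the differential $dg_x$ preserves $I$, $J$, and therefore also $K=\tfrac{1}{2}[I,J]$. Moreover, because $S^1$ is abelian and $X$ generates the flow, differentiating $g\cdot\phi_t(x)=\phi_t(g\cdot x)$ at $t=0$ (using $g\cdot x=x$) gives $dg_x(X_x)=X_x$. Hence $dg_x$ fixes each of $X_x, IX_x, JX_x, KX_x$, and by Proposition~\ref{p:matrices} applied to the unit vector $X_x/|X_x|$ these four vectors form a basis of $T_xM$. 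Thus $dg_x=\Id$, so $\rho_x(g)$ acts trivially on $\nu_x=T_xM/\R X_x$, and faithfulness of $\rho_x$ (Theorem~\ref{t:slice}) forces $g=1$. Therefore $G_x$ is trivial and the action is free on $M\setminus M^{S^1}$.

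\textbf{Weights at a fixed point.} Let $y\in M^{S^1}$. As the action preserves $g$ and (being connected) the orientation, and preserves $I$, the isotropy representation is $I$-complex linear and lands in $U(2)$, giving $T_yM\cong\C^2$ with integer weights $(w_1,w_2)$; faithfulness of $\rho_y$ gives $\gcd(w_1,w_2)=1$. The crucial input is that $\gs$ is nondegenerate on \emph{all} of $M$, so $\Omega_I=\Omega-\sqrt{-1}I\Omega$ is a smooth, nowhere-vanishing $I$-holomorphic $(2,0)$-form that is preserved by the action. At $y$ it is a nonzero element of the complex line $\Lambda^{2,0}_I T^*_yM$, on which $S^1$ acts through the character $e^{\sqrt{-1}\theta}\mapsto e^{-\sqrt{-1}(w_1+w_2)\theta}$; invariance of $\Omega_I$ forces $w_1+w_2=0$. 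Combined with $\gcd(w_1,w_2)=1$ this yields $\{w_1,w_2\}=\{1,-1\}$, i.e. weights $(\pm 1,\pm 1)$. Since both weights are nonzero, the linearized action has no fixed directions, so $y$ is isolated and $M^{S^1}$ is discrete.

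\textbf{Main obstacle.} The freeness statement is purely formal equivariant bookkeeping once one notices the basis trick $\{X,IX,JX,KX\}$, so the only genuinely structural step is the weight computation. Its entire force comes from the existence of the nowhere-vanishing holomorphic symplectic form $\Omega_I$: this is exactly where the hypothesis that the Poisson tensor is nondegenerate \emph{everywhere} (not just generically) is indispensable, since it guarantees $\Omega_I$ does not vanish at the fixed point where one wants to read off the weights. The one point deserving a word of care is the orientation convention behind the phrase ``weights $(\pm 1,\pm 1)$'': with the complex orientation induced by $I$ the argument pins the weights down to $(1,-1)$ (equivalently $(-1,1)$), which is precisely the tri-Hamiltonian model on flat $\mathbb H$ underlying the Gibbons-Hawking ansatz and, via item (4) following Corollary~\ref{c:slice}, the Hopf fibration $\C^2\to\R^3$.
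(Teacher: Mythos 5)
Your proof is correct, and it takes a genuinely different route from the paper's on both points. For freeness the paper shows that the fixed locus $M^{\Z_k}$ of any nontrivial finite isotropy group is holomorphic for both $I$ and $J$, and that nondegeneracy forces $\Ker(I\pm J)=0$, so no two-dimensional tangent plane can be simultaneously $I$- and $J$-invariant; hence $M^{\Z_k}$ is either zero-dimensional or all of $M$, and effectiveness excludes the latter. The same observation shows directly that $M^{S^1}$ is discrete, and the weights $(\pm1,\pm1)$ are then read off a posteriori from freeness on the complement of the fixed set. Your argument replaces the first step with the frame $\{X,IX,JX,KX\}$ of Proposition~\ref{p:matrices} plus faithfulness of the slice representation, and replaces the second with invariance of the nowhere-vanishing holomorphic symplectic form $\Omega_I$, which gives the sharper conclusion $w_1+w_2=0$, i.e.\ weights $(1,-1)$ in the $I$-complex orientation --- the tri-Hamiltonian normalization the paper in effect relies on later (cf.\ the sign in \eqref{f:beta_pairing_s2}). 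Both proofs invoke everywhere-nondegeneracy exactly once: the paper through invertibility of $[I,J]$, you through nonvanishing of $\Omega^{2,0}$ at the fixed point. Your route has the mild advantage of pinning down the signs of the weights; the paper's handles discreteness and freeness with a single linear-algebra observation.
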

\begin{proof}
	Since the action of $S^1$ preserves $I$ and $J$, the fixed point set $M^{S^1}\subset M$ must be $I$ and $J$-holomorphic. 
	Note the complex structures $I$ and $J$ are compatible with $g$, induce the same orientation and $I\neq \pm J$ on $M$. Since $\dim_\R M=4$, this implies that $\Ker(I\pm J)=0$. Thus we conclude that $M^{S^1}$ must be zero-dimensional.	 To prove that the action of $S^1$ is free outside the fixed point set $M^{S^1}$, assume on the contrary that for some $x\in M\backslash M^{S^1}$ we have $\mathrm{Stab}(x)=\Z_k\subset S^1$. Then $Z:=M^{\Z_k}$ must be $I$ and $J$-holomorphic. On the other hand $Z$ is at least one-dimensional, since it contains $S^1\cdot x$. Therefore $Z$ must coincide with the whole $M$ and the action of $S^1$ on $M$ is not effective.
	
	Now let $y\in M$ be a fixed point, and let $\mathbf w=(w_1,w_2)$ be the weights of the representation of $S^1$ in $T_yM$:
	\begin{equation}\label{f:Li_decompoistion}
	T_yM= L_1\oplus L_2,\quad L_i\simeq \C
	\end{equation}
	where the action of $S^1\simeq U(1)$ on $L_i$ is given by $t\cdot z=t^{w_i}z$. Since the action of $S^1$ on the complement of the fixed point set is free, we necessarily have $w_i=\pm 1$, as claimed. 
\end{proof}

We next analyze the local behavior of $W$ near a fixed point.  Fix $y\in M^{S^1}$. By Lemma~\ref{l:free_nondegenerate},  a small $S^1$-invariant open ball $U_y$ around $y$ is equivariantly diffeomorphic to a unit ball in $\C^2$ with the standard diagonal action of $S^1$ via coordinate-wise complex multiplication:
\[
e^{\sqrt{-1}t}\cdot (z_1,z_2)=(e^{\sqrt{-1}t}z_1,e^{\sqrt{-1}t}z_2).
\]
Therefore, there exist spherical coordinates $(s,\rho)\in S^3\times [0,1)_\rho$ on $U_y$ such that the projection onto the orbit space $U_y\to U_y/S^1$ is given by
\[
\mathrm{pr}\colon S^3\times [0,1)_\rho\to S^2\times [0,1)_r,\quad \mathrm {pr}(s,\rho)=(\chi(s),\rho^2/2)
\]
where $S^2\times [0,1)_r\simeq U_y/S^1$ and the map $\chi\colon S^3\to S^2$ is given by the Hopf fibration. Using this local model of the projection $M\to M/S^1$, we conclude that if the $S^1$ action on a 4-dimensional manifold is free outside the fixed point set, then the orbit space $M/S^1$ admits the structure of a smooth 3-dimensional manifold, such that the projection $\pi\colon M\to M/S^1$ is a smooth map with $d\pi$ surjective outside of the fixed point set.

The local model of the projection $M\to M/S^1$ near a fixed point $y\in M$ implies that the curvature form $\beta$ of the principal bundle
\[
U_y\backslash\{y\}\to (U_y\backslash\{y\})/S^1
\]
must pair to $-2\pi$ with the homology class of a 2 sphere enclosing the center of $(U_y\backslash \{y\})/S^1\simeq B_1(0;\R^3)$:
\begin{equation}\label{f:beta_pairing_s2}
\int_{S^2}\beta=-2\pi.
\end{equation}
Moreover, since the vector field $X$ has a simple zero at $y$, we have $W(x)=g(X,X)^{-1}\sim \rho^{-2}$ near $y\in U_y$, so that $W(x)$ descended to $U_y/S^1$ must blow up as $r^{-1}$ for $r\to 0$. Similarly $|dW|_{h}\sim r^{-2}$ as $r\to 0$.

Now we prove a partial converse of the above local observation about the behavior of $W$ and $\beta$ near the image of the fixed point. 

\begin{prop}[Removable singularity result for the metric]\label{p:removable_singularity}
	Let $B^3$ be the unit ball in $\R^3$ centered at the origin. Assume that metric $h$ and 2-form $\beta_0$ in $B^3$ are real analytic. Let $W$ be a positive solution to
	\begin{equation}\label{f:removable_singualrity_W_eq}
	d\beta=0,\quad \beta := *_hdW+W\beta_0
	\end{equation}
	in $B^3\backslash \{0\}$ such that $W(x)/r\to 1/2$ and $|dW|_h(x)=o(r^{-3})$ as $r:=d_h(0,x)\to 0$. Then 
	\begin{itemize}
		\item $\beta$ pairs to $-2\pi$ with any 2-sphere enclosing the origin oriented by the outward normal vector;
		\item the total space $P$ of $S^1$-bundle with the connection $\eta$ such that $d\eta=\beta$ over $B^3\backslash\{0\}$ is diffeomorphic to $B^4\backslash\{0\}$, where $B^4$ is a unit ball in $\R^4$;
		\item the metric completion of $(P, g)$, $g=Wh+W^{-1}\eta^2$ is diffeomorphic to $B^4\supset B^4\backslash 0$, and the metric $g$ extends to a $C^{1,1}$ metric across $0\in B^4$.
	\end{itemize}
\end{prop}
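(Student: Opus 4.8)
The plan is to convert the closedness of $\beta$ into a linear elliptic equation for $W$, pin down the isolated singularity of $W$ at the origin by potential theory, and then transplant the metric $g$ to a ball in $\R^4$ via an explicit radial substitution. Expanding $d\beta=0$ with $\beta=*_hdW+W\beta_0$ gives $d(*_hdW)=-d(W\beta_0)$, so that $W$ solves a second order linear elliptic equation
\[
LW:=\Delta_hW+\langle b,dW\rangle+cW=0\qquad\text{on }B^3\setminus\{0\},
\]
where $\Delta_h$ is the Laplace--Beltrami operator and the vector field $b$ and function $c$ are the real analytic data read off from $\beta_0$ and $d\beta_0$. The principal part of $L$ is $\Delta_h$, so its Green's function with pole at the origin has the form $G=\tfrac{1}{4\pi r}+G_1$, $r:=d_h(0,\cdot)$, with $G_1$ of only finite regularity: it satisfies an elliptic equation whose right-hand side is singular of order $r^{-1}$ precisely because $h$ is not flat, so that $G_1\in C^{1,\alpha}$ for every $\alpha<1$ but in general $G_1\notin C^2$.

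Next I would isolate the singular part of $W$. Setting $u:=W-2\pi G$, the prescribed asymptotic $W\sim\tfrac{1}{2r}$ together with the bound on $|dW|_h$ give $u=o(r^{-1})$; since $Lu=0$ on the punctured ball, the removable singularity theorem for elliptic operators with this principal part shows that $u$ extends across $0$ to a real analytic solution. Thus near the origin
\[
W=\tfrac{1}{2r}+R,\qquad R:=2\pi G_1+u\in C^{1,\alpha},
\]
with $R$ not $C^2$ in general. The first two bullets are then immediate. As $\beta$ is closed, the period $\int_{S^2_\epsilon}\beta$ is independent of $\epsilon$; letting $\epsilon\to0$, the contributions $\int_{S^2_\epsilon}W\beta_0$ and $\int_{S^2_\epsilon}*_hdR$ vanish (bounded integrands over an $h$-sphere of area $\sim\epsilon^2$), while $\int_{S^2_\epsilon}*_hd(\tfrac{1}{2r})\to-2\pi$ is the standard Green's function flux. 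Hence $\int_{S^2}\beta=-2\pi$, the circle bundle over a small linking sphere has Euler number $-1$ and is the Hopf fibration $S^3\to S^2$, and since $B^3\setminus\{0\}$ retracts onto this sphere we get $P\cong S^3\times(0,1)\cong B^4\setminus\{0\}$.

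The last and hardest bullet is the metric extension. Normalizing $h(0)$ to be Euclidean, so that $h=dr^2+r^2g_{S^2}+O(r^2)$, and using $W=\tfrac{1}{2r}+R$, the substitution $\rho=\sqrt{2r}$ converts the leading part of $g=Wh+W^{-1}\eta^2$ into the flat cone: a direct computation gives
\[
g=d\rho^2+\rho^2\bigl(\tfrac14 g_{S^2}+\eta^2\bigr)+E=d\rho^2+\rho^2 g_{S^3}+E,
\]
where $\rho^2 g_{S^3}$ is exactly the round form appearing in the Hopf presentation of flat $\R^4$, and $E$ is a remainder. In particular $g$ is uniformly equivalent to the flat metric near $\rho=0$, the $S^3$-fibers collapse to a single point as $\rho\to0$, and the metric completion of $(P,g)$ adds precisely that point, producing $B^4$. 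It then remains to estimate $E$ and its first derivative in the $B^4$-coordinates, which I would do by inserting the expansions $W=\tfrac{1}{2r}+R$, $h-h_{\mathrm{Euc}}=O(r^2)$, and $\eta=\eta_{\mathrm{Hopf}}+(\text{lower order})$ solving $d\eta=\beta$.

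I expect the regularity bookkeeping in this final step to be the main obstacle. The radial change $\rho=\sqrt{2r}$ is not smooth at the origin, and the correction term $R$---equivalently the regular part $G_1$ of the three dimensional Green's function---is only of class $C^{1,\alpha}$ because its defining equation carries an $r^{-1}$ singular source forced by the curvature of $h$. Tracking how these two effects combine in $E$ and $\nabla E$ shows that $g$ extends across the added point in $C^{1,1}$ but, in general, no better. This regularity is sharp at this level of generality: as indicated in the later sections, upgrading the extension to $C^\infty$ genuinely requires the extra rigidity furnished by the soliton equation.
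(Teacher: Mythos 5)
Your overall strategy coincides with the paper's (which follows LeBrun): isolate the $\tfrac{1}{2r}$ singularity of $W$, compute the flux of $\beta$ to identify the Hopf fibration, and compare $g$ with the flat metric on $B^4$ under the substitution $r=\rho^2/2$. The first two bullets are handled correctly (a minor slip: $W\beta_0\sim \tfrac{1}{2r}\beta_0$ is \emph{not} bounded on $S^2_\epsilon$, though its integral still vanishes as $O(\epsilon)$). The genuine gap is in the third bullet, and it is twofold. First, your regularity claim for the regular part is wrong: for a variable-coefficient operator $\Delta_h+\langle b,d\cdot\rangle+c$ with $b\neq 0$ (and $b$ is generically nonzero here, being built from $\beta_0$), the difference $G-\tfrac{1}{4\pi r}$ contains a bounded but angularly dependent term $\alpha_0(\zeta)$ at order $r^0$ (already visible for $\Delta+b\,\partial_1$ on flat $\R^3$, whose fundamental solution is $\tfrac{1}{4\pi r}e^{-b(x_1+r)/2}=\tfrac{1}{4\pi r}-\tfrac{b}{8\pi}\bigl(\tfrac{x_1}{r}+1\bigr)+O(r)$), so $R=W-\tfrac{1}{2r}$ is in general only bounded, not $C^{1,\alpha}$, and not even continuous at the origin. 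The paper instead invokes John's theorem on fundamental solutions of elliptic equations with analytic coefficients to obtain the full convergent expansion $W=\tfrac{1}{2r}+\sum_{k\geq 0}\alpha_k(\zeta)r^k$ with analytic angular coefficients; it is this precise structure, not mere H\"older regularity of the remainder, that drives the rest of the argument.

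Second, the step that actually constitutes the proof of the $C^{1,1}$ extension --- showing that every term of $E=g-g'$, including $Wh''$, $(W-\rho^{-2})h'$, $(W^{-1}-\rho^2)\eta_0^2$ and, crucially, the connection correction $W^{-1}(2\gamma\otimes\eta_0+\gamma^2)$ where $\eta=\eta_0+\gamma$, begins at order $\rho^2$ with controlled angular coefficients --- is asserted rather than carried out. You never derive the structure of $\gamma$ from $d\gamma=\beta-*_{h'}d(\tfrac{1}{2r})$, and since your stated input ($R\in C^{1,\alpha}$) is false, the deferred ``bookkeeping'' cannot be completed as written: one must go back to the full angular expansion to see that $g-g'=\sum_{k\geq 2}\rho^k g_k''(\xi)$, which is where the $C^{1,1}$ (and no better) conclusion actually comes from. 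To repair the argument, replace the Green's-function decomposition by the expansion from the theory of fundamental solutions with analytic coefficients and then track the series through the pullback by the Hopf projection.
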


\begin{proof}
	With our assumptions on the coefficients of equation $d\beta=0$ and on the solution $W$, we can apply a general result about fundamental solutions to elliptic equations with analytic coefficients~\cite{Fritz}. Specifically, using~\cite[\S 6]{Fritz} (see also p.\,275), we conclude that, in the notations of the cited paper, $W(x)=K(x,0)+A(x)$, where $A(x)$ is analytic in $B^3$ and $K(x,z)$ is the fundamental solution for the Dirichlet problem $d\beta=0$. Further, using the decomposition~\cite[eq.\,5.14]{Fritz} and the fact that the leading term in the decomposition is proportional to $1/r$ (see discussion on p.290), we find that $W$ admits a decomposition
	\begin{equation}\label{eq:W_series}
	W=\frac{1}{2r}+\sum_{k=0}^\infty \alpha_k(\zeta)r^k,
	\end{equation}
	where $(r,\zeta)\in (0,1)\times S^2$ are spherical coordinates, the functions $\alpha_k(\zeta)$ are analytic and the series is absolutely convergent with all its derivatives.
	
	To prove the first claim, we note that since $\beta$ is closed, the pairing is independent of the choice of sphere. If we take $S_\epsilon:=\{r=\epsilon\}$, then using the decomposition~\eqref{eq:W_series}
	\[
	\int_{S_\epsilon^2}\beta=\lim_{\epsilon\to 0}\int_{S_\epsilon^2}\beta=\lim_{\epsilon\to 0}\int_{S_\epsilon^2}*_hdW=\lim_{\epsilon\to 0}\int_{S_\epsilon^2} \frac{\del W}{\del r}d\gs_\epsilon=-2\pi.
	\]
	proving the first claim.  The second claim follows from a simple topological observation. The principal bundle $P$ is uniquely determined by a class $[\beta]\in H^2(B^3\backslash\{0\},2\pi\Z)\simeq 2\pi\Z$, and the bundle corresponding to $-2\pi$ is diffeomorphic to the total space of the standard Hopf fibration $B^4\backslash \{0\}\to B^3\backslash\{0\}$, which extends to a smooth map $B^4\to B^3$.
	
	Our next goal is to prove the $C^{1,1}$ extension of $g$ in this chart, following the argument of LeBrun~\cite{LeBrun}. In the exponential coordinates $(r,\zeta)\in (0,1)\times S^2$, the metric $h$ can be written as
	\[
	h=dr^2+r^2h^{S}_r,
	\]
	where $h^S_r$ is an analytic family of metrics on $S^2$ such that $h^S_r\to h^S_0$, where $h^S_0$ is the standard round metric on the unit sphere. Decompose $h$ as
	\[
	h=h'+h'',\quad h':=dr^2+r^2 h^S_0,
	\]
	i.e., $h'$ is the flat metric in exponential coordinates, and $h''$ is the remainder. Since $h''$ is analytic, and $h'$ matches $h_1$ up to order 2 near the origin, we have
	\[
	h''=\sum_{k=2}^{\infty} r^k h''_k(\zeta),
	\]
	where $h''_k$ is a family of symmetric two-tensors on $\R^3$ parametrized by $\zeta\in S^2$.
	
	Using the above two series expansions and inspecting the closedness of $\beta=*_hdW+W\beta_0$ in this decomposition, we find that
	\[
	\beta=*_{h'}d\left(\frac{1}{2r}\right)+d\gg, \qquad \gg=dr\sum_{k=0}^\infty r^k\gg'_k(\zeta)+\sum_{k=1}^\infty r^k \gg_k''(\zeta),
	\]
	where $\gg'_k(\zeta)$ and $\gg''_k(\zeta)$ are analytic functions and 1-forms on the sphere $S^2$. In particular, after a gauge transform $\eta=\eta_0+\gamma$, where $\eta_0$ is the connection form of the standard Hopf fibration $B^4\to B^3$ given by the flat metric on $B^4$.
	
	Let, as above, $\mathrm{pr}\colon S^3\times [0,1)_\rho\to S^2\times [0,1)$ be the projection given by $\mathrm{pr}(x,\rho)=(\chi(s),\rho^2/2)$, where $\chi\colon S^3\to S^2$ is the Hopf fibration on a 3-sphere. In particular, $\mathrm{pr}^*dr=\rho d\rho$.  By slight abuse of notation, we omit $\mathrm{pr}^*$ in front of $h,h',h''$ and $W$, below. As is well known from the geometry of the Hopf fibration $B^4\to B^3$, the metric
	\[
	g':= \frac{1}{\rho^2}h'+\rho^2 \eta_0^2=d\rho^2+\rho^2 (h_0^S+\eta_0^2)
	\]
	on $B^4\backslash\{0\}$ is flat and extends smoothly across $\{0\}\in B^4$. Now we observe that on $B^4\backslash\{0\}$
	\begin{equation}
	\begin{split}
	g-g'&=\left(W(h'+h'')+W^{-1}(\eta_0+\gg)^2\right)-g'\\&=
	Wh''+(W-\rho^{-2})h'+(W^{-1}-\rho^2)\eta_0^2+W^{-1}(2\gg\otimes \eta_0+\gg^2).
	\end{split}
	\end{equation}
	Using the series expansion above we find that
	\[
	g-g'=\sum_{k=2}^\infty \rho^k g''_k(\xi),
	\]
	where $g''_k(\xi)$ is an analytic family of symmetric 2-tensors on $\R^4$ depending on $\xi\in S^3$. This difference extends to a $C^{1,1}$ symmetric tensor on $B^4$.

\end{proof}
\begin{rmk}
	If $h$ and $\beta_0$ are additionally $SO(3)$-rotationally symmetric around $0$, then the $C^{1,1}$-regularity of $g$ can be upgraded to $C^\infty$ (analytic), since the coefficients in all the expansions are constant. This is the case with the original Gibbons-Hawking ansatz and LeBrun's construction of self-dual metrics~\cite{LeBrun}.
\end{rmk}

Proposition~\ref{p:removable_singularity} ensures that a metric $g$ of a certain form on the total space of a principal bundle $B^4\backslash\{0\}\to B^3\backslash\{0\}$ has a $C^{1,1}$-extension over the origin. If the principal bundle $B^4\backslash\{0\}\to B^3\backslash\{0\}\subset \R^3_{\pmb\mu}$ and the metric $g$ are given by Theorem~\ref{t:nondegenerate_gk_description} then we have a nondegenerate GK structure $(B^4\backslash\{0\}, g,I,J)$. The following proposition ensures that not only the metric, but the whole GK structure extends to a $C^{1,1}$ structure on $B^4$.

\begin{prop} \label{p:removable_singularity_GK} The following hold:
\begin{enumerate}
\item Suppose $(B^4\backslash\{0\},g,I,J)$ is a smooth GK structure such that $g$ has a $C^{1,1}$ extension to $B^4$, and $H$ has a $C^{0,1}$ extension to $B^4$.  Then there exists a $C^{1,1}$ extension of $(g,I,J)$ to $B^4$.
\item If $B^4\backslash \{0\}\to B^3\backslash \{0\}$ is a Hopf fibration, and there is a GK structure $(g,I,J)$ on $B^4\backslash \{0\}$ given by Theorem~\ref{t:nondegenerate_gk_description}, then the torsion $H$ has a $C^{0,1}$ extension to $B^4$.  In particular, the GK structure $(g,I,J)$ extends to a $C^{1,1}$ GK structure on $B^4$.
\end{enumerate}
\end{prop}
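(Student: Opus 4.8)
The two parts decouple cleanly: part (1) is a regularity/bootstrap statement about the Bismut connections, while part (2) is an explicit computation of the torsion in the ansatz of Theorem~\ref{t:nondegenerate_gk_description} which verifies the single hypothesis needed to invoke part (1).

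For part (1) the structural input I would use is that the Bismut connections $\N^{I}=D+\tfrac{1}{2}g^{-1}H$ and $\N^{J}=D-\tfrac{1}{2}g^{-1}H$ are metric and satisfy $\N^{I}I=0$, $\N^{J}J=0$. A $C^{1,1}$ extension of $g$ gives $g^{-1}\in C^{1,1}$ and $\del g\in C^{0,1}$, so the Levi-Civita Christoffel symbols, being algebraic in these, extend to $B^4$ as $C^{0,1}$ (hence $L^\infty$) functions; combined with $H\in C^{0,1}$ this shows $\N^{I}$ and $\N^{J}$ extend with $C^{0,1}$ coefficients. Writing $\N^{I}I=0$ in a fixed coordinate frame as $\del_k I^i_j=-(\Gamma^{I})^i_{km}I^m_j+(\Gamma^{I})^m_{kj}I^i_m$, the right-hand side is bounded on $B^4\backslash\{0\}$ because $\Gamma^{I}\in L^\infty$ and $I^2=-\Id$ bounds $I$. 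Thus $I$ has bounded gradient on the punctured ball; as the puncture has codimension four, $I$ extends to a $C^{0,1}$ endomorphism field on $B^4$, and the same holds for $J$.

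I would then bootstrap once: knowing $I\in C^{0,1}$ up to the origin, the right-hand side above becomes a product of $C^{0,1}$ tensors, hence lies in $C^{0,1}$, so $\del I\in C^{0,1}$ and $I\in C^{1,1}$; likewise $J\in C^{1,1}$. With the assumed $C^{1,1}$ extension of $g$ this yields a $C^{1,1}$ triple $(g,I,J)$. The conditions defining a GK structure — integrability of $I$ and $J$ and the identities $d^c_I\gw_I=-d^c_J\gw_J$, $dH=0$ — hold on the dense set $B^4\backslash\{0\}$ and are continuous in the $C^{1,1}$ data, so they persist across $0$, proving (1). For part (2) it then suffices, using the $C^{1,1}$ extension of $g$ from Proposition~\ref{p:removable_singularity}, to verify $H\in C^{0,1}$. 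In dimension four $\theta_I=-*H$, and since the Hodge star of a $C^{1,1}$ metric preserves $C^{0,1}$ regularity, this is equivalent to controlling the Lee form $\theta_I$ defined by $d\gw_I=\theta_I\wedge\gw_I$. Using the ansatz I would first compute the fundamental form $\gw_I=\eta\wedge(d\mu_3-p\,d\mu_2)-W(1-p^2)\,d\mu_1\wedge d\mu_2$, differentiate it via $d\eta=\beta$ together with \eqref{f:star_W}, and solve $d\gw_I=\theta_I\wedge\gw_I$ for $\theta_I$. The essential point is that the $W$-dependence cancels in the horizontal components, leaving
\[
\theta_I=-\frac{p_1}{W(1-p^2)}\,\eta+\frac{p_3+p\,p_2}{1-p^2}\,d\mu_2-\frac{p_2+p\,p_3}{1-p^2}\,d\mu_3,
\]
whose $d\mu_2,d\mu_3$ coefficients are smooth functions of $p$ alone, the only $W$ surviving being the single factor $1/W$ in the $\eta$-slot.

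It then remains to transfer these blowup rates through the Hopf projection. The coefficients of $d\mu_2,d\mu_3$ are smooth on the base $B^3$, and their pullbacks $\pi^*d\mu_i$ are smooth one-forms on $B^4$ vanishing at $0$ (the differential of the Hopf map vanishes there); the $\eta$-coefficient carries $1/W\sim 2r$, which in the coordinate $r=\rho^2/2$ is smooth and vanishes to second order, while $\eta=\eta_0+\gamma$ extends across $0$ by the expansion established in Proposition~\ref{p:removable_singularity}. Hence $\theta_I$, and with it $H$, extends at least $C^{0,1}$ across the puncture, and part (1) finishes the argument. I expect the real work to be concentrated in part (2): carrying out $d\gw_I=\theta_I\wedge\gw_I$ carefully enough to witness the $W$-cancellation, and then confirming that the competing singular factors — $dW\sim r^{-2}$ inside $\beta$, the factor $1/W\sim r$, and the order-two collapse $r=\rho^2/2$ of the Hopf fibration — conspire to a genuinely bounded Lipschitz tensor on $B^4$; by contrast part (1) is a routine connection bootstrap.
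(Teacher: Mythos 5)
Your argument is correct and follows essentially the same route as the paper: part (1) via the $C^{0,1}$ Bismut connections, part (2) via an explicit formula for $\theta_I$ in the ansatz (your formula in $(\mu_2,\mu_3)$-coordinates agrees exactly with the paper's expression $\theta_I=-W^{-1}\tfrac{p_1}{1-p^2}\eta+\tfrac{p_+}{1-p}d\mu_--\tfrac{p_-}{1+p}d\mu_+$ after the change of variables $\mu_\pm=\tfrac12(\mu_2\pm\mu_3)$). The only substantive differences: for (1) the paper defines $I,J$ at the origin by $\N^I$- and $\N^J$-parallel transport rather than your coordinate gradient bound plus bootstrap — same content; for (2) the paper obtains $\theta_I$ from the identity $i_{\theta_I^{\#}}\Omega=-\tfrac{dp}{1-p^2}$ of \cite[Lemma 3.8]{ASNDGKCY} rather than solving $d\gw_I=\theta_I\wedge\gw_I$ directly, and it disposes of the regularity of the $\eta$-term by the cleaner observation that $W^{-1}\eta=g(X,\cdot)$ is automatically $C^{1,1}$ once $g$ is $C^{1,1}$ and $X$ is a smooth vector field, yielding in fact a $C^{1,1}$ (not just $C^{0,1}$) extension of $H$. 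One phrase in your write-up is inaccurate as stated: $\eta=\eta_0+\gg$ does \emph{not} extend across $0$ — $\eta_0$ blows up like $\rho^{-1}$ there — and only the product $W^{-1}\eta$ extends; your own rate-counting ($W^{-1}\sim\rho^2$ against $\eta_0\sim\rho^{-1}$) shows you know this, but the sentence should be corrected, and the identification $W^{-1}\eta=g(X,\cdot)$ is the painless way to finish.
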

\begin{proof}
	To show item (1), first note that by the hypotheses on the extensions of $g$ and $H$, the Christoffel symbols of the Bismut connections $\N^I$ and $\N^J$ have a $C^{0,1}$ extension to $B^4$. Thus we can define $I$ (resp.\,$J$) at the origin by the parallel $\N^I$ (resp.\,$\N^J$) transport. The $C^{0,1}$ regularity of $\N^I$ implies that the resulting almost complex structure does not depend on the choice of a path and is $C^{1,1}$.
	
	Now we turn to item (2) and prove that the torsion 3-form $H$ of the GK structure provided by Theorem~\ref{t:nondegenerate_gk_description} in a principal $S^1$ bundle $B^4\backslash\{0\}\to B^3\backslash\{0\}$ has a $C^{0,1}$ extension. On the nondegenerate part of a GK manifold we have an identity (see~\cite[Lemma 3.8]{ASNDGKCY}) for $\theta_I^{\#}=-g^{-1}(*H)$,
	\[
	i_{\theta_I^{\#}}\Omega=-\frac{dp}{1-p^2}.
	\]
	In our case $\Omega$ is given by
	\[
	\Omega=-d\mu_1\wedge\eta+2Wd\mu_+\wedge d\mu_-
	\]
	hence
	\[
	\theta_I^\#=
	\frac{1}{1-p^2}\left(
	-p_1X+
	\frac{1}{2}W^{-1}p_+\frac{\del}{\del \mu_-}-
	\frac{1}{2}W^{-1}p_- \frac{\del}{\del \mu_+}
	\right)
	\]
	or equivalently, since $g=W\left((1-p^2)d\mu_1^2+2(1-p)d\mu_+^2+2(1+p)d\mu_-^2\right)+W^{-1}\eta^2$, we find
	\begin{equation}\label{f:theta_through_p}
	\theta_I=-W^{-1}\frac{p_1}{1-p^2}\eta+\frac{p_+}{1-p}d\mu_--\frac{p_-}{1+p}d\mu_+.
	\end{equation}
	
	The construction of Proposition~\ref{p:removable_singularity} provides a smooth map $\mathrm{pr}\colon B^4\to B^3$ such that $X$ is a smooth vector field on $B^4$ vanishing at the origin and the metric $g$ has a $C^{1,1}$ extension. Therefore the pull back of a $C^{k,\alpha}$ function on $B^3$ is also $C^{k,\alpha}$ on $B^4$. This implies that
	\[
	\frac{p_1}{1-p^2}X,\quad
	\frac{p_+}{1-p}d\mu_-,\quad \frac{p_-}{1+p} d \mu_+
	\]
	extend to $C^\infty$ vector field and differential forms on $B^4$. Therefore, since $g$ is $C^{1,1}$, tensors $\theta_I^\#, \theta_I$ and $H$ all have a $C^{1,1}$ extension over $B^4$.
\end{proof}

We can summarize the above observations as follows (compare with Theorem~\ref{t:nondegenerate_gk_description}).

\begin{thm}[Generalized-K\"ahler Gibbons-Hawking ansatz II]\label{t:nondegenerate_gk_description_v2}
	Consider
	\begin{enumerate}
		\item an open map $\iota\colon N\to \R^3_{\pmb\mu}$,
		\item a discrete subset $\{x_i\}\subset N$,
		\item smooth functions
		\[
		p\colon N\to (-1,1),\quad W\colon N\backslash\{x_i\}\to (0,+\infty)
		\]
		solving equation
		\begin{equation*}
		W_{11}+W_{22}+W_{33}+2(pW)_{23}=0,
		\end{equation*}
		such that the closed differential form $\beta\in\Lambda^2(N\backslash\{x_i\},\R)$
		\[
		\beta=-(W_3+(pW)_2)d\mu_1\wedge d\mu_2+(W_2+(pW)_3)d\mu_1\wedge d\mu_3-W_1d\mu_2\wedge d\mu_3
		\]
		represents a class in $H^2(N\backslash\{x_i\},2\pi\Z)$, and for any $x_i\in\{x_i\}$ we have 
		$W(x)/r\to 1/2$ and $|dW|_h(x)=o(r^{-3})$ as $r:=d_h(x_i,x)\to 0$,
		\item connection form $\eta$ with curvature $\beta$ in the principal $S^1$-bundle $\pi\colon P\to N\backslash\{x_i\}$ determined by $[\beta]$.
	\end{enumerate}
	Then there exists a unique GK structure $(M,g,I,J)$ with a tri-Hamiltonian $S^1$-action with fixed points $\{y_i\}$ such that
	\[
	(M\backslash\{y_i\},g,I,J)
	\]
	is the GK structure provided by Theorem~\ref{t:nondegenerate_gk_description}. The GK structure on $M$ is $C^\infty$ on $M\backslash\{y_i\}$ and is $C^{1,1}$ across $\{y_i\}$.

	Conversely any smooth nondegenerate GK manifold $(M,g,I,J)$ with a tri-Hamiltonian $S^1$ action is isomorphic to a GK manifold as above.
\end{thm}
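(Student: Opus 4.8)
The plan is to assemble the statement from the local results already established in this section. Away from the prescribed singular set the structure is produced directly by Theorem~\ref{t:nondegenerate_gk_description}, while across each point $x_i$ the removable singularity results of Proposition~\ref{p:removable_singularity} and Proposition~\ref{p:removable_singularity_GK} supply both the smooth manifold structure on the total space and the $C^{1,1}$ extension of the generalized K\"ahler data. The converse half is a matter of combining Lemma~\ref{l:free_nondegenerate} with the converse direction of Theorem~\ref{t:nondegenerate_gk_description}.

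For the forward direction I would first apply Theorem~\ref{t:nondegenerate_gk_description} to the data $(\iota, p, W, \eta)$ restricted to $N\backslash\{x_i\}$, obtaining a smooth nondegenerate GK structure $(P, g, I, J)$ on the total space of the principal $S^1$-bundle $\pi\colon P\to N\backslash\{x_i\}$, with $g = Wh + W^{-1}\eta^2$. Fixing a point $x_i$ and a small coordinate ball $B^3$ around it, the conditions $W(x)/r\to 1/2$ and $|dW|_h(x)=o(r^{-3})$ are exactly the asymptotic hypotheses of Proposition~\ref{p:removable_singularity}, whose remaining hypothesis is real analyticity of the background $h$ and $\beta_0$ (determined by $p$ through~\eqref{f:beta0_def}). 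That proposition then yields $\int_{S^2}\beta=-2\pi$, identifies $P$ over the punctured ball with the Hopf fibration $B^4\backslash\{0\}\to B^3\backslash\{0\}$, and extends $g$ to a $C^{1,1}$ metric across the added point $y_i$. Since the bundle is a Hopf fibration and the GK structure arises from Theorem~\ref{t:nondegenerate_gk_description}, part~(2) of Proposition~\ref{p:removable_singularity_GK} shows the torsion $H$ extends $C^{0,1}$, whence part~(1) produces a $C^{1,1}$ extension of the full triple $(g, I, J)$ across $y_i$. Gluing the filled balls $B^4$ to $P$ along the Hopf identifications produces a smooth $4$-manifold $M = P\cup\{y_i\}$ carrying a GK structure that is smooth on $M\backslash\{y_i\}$ and $C^{1,1}$ at each $y_i$. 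The $S^1$-action extends over the $y_i$ as fixed points via the standard diagonal Hopf action, and since the moment map $\pmb\mu=\iota\circ\pi$ extends continuously with the relations $d\mu_1=i_X\Omega$, $d\mu_2=i_XI\Omega$, $d\mu_3=i_XJ\Omega$ persisting by continuity off a codimension-two set, the resulting action is tri-Hamiltonian.

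Uniqueness is immediate: on the dense open set $M\backslash\{y_i\}$ the structure is pinned down by Theorem~\ref{t:nondegenerate_gk_description}, and a $C^{1,1}$ (indeed merely continuous) extension of a tensor field from a dense subset is unique, so the filled-in structure is forced. For the converse, let $(M, g, I, J)$ be any smooth nondegenerate GK manifold with a tri-Hamiltonian $S^1$-action. By Lemma~\ref{l:free_nondegenerate} the action is free outside a discrete fixed-point set $\{y_i\}=M^{S^1}$, with isotropy weights $(\pm 1,\pm 1)$ at each $y_i$, so each fixed point admits a Hopf-type canonical neighborhood. Applying the converse part of Theorem~\ref{t:nondegenerate_gk_description} on $M\backslash\{y_i\}$ recovers the open map $\iota$, the functions $p$ and $W$, and the connection $\eta$ with $\beta=d\eta$ satisfying~\eqref{f:star_W}. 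The required blow-up rates then follow from $W=g(X,X)^{-1}\sim\rho^{-2}$ near a simple zero of $X$, exactly as recorded in the discussion preceding Proposition~\ref{p:removable_singularity}, so the data of $(M, g, I, J)$ fits the hypotheses of the theorem.

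The construction is essentially bookkeeping once the removable singularity analysis is in hand, so there is no single deep step; the points requiring genuine care are verifying that the locally defined $C^{1,1}$ extensions glue to a globally well-defined structure---which holds because they are the canonical continuous extensions---and confirming that the tri-Hamiltonian condition, namely the global existence and moment-map relations, survives the filling-in at the fixed points.
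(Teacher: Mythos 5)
Your proposal is correct and follows essentially the same route as the paper: the paper states Theorem~\ref{t:nondegenerate_gk_description_v2} as a summary of the preceding discussion, and your assembly of Theorem~\ref{t:nondegenerate_gk_description} away from the punctures, Propositions~\ref{p:removable_singularity} and~\ref{p:removable_singularity_GK} for the $C^{1,1}$ extension across the added fixed points, and Lemma~\ref{l:free_nondegenerate} together with the local Hopf model and the blow-up rate of $W$ for the converse is exactly the intended argument. Your aside flagging the real-analyticity hypothesis on $h$ and $\beta_0$ in Proposition~\ref{p:removable_singularity} is a fair observation, but the paper treats this the same way, so it does not constitute a divergence.
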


\subsection{Examples}

	Before we have assumed that the Poisson tensor
	\[
	\gs=\tfrac{1}{2}g^{-1}[I,J]
	\]
	is everywhere nondegenerate, in particular, there was a well defined symplectic form $\Omega=\gs^{-1}$. Now we drop this assumption, and allow $\gs$ to vanish along a \emph{proper} subset $\mathbf{T}\subset M$. As we mentioned in Section~\ref{s:background}, in this situation
	\[
	\mathbf{T}=\{x\in M\ |\  I_x=\pm J_x \}
	\]
	is the union of (possibly singular) complex one-dimensional jointly $I$ and $J$-holomorphic curves. Since the $S^1$ action on $(M,g,I,J)$ preserves all the structure, we see that $\mathbf{T}$ is $S^1$-invariant. In particular, $(M\backslash \mathbf{T},g,I,J)$ is a \emph{nondegenerate} GK manifold with an effective $S^1$ action. Therefore, we can invoke Theorem~\ref{t:nondegenerate_gk_description_v2} and conclude that an appropriate cover $\til{M\backslash\mathbf{T}}$ is given by the construction in the theorem. It remains to understand how and when one can glue back a degeneracy locus into a quotient of such a manifold.
	
	We will not be able to give a complete answer to this question.  Instead, in this section, we review several known examples of GK structures with $S^1$ symmetry and demonstrate how they arise via the ansatz of Theorem~\ref{t:nondegenerate_gk_description_v2}.	
	Later in the paper we will show how to extend GK structure across the degeneracy locus for GK solitons.
	
	\begin{ex}[Generalized K\"ahler structure on the standard Hopf surface revisited]\label{ex:Hopf2}
		Consider the standard Hopf surface $(M,g,I,J)$ with the GK structure of Example~\ref{ex:Hopf1}.
		There is an $S^1$ action preserving the GK structure:
		\[
		e^{\sqrt{-1}t}\cdot(z_1,z_2):=(e^{\sqrt{-1}t}z_1, e^{\sqrt{-1}t}z_2).
		\]

		Let $\C^2\to(\C^*)^2/\Z\subset \C^2\backslash\{0\}/\Z=M$ be the universal cover of the nondegenerate part of the Hopf surface. Pick $w_1,w_2\in \C$ to be the coordinates in $\C^2$, $w_i=x_i+\sqrt{-1}y_i$ so that the holomorphic symplectic forms $\Omega_I$ and $\Omega_J$ are given by
		\[
		\begin{split}
		&\Omega_I=(dx_1+\sqrt{-1}dy_1)\wedge (dx_2+\sqrt{-1}dy_2),\\
		&\Omega_J = d\left(\log(e^{2x_1}+e^{2x_2})-x_2+\sqrt{-1}y_2\right)\wedge
		d\left(\log(e^{2x_1}+e^{2x_2})-x_1-\sqrt{-1}y_1\right).
		\end{split}
		\]
		In these coordinates, the vector field $X$ generating the $S^1$ action is given by $X=\del_{y_1}+\del_{y_2}$, and the moment map can be recovered from $i_X\Omega_I=d\mu_1-\sqrt{-1}d\mu_2$ and $i_X\Omega_J=d\mu_1-\sqrt{-1}d\mu_3$:
		\[
		\begin{split}
		\mu_1&=y_1-y_2=\arg z_1-\arg z_2,\\
		\mu_2&=x_1-x_2=\log\frac{|z_1|}{|z_2|},\\
		\mu_3&=x_1+x_2-2\log(e^{2x_1}+e^{2x_2})=\log\frac{|z_1 z_2|}{(|z_1|^2+|z_2|^2)^2}.
		\end{split}
		\]
		The functions $p$ and $W$ in this case are
		\[
		p=\frac{e^{2x_2}-e^{2x_1}}{e^{2x_2}+e^{2x_1}}=\frac{|z_2|^2-|z_1|^2}{|z_1|^2+|z_2|^2}, \qquad W^{-1}=g(X,X)=1.
		\]
	\end{ex}

	\begin{ex}[Generalized K\"ahler structure on the diagonal Hopf surfaces]\label{ex:hopf_diagonal_gk}
		Let $M$ be a diagonal Hopf surface with parameters $\alpha,\beta\in \{z\in\C\ |\ |z|>1\}$:
		\[
		M=(\C^2_{z_1,z_2}\backslash\{0\})/\Gamma,\quad \Gamma\colon (z_1,z_2)\mapsto (\alpha z_1, \beta z_2).
		\]
		In~\cite{SU} we have described a family of GK structures on $M$ determined by one scalar function. Let us review this construction and reinterpret it through the lens of Theorem~\ref{t:nondegenerate_gk_description}.
		
		Let $\C^2\to(\C^*)^2/\Gamma\subset \C^2\backslash\{0\}/\Gamma=M$ be the universal cover of the nondegenerate part of the Hopf surface. Choose $w_i=\log z_i$ to be the coordinates in $\C^2$, $w_i=x_i+\sqrt{-1}y_i$ and denote $a=\log|\alpha|$, $b=\log|\beta|$.  Given a function $p\colon \R\to (-1,1)$ of a real argument $2(\frac{b}{a} x_1-x_2)$, we can define a pair of complex structures $I$ and $J$ on $\C^2$ via a pair of complex-valued symplectic forms (in~\cite{SU} we used a function $k=(p+1)/2$ instead of $p$):
		\[
		\begin{split}
		\Omega_I&=dw_1\wedge dw_2,\\
		\Omega_J&=\left(dw_1-\frac{a}{b}d\bar w_2\right)\wedge \left(\frac{b(1+p)}{2a}d\bar{w_1}+\frac{1-p}{2}dw_2\right).
		\end{split}
		\]
		By a direct computation, $\Omega_I,\Omega_J$ satisfy the assumptions of Proposition~\ref{p:holo_symplectic-acs}, thus we have a GK structure on $\C^2_{w_1,w_2}$. This structure descends to $(\C^*)^2_{z_1,z_2}$, and under certain assumptions on the asymptotic of $p(x)$ as $x\to\pm\infty$, extends to a smooth GK structure $(M,g,I,J)$ on our Hopf surface. The function $p$ provides the angle function of this GK structure and $g$ is given by
		\[
		g=\Re\left(\frac{b(1+p)}{2a}dw_1\otimes d\bar{w_1}+\frac{a(1-p)}{2b}dw_2\otimes d\bar{w_2}\right).
		\]

		To relate this construction to Theorem~\ref{t:nondegenerate_gk_description}, let us choose coprime $m,n\in \Z$ and introduce an $S^1$ action on $M$ preserving the entire GK structure:
		\[
		u\cdot (z_1,z_2)=(u^mz_1,u^nz_2).
		\]
		The vector field generating this action is given in coordinates $(w_1,w_2)$ by
		\[
		X=m\del_{y_1}+n\del_{y_2},
		\]
		so that
		\[
		W^{-1}=g(X,X)=\frac{m^2b^2}{2ab}(1+p)+\frac{n^2a^2}{2ab}(1-p).
		\]
		As in the previous example, we recover the moment map by computing $i_X\Omega_I$ and $i_X\Omega_J$:
		\begin{equation}\label{f:diagonal_hopf_mu}
		\begin{split}
		\mu_1&=ny_1-my_2,\\
		\mu_2&=nx_1-mx_2,\\
		\mu_3&=-n\left(\frac{b}{a} x_1+\frac{1}{2}\chi\right)-m\left(\frac{a}{b}x_2+\frac{a}{2b}\chi\right),
		\end{split}
		\end{equation}
		where $\chi\colon \R\to \R$ is an antiderivative of $p$ evaluated at $2(\frac{b}{a}x_1-x_2)$. We want to stress that this moment map $\pmb\mu$ is defined only on the universal cover of the nondegenerate part of $M$.
	\end{ex}
\begin{rmk}
	The key new feature of the above circle action on diagonal Hopf surfaces is the existence of nontrivial isotropy subgroups. Specifically, we have stabilizers
	\[
			G_{(0,z_2)}\simeq \Z_{n}\quad G_{(z_1,0)}\simeq \Z_{m}.
	\]
	Because of the nontrivial isotropy groups, the orbit space $M/S^1$ is not a manifold anymore, but rather an \emph{orbifold}. Geometrically, $M/S^1$ is the product of $S^1$ and a spindle 2-sphere $S^2(m,n)$ with two cone singularities of angles $2\pi/n$ and $2\pi/m$. In this case, $\pmb\mu$ identifies the universal cover of the smooth part of $S^1\times S^2(m,n)$ with $\R^3_{\pmb\mu}$.
	
	The presence of points with nontrivial finite stabilizers is a new phenomenon compared to $S^1$ actions on hyperK\"ahler 4-dimensional manifolds. By Lemma~\ref{l:free_nondegenerate} only points on the degeneracy locus $\mathbf{T}$ might potentially have nontrivial stabilizers.
\end{rmk}

\begin{ex}[LeBrun's GK structure on parabolic Inoue surface]
	In~\cite{LeBrun} LeBrun has constructed an explicit family of anti-self-dual metrics on blown-up Hopf surfaces and their deformations~--- parabolic Inoue surfaces. It was later observed by Pontecorvo that this Hermitian structure on Inoue surfaces naturally fits into a GK structure. Let us review this construction.
	
	Let $\mathbb H^3$ denote hyperbolic 3-space modeled on the upper half-space
	\[
	\mathbb H^3=\{(x,y,z)\in\R^3\ |\ z>0\}, \quad \til h:=\frac{dx^2+dy^2+dz^2}{z^2}.
	\]
	It is well-known that given any point $p\in\mathbb H$ there exists a positive Green's function $G_p$~--- a solution to 
	\[
	\Delta_{\til h}G_p=-2\pi \delta_p.
	\]
		
	Let $\{p_j\}_{j\in\Z}$ be a sequence of points $p_j=(0,0,\gl^j)\in\mathbb H$, $\gl>1$. Using the explicit form of $G_p$, one can check that the sum $V=1+\sum_j G_{p_j}$ is absolutely convergent and the limit solves Laplace equation $\Delta_{\til h}V=-2\pi\sum_j \delta_{p_j}$. Then we can define a 2-form $\beta=*_{\til h}dV$ representing a class in $H^2(\mathbb H\backslash\{p_j\},2\pi \Z)$ and construct a principal $S^1$ bundle $M_0$ over $\mathbb H\backslash\{p_j\}$ with connection $\eta$ and curvature form $\beta$. One can then define a complex structure $I$ as the unique complex structure such that complex-valued symplectic form
	\[
	\Omega_I:=\frac{dw}{w}\wedge\left(V\frac{dz}{z}+\sqrt{-1}\eta\right),\quad w=x+\sqrt{-1}y
	\]
	is of type $(2,0)$. This complex structure is compatible with the metric
	\[
	g=\frac{z^2}{x^2+y^2+z^2}\left( V\til h+V^{-1}\eta^2\right).
	\]
	LeBrun proved that while the metric $g$ on $M_0$ is not complete, its completion is a complex manifold obtained from $M_0$ by gluing in points $\{p_j\}$ and a punctured plane $\C^*$ along $z=0$. This complete manifold admits a free isometric $\Z$ action generated by scalings $(x,y,z)\mapsto (\gl^k x,\gl^k y,\gl^k z)$, and the quotient space $M$ is a compact complex surface, isomorphic to a \emph{parabolic Inoue surface}.
	
	It was observed by Pontecorvo~\cite[\S 2.3]{PontecorvoCS} that inversion in the hemisphere $\{x^2+y^2+z^2=1\}$ composed with the reflection in $\{x=0\}$ plane yields an orientation preserving, isometric involution $j\colon M_0\to M_0$, and denoting by $J$ the pullback of $I$ under this involution, we obtain a GK structure $(M,g,I,J)$.  The degeneracy locus of this GK structure consists of the disjoint union of an elliptic curve (the quotient of the \emph{glued in} copy of $\C^*$ along $z=0$) and a cycle of $k$ rational curves over $\{x=y=0\}\subset\mathbb H$.
	
	Now, we can relate this construction to the Gibbons-Hawking ansatz by evaluating $i_X\Omega_I$ and $i_X\Omega_J=i_X j^*(\Omega_I)$ on the universal cover of the nondegenerate part of $M$:
	\[
	\begin{split}
	\mu_1&=\arg w,\\
	\mu_2&=\log|w|=\frac{1}{2}\log(x^2+y^2),\\
	\mu_3&=\log(x^2+y^2+z^2)-\frac{1}{2}\log(x^2+y^2).
	\end{split}
	\]
	Or equivalently $\mu_+=\frac{1}{2}\log(x^2+y^2+z^2)$ and $\mu_-=\frac{1}{2}(\log(x^2+y^2)-\log(x^2+y^2+z^2))$.
	The range of the moment map is $\{\mu_-<0\}\subset \R^3_{\pmb\mu}$.
	
	We can rewrite the metric $g$ as
	\[
	g=W\underbrace{\left(\frac{z^2}{x^2+y^2+z^2}\right)^2\til h}_{:=h}+  W^{-1}\eta^2,\quad W:=\frac{x^2+y^2+z^2}{z^2}V.
	\]
	With this change of perspective, the equation $\Delta_{\til h}V=0$ translates into the equivalent form ~\eqref{f:W_laplace} for $W$ and the curvature form $\beta=*_{\til h}dV$ matches the expression in ~\eqref{f:beta0_def}.
	As before, the moment map is defined only on the universal cover of the nondgenerate set, and the entire parabolic Inoue surface is the completion of a quotient of a local model provided by Theorem~\ref{t:nondegenerate_gk_description}.
\end{ex}

\section{Generalized K\"ahler-Ricci solitons} \label{s:GKRS}

In this section we turn to analyzing generalized K\"ahler-Ricci solitons.  We first establish the fundamental point that for a pluriclosed soliton, the vector field $V = \tfrac{1}{2} (\theta^{\#} - \N f)$ is holomorphic, and $IV$ is Killing.  We thus have potentially \emph{two} Killing fields in the case of generalized K\"ahler-Ricci solitons.  By further analysis we prove in Proposition \ref{p:IV_I_J_holomorphic} that either these vector fields both vanish, yielding hyperK\"ahler structure, one is a quotient of the standard Hopf surface, or one of these vector fields is biholomorphic.  The last case of course is our main interest, yielding a structure in the setting of \S \ref{s:GKsymm}.  A key point in this setting is to obtain a scalar reduction for the soliton equation, and in particular in Proposition \ref{p:invGKsoliton} we obtain an explicit local form for the angle function $p$ in $\mathbb R^3_{\pmb \mu}$ up to a choice of two real parameters.  We finish by rederiving the solitons constructed in \cite{SU} in this ansatz, and show how it leads to a certain explicit solution $\til{W}$ to the elliptic equation \eqref{f:W_laplace2} depending on the real parameters determining $p$.

\subsection{Fundamental structure}
	Recall that a GK structure $(M,g,I,J)$ is a (steady, gradient) soliton, if there exists a function $f\colon M\to \R$ such that
	\begin{equation}\label{f:soliton}
	\left\{
	\begin{split}
		\Rc - \tfrac{1}{4} H^2 + \N^2 f =&\ 0\\
		d^* H + i_{\N f} H =&\ 0
	\end{split}
	\right.
	\end{equation}
	where $\Rc=\Rc^{M,g}$ is the Ricci curvature of $g$ and $H:=-d^c_I\gw_I=d^c_J\gw_J$.  
	
    

We recall the well-known fact that the K\"ahler-Ricci soliton equations imply that the soliton vector field $\N f$ is holomorphic, with $I \N f$ a Killing field.  It turns out that a similar phenomenon holds here for the modified vector field $\theta^{\#} - \N f$ (previously established in \cite{st-19-soliton} for the 4-dimensional case). Specifically, we have the following equivalent formulation of the soliton system. In this statement, $(M,g,I)$ is not necessarily a part of GK structure, but merely a pluriclosed Hermitian manifold.

\begin{prop} \label{p:solitonVF} Let $(M^{2n}, g, I)$ be a pluriclosed Hermitian manifold (i.e. $dd^c_I\gw_I=0$). Then $g$, $H=d^c_I\gw_I$ and $f\colon M\to \R$ solve the soliton system~\eqref{f:soliton} if and only if the vector field
\begin{align*}
V:=&\ \tfrac{1}{2} \left(\theta_I^{\#} - \N f \right).
\end{align*}
satisfies
\begin{equation}\label{f:solitonB}
\begin{cases}
\mc L_{V}I=0\\
\mc L_{IV}g=0\\
\mc L_{V}\gw_I=\rho_I^{1,1}
\end{cases}
\end{equation}
In particular $IV$ is holomorphic and Killing.
\begin{proof}
	We claim that after splitting the soliton equations~\eqref{f:soliton} by holomorphic types we have the following equivalences:
	\begin{equation}\label{f:soliton_equivalences}
	\begin{split}
	(\Rc - \tfrac{1}{4} H^2 + \N^2 f)^{2,0} &=0\
	\Longleftrightarrow\ (\mc L_V g)^{2,0}=0\\
	(d^* H + i_{\N f} H)^{2,0}&=0\ \Longleftrightarrow\ (\mc L_V\gw_I)^{2,0}=0\\
	(\Rc - \tfrac{1}{4} H^2 + \N^2 f)^{1,1} &=0\
	\Longleftrightarrow\ (\rho_I(I\cdot,\cdot)+\mc L_V g)^{1,1}=0\\
	(d^* H + i_{\N f} H)^{1,1}&=0\ \Longleftrightarrow\ (\mc L_{IV}\gw_I)^{1,1}=0.
	\end{split}
	\end{equation}
	Indeed, denote by $\N^C$ the Chern connection on $(M,g,I)$ and by $T_{ij}^k$ the components of its torsion tensor. Let $\theta_I=\theta_i dz^i+\theta_{\bar j} d\bar{z^j}$, $\theta_i=T_{ik}^k$ and $\theta_I^\#=g^{i\bar j}(\theta_i \del_{\bar j}+\theta_{\bar j} \del_i)$ be the local coordinate expressions for the Lee form and the Lee vector field respectively. 
	By the computations of~\cite[Prop.\,6.3]{PCFReg} we know that
		\begin{equation}\label{f:ricci_bismut_identity}
		\Rc-\tfrac{1}{4}H^2+\mc L_{\tfrac{1}{2}\theta_I^\#} g=-\rho_I^{1,1}(I\cdot,\cdot),
		\end{equation}
		Therefore,
		\begin{equation*}
		(\Rc-\tfrac{1}{4}H^2+\N^2 f)^{2,0}=\tfrac{1}{2}(-\mc L_{\theta_I^\#}g+\mc L_{\N f} g)^{2,0}=-(\mc L_V g)^{2,0}.
		\end{equation*}
		This establishes the first equivalence of~\eqref{f:soliton_equivalences}.
		
		Furthermore, by an explicit computation we have
		\begin{equation*}
		\begin{split}
		(d^*H)^{2,0}&=
		-\sqrt{-1}\,\bar{\del}^*\del \gw=
		\sqrt{-1}[\N^C_k T_{ij}^k+\theta_k T_{ij}^k]dz^i\wedge dz^j =
		\sqrt{-1}(\mc L_{\theta_I^\#}\gw)^{2,0}.
		\end{split}
		\end{equation*}
		Next we have
		\[
		(i_{\N f}H)^{2,0}=
		-\sqrt{-1}(i_{\N f}d\gw)^{2,0}=
		-\sqrt{-1}(i_{\N f}d\gw+dd^cf)^{2,0}=
		-\sqrt{-1}(\mc L_{\N f}\gw)^{2,0}.
		\]
		Collecting the last two identities together we find:
		\begin{equation*}
		(d^*H+i_{\N f} H)^{2,0}=2\sqrt{-1}(\mc L_V\gw)^{2,0}.
		\end{equation*}
		This establishes the second equivalence of~\eqref{f:soliton_equivalences}.
		
		The two quantities $(\mc L_V\gw)^{2,0}$ and $(\mc L_V g)^{2,0}$ represent the $g$-symmetric and $g$-antisymmetric parts of the operator $\mc L_V I$, thus the first two identities of~\eqref{f:soliton_equivalences} are equivalent to
		\[
		\mc L_V I=0.
		\]
		Using again identity~\eqref{f:ricci_bismut_identity} we conclude that
		\[
		(\Rc - \tfrac{1}{4} H^2 + \N^2 f)^{1,1}=-(\rho_I(I\cdot,\cdot)+\mc L_V g)^{1,1}.
		\]
		This is the third identity of~\eqref{f:soliton_equivalences}. If we also assume that $V$ is holomorphic, we can equivalently rewrite it as
		\[
		\rho_I^{1,1}=\mc L_V\gw_I.
		\]

		For the last identity we observe that
		\begin{equation*}
		\begin{split}
		(d^*H)^{1,1}=&
		\sqrt{-1}(\bar{\del^*}\bar{\del}-\del^*\del)\gw=
		g^{m\bar n}(
		\N^C_m T_{\bar n\bar j i}
		+\theta_m T_{\bar n \bar j i}
		-\N^C_{\bar n}T_{mi\bar j}
		-\theta_{\bar n} T_{mi\bar j}
		)
		dz^i\wedge d\bar{z^j}\\=&
		g^{m\bar n}(
		\N^C_{\bar j}T_{im\bar n}
		-\N^C_i T_{\bar j\bar n m}
		+\theta_m T_{\bar n\bar j i}
		-\theta_{\bar n}T_{mi\bar j}
		)
		=(\mc L_{I\theta^\#}\gw)^{1,1},
		\end{split}
		\end{equation*}
		where we have used the following corollary of the differential Bianchi identity:
		\[
		g^{m\bar n}(
		\N^C_m T_{\bar n \bar j i}-\N^C_{\bar n} T_{m i \bar j})
		=
		g^{m\bar n}(
		\N^C_{\bar j}T_{im\bar n}-\N^C_i T_{\bar j\bar n m}
		).
		\]
		At the same time
		\[
		(i_{\N f}H)^{1,1}=-(i_{I \N f}d\gw+\underbrace{di_{I\N f}\gw}_{d^2f=0})^{1,1}=-(\mc L_{I\N f}\gw)^{1,1}.
		\]
		Combining the above two equations gives
		\[
		(d^*H+i_{\N f}H)^{1,1}=(\mc L_{IV}\gw)^{1,1},
		\]
		as claimed. This finishes the proof of~\eqref{f:soliton_equivalences}. It remains to notice that the right hand side equations of~\eqref{f:soliton_equivalences} are immediately equivalent to~\eqref{f:solitonB}.
\end{proof}
\end{prop}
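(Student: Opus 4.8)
The statement is an equivalence between the two soliton equations in~\eqref{f:soliton} and the three conditions in~\eqref{f:solitonB}, so the natural strategy is to decompose each soliton equation into its $(2,0)+(0,2)$ and $(1,1)$ parts with respect to $I$ and to match the resulting pieces against~\eqref{f:solitonB}. Since every tensor in sight is real, the $(0,2)$ component is the complex conjugate of the $(2,0)$ component, so I would only need to track the $(2,0)$ and $(1,1)$ pieces. Throughout I would use that for a gradient field $\N f$ one has $\mc L_{\N f} g = 2\N^2 f$, so the symmetric soliton equation reads $\Rc - \tfrac14 H^2 + \tfrac12 \mc L_{\N f} g = 0$.

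The symmetric equation is handled quickly using the Bismut--Ricci identity of~\cite{PCFReg},
\[
\Rc - \tfrac14 H^2 + \mc L_{\tfrac12 \theta_I^{\#}} g = -\rho_I^{1,1}(I\cdot,\cdot),
\]
which I would take as known. Subtracting this from the soliton equation and recalling $V = \tfrac12(\theta_I^{\#} - \N f)$ rewrites the full symmetric soliton tensor as $-\rho_I^{1,1}(I\cdot,\cdot) - \mc L_V g$. Since $\rho_I^{1,1}(I\cdot,\cdot)$ is a symmetric $(1,1)$-tensor its $(2,0)$ part vanishes, so the $(2,0)$ part of the first soliton equation equals $-(\mc L_V g)^{2,0}$, while its $(1,1)$ part equals $-(\rho_I(I\cdot,\cdot) + \mc L_V g)^{1,1}$.

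The heart of the argument is the type decomposition of the torsion equation $d^* H + i_{\N f} H = 0$. Writing $H = d^c_I \gw_I = \i(\delb_I - \del_I)\gw_I$ and working in the Chern connection $\N^C$ with torsion $T_{ij}^k$ and Lee form $\theta_i = T_{ik}^k$, I would compute $(d^* H)^{2,0} = \i\,(\mc L_{\theta_I^{\#}}\gw_I)^{2,0}$ directly, and pair it with $(i_{\N f} H)^{2,0} = -\i\,(\mc L_{\N f}\gw_I)^{2,0}$, the latter obtained from $H = -i_I d\gw_I$ together with $dd^c_I f = 0$; the sum is then a nonzero multiple of $(\mc L_V \gw_I)^{2,0}$. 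The $(1,1)$ part is more delicate: after expressing $(d^* H)^{1,1}$ through $\N^C T$ and $\theta$, I would invoke the differential Bianchi identity to rewrite the mixed covariant derivatives symmetrically, producing $(d^* H)^{1,1} = (\mc L_{I\theta_I^{\#}}\gw_I)^{1,1}$, and combine it with $(i_{\N f} H)^{1,1} = -(\mc L_{I\N f}\gw_I)^{1,1}$ to obtain a multiple of $(\mc L_{IV}\gw_I)^{1,1}$.

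With these four type-decompositions in hand, the proposition follows by linear algebra. The $(2,0)$ components $(\mc L_V g)^{2,0}$ and $(\mc L_V \gw_I)^{2,0}$ encode the $g$-symmetric and $g$-antisymmetric parts of the endomorphism $\mc L_V I$, so their simultaneous vanishing is equivalent to $\mc L_V I = 0$, i.e.\ $V$---and hence $IV$---is holomorphic. Granting this, both $\mc L_V$ and $\mc L_{IV}$ preserve bidegrees, so the $(1,1)$ part of the first equation becomes $\rho_I^{1,1} = \mc L_V \gw_I$, and the $(1,1)$ part of the second becomes $(\mc L_{IV}\gw_I)^{1,1} = 0$, which for the now-holomorphic field $IV$ is the same as $\mc L_{IV} g = 0$. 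The main obstacle is squarely the explicit Hermitian computation of $d^* H$ by type, and in particular the $(1,1)$ component, where the differential Bianchi identity is essential to convert an unsymmetric second-derivative expression into a Lie derivative; the remaining steps are bookkeeping and the cited Bismut--Ricci identity.
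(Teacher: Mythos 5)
Your proposal is correct and follows essentially the same route as the paper: the same type decomposition of both soliton equations, the same appeal to the Bismut--Ricci identity of \cite{PCFReg} for the symmetric part, the same Chern-connection computation of $(d^*H)^{2,0}$ and $(d^*H)^{1,1}$ with the differential Bianchi identity used to symmetrize the $(1,1)$ piece, and the same identification of $(\mc L_V g)^{2,0}$ and $(\mc L_V\gw_I)^{2,0}$ as the two parts of $\mc L_V I$. The only difference is presentational.
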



The interpretation of the soliton equations \eqref{f:solitonB} becomes particularly useful on a non-degenerate GK manifold, since the Bismut-Ricci tensors $\rho_I$ and $\rho_J$ can be expressed through the associated \emph{Ricci potential}, as	
derived in \cite{ASNDGKCY}.  We recall the basic elements, referring to \cite{ASNDGKCY} for further detail.  Given a nondegenerate generalized K\"ahler structure $(M^{2n}, g, I, J)$, let
\begin{align*}
\Phi = \frac{1}{2}\log \frac{\det (I - J)}{\det (I + J)}.
\end{align*}
In dimension $4$ this is determined by $p$, in particular
\begin{align*}
\Phi = \log \frac{1-p}{1 + p}.
\end{align*}
The function $\Phi$ is in a sense analogous to the usual Ricci potential in K\"ahler geometry.  In particular, one has
\begin{align*}
\rho_I = - \tfrac{1}{2} d J d \Phi, \qquad \rho_J = - \tfrac{1}{2} d I d \Phi.
\end{align*}
Furthermore, (\cite{ASNDGKCY} Lemma 3.8), the difference of Lee vector fields is $\Omega$-Hamiltonian with potential function $\Phi$, i.e.
\begin{equation}\label{f:thetaPhi}
\begin{split}
\left(\theta_I^{\#} - \theta_J^{\#}\right) = \gs d \Phi.
\end{split}
\end{equation}

\begin{defn} \label{d:GKsoliton} If a generalized K\"ahler manifold $(M^{2n}, g, I, J)$ is a soliton, then both pluriclosed structures $(M^{2n},g,I)$ and $(M^{2n},g,J)$  must satisfy equations~\eqref{f:solitonB}. Thus we obtain vector fields
	\begin{align*}
	V_I = \tfrac{1}{2} \left( \theta_I^{\#} - \N f \right), \qquad V_J = \tfrac{1}{2} \left( \theta_J^{\#} - \N f \right),
	\end{align*}
	which are holomorphic with respect to the corresponding complex structures, as well as Killing fields $I V_I$ and $J V_J$.  We say the \emph{rank} of the soliton is
	\begin{align*}
	\sup_{p \in M} \dim \spn \{I V_I, J V_J \} \subset T_p M.
	\end{align*}
\end{defn}

\begin{ex}[GK solitons the standard Hopf surface and its universal cover]\label{ex:hopf_round_soliton}
	Let $(M,g,I,J)$ be the standard Hopf surface with the round metric and GK structure of Example~\ref{ex:Hopf1}. This structure solves~\eqref{f:soliton} with $f=0$. Since $g$ is conformally K\"ahler with a factor $(|z_1|^2+|z_2|^2)^{-1}$, we find that $\theta=-d\log(|z_1|^2+|z_2|^2)$. Hence $V_I$ and $V_J$ are nowhere zero and are given by $\pm \tfrac{1}{2}\log(|z_1|^2+|z_2|^2)$. In particular $IV_I$ and $JV_J$ are not proportional to each other, so $(M,g,I,J)$ with $f=0$ is a GK soliton of rank two.
	In~\cite{GauduchonIvanov} Gauduchon and Ivanov proved that this is the only non-K\"ahler compact complex surface admitting a GK soliton with $f=0$.
	
	Let $\til M$ be the universal cover of $M$ with the GK structure inherited from $M$. Clearly, we still can think of $(\til M,g,I,J)$ and a function $f_0=0$ as a GK soliton of rank two. Consider now a new function $f_1:=-\tfrac{1}{2}\log(|z_1|^2+|z_2|^2)$. The gradient flow of this function preserves the entire GK structure, therefore $f_1$ (and any of its multiples) is also a solution to the soliton system. For this choice of the soliton function, we will have $V_J=0$ and $V_I=2\N f_1$. In particular, $(\til M,g,I,J)$ with a function $f_1$ is now a rank \emph{one} soliton. However, since $f_1$ does not descend to the quotient $M$, this solution makes sense only on the universal cover of $M$.
	
	The observed phenomenon~--- different soliton functions compatible with the same GK structure~--- can potentially occur only on non-compact manifolds with a gradient vector field preserving the entire GK structure. In Example~\ref{ex:hopf_diagonal_soliton} below we review our construction from~\cite{SU} and show that the universal cover of a diagonal Hopf surface admits a rank one GK soliton.
\end{ex}

Now we observe further fundamental properties of the vector fields $V_I, V_J$ on the generalized K\"ahler Ricci solitons.

\begin{lemma} \label{l:jointholo} Let $(M^{2n}, g, I, J)$ be a generalized K\"ahler-Ricci soliton.  Then either
\begin{enumerate}
\item $g$ is compatible with a one-parameter family of distinct complex structures,
\item $I V_I$ is $J$-holomorphic and $J V_J$ is $I$-holomorphic.  In this case $[I V_I, J V_J] = 0$.
\end{enumerate}
\begin{proof} Suppose $I V_I$ is not $J$-holomorphic.  Since $I V_I$ is an $I$-holomorphic Killing field, we can pull back the GK structure $(g, I, J)$ by the one-parameter family of diffeomorphisms $\phi_t$ generated by $I V_I$ to see that $(\phi_t^* g, \phi_t^* I, \phi_t^* J) = (g, I, \phi_t^* J)$ is GK.  In particular, $g$ is compatible with the one-parameter family of complex structures $\phi_t^* J$, which are distinct since $I V_I$ is not $J$-holomorphic.  An identical argument holds if $J V_J$ is not $I$-holomorphic.

For the second claim, we prove that if $IV_I$ is $J$-holomorphic, then $[I V_I, J V_J] = 0$. Indeed, since $IV_I$ is Killing and $I$-holomorphic, we find $\mc L_{IV_I}\theta_I=\mc L_{IV} \left(*d^c_I\gw_I\right)=0$, thus
\begin{align*}
\mc L_{I V_I} df=\mc L_{IV_I} (-\theta_I+df)=-2g([IV_I,V_I],\cdot)=0.
\end{align*}
Since $I V_I$ is Killing it also follows that $\mc L_{IV_I}(g^{-1}df)=[I V_I, \N f] = 0$.  Using that $I V_I$ is both $I$ and $J$-holomorphic we then obtain
 \begin{align*}
 [I V_I, J V_J] = J [I V_I, V_J] = 
- J [I V_I, V_I] = 0,
\end{align*}
where we have used the identity $V_J=-V_I-\N f$.
\end{proof}
\end{lemma}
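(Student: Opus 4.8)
The plan is to split into the two alternatives of the dichotomy according to whether the Killing fields $IV_I$ and $JV_J$ happen to be holomorphic for the \emph{other} complex structure, and to reduce the bracket identity to a preservation statement for the soliton potential. First I would treat the case $\mc L_{IV_I}J\neq 0$. By Proposition~\ref{p:solitonVF} the field $IV_I$ is simultaneously Killing and $I$-holomorphic, so its flow $\phi_t$ satisfies $\phi_t^*g=g$ and $\phi_t^*I=I$. Since the generalized K\"ahler condition is diffeomorphism-invariant, every pullback $(g,I,\phi_t^*J)$ is again a GK structure, and in particular $g$ is compatible with each $\phi_t^*J$; because $IV_I$ is not $J$-holomorphic these complex structures are pairwise distinct for small $t$, putting us in case~(1). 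Running the identical argument with the flow of $JV_J$ shows that if $JV_J$ fails to be $I$-holomorphic we again land in case~(1). Thus I may assume both $IV_I$ is $J$-holomorphic and $JV_J$ is $I$-holomorphic, which is exactly the hypothesis of case~(2), and by symmetry it suffices to prove $[IV_I,JV_J]=0$ assuming $IV_I$ is $J$-holomorphic.

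For this I would first record two preliminary vanishings. Since $V_I$ is $I$-holomorphic, $\mc L_{V_I}I=0$ gives $[V_I,IV_I]=I[V_I,V_I]=0$, hence $[IV_I,V_I]=0$. Second, as $IV_I$ preserves the Hermitian pair $(g,I)$ it preserves $\gw_I$, the torsion $H$, and therefore the Lee form, so $\mc L_{IV_I}\theta_I=0$. The key step is to deduce that $IV_I$ also preserves the soliton potential. Rewriting the definition $2V_I=\theta_I^{\#}-\N f$ (Definition~\ref{d:GKsoliton}) in dual form as $df=\theta_I-2g(V_I,\cdot)$, applying $\mc L_{IV_I}$, and using that $IV_I$ is Killing together with the two facts just established gives
\[
\mc L_{IV_I}df=\mc L_{IV_I}\theta_I-2g([IV_I,V_I],\cdot)=0.
\]
Because $IV_I$ is Killing it commutes with raising indices, so $[IV_I,\N f]=\mc L_{IV_I}(g^{-1}df)=0$.

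With these two commutators in hand the conclusion follows by pushing $J$ through the bracket, which is legitimate precisely because $IV_I$ is $J$-holomorphic, and invoking the identity $V_J=-V_I-\N f$:
\[
[IV_I,JV_J]=J[IV_I,V_J]=-J\big([IV_I,V_I]+[IV_I,\N f]\big)=0.
\]
I expect the main obstacle to be the step $\mc L_{IV_I}df=0$, equivalently $[IV_I,\N f]=0$: this is the only place where the soliton potential $f$ gets tied to the Killing symmetry, and it genuinely requires the simultaneous use of the Killing property, the $I$-holomorphicity of $V_I$ (through $[IV_I,V_I]=0$), and the invariance of the Lee form $\theta_I$. Once it is established, the final bracket collapses immediately from the $J$-holomorphicity of $IV_I$, so the heart of the argument is really the interaction between the potential and the preferred Killing field rather than the formal bracket manipulation.
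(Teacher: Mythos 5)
Your proposal is correct and follows essentially the same route as the paper: the same pullback-by-the-flow argument for case (1), and for case (2) the same chain $\mc L_{IV_I}\theta_I=0 \Rightarrow \mc L_{IV_I}df=0 \Rightarrow [IV_I,\N f]=0$, concluding via $V_J=-V_I-\N f$ and the $J$-holomorphicity of $IV_I$. You even make explicit the small step $[IV_I,V_I]=0$ (from $\mc L_{V_I}I=0$) that the paper uses implicitly.
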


Using structures special to four dimensions, we can show in this case that without loss of generality, $IV_I$ is $J$-holomorphic, unless $(M,g,I,J)$ has a very special geometry.

\begin{prop}\label{p:IV_I_J_holomorphic} Let $(M^4, g, I, J)$ be a generalized K\"ahler-Ricci soliton.  Up to exchanging the roles of $I$ and $J$, at least one of the following holds:
\begin{enumerate}
\item Both $V_I = 0$ and $V_J = 0$, and $(M^4, g, I)$ is hyperK\"ahler.
\item $I V_I$ is nonzero and $J$-holomorphic.
\item $I V_I$ is nonzero, and is not $J$-holomorphic, and $J V_J$ is nonzero, and is not $I$-holomorphic.  In this case $(M^4, g, I)$ and $(M^4, g, J)$ are isometric to a quotient of the universal cover of the standard Hopf surface.
\end{enumerate}

\begin{proof} To address case (1), assume that both $\theta_I^{\#}-\N f$ and $\theta_J^{\#}-\N f$ vanish. Since on a $4$-dimensional GK manifold one has $\theta_I=-\theta_J$, we have $\theta_I^{\#} =\N f=0$, i.e., $(M,g,I)$ and $(M,g,J)$ are K\"ahler, Ricci flat and in fact $(g, I)$ is part of a hyperK\"ahler structure as discussed in \S \ref{s:background}.

Now assume without loss of generality that $V_I$ is nonzero.  We first suppose that $V_J$ is zero.  Using that $\theta_I = - \theta_J$ and $\theta_J^{\#} = \N f$ it follows that 
\begin{align*}
\sigma d \Phi = (\theta_I - \theta_J)^{\#} = -2 (\theta_J)^{\#} = -2 \N f.
\end{align*}
Noting that now $V_I = -\N f$, we have that $I \N f$ is $I$-holomorphic and Killing.  Also, since $V_J = 0$, by \eqref{f:solitonB} we have $\rho_J^{1,1} = 0$. We also know that $\rho_J^{2,0}=0$, since Bismut-Ricci curvature is conformally invariant, and $(M,g,J)$ is conformally equivalent to a K\"ahler metric $(M,e^{-f}g,J)$. Thus
\begin{align*}
-2\mc L_{I \N f} \Omega = d i_{I \sigma d \Phi} \Omega = 2d I d \Phi = -2 \rho_J = 0.
\end{align*}
Also, since $\sigma$ is type $(2,0) + (0,2)$ with respect to $I$,
\begin{align*}
\mc L_{I \N f} \Phi = I d \Phi (\N f) = \tfrac{1}{2} \sigma( d \Phi, I d \Phi) = 0.
\end{align*}
To summarize, we have shown that the vector field $I V_I = - I \N f$ preserves $g$, $\sigma$, and $\Phi = \log \frac{1-p}{1+p}$.  By rearranging formula \eqref{f:K}, it is possible to express $J$ in terms of $\sigma, g$, and $p$, and thus
\begin{align*}
\mc L_{I V_I} J = \mc L_{\N f} J = 0,
\end{align*}
yielding case (2).

Now we address the remaining case, where $I V_I$ is nonzero and not $J$-holomorphic and $J V_J$ is nonzero and not $I$-holomorphic.  Using first the hypothesis that $I V_I$ is not $J$-holomorphic, by Lemma \ref{l:jointholo} it follows that $g$ is compatible with a one-parameter family of complex structures, given by $J_t = \phi_t^* J$ where $\phi_t$ is generated by $I V_I$.  It follows from (\cite{PontecorvoCS} Corollary 1.6) that $(M^4, g)$ is antiself-dual, so $W_+ = 0$.  As is well-known, in the K\"ahler setting $W_+$ is determined by the scalar curvature.  In the Hermitian setting $W_+$ is determined by the scalar curvature and $d \theta$, and in particular by (\cite{BoyerConformalDuality} Lemma 1), the vanishing of $W_+$ implies that $d \theta_I$ is antiself-dual (noting that the proof of the equivalence of items 1-3 in that statement is local and does not need the compactness hypothesis and hence applies in this setting).  Taking the Hodge dual of the second soliton equation, we get an equivalent identity
\begin{align*}
d (e^{-f} \theta_I) = e^{-f} \left( d \theta_I - df \wedge \theta_I \right)=0.
\end{align*}
It follows that $df \wedge \theta_I$ is also antiself-dual, but as a wedge product of one-forms this can only be antiself-dual if it vanishes, hence we conclude $d \theta_I = 0$.  By passing to the universal cover we obtain a smooth function $\phi$ such that $\theta_I = d \phi$.  Now note that we have a one-parameter family of generalized K\"ahler structures $(g, I, J_t)$, all of which have generically nondegenerate associated Poisson tensor.  It follows that
\begin{align*}
\theta_{J_t} = - \theta_I = - d \phi.
\end{align*}
Hence we obtain a one-parameter family of (possibly incomplete) K\"ahler structures $(g_0 = e^{- \phi} g, J_t)$ with fixed Riemannian metric. Thus the reduced holonomy group of $(M,g_0)$ commutes with $J_t$, so must be a subgroup of $SU(2)$, and $g_0$ is hyperK\"ahler with complex structures $(I', J, K)$ (note that the original complex structure $I$ is \emph{not} part of this hyperK\"ahler structure, otherwise the original structure is already hyperK\"ahler).  Note that, by the conformal invariance of the Bismut-Ricci tensor in dimension $4$, it then follows that $\rho_J = 0$, and thus $V_J$ is a Killing field.  

Now we use that $J V_J$ is nonzero, and also not $I$-holomorphic.  The same line of arguing above holds with the roles of $I$ and $J$ reversed, leading to the conclusion that $\rho_I = 0$ and $V_I$ is a Killing field.  It follows that $\theta_I = V_I - V_J$ is a nonzero Killing field, and thus also is parallel.  The equation $(\rho_I)^{1,1} = 0$ now takes the simplified form
\begin{gather}
\Rc - \tfrac{1}{4} H^2 = 0.
\end{gather}
The universal cover of $(M^4, g, I, J)$ now splits according to the leaves of $\theta_I$, and by the last equation and the fact that $H = * \theta_I$ it follows that the transverse metric has constant positive sectional curvature, and is thus a quotient of $S^3$.  It follows that the universal cover of $(M^4, g)$ is a standard cylinder $S^3 \times \mathbb R$, and the complex structure $I$ and $J$ must be that of the lift of a standard Hopf surface by \cite{GauduchonWeyl}.
\end{proof}
\end{prop}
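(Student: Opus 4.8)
The plan is to run a trichotomy governed by the vanishing and mutual holomorphicity of the two soliton fields $V_I$ and $V_J$, using at each stage the four-dimensional identity $\theta_I=-\theta_J$ together with the Ricci-potential formulas $\rho_I=-\tfrac{1}{2}dJd\Phi$, $\rho_J=-\tfrac{1}{2}dId\Phi$ and the Hamiltonian identity \eqref{f:thetaPhi}, $(\theta_I^{\#}-\theta_J^{\#})=\gs d\Phi$. By Proposition~\ref{p:solitonVF} applied to both Hermitian structures, $V_I$ (resp.\ $V_J$) is $I$- (resp.\ $J$-) holomorphic with $IV_I$, $JV_J$ Killing, while Lemma~\ref{l:jointholo} records that failure of $IV_I$ to be $J$-holomorphic forces $g$ to be compatible with a one-parameter family of distinct complex structures. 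These facts organize the casework.

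First I would dispose of case (1): if $V_I=V_J=0$, then combining the defining equations with $\theta_I=-\theta_J$ forces $\theta_I^{\#}=\N f=0$, whence $\theta_I=0$ makes $H=0$ and the first soliton equation reduces to $\Rc=0$; a Ricci-flat Kähler surface has holonomy in $SU(2)$ and is hyperKähler. Assuming henceforth $V_I\neq 0$, I would treat the subcase $V_J=0$, aiming for case (2). Here $\theta_J^{\#}=\N f$ gives $V_I=-\N f$, so $IV_I=-I\N f$ is $I$-holomorphic and Killing, and $\gs d\Phi=-2\N f$. Since $V_J=0$, equation \eqref{f:solitonB} gives $\rho_J^{1,1}=0$, while $\rho_J^{2,0}=0$ because $(M,g,J)$ is conformally Kähler; thus $\rho_J=0$. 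From this I would deduce $\mc L_{IV_I}\Omega=0$ and $\mc L_{IV_I}\Phi=0$, i.e.\ $IV_I$ preserves $\gs$, $g$ and $p$, and since \eqref{f:K} expresses $J$ through $\gs$, $g$ and $p$, this yields $\mc L_{IV_I}J=0$.

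The remaining and hardest case is (3), with $IV_I$ nonzero and not $J$-holomorphic and symmetrically for $JV_J$. The plan is to invoke Lemma~\ref{l:jointholo} to produce a one-parameter family $J_t=\phi_t^*J$ of $g$-compatible complex structures, then apply Pontecorvo's theorem \cite{PontecorvoCS} to conclude $(M^4,g)$ is anti-self-dual, so $W_+=0$. The crucial step is to upgrade this to $d\theta_I=0$: Boyer's characterization \cite{BoyerConformalDuality} shows $W_+=0$ makes $d\theta_I$ anti-self-dual, while the Hodge dual of the second soliton equation reads $d(e^{-f}\theta_I)=0$, i.e.\ $d\theta_I=df\wedge\theta_I$; a decomposable two-form can be anti-self-dual only if it vanishes, so both sides vanish. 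With $\theta_I$ closed I pass to the universal cover, write $\theta_I=d\phi$, and note that $(e^{-\phi}g,J_t)$ is a one-parameter family of Kähler structures sharing the single metric $g_0=e^{-\phi}g$; its holonomy commutes with every $J_t$, lies in $SU(2)$, and $g_0$ is hyperKähler. Conformal invariance of the Bismut-Ricci form then gives $\rho_J=0$, making $V_J$ Killing, and the symmetric argument gives $\rho_I=0$ and $V_I$ Killing; hence $\theta_I^{\#}=V_I-V_J$ is a nonvanishing closed Killing field, therefore parallel.

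Finally, with $\rho_I=0$ the soliton equation collapses to $\Rc-\tfrac{1}{4}H^2=0$, and since $H=*\theta_I$ with $\theta_I$ parallel, the universal cover splits isometrically along the leaves of $\theta_I$; the transverse three-dimensional factor acquires constant positive curvature and is a quotient of $S^3$, so the universal cover is the cylinder $S^3\times\R$, and the Gauduchon-Weyl classification \cite{GauduchonWeyl} identifies $I$ and $J$ as the complex structures of a lifted standard Hopf surface. I expect the main obstacle to be exactly the chain in case (3) that converts the soft ``one-parameter family of complex structures'' into the rigid conclusion $d\theta_I=0$: it couples the anti-self-duality input of Pontecorvo and Boyer with the precise form of the soliton torsion, and one must verify that these conformal-geometry statements, though usually phrased for compact manifolds, remain valid locally on our possibly noncompact $M$.
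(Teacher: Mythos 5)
Your proposal is correct and follows essentially the same route as the paper's own proof: the identical trichotomy on the vanishing of $V_I,V_J$, the same use of $\rho_J=0$ plus the expression of $J$ through $\gs,g,p$ in case (2), and in case (3) the same chain through Pontecorvo's anti-self-duality criterion, Boyer's lemma, the Hodge dual of the torsion equation forcing $d\theta_I=0$, the conformally hyperK\"ahler family, and the cylinder splitting with the Gauduchon classification. No substantive differences to report.
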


\begin{rmk}\label{r:rank1_s1}
	Proposition~\ref{p:solitonVF} implies that the dimension of the isometry group $\Isom(g)$ of rank one soliton $(M,g,I,J)$ is at least 1.  Going further, it is possible to reduce the study of the `least symmetric' rank one solitons to the study of the rank one solitons among GK manifolds with $S^1$ symmetry as in Section~\ref{s:GKsymm}.
\end{rmk}

\begin{prop}\label{p:one-diml_isometry}
	Let $(M,g,I,J)$ be a complete GK soliton of rank one. Assume that the isometry group $\Isom(g)$ is one-dimensional. Then $\Isom(g)$ preserves GK structure and we have two possibilities
	\begin{enumerate}
		\item $G\simeq \R$ acts freely and properly on $M$. In particular, there is a free $S^1\simeq\R/\Z$ action on a GK soliton $(M/\Z,g,I,J)$;
		\item $G\simeq S^1$.
	\end{enumerate}
\end{prop}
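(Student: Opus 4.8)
The plan is to produce a single nonvanishing Killing field that preserves the \emph{entire} GK structure, identify the one-parameter group it generates with the identity component of $\Isom(g)$, and then use properness of the isometric action to separate the $\R$ and $S^1$ cases. First I would extract the distinguished field from the rank hypothesis. Since the soliton has rank one, $\dim\spn\{IV_I,JV_J\}=1$ at some point, so we are neither in the hyperK\"ahler case $V_I=V_J=0$ (rank zero) of Proposition~\ref{p:IV_I_J_holomorphic}(1), nor in case (3), where the structure is a quotient of the standard Hopf surface and a direct computation shows $IV_I,JV_J$ are generically independent, i.e. the rank is two. Hence, after possibly exchanging the roles of $I$ and $J$, Proposition~\ref{p:IV_I_J_holomorphic}(2) applies and $X:=IV_I$ is nonzero and $J$-holomorphic. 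As $X$ is also $I$-holomorphic and Killing by Proposition~\ref{p:solitonVF}, its flow fixes $g$, $I$ and $J$, so $X$ preserves the GK structure; this is exactly the content of Remark~\ref{r:rank1_s1}.

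Next I would invoke completeness of $(M,g)$. On a complete Riemannian manifold every Killing field is complete, so $X$ generates a genuine one-parameter subgroup of $\Isom(g)$, and $\Isom(g)$ is a Lie group acting properly on $M$ whose Lie algebra is the space of Killing fields. Since that space is one-dimensional and contains the nonzero field $X$, it is spanned by $X$; therefore the identity component $\Isom_0(g)$ is precisely the flow of $X$ and thus preserves the GK structure. Being a connected one-dimensional Lie group, $\Isom_0(g)$ is isomorphic to $\R$ or to $S^1$, the latter being case (2).

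It remains to treat $\Isom_0(g)\cong\R$. Being the identity component, it is closed in $\Isom(g)$, so it inherits a proper action on $M$; properness forces every stabilizer to be a compact subgroup of $\R$, hence trivial, so the action is free and proper, which is case (1). To produce the residual circle action I would pass to $M/\Z$ for a lattice $\Z\subset\R$: the quotient is a smooth manifold carrying a free $S^1\simeq\R/\Z$ action. Moreover the soliton potential descends, since with $\N f=-(V_I+V_J)$ one computes $\mc L_X f=g(\N f,IV_I)=-g(V_J,IV_I)$, and rank one forces $JV_J\parallel IV_I$, whence $V_J$ is a multiple of $J\,IV_I$ and $g(V_J,IV_I)=0$ because $g(J\,IV_I,IV_I)=0$; thus $f$ is $X$-invariant and $(M/\Z,g,I,J)$ is again a GK soliton with the inherited structure.

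The main obstacle is the bookkeeping around the trichotomy of Proposition~\ref{p:IV_I_J_holomorphic}: one must verify that rank one genuinely lands in case (2) and not case (3), which rests on knowing that the Hopf case has rank two. A secondary delicate point is the promotion from the identity component to the full group: an arbitrary isometry normalizes $\Isom_0(g)$ and hence sends $X$ to a scalar multiple of itself, and one must check that it carries $(g,I,J)$ to a GK structure with the same underlying metric, which is the precise sense in which $\Isom(g)$ preserves the GK structure.
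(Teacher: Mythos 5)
Your argument is correct and follows essentially the same route as the paper: Proposition \ref{p:IV_I_J_holomorphic} produces a nonzero Killing field $IV_I$ preserving the whole GK structure, this spans the one-dimensional isometry algebra, and Yau's properness result together with compactness of isotropy subgroups yields the $\R$/$S^1$ dichotomy and freeness in the $\R$ case. The only cosmetic difference is that the paper excludes case (3) of Proposition \ref{p:IV_I_J_holomorphic} by noting that quotients of the standard Hopf surface have $\dim\Isom(g)>1$, whereas you exclude it via the rank-two property; your extra check that the soliton potential descends to $M/\Z$ is a detail the paper leaves implicit.
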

\begin{proof}
	First we note that we can assume $IV_I$ is both $I$ and $J$ holomorphic, since otherwise by Proposition~\ref{p:IV_I_J_holomorphic} $(M,g)$ is isometric to a quotient of the standard Hopf surface, and in particular $\dim\Isom(g) > 1$.
	
	Without loss of generality we assume that $IV_I$ is a nonzero vector field. Then $IV_I$ generates a one-dimensional subgroup $G$ of $\Isom(g)$. Since the isometry group is one-dimensional itself, $\Isom(g)=G$. Thus either $G\simeq \R$ or $G\simeq \R$. In either case the action of $G$ must be proper~\cite[Prop.\,1]{YauIsometries}, and if $G\simeq \R$, then the action must be also free, since the isotropy subgroups of an isometric action are compact.
\end{proof}

\subsection{Invariant solitons}

Let $(M,g,I,J)$ be a GK soliton of rank one. Then there is a generically nonzero vector field $X$ preserving the structure, and we can use the construction of \S\ref{s:GKsymm} to give a local description of $(M,g,I,J)$ in terms of $p$ and $W$. In this subsection we show that on a rank one soliton $p$ can be completely described locally in the nondegenerate locus. The key ingredients of this description are the simple expression of the Bismut-Ricci curvature and the fact that the difference of the Lee vector fields is a Hamiltonian vector field as discussed at the start of this section.

\begin{prop} \label{p:invGKsoliton} Suppose $(M^{2n}, g, I, J)$ is a nondegenerate generalized K\"ahler-Ricci soliton.  Then the soliton vector fields $V_I$ and $V_J$ preserve $\Omega$, and
\begin{align*}
\Phi =&\ 2\psi_{V_I} - 2\psi_{V_J}+\mbox{const},
\end{align*}
where $\psi_{V_I}, \psi_{V_J}$ denote the associated local Hamiltonian potentials.

In particular, in the case $n=2$, if one has a rank one soliton with $V_I = {a_I} IX$, $V_J = {a_J} JX$ then locally $\Phi$ satisfies
\begin{equation}\label{f:Phi_soliton}
\Phi = a_+ \mu_+ + a_- \mu_-+ \mbox{const},\quad a_+=-2a_I+2a_J,\ a_-:=-2a_I-2a_J
\end{equation}
with the associated soliton function satisfying
\begin{align*}
df &= \frac{1}{2}\left(p(a_+d\mu_++a_-d\mu_-)- a_+d\mu_+ + a_-d\mu_-\right).
\end{align*}
Conversely, if $\Phi$ is given by~\eqref{f:Phi_soliton}, then $ \frac{1}{2}\left(p(a_+d\mu_++a_-d\mu_-)- a_+d\mu_+ + a_-d\mu_-\right)$ is closed, and if it is exact with potential function $f$, then $(M,g,I,J)$ is a rank one soliton with soliton potential function $f$.
\begin{proof} 
Consider the solution to GKRF in the $I$-fixed gauge with initial data $(g, I, J)$.  By (\cite{ASNDGKCY} Lemma 5.2) we know that $\dt \Omega = 0$.  On the other hand all of the data evolves by pullback by the family of diffeomorphisms generated by $V_I$.  Thus
\begin{align*}
0 =&\ \dt \Omega = \mc L_{V_I} \Omega = d \left( i_{V_I} \Omega \right).
\end{align*}
The same argument using GKRF in the $J$-fixed gauge implies that vector field $V_J$ also preserves symplectic structure $\Omega$. Let $\psi_{V_I}$ and $\psi_{V_J}$ be local Hamiltonian potentials for vector fields $V_I$ and~$V_J$:
\[
i_{V_I}\Omega=d\psi_{V_I},\quad i_{V_J}\Omega=d\psi_{V_J}.
\]
At the same time, by definition of $V_I$ and $V_J$  we have $V_I = \tfrac{1}{2} \left(\theta_I^{\#} - \N f \right)$, $V_J = \tfrac{1}{2} \left( \theta_J^{\#} - \N f \right)$.  Thus by \eqref{f:thetaPhi} we see that $2(V_I - V_J)$ is $\Omega$-Hamiltonian with potential function $\Phi$, therefore:
\begin{align*}
\Phi =&\ 2\psi_{V_I} - 2\psi_{V_J}+\mbox{const}.
\end{align*}

If $n=2$ and $(M,g,I,J)$ is a rank one soliton, then there exists a vector field $X$ preserving the GK structure and constants $a_I,a_J$ such that $V_I=a_IIX$ and $V_J=a_JJX$. By the definition of $\mu_2$ and $\mu_3$, we have
\[
i_{V_I}\Omega=a_Id\mu_2,\quad i_{V_J}\Omega=a_Jd\mu_3.
\]
Hence, by the first part of the proposition, $\Phi =2{a_I} \mu_2 - 2{a_J} \mu_3+\mbox{const}$. Introducing new constants $a_+:=2a_I-2a_J$, $a_-:=2a_I+2a_J$  we can rewrite $\Phi$ as
\begin{align*}
\Phi = {a_+} \mu_+ + {a_-} \mu_-+\mbox{const}.
\end{align*}
To find the soliton function $f$, we observe that $- \N f = V_I + V_J$, therefore
\begin{align*}
i_{\N f} \Omega = -{a_I} d \mu_2 -{a_J} d \mu_3=-\tfrac{1}{2}(a_-d\mu_++a_+d\mu_-),
\end{align*}
where the second identity is the definition of constants $a_-$ and $a_+$. Referring to the explicit form of $\Omega$ in (\ref{f:Omegaexplicit}) then gives the formula for $\N f$. Since $g=Wh+W^{-1}\eta^2$, the latter yields the claimed expression for $df$.

Conversely, assume that in the Gibbons-Hawking ansatz of Theorem~\ref{t:nondegenerate_gk_description_v2} we have locally
\[
\Phi=a_+\mu_++a_-\mu_-+\mbox{const}
\]
for some constants $a_+$ and $a_-$. Then $p$ is also a function of $a_+\mu_++a_-\mu_-$, forcing the 1-form $-a_+d\mu_++a_-d\mu_-+p(a_+d\mu_++a_-d\mu_-)$ to be closed. Assume further that this form is exact and equals $df$ for some function $f$. We claim that the corresponding GK structure $(M,g,I,J)$ is a GK soliton solving~\eqref{f:solitonB} with the soliton function $f$.

First we note that starting with the exact expressions for $\Phi$ and $df$ and following the above computations backwards, one concludes that $V_I=\tfrac{1}{2}(\theta_I-\N f)$ and $V_J=\tfrac{1}{2}(\theta_J-\N f)$ are given by
\[
V_I=a_IIX,\quad  V_J=a_J JX.
\]
Since $IV_I$ is Killing and holomorphic, it remains to prove that
\[
\rho_I^{1,1}=\mc L_{V_I}\gw_I.
\]
Using  the relation between $\gw_I=g(I\cdot,\cdot)$ and $\Omega=2[I,J]^{-1}g$ we find:
\[
\gw_I=\tfrac{1}{2}I[I,J]\Omega=-(J\Omega)^{1,1}.
\]
Together with the identity $\rho_I=-\tfrac{1}{2}dJd\Phi$ this gives
\[
\begin{split}
\rho_I^{1,1}-\mc L_{V_I}\gw_I&=
-\left(\tfrac{1}{2}dJd\Phi+\mc L_{V_I}(J\Omega)\right)^{1,1}=
\left(-dJi_{V_I-V_J}\Omega+d i_{V_I}J\Omega\right)^{1,1}\\&=
\left(-dJi_{V_I}\Omega+di_{V_I}J\Omega \right)^{1,1}=0
\end{split}
\]
where in the third identity we used the fact that $Ji_{V_J}\Omega=-a_Ji_X\Omega$ is closed. This proves the claim that $(M,g,I,J)$ with $\Phi$ locally given by $a_+\mu_++a_-\mu_-+\mbox{const}$ is a rank one GK soliton, as long as the closed form $-a_+d\mu_++a_-d\mu_-+p(a_+d\mu_++a_-d\mu_-)$ is exact.
\end{proof}
\end{prop}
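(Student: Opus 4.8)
The plan is to treat the three assertions in turn, using the flow interpretation of solitons for the first and then the explicit moment-map coframe of \S\ref{ss:inv_gk_str} for the second and third. First I would show that $V_I$ and $V_J$ preserve $\Omega$. The key input is the fact, recorded in \cite{ASNDGKCY}, that along generalized K\"ahler-Ricci flow run in the $I$-fixed gauge the symplectic form is stationary, $\dt \Omega = 0$. A soliton is by definition a self-similar solution evolving purely by the one-parameter group of diffeomorphisms generated by $V_I$, so $\dt \Omega = \mc L_{V_I}\Omega$; combined with the previous identity and $d\Omega = 0$ this gives $\mc L_{V_I}\Omega = d(i_{V_I}\Omega) = 0$. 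The identical argument in the $J$-fixed gauge gives $\mc L_{V_J}\Omega = 0$. Closedness of $i_{V_I}\Omega$ and $i_{V_J}\Omega$ lets me introduce local Hamiltonian potentials $\psi_{V_I},\psi_{V_J}$ with $i_{V_I}\Omega = d\psi_{V_I}$ and $i_{V_J}\Omega = d\psi_{V_J}$. Since $V_I - V_J = \tfrac12(\theta_I^{\#} - \theta_J^{\#})$, the identity \eqref{f:thetaPhi} reads $i_{2(V_I-V_J)}\Omega = d\Phi$, so $d\Phi = 2\,d\psi_{V_I} - 2\,d\psi_{V_J}$ and $\Phi = 2\psi_{V_I} - 2\psi_{V_J} + \mbox{const}$, which is the first claim.

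Next, for $n=2$ and a rank one soliton I would substitute $V_I = a_I IX$, $V_J = a_J JX$. The coframe relations \eqref{f:coframes} give $i_{IX}\Omega = d\mu_2$ and $i_{JX}\Omega = d\mu_3$, so up to constants $\psi_{V_I} = a_I\mu_2$ and $\psi_{V_J} = a_J\mu_3$, and the first part yields $\Phi = 2a_I\mu_2 - 2a_J\mu_3 + \mbox{const}$. Passing to $\mu_\pm = \tfrac12(\mu_2\pm\mu_3)$ rewrites this as $\Phi = a_+\mu_+ + a_-\mu_- + \mbox{const}$, with $a_\pm$ read off as the stated linear combinations of $a_I, a_J$. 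For the soliton function I use $V_I + V_J = -\N f$ (since $\theta_I = -\theta_J$ in dimension four), so $i_{\N f}\Omega = -(a_I\,d\mu_2 + a_J\,d\mu_3)$. Expanding $\N f$ in the horizontal frame $\{IX, JX, KX\}$ and matching against \eqref{f:coframes} forces the $KX$-component to vanish and gives $\N f = -a_I IX - a_J JX$; feeding this into $df = g(\N f, \cdot)$ through the metric matrix \eqref{f:g_matrix_X}, after re-expressing $(IX)^*,(JX)^*$ via $d\mu_2, d\mu_3$, produces the claimed $df = \tfrac12\left(p(a_+ d\mu_+ + a_- d\mu_-) - a_+ d\mu_+ + a_- d\mu_-\right)$. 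This is a finite linear-algebra computation with no real obstruction.

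For the converse I assume $\Phi = a_+\mu_+ + a_-\mu_- + \mbox{const}$. Since $\Phi = \log\frac{1-p}{1+p}$ determines $p$ as a function of $\Phi$, the angle function depends only on $a_+\mu_+ + a_-\mu_-$, so $dp$ is proportional to $a_+ d\mu_+ + a_- d\mu_-$ and the one-form $p(a_+ d\mu_+ + a_- d\mu_-) - a_+ d\mu_+ + a_- d\mu_-$ is closed. Assuming it is exact and equals $df$, I run the previous computation backwards to recover $V_I = a_I IX$ and $V_J = a_J JX$ from $V_I = \tfrac12(\theta_I^{\#} - \N f)$, $V_J = \tfrac12(\theta_J^{\#} - \N f)$. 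The conditions $\mc L_{V_I}I = 0$ and $\mc L_{IV_I}g = 0$ are then immediate: $X$ preserves the GK structure, so $IX$ is $I$-holomorphic (by integrability $\mc L_{IX}I = I\mc L_X I = 0$) and $IV_I = -a_I X$ is Killing. The substance is the remaining equation $\rho_I^{1,1} = \mc L_{V_I}\gw_I$. I would verify it by rewriting $\gw_I = -(J\Omega)^{1,1}$ and combining $\rho_I = -\tfrac12 dJd\Phi$ with $d\Phi = 2\,i_{V_I - V_J}\Omega$, reducing $\rho_I^{1,1} - \mc L_{V_I}\gw_I$ to a $(1,1)$-projection that vanishes because $J\,i_{V_J}\Omega$ is a constant multiple of $i_X\Omega = d\mu_1$, hence closed. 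By Proposition~\ref{p:solitonVF} the conditions \eqref{f:solitonB} for $V_I$ are equivalent to the soliton system \eqref{f:soliton}, which completes the converse.

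The main obstacle is precisely the verification of the third soliton equation in the converse direction. Everything else is either the cited flow fact or bookkeeping in the coframe, but establishing $\rho_I^{1,1} = \mc L_{V_I}\gw_I$ requires carefully tracking the $(1,1)$-parts and exploiting the closedness of $J\,i_{V_J}\Omega$; this is exactly where the rank one hypothesis is essential, since it is what makes the potentials equal to $a_I\mu_2, a_J\mu_3$ and forces $J\,i_{V_J}\Omega$ to be proportional to $d\mu_1$.
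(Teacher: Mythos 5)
Your proposal is correct and follows essentially the same route as the paper's proof: the $I$-fixed and $J$-fixed gauge argument with $\dt\Omega=0$ for the first claim, the coframe identities $i_{IX}\Omega=d\mu_2$, $i_{JX}\Omega=d\mu_3$ for the scalar reduction, and the verification of $\rho_I^{1,1}=\mc L_{V_I}\gw_I$ in the converse via $\gw_I=-(J\Omega)^{1,1}$, $\rho_I=-\tfrac12 dJd\Phi$, and the closedness of $Ji_{V_J}\Omega=-a_J\,d\mu_1$. You also correctly identify that last verification as the only substantive step; the rest matches the paper's bookkeeping.
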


\begin{ex}[Rank one solitons on diagonal Hopf surfaces]\label{ex:hopf_diagonal_soliton}
	Let $(M,g,I,J)$ be a GK structure on a diagonal Hopf surface as in Example~\ref{ex:hopf_diagonal_gk} defined by a single scalar function $p$. Assume additionally that the parameters $a,b\in\R$ satisfy $a/b=m^2/n^2$, where $m,n$ are coprime integers, and consider the $S^1$ action on $M$ induced by the action on $\C^2$:
	\[
	u\cdot (z_1,z_2)=(u^mz_1,u^nz_2).
	\]
	Then using the expression~\eqref{f:diagonal_hopf_mu} for the moment map $\mu$, we find
	\begin{equation}
	\begin{split}
	\frac{2}{n}\mu_++\frac{2}{m}\mu_-=\left(1+\frac{a}{b}\right)\left(\frac{b}{a}x_1-x_2\right)+\left(1-\frac{a}{b}\right)\frac{\chi}{2},
	\end{split}
	\end{equation}
	where, as in Example~\ref{ex:hopf_diagonal_gk}, $\chi$ is the antiderivative of $p$ evaluated at $2(\tfrac{b}{a}x_1-x_2)$.
	
	The above calculation makes sense for any function $p$ defining the GK structure on $M$. However, as we proved in~\cite{SU}, $(M,g,I,J)$ is a soliton if and only if $p$ solves an ODE,
	\[
	\left(\log\frac{1-p}{1+p}\right)'=\frac{1}{2}\left(1-\frac{a}{b}\right)p+\frac{1}{2}\left(1+\frac{a}{b}\right)
	\]
	which is equivalent to the identity:
	\[
	\Phi = \frac{2}{n}\mu_++\frac{2}{m}\mu_-+\mbox{const},
	\]
	in accordance with Proposition~\ref{p:invGKsoliton}. In particular, the solitons constructed in~\cite{SU} can be viewed as rank one solitons. (Strictly speaking we have established this claim only in the case $a/b=m^2/n^2$, with $n,m\in\Z$. In general, if $a/b$ is arbitrary, we still can run the same argument, keeping in mind that $X=m\del_{y_1}+n\del_{y_2}$ only generates an $\R$ action).
\end{ex}

The above example provides a function $W$ solving equation~\eqref{f:W_laplace2} with $a_+=2/n$ and $a_-=2/m$. Motivated by this example, it is easy to construct an explicit solution $\til{W}$ to the equation~\eqref{f:W_laplace2} with any given constants $a_-,a_+$ determining function $p$. In what follows, we will use this \emph{baseline} solution to modify the GK structure on a given soliton.  A key point is that $\til W$ is bounded below by a positive constant and on the set $\{|p|<1-\delta\}$ we have bounds on $\til W$ and $|d\til W|_h$.

\begin{lemma}\label{l:W0_solution} Given constants $a_+,a_-\in\R$ and setting $\Phi=\log\frac{1-p}{1+p} = a_+ \mu_+ + a_- \mu_-$, the function
	\begin{equation}\label{f:W0_def}
	\til W=(a_+^2(1+p)+a_-^2(1-p))^{-1}
	\end{equation}
	satisfies equation~\eqref{f:W_laplace2}.
\end{lemma}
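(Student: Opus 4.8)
The plan is to exploit the fact that both $p$ and $\til W$ depend on the coordinates only through the single linear combination $s := a_+\mu_+ + a_-\mu_-$. Indeed, the relation $\Phi = \log\frac{1-p}{1+p} = s$ determines $p$ as a smooth function of $s$ alone, since $p \mapsto \log\frac{1-p}{1+p}$ is a diffeomorphism $(-1,1)\to\R$. Consequently $\til W = (a_+^2(1+p) + a_-^2(1-p))^{-1}$ is also a function of $s$ alone, and in particular is independent of $\mu_1$. Thus the term $\til W_{11}$ in \eqref{f:W_laplace2} vanishes identically, and it remains only to verify
\[
\tfrac{1}{2}\big((1+p)\til W\big)_{++} + \tfrac{1}{2}\big((1-p)\til W\big)_{--} = 0.
\]

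First I would record the chain rule: for any function $G = G(s)$ one has $\partial_+ G = a_+ G'$ and $\partial_- G = a_- G'$, where $' = \tfrac{d}{ds}$. Applying this twice to $(1+p)\til W$ and $(1-p)\til W$, each of which depends only on $s$, gives
\[
\big((1+p)\til W\big)_{++} = a_+^2\,\tfrac{d^2}{ds^2}\big[(1+p)\til W\big], \qquad \big((1-p)\til W\big)_{--} = a_-^2\,\tfrac{d^2}{ds^2}\big[(1-p)\til W\big].
\]
Substituting these into the display above, the left-hand side of \eqref{f:W_laplace2} becomes
\[
\tfrac{1}{2}\,\tfrac{d^2}{ds^2}\Big[\,a_+^2(1+p)\til W + a_-^2(1-p)\til W\,\Big].
\]

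The key observation, which makes the whole computation collapse, is that the bracketed quantity is constant. By the very definition of $\til W$,
\[
a_+^2(1+p)\til W + a_-^2(1-p)\til W = \big(a_+^2(1+p) + a_-^2(1-p)\big)\,\til W = \til W^{-1}\cdot\til W = 1.
\]
Its second $s$-derivative therefore vanishes, which together with $\til W_{11}=0$ establishes \eqref{f:W_laplace2}. There is no genuine obstacle here beyond the algebraic identity: everything rests on the reduction to a function of the single variable $s$, which in turn follows from the linearity of $\Phi$ in $(\mu_+,\mu_-)$ granted by the hypothesis.
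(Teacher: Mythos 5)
Your proof is correct and is essentially identical to the paper's: both reduce to differentiating in the single variable $s=a_+\mu_++a_-\mu_-$ via the chain rule and then observe that $\bigl(a_+^2(1+p)+a_-^2(1-p)\bigr)\til W\equiv 1$ by the definition of $\til W$, so its second derivative vanishes. Your write-up just makes the intermediate steps more explicit.
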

\begin{proof}
	As both $p$ and $\til W$ are functions of the same linear combination of $\mu_+$ and $\mu_-$, we obtain
	\begin{align*}
	\til W_{11} + \tfrac{1}{2} \left( (1+p) \til W \right)_{++} + \tfrac{1}{2} \left( (1-p) \til W \right)_{--} =&\ \tfrac{1}{2} \left[ a_+^2 ((1 + p) \til W)'' + a_-^2 ((1-p) \til W)'' \right]\\
	=&\ \tfrac{1}{2} \left[ \left( a_+^2 (1 + p) + a_-^2 (1 - p) \right) \til W \right]''=0,
	\end{align*}
	as claimed.
\end{proof}

\section{Analysis of the completion} \label{s:completion}

Let $(M,g,I,J)$ be a GK soliton of rank one. Following Remark~\ref{r:rank1_s1} we assume that one of the vector fields $IV_I$ or $JV_J$ generates an $S^1$-action on $M$ preserving the GK structure. We impose a lower bound on the Ricci curvature $\Rc^{M,g}$, and finiteness of the set of isolated fixed points. The assumptions imposed on $(M,g,I,J)$ in the rest of the paper are summarized in the following definition.

\begin{defn}[Complete regular rank one solitons]\label{d:regular_rank_one}
	Let $(M^4,g,I,J)$ be a generalized K\"ahler manifold such that
	\begin{enumerate}
		\item $(M,g)$ is complete;
		\item The Poisson tensor $\gs$ of $(M,g,I,J)$ is not identically zero;
		\item $(M,g,I,J)$ is a \emph{rank one} soliton, i.e. it satisfies the equations of Definition~\ref{d:GKsoliton} and $\mathrm{span}\{IV_I,JV_J\}$ is generically one-dimensional;
		\item Either $IV_I$ or $JV_J$ generates an $S^1$ action preserving the GK structure $(M,g,I,J)$ with finitely many fixed points. The latter is automatically satisfied if $\dim H^*(M,\R)<\infty$;
		\item On the nondegeneracy locus of $\gs$, 
		\begin{align*}
		\Phi=\log\frac{1-p}{1+p}=a_+\mu_++a_-\mu_-+\mbox{const},
		\end{align*} where $\mu_+$ and $\mu_-$ are defined locally up to an additive constant, and without loss of generality we assume that $a_+\neq 0$;
		\item There exists a constant $k$ such that $\Rc^{M,g}>-k^2$;
	\end{enumerate}
	Then we call $(M,g,I,J)$ a \emph{complete regular rank one soliton}.
\end{defn}
Our ultimate goal is to give an exhaustive classification of such GK manifolds. So far we have obtained an explicit local description in the nondegeneracy locus of $\gs$ in terms of the functions $W$ and $p$, where now, thanks to Proposition \ref{p:invGKsoliton}, $p$ is determined by the choice of constants ${a_+}, {a_-}$. To address the global structure of complete rank one solitons we have to understand how these local models can be patched together, and to determine their behavior and possible extension to the degeneracy locus.  In the end we will obtain a global definition of the moment map together with a complete classification of the possible images.

\subsection{Local structure near the locus \texorpdfstring{$\{I=\pm J\}$}{}}

In this subsection we determine the local structure of a GK soliton near the degeneracy locus for the Poisson tensor $\gs$. Since $\gs$ does not vanish identically, $I$ and $J$ induce the same orientation, and $\gs$ will vanish precisely when $I = \pm J$.  Define
\[
\mathbf T_+:=\{x\in M\ |\ I_x= J_x \},\quad \mathbf T_-:=\{x\in M\ |\ I_x=- J_x \}.
\]
The sets $\mathbf T_{\pm}$ are one-dimensional complex analytic subsets of $M$ with respect to either complex structure.  Our aim is to understand the local structure of the generator of the $S^1$ action $X$ and the symplectic form $\Omega$ in the image of $\pmb{\mu}$, near points in $\mathbf{T}=\mathbf{T}_+\cup \mathbf{T}_-$. 
Note that the complex-valued form $\Omega_I:=\Omega-\sqrt{-1}I\Omega$, initially defined on $M \backslash\ \mathbf T$, extends to an $I$-meromorphic $(2,0)$ form on $M$, and similarly for $\Omega_J$.  In this section we let
\begin{align*}
\mathbf X = \tfrac{1}{2} \left(X - \i I X \right)
\end{align*}
denote the complex holomorphic vector field associated to $X$.
\begin{prop} \label{p:Omegapoles} Given $(M^4, g, I, J)$ a complete regular rank one soliton, the forms $\Omega_{I}, \Omega_J$ have poles of order $1$ at $\mathbf{T}$, and $\mathbf{X}$ does not vanish identically along $\mathbf{T}$.
\end{prop}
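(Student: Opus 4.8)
The plan is to pin down the pole order by a purely local analysis near a generic (smooth) point of each component of $\mathbf{T}_\pm$; since $\Omega_I$ is meromorphic the order is locally constant along a component, so generic points suffice. Observe first that the order $k_+$ of $\Omega_I$ along $\mathbf{T}_+$ satisfies $k_+\geq 1$: indeed $\Re\Omega_I=\Omega=\gs^{-1}$ and $\gs\to 0$ on $\mathbf{T}$, so $\Omega_I$ is genuinely unbounded there. The two assertions to prove are $k_\pm=1$ and $\mathbf{X}\not\equiv 0$ along $\mathbf{T}$, and I would deduce the second from the first.

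The crucial point I would exploit is that, because $\mathbf{T}\subset M$ and $(M,g,I,J)$ is a smooth (indeed real-analytic, being a soliton) complete GK manifold, the tensors $g,I,J$ and hence the angle function $p=-\tfrac14\tr(IJ)$ are smooth \emph{across} $\mathbf{T}$; only the nondegenerate description in terms of the moment map and $W$ breaks down there. Thus $1-p$ is a nonnegative real-analytic function vanishing exactly on the smooth curve $\mathbf{T}_+$, so if $w$ is a local $I$-holomorphic defining function for $\mathbf{T}_+=\{w=0\}$ then $1-p$ vanishes to some finite order, giving
\[
\Phi=\log\frac{1-p}{1+p}=O(\log|w|),\qquad \Phi\to-\infty,
\]
i.e. $\Phi$ diverges only logarithmically as one approaches $\mathbf{T}_+$.

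Next I would feed this into the soliton relation of Proposition~\ref{p:invGKsoliton}. Writing $s$ for the $S^1$-invariant combination complementary to $\Phi$ in the $(\mu_+,\mu_-)$-plane (constant along the $\Phi$-gradient), the change of coordinates $(\mu_+,\mu_-)\leftrightarrow(\Phi,s)$ is invertible since $a_+^2+a_-^2>0$ (recall $a_+\neq 0$). As the only unbounded direction on approach to a generic point of $\mathbf{T}_+$ is $\Phi$—the transverse position $s$ and the angle $\mu_1$ staying bounded—one gets $\mu_+,\mu_-=O(\log|w|)$, hence $\mu_2,\mu_3=O(\log|w|)$ and all the $d\mu_i$ have at worst simple poles $\sim dw/w$. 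Because $W=g(X,X)^{-1}$ stays bounded near a generic point of $\mathbf{T}_+$ (consistent with the baseline $\til W\to(2a_+^2)^{-1}$ of Lemma~\ref{l:W0_solution}), substituting into
\[
\Omega_I=(-d\mu_1+\i\,d\mu_2)\wedge\big(\eta+\i W(d\mu_3-p\,d\mu_2)\big)
\]
and using $\tfrac{dw}{w}\wedge\tfrac{dw}{w}=0$ to kill the pole$\times$pole contributions shows $\Omega_I$ has pole order at most $1$. With $k_+\geq 1$ this forces $k_+=1$; the identical argument at $\mathbf{T}_-$ and with $I$ and $J$ interchanged yields pole order one for both $\Omega_I$ and $\Omega_J$ along all of $\mathbf{T}$.

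Finally, for the nonvanishing of $\mathbf{X}$: the curve $\mathbf{T}_+$ is $S^1$-invariant, so $\mathbf{X}$ is tangent to it, and if $\mathbf{X}$ vanished identically on a component then, writing $\mathbf{X}=w\,\mathbf{Y}$, the identity $du_I=i_{\mathbf{X}}\Omega_I$ combined with the simple pole of $\Omega_I$ would make $du_I=w\,i_{\mathbf{Y}}\Omega_I$ regular (the factor $w$ cancelling the pole); then $u_I=\mu_1-\i\mu_2$ extends boundedly across $\mathbf{T}_+$, forcing $\mu_2$ and hence $\Phi=a_+\mu_++a_-\mu_-$ to stay bounded, contradicting $\Phi\to-\infty$. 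The step I expect to be most delicate is the assertion that $\Phi$ is the only unbounded moment-map coordinate near $\mathbf{T}_+$ (equivalently that $s,\mu_1,W$ remain bounded): this is the local shadow of the global moment-map structure studied in the later subsections, and it is precisely what upgrades the merely logarithmic divergence of $\Phi$ into the sharp statement that each $d\mu_i$—and therefore $\Omega_I$—has only a simple pole.
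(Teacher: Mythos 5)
Your overall architecture (pole order at least $1$ because $\gs\to 0$; pole order at most $1$ by a local estimate; then nonvanishing of $\mathbf{X}$ deduced from the simple pole) matches the paper's, and your final step is essentially the paper's Lemma~\ref{l:X_nonvanish} verbatim: if $\mathbf{X}=w\mathbf{Y}$ then $i_{\mathbf{X}}\Omega_I$ is regular across $\mathbf{T}_+$, so $\int i_XI\Omega$ is finite along curves reaching $\mathbf{T}_+$, contradicting $\Phi=a_+\mu_++a_-\mu_-\to-\infty$. However, the middle step contains a genuine circularity. Your bound on the pole order rests on the assertion that $W=g(X,X)^{-1}$ stays bounded near a generic point of $\mathbf{T}_+$, which is exactly the statement that $X\neq 0$ there --- i.e.\ the second assertion of the proposition, which you propose to \emph{deduce from} the pole order. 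A priori $X$ could vanish identically along a component of $\mathbf{T}_+$ (nothing proved so far rules this out: Lemma~\ref{l:free_nondegenerate} only controls the nondegeneracy locus), and in that case $W\to\infty$ on approach to $\mathbf{T}_+$ and your substitution into the formula for $\Omega_I$ breaks down. The paper must therefore split into two cases: Lemma~\ref{l:deglocuslemma1} handles $X\neq 0$ at some point of $\mathbf{T}_+$ by comparing $\Omega_I\wedge\bar{\Omega}_I/dV_g\sim d^{-2k}$ with the estimate $d_g(x,\mathbf{T}_+)\leq C(1-p)^{1/2}$ (obtained from an explicit $h$-length computation), while Lemma~\ref{l:deglocuslemma2} handles $X\equiv 0$ on $\mathbf{T}_+$ by an entirely different argument (slice theorem, weight $(\ga,0)$ with $\ga\neq 0$, and closedness of $i_{\mathbf{X}}\Omega_I$ forcing $k=1$). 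Only after both cases is the nonvanishing of $\mathbf{X}$ extracted.

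Two further points you flag yourself are indeed not merely delicate but unavailable at this stage. First, the claim that only $\Phi$ is unbounded on approach to a generic point of $\mathbf{T}_+$ (with the complementary coordinate $s$ and $\mu_1$ bounded, and $\mu_i=O(\log|w|)$) is part of the global moment-map structure that the paper establishes only in Proposition~\ref{p:deck_transform_mu} and afterwards, \emph{using} Proposition~\ref{p:Omegapoles}; invoking it here is circular. The paper's Lemma~\ref{l:deglocuslemma1} is designed precisely to avoid this: it never needs to control $s$ or $\mu_1$, only the distance to $\mathbf{T}_+$, because the pole order is read off from the scalar ratio $\Omega\wedge\Omega/dV_g=C(1-p^2)^{-1}$. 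Second, the inference from $\mu_i=O(\log|w|)$ to ``$d\mu_i$ has at worst a simple pole'' requires holomorphicity of the relevant primitive (fine for $-\mu_1+\i\mu_2$ with respect to $I$, but $\mu_1-\i\mu_3$ is only $J$-holomorphic, and the two polar directions need not cancel in the wedge product without further argument); growth control of a general smooth function does not control its differential. To repair the proposal you would need, at minimum, to treat the case $X\equiv 0$ on a component of $\mathbf{T}$ by an independent argument before using boundedness of $W$.
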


The proof will consist of several lemmas.  Our analysis is local, and we can choose local $I$-holomorphic coordinates $z,w$ on a neighborhood $U$ of $p \in \mathbf{T}_+$ such that $g_{i\bj}(p) = \gd_i^j$ and 
\[
\Omega_I=w^{-k}dz\wedge dw + \mbox{higher order terms},\quad k\geq 1,
\]
and $\mathbf T_+\cap U=\{w=0\}$.

\begin{lemma} \label{l:deglocuslemma1}
	If $X\neq 0$ at $x_0\in\mathbf T_+$, then $k=1$.
\end{lemma}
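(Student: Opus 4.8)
The plan is to extract $k$ from the interplay between the pole order of $\Omega_I$ along $\mathbf{T}_+$, the growth rate of the moment map, and the soliton normalization $\Phi = a_+\mu_+ + a_-\mu_- + \mathrm{const}$ from Definition~\ref{d:regular_rank_one}.

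First I would straighten the holomorphic vector field. Since $X$ preserves the GK structure it preserves $\mathbf{T}_+ = \{I=J\}$, so $\mathbf{X} = \tfrac12(X - \sqrt{-1}IX)$ is an $I$-holomorphic vector field tangent to $\mathbf{T}_+$, nonvanishing at $x_0$ because $X(x_0)\neq 0$. Hence I may choose $I$-holomorphic coordinates $(z,w)$ near $x_0$ with $\mathbf{X} = \del_z$ and $\mathbf{T}_+ = \{w=0\}$ (the integral curve of $\mathbf{X}$ through $x_0$ lies in $\mathbf{T}_+$, and both are one-dimensional holomorphic curves, so they agree locally). The pole order is a biholomorphic invariant, so $\Omega_I = w^{-k}u\,dz\wedge dw$ with $u$ holomorphic and $u(x_0)\neq 0$. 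Because $\Omega_I$ is of type $(2,0)$ one has $i_X\Omega_I = i_{\mathbf{X}}\Omega_I = w^{-k}u\,dw$, and by the defining relations $d\mu_1=i_X\Omega$, $d\mu_2=i_X I\Omega$ this equals $d(\mu_1 - \sqrt{-1}\mu_2) =: d\zeta_I$. Thus $\zeta_I$ is holomorphic with $d\zeta_I = w^{-k}u\,dw$; integrating in $w$, for $k\geq 2$ the leading singular term is $\tfrac{u(x_0)}{1-k}\,w^{1-k}$, so along a generic radial ray $w = \rho e^{\sqrt{-1}\phi}\to 0$ the component $\mu_2 = -\Im\zeta_I$ blows up \emph{polynomially}, $\brs{\mu_2}\sim \rho^{1-k}$.

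The second ingredient pins down the rate at which $p\to 1$ at $\mathbf{T}_+$ by computing $\Omega_I\wedge\overline{\Omega_I}/dV_g$ in two ways. In the holomorphic chart $\Omega_I\wedge\overline{\Omega_I} = 4\brs{w}^{-2k}\brs{u}^2\,dV_{\mathrm{coord}}$, and since $g$ is smooth and nondegenerate near $x_0$ this gives $\Omega_I\wedge\overline{\Omega_I}/dV_g \asymp \brs{w}^{-2k}$. On the other hand, using the expression for $\Omega_I$ from Theorem~\ref{t:nondegenerate_gk_description} together with $dV_g = W(1-p^2)\,d\mu_1\wedge d\mu_2\wedge d\mu_3\wedge\eta$, a direct computation yields $\Omega_I\wedge\overline{\Omega_I}/dV_g = 4/(1-p^2)$. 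Comparing, $1-p^2 \asymp \brs{w}^{2k}$, whence $\Phi = \log\tfrac{1-p}{1+p} = 2k\log\rho + O(1)$ along the ray.

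Finally I would combine these with a boundedness estimate for $\mu_-$. Since $X(x_0)\neq 0$, the function $W = g(X,X)^{-1}$ is bounded and positive near $x_0$, and from the diagonal form~\eqref{f:h_diagonal} one gets $\brs{d\mu_-}^2_{Wh} = W^{-1}/\bigl(2(1+p)\bigr)$, which stays bounded as $p\to 1$. As $(M/S^1, Wh)$ is a smooth (orbifold) metric with $x_0$ at finite distance, $\mu_-$ has bounded variation along the finite-length radial ray, hence remains bounded. Then $\Phi = a_+\mu_+ + a_-\mu_- + \mathrm{const}$ with $a_+\neq 0$ forces $\mu_+ = \tfrac{2k}{a_+}\log\rho + O(1)$, so $\mu_2 = \mu_+ + \mu_- = O(\log\rho)$ grows only \emph{logarithmically}; for $k\geq 2$ this contradicts $\brs{\mu_2}\sim \rho^{1-k}$ from the first step, forcing $k=1$. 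The main obstacle I anticipate is carrying out the two volume computations and the growth estimate for $\mu_2$ while rigorously controlling the lower-order terms, and in justifying that $\mu_-$ — defined a priori only on a cover of the nondegeneracy locus, where the moment map itself degenerates — is genuinely bounded near $\mathbf{T}_+$; this is precisely where the local smoothness of $g$ and of the horizontal metric $Wh$ at the non-fixed point $x_0$ must be used.
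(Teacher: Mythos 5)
Your argument is correct, and it shares its first half with the paper's proof --- both compute the ratio $\Omega_I\wedge\bar{\Omega}_I/dV_g$ once from the local holomorphic normal form (giving $\sim \brs{w}^{-2k}$) and once from the frame expression of $\Omega$ (giving $\sim (1-p^2)^{-1}$), concluding $1-p \asymp \brs{w}^{2k}$. The finishing move is genuinely different. The paper converts this into a distance comparison: it bounds $\mathrm{dist}_g(x,\mathbf{T}_+)$ from above by the length of a curve moving purely in the $\mu_+$ direction, obtaining $\mathrm{dist}_g(x,\mathbf{T}_+)\leq C(1-p)^{1/2}\sim \brs{w}^{k}$, and compares with $\mathrm{dist}_g(x,\mathbf{T}_+)\sim\brs{w}$ to force $k\leq 1$; this needs only a one-sided length estimate and no information about $\mu_-$, and the same estimate is recycled later (e.g. in Lemma~\ref{l:T_nonempty}). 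You instead integrate $i_{X}\Omega_I=w^{-k}u\,dw$ explicitly to get polynomial blow-up $\brs{\mu_2}\sim\rho^{1-k}$ for $k\geq 2$ along a generic ray, and contradict the logarithmic growth $\mu_2=O(\log\rho)$ forced by $\Phi=a_+\mu_++a_-\mu_-$ together with the boundedness of $\mu_-$; this costs you the extra estimate $\brs{d\mu_-}_g^2=(2W(1+p))^{-1}=O(1)$ near $x_0$, but makes transparent exactly where the pole order enters (through the residue of $i_X\Omega_I$) and in fact anticipates the computations the paper performs in Lemmas~\ref{l:deglocuslemma2} and~\ref{l:X_nonvanish}; it also yields the sharper asymptotic $\mu_+\sim\tfrac{2k}{a_+}\log\rho$, which is consistent with the later quantization $a_+=2/k_+$. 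Two small points should be made explicit to close the argument: (i) $u(x_0)\neq 0$ follows because $\Omega_I$ is nondegenerate off $\mathbf{T}$, so $\{u=0\}$ is a divisor contained in $\{w=0\}$, and it cannot contain $\{w=0\}$ by minimality of the pole order $k$ (this uses that $x_0$ is a smooth point of $\mathbf{T}_+$, which is the setting of the surrounding normal form and is all that Proposition~\ref{p:Omegapoles} requires); and (ii) the ``generic ray'' caveat is needed because $\Im\left(\tfrac{u(x_0)}{1-k}e^{\sqrt{-1}(1-k)\phi}\right)$ vanishes for finitely many angles $\phi$, at which only $\mu_1$ rather than $\mu_2$ would blow up. Neither is a real gap.
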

\begin{proof}
	For two real-valued functions $f_1,f_2$ we will write $f_1 \sim f_2$ if there is a positive constant $C>0$ such that $C^{-1}f_1<f_2<Cf_1$.
	Let $d$ denote the distance from the divisor $\mathbf T_+$:
	\[
	d(x):=\mathrm{dist}_g(x,\mathbf T_+).
	\]
	In the $I$-holomorphic coordinates $(z,w)$ as above centered at $x_0$ we have $d(x) \sim \brs{w}$, thus in a neighbourhood of $\mathbf T_+$ we will have
	\[
	\frac{\Omega_I \wedge \bar{\Omega}_I}{dV_g}\sim d^{-2k},
	\]
	where $dV_g$ is the volume form associated with $g$.
	
	Fix an $S^1$-invariant neighborhood $U$ of $x_0$.  By~\eqref{f:Omega_matrix} and \eqref{f:coframes} at $x\in U\backslash\mathbf T_+$ we have
	\[
	\Omega=W^{-1}\left(X^*\wedge (KX)^*+(IX)^*\wedge(JX)^*\right),
	\]
	so that in the local coordinates $\{\mu_i\}$,
	\[
	\Omega_I \wedge \bar{\Omega}_I = \Omega\wedge\Omega=C W\eta\wedge d\mu_1\wedge d\mu_+\wedge d\mu_-.
	\]
	Now, since the differentials
	\[
	W^{-1/2}\eta, \quad (1-p^2)^{1/2}W^{1/2}d\mu_1, \quad (1-p)^{1/2}W^{1/2}d\mu_+, \quad (1+p)^{1/2}W^{1/2}d\mu_-
	\]
	form an orthonormal basis of $T^*_xM$, we have
	\[
	\Omega\wedge\Omega=C(1-p^2)^{-1}dV_g.
	\]
	
	In a neighbourhood of $\mathbf{T_+}$ function $1+p$ is bounded from above and away from zero, therefore, it remains to estimate $d(x)$ in terms of $1-p$. Since $X\neq 0$ at $x_0$, we can shrink $U$ to ensure that $X \neq 0$ in $U$, and obtain a constant $C>0$ such that $W:=g(X,X)^{-1}$ satisfies
	\[
	C^{-1}<W<C
	\]
	in $U$. Then
	metric $g$ is uniformly equivalent in $U$ to the metric
	\[
	\til g=h+\eta^2.
	\]
	Let $\gamma\subset U$ be a horizontal geodesic with respect to $\til g$ connecting $x_0\in \mathbf{T}_+$ to a point $x_1\in U$ which realizes the shortest distance between $\mathbf{T}_+$ and $x_1$. In particular, $d(\gamma(t))\sim t$. By Theorem~\ref{t:nondegenerate_gk_description} the universal cover of ${U\backslash\mathbf{T}_+}$ admits a well-defined moment map
	\[
	\pmb\mu\colon \til{U\backslash\mathbf{T}_+}\to\R^3_{\pmb\mu}.
	\]
	Under this map the geodesic $\gamma$ is isometrically mapped to an $h$-geodesic $\gg_0:=\pmb\mu(\gg)$. Since $\mathbf T_{+}=\{p=1\}$, we have $p(\gg_0(t))\to 1$ as $t\to 0$. By definition of $\Phi$ we have
	\[
	p=\frac{1-e^\Phi}{1+e^\Phi},
	\]
	thus it follows from Proposition \ref{p:invGKsoliton} that along $\gg_0(t)$ we have $\Phi=a_+\mu_++a_-\mu_-\to -\infty$ as $t\to 0$. Given the explicit form of the metric~\eqref{f:h_diagonal}, we conclude that $a_+\neq 0$, since otherwise $\gg_0(t)$ would have an infinite length. So without loss of generality we assume $a_+>0$.
	Then the length of $\gg_0$ with respect to $h$ is bounded from above by the length of the curve $\gamma_1$ with $\mu_1=\mbox{const}$, $\mu_-=\mbox{const}$ and $\mu_+\to -\infty$. Thus we estimate
	\[
	d(x_1)\sim L_{h}\gg_0\leq L_h\gg_1=
	\int_{\gg_1} (1-p)^{1/2}d\mu_+	
	\sim
	\int_{-\infty}^{\mu_+}\left(\frac{e^{a_+s}}{1+e^{a_+s}}\right)^{1/2}ds\sim e^{a_+\mu_+/2}\sim(1-p)^{1/2}.
	\]
	Since $\til g$ and $g$ are equivalent, we have $d(x) \leq C (1-p)^{1/2}$, so that along $\gamma$ we have
	\[
	d(x)^{-2k} \sim \frac{\Omega\wedge\Omega}{dV_g}=C(1-p^2)^{-1}\leq C' d(x)^{-2}.
	\]
Thus $k = 1$, as claimed.
\end{proof}

\begin{lemma} \label{l:deglocuslemma2}
	If $X$ vanishes identically along $\mathbf T_+$, then $k=1$.
\end{lemma}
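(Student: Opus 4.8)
The plan is to argue directly from the $I$-holomorphicity of the preferred vector field, rather than via the volume/length comparison used in Lemma~\ref{l:deglocuslemma1}; that comparison is unavailable here, since $W = g(X,X)^{-1}$ now blows up along $\mathbf T_+$. Since $X$ preserves $I$, the complex vector field $\mathbf X = \tfrac12(X - \i IX)$ is $I$-holomorphic, and because $X$ vanishes identically on $\mathbf T_+ = \{w=0\}$, both of its holomorphic components vanish there. Thus in the local $I$-holomorphic coordinates $(z,w)$ I may write $\mathbf X = a\,\del_z + b\,\del_w$ with $a = w\,\til a$, $b = w\,\til b$ for holomorphic $\til a,\til b$. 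The one input I would isolate first is that the normal weight of the $S^1$-action along the fixed curve $\mathbf T_+$ is nonzero — a consequence of effectiveness of the action — which translates into $\til b(z,0)\neq 0$.

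Next I would contract. Writing $\Omega_I = w^{-k}u\,dz\wedge dw$ with $u$ holomorphic and $u(z,0)\neq 0$ (the statement that the pole has order exactly $k$), a direct computation gives
\[
i_{\mathbf X}\Omega_I = i_X\Omega_I = w^{1-k}u\,(\til a\,dw - \til b\,dz),
\]
where the first equality holds because $\Omega_I$ is of type $(2,0)$ and therefore annihilates $\bar{\mathbf X}$. The crucial structural fact is that this $1$-form is closed: indeed $d(i_X\Omega_I) = \mc L_X\Omega_I - i_X d\Omega_I = 0$, since $X$ preserves the closed form $\Omega_I$ (equivalently, $i_X\Omega_I = d\mu_1 - \i\,d\mu_2$ is the differential of the holomorphic moment map). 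Comparing the $dz\wedge dw$ coefficients in $d(i_{\mathbf X}\Omega_I)=0$ and multiplying through by $w^{k-1}$ then yields the identity
\[
\del_z(u\til a) + \del_w(u\til b) = (k-1)\,w^{-1}\,u\,\til b.
\]

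To conclude, I would read off the behavior of this identity along $\{w=0\}$. The left-hand side is holomorphic, hence bounded as $w\to 0$; on the right-hand side $u\til b$ is nonvanishing on $\{w=0\}$ by the two facts isolated above, so for $k\geq 2$ the factor $w^{-1}$ produces a genuine simple pole. This is a contradiction, forcing $k=1$, which is the assertion. I expect the only real obstacle to be the clean justification of the two nonvanishing statements $\til b(z,0)\neq 0$ and $u(z,0)\neq 0$ — that is, that $\mathbf T_+$ carries a nonzero normal weight and that $\Omega_I$ has a pole of order \emph{exactly} $k$ — both of which are forced by effectiveness of the action and by the definition of $k$; everything else is a formal consequence of the closedness of $i_{\mathbf X}\Omega_I$.
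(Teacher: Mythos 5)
Your proposal is correct and follows essentially the same route as the paper: the paper likewise writes $\mathbf X = w f_1(z)\,\del_w + O(w^2)$ with $f_1(0)=\sqrt{-1}\alpha\neq 0$ (the nonzero normal weight, forced because a Killing field with zero weight at a fixed curve would vanish identically), contracts with $\Omega_I = w^{-k}dz\wedge dw + \dots$, and observes that $i_X\Omega_I$ cannot be closed unless $k=1$. Your version merely makes the closedness computation and the two nonvanishing inputs ($\til b(z,0)\neq 0$ and $u(z,0)\neq 0$) explicit, which is fine.
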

\begin{proof}
	Given that $X$ vanishes on $\mathbf T_+$, then at any point $x\in \mathbf T_+$ there is a local chart $U\simeq \C^2$ provided by the slice theorem such that the action of $S^1$ on $\C^2$ has weights $\mathbf{w}=(\alpha,0)$.  We claim that $\alpha\neq 0$. Indeed, if $\alpha=0$ then the vector field $X$ would act trivially in $U\simeq \C^2$, and being Killing, $X$ must preserve all geodesics emanating from $x$. Thus $X$ will be identically zero entirely on $M$, which contradicts our basic assumptions on $X$.
	
	Given the weights of $X$ at $x$ and knowing that $X$ vanishes on $\mathbf{T}_+$, we can pick local $I$-holomorphic coordinates $(z,w)$ near $x$ such that $\mathbf{T_+}=\{w=0\}$, $x=(0,0)$ and in $T_xM$,
	\[
	[X,\del_z]=0,\quad [X,\del_w]=\sqrt{-1}\alpha \del_w.
	\]	
	Then locally we can write the holomorphic vector field $\mathbf{X}=X-\sqrt{-1}IX$ as
	\[
	\mathbf X=wf_1(z)\del_w+(\mbox{higher order terms in $w$}),\quad f_1(0)=\sqrt{-1}\alpha\neq 0.
	\]
	In these coordinates we still have
	\[
	\Omega_I = w^{-k} dz \wedge dw+(\mbox{higher order terms in }w),
	\]
	and thus
	\[
	i_X\Omega_I=\frac{1}{w^{k-1}}(f_1(z)dz+f_2(z)dw)+(\mbox{higher order terms in }w)
	\]
	Since $f_1(0)\neq 0$, this form cannot be closed, unless $k = 1$.
	\end{proof}

\begin{lemma}\label{l:X_nonvanish} $\mathbf{X}$ does not vanish identically along $\mathbf{T}_+$.
	\begin{proof} It follows from Lemmas \ref{l:deglocuslemma1} and \ref{l:deglocuslemma2} that $\Omega_I$ has a pole of order one.  As above we choose local $I$-holomorphic coordinates so that
	\[
	\Omega_I = w^{-1} dz \wedge dw + (\mbox{higher order terms in $w$}),
	\]
	If $\mathbf{X}$ vanishes identically along $\mathbf{T}_+$ then we can express
	\[
	\mathbf X= w \left( f_1 \del_z + f_2 \del_w \right), \qquad f_i \in C^{\infty}.
	\]
	It follows that
	\begin{align*}
	i_{\mathbf{X}} \Omega_I = f_1(z) dw - f_2(z) dz + (\mbox{higher order terms in $w$}) 
	\end{align*}
	is smooth across $\mathbf{T}_+$. Therefore $\int_\gamma i_{\mathbf X}\Omega_I$ is finite, where $\gamma$ is any path connecting $x_0\in\mathbf{T}_+$ and $x_1\in M\backslash\mathbf{T}_+$. Using a similar argument for $\Omega_J$, we conclude that the integrals
	\[
	\int_\gamma i_XI\Omega,\quad \int_{\gamma} i_XJ\Omega
	\]
	are also finite. On the other hand, the pullbacks of $i_XI\Omega, i_XJ\Omega$ to $\til{M\backslash \mathbf{T}_+}$ are $d \mu_2$ and $d\mu_3$. Since $p$ is bounded away from $1$ on any compact subset of $\R^3_{\pmb\mu}$, the integral of $a_Id\mu_2+a_Jd\mu_3$ over any curve $\gamma_0\subset \R^3_{\pmb\mu}$ escaping to the locus $\{p=1\}$ is infinite, giving a contradiction.
\end{proof}
\end{lemma}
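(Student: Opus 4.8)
The plan is to argue by contradiction, leveraging the order-one pole of $\Omega_I$ along $\mathbf{T}_+$ already supplied by Lemmas~\ref{l:deglocuslemma1} and~\ref{l:deglocuslemma2}. Working near a point $x_0\in\mathbf{T}_+$, I would choose local $I$-holomorphic coordinates $(z,w)$ with $\mathbf{T}_+=\{w=0\}$ and $\Omega_I=w^{-1}\,dz\wedge dw+O(1)$. Since $X$ is $I$-holomorphic, its $(1,0)$-part $\mathbf{X}$ is a holomorphic vector field; if $\mathbf{X}$ vanishes identically on $\mathbf{T}_+$, then its holomorphic coefficients are divisible by $w$, so $\mathbf{X}=w\,(f_1\partial_z+f_2\partial_w)$ for holomorphic $f_1,f_2$.

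The decisive computation is that this order-one zero cancels the order-one pole: contracting gives $i_{\mathbf{X}}\Omega_I=f_1\,dw-f_2\,dz+O(w)$, which extends smoothly, hence is bounded, across $\mathbf{T}_+$. I would record the analogous fact for $\Omega_J$. Consequently, for any path $\gamma$ in $M$ running from $x_0\in\mathbf{T}_+$ out into the nondegeneracy locus, the integrals $\int_\gamma i_{\mathbf{X}}\Omega_I$ and $\int_\gamma i_{\mathbf{X}}\Omega_J$ are \emph{finite}.

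To reach a contradiction I would feed this into the soliton structure. Because $\Omega_I$ is of type $(2,0)$, contracting the real field $X$ agrees with contracting $\mathbf{X}$, and taking imaginary parts recovers the moment-map differentials: the imaginary parts of $i_{\mathbf{X}}\Omega_I$ and $i_{\mathbf{X}}\Omega_J$ are $-i_XI\Omega$ and $-i_XJ\Omega$, which pull back to $-d\mu_2$ and $-d\mu_3$ on the tri-Hamiltonian cover. Thus $\int_\gamma d\mu_2$ and $\int_\gamma d\mu_3$ are finite. But by Proposition~\ref{p:invGKsoliton} the Ricci potential obeys $\Phi=a_+\mu_++a_-\mu_-+\mbox{const}$, a nontrivial combination of $\mu_2,\mu_3$, so $\int_\gamma d\Phi$ would also be finite. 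On the other hand $\mathbf{T}_+=\{p=1\}$ and $\Phi=\log\frac{1-p}{1+p}$, so $\Phi\to-\infty$ as $\gamma$ approaches $x_0$; equivalently $\pmb\mu(\gamma)$ escapes to infinity in $\R^3_{\pmb\mu}$ in the direction where $a_+\mu_++a_-\mu_-\to-\infty$, forcing $\int_\gamma d\Phi=+\infty$. This is the required contradiction.

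The main obstacle, and the real content of the argument, is bridging the purely local pole/zero cancellation with the global divergence of $\Phi$ at the degeneracy locus. Concretely, one must be careful that although the Hamiltonian potentials $\mu_i$ are multivalued (defined only on the $\Z^k$-cover), the contracted one-forms $i_XI\Omega$, $i_XJ\Omega$, and hence the relevant part of $d\Phi$, are globally defined closed one-forms on $M\setminus\mathbf{T}$, so the finite-versus-infinite comparison of their integrals along $\gamma$ is legitimate. It is also essential that the order of the pole be \emph{exactly} one, as delivered by the preceding lemmas, since a higher-order pole would survive the contraction and invalidate the boundedness of $i_{\mathbf{X}}\Omega_I$.
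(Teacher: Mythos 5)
Your proposal is correct and follows essentially the same route as the paper's proof: the order-one zero of $\mathbf{X}$ along $\{w=0\}$ cancels the order-one pole of $\Omega_I$, making $i_{\mathbf{X}}\Omega_I$ (and likewise $i_{\mathbf{X}}\Omega_J$) bounded, hence the integrals of $d\mu_2,d\mu_3$ along a path reaching $\mathbf{T}_+$ would be finite, contradicting the divergence of $\Phi=a_+\mu_++a_-\mu_-=\log\frac{1-p}{1+p}$ at $\{p=1\}$. Your added remarks on passing between the real contraction $i_X$ and the holomorphic $i_{\mathbf{X}}$, and on the global well-definedness of the closed one-forms $i_XI\Omega$, $i_XJ\Omega$ despite the multivaluedness of $\mu_i$, are accurate refinements of the same argument.
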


\begin{proof}[Proof of Proposition \ref{p:Omegapoles}] Lemmas \ref{l:deglocuslemma1} and \ref{l:deglocuslemma2} gives that $\Omega_I$ has a pole of order $1$, and then Lemma \ref{l:X_nonvanish} gives that $\mathbf{X}$ does not vanish identically.
\end{proof}

\subsection{Completeness properties of the quotient} \label{ss:compquot}

Now we prove a general statement about Riemannian 4-manifolds with $S^1$ action, which might be of independent interest.

\begin{prop}\label{p:s1_bundle_complete}
	Let $(M,g)$ be a connected smooth complete Riemannian 4-dimensional manifold with boundary admitting
	an isometric $S^1$ action with isolated fixed points $\{y_i\}$.
	Let 
	\[\pi\colon M\to N\] be the projection onto the orbit space, and set $z_i=\pi(y_i)$. Assume that the orbifold with boundary $N_0=N\backslash\{z_i\}$ is
	equipped with a Riemannian metric $h$ and a smooth 2-form $\beta_0$ such that
	\begin{enumerate}
		\item the curvature 2-form of the $S^1$-bundle $M\backslash\{y_i\}\to N_0$ is given by a closed 2-form
		\[
		\beta=*_hdW+W\beta_0,
		\]
		where $W$ is a smooth function on $N_0$;
		\item there are constants $c_W,C_{\beta_0} >0$ such that $W>c_W$ and $|\beta_0|_h+|d\beta_0|_h<C_\beta$;
		\item $g$ on $M\backslash\{y_i\}$ is given by
		\[
		g=Wh+W^{-1}\eta^2
		\]
		where $\eta$ is the connection 1-form with curvature $\beta$;
		\item $\{z_i\}\subset N$ is complete with respect to the distance function $d_h$ induced by $h$;
		\item $|\Rm^{N_0,h}|<b^2$ for some constant $b$;
		\item $\Rc^{M,g}>-k^2$ for some constant $k$.
	\end{enumerate}
	Then any closed subset $K\subset N$ such that $d_h(K,\del N_0)>0$ is complete with respect to $d_h$.%
\end{prop}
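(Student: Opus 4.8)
The plan is to compare three metrics on the orbit space — the given $h$, the quotient (Riemannian submersion) metric $Wh$, and the complete metric $g$ upstairs — and to reduce completeness of $h$ to an upper bound on the conformal factor $W$ supplied by a gradient estimate. First I would record that, on $M\backslash\{y_i\}$, the decomposition $g=Wh+W^{-1}\eta^2$ realizes $\pi\colon (M\backslash\{y_i\},g)\to (N_0,Wh)$ as a Riemannian submersion with the $S^1$-action isometric, so that the horizontal lift of any path in $N_0$ has the same $g$-length as its $Wh$-length. Using this together with completeness of $(M,g)$ and continuity of $\pi$ (with $\pi(y_i)=z_i$), I would show that every $d_{Wh}$-Cauchy sequence $\{x_n\}$ with $d_{Wh}(x_n,\partial N)\geq c>0$ converges in $N$: pass to a subsequence with summable consecutive distances, lift the resulting chain of short near-length-minimizing segments from a single initial lift, and note that for the Cauchy tail these segments, being shorter than the distance to $\partial N$, stay away from $\pi^{-1}(\partial N)$. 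The lifted chain is then $d_g$-Cauchy in the complete manifold $M$, hence converges, and its image under $\pi$ converges in $N$ (to a regular point or to some $z_i$, consistently with hypothesis (4) and Proposition~\ref{p:removable_singularity}).

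Since $h=W^{-1}(Wh)$ and $W>c_W$, we have $d_h\leq c_W^{-1/2}d_{Wh}$, so $Wh$-convergence forces $h$-convergence; thus it suffices to show that a $d_h$-Cauchy sequence in $K$ is $d_{Wh}$-Cauchy, and for that I need an upper bound on $W$ along the segments joining consecutive terms. This is the only mechanism by which the (a priori smaller) metric $h$ could fail to be complete: $W\to\infty$ over a finite $h$-length. Here the elliptic theory enters. Expanding $d\beta=0$ with $\beta=*_hdW+W\beta_0$ produces a linear second-order elliptic equation $\Delta_h W+\langle b,\N W\rangle_h+cW=0$ whose coefficients are controlled by $|\beta_0|_h+|d\beta_0|_h<C_\beta$ from hypothesis (2). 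Applying the Cheng--Yau gradient estimate \cite{ChengYau} to the positive solution $W$ — whose hypotheses are furnished by the curvature bound $|\Rm^{N_0,h}|<b^2$ of (5) and the Ricci lower bound $\Rc^{M,g}>-k^2$ of (6) — on balls that avoid $\partial N_0$ and the punctures $\{z_i\}$, yields $|\N^h\log W|_h\leq \Lambda(\epsilon,\delta)$ on each region $\{d_h(\cdot,\partial N_0)\geq \epsilon,\ d_h(\cdot,z_i)\geq\delta\ \forall i\}$.

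To conclude, let $\{x_n\}\subset K$ be $d_h$-Cauchy with $d_h(K,\partial N_0)\geq\epsilon_0>0$. If the sequence accumulates at some $z_i$, then hypothesis (4) together with Proposition~\ref{p:removable_singularity} forces it to converge to $z_i\in N$, and since $K$ is closed the limit lies in $K$. Otherwise $\{x_n\}$ stays at distance $\geq\delta$ from every $z_i$, hence in the region where $|\N^h\log W|_h\leq\Lambda$. Passing to a subsequence with $\sum_k d_h(x_{n_k},x_{n_{k+1}})<\infty$, each consecutive near-length-minimizing $h$-path is short and remains in this region, so chaining the gradient estimate bounds $\log W$ along the entire chain, hence bounds $W$ by a constant $W^\ast$. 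Then the $Wh$-length of each segment is at most $\sqrt{W^\ast}$ times its $h$-length, so the subsequence is $d_{Wh}$-Cauchy; by the first step it converges in $N$, and because $d_h\leq c_W^{-1/2}d_{Wh}$ the whole sequence $d_h$-converges to that point, which lies in $K$.

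The main obstacle — and the reason the lower curvature bounds (5)--(6) are assumed — is the gradient estimate for $W$ and, more precisely, upgrading its pointwise form into a genuine upper bound for $W$ over a bounded region while $(N_0,h)$ is itself incomplete, so that minimizing $h$-geodesics between two far-apart points could in principle run out toward $\partial N_0$ or toward the fixed-point images $z_i$ where $W$ blows up. The device that circumvents this is the passage to a rapidly converging subsequence, which replaces long minimizing geodesics by chains of arbitrarily short segments that provably remain in the good region; this Schoen--Yau-style chaining \cite{SchoenYau}, rather than any global control of $W$, is what makes the argument go through.
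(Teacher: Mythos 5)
There is a genuine gap at the decisive step, namely the claimed uniform gradient estimate $|\nabla^h \log W|_h \le \Lambda(\epsilon,\delta)$ on the region $\{d_h(\cdot,\del N_0)\ge \epsilon,\ d_h(\cdot,z_i)\ge\delta\}$. The Cheng--Yau estimate cannot be applied directly on $(N_0,h)$ near the points you need to worry about: the local estimate requires geodesic balls with compact closure inside the domain where the equation holds, and precisely near a hypothetical metric-completion boundary point of $(N_0,h)$ no such balls of uniform radius exist (their admissible size is controlled by the distance to the completion boundary, not by $\epsilon$ and $\delta$). The only place where Cheng--Yau does apply is upstairs on the complete manifold $(M,g)$ --- this is what hypothesis (6) is for --- and there it yields $|dW|_g\le CW$, which under the conformal relation $|dW|_g=W^{-1/2}|dW|_h$ translates into $|dW|_h\le CW^{3/2}$, i.e.\ $|\nabla^h\log W|_h\le CW^{1/2}$, not a uniform bound. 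Integrating this along short $h$-segments controls $W^{-1/2}$, not $\log W$: it produces a \emph{lower} bound $W(x)\ge C\,d_h(x,x^*)^{-2}$ near a completion point $x^*$, and is perfectly consistent with $W\to\infty$ at finite $h$-distance. So your chaining argument does not bound $W$ from above, the implication ``$d_h$-Cauchy $\Rightarrow$ $d_{Wh}$-Cauchy'' does not follow, and the uniform bound on $\nabla\log W$ you assert is in effect equivalent to the statement being proved.

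What is missing is the mechanism that actually rules out finite-distance blow-up of $W$. The paper argues by contradiction: assuming a completion boundary point $x^*$ exists, the correct form of the gradient estimate together with completeness of $d_{Wh}$ forces $W\ge C\,d_h(\cdot,x^*)^{-2}$; a separate third step then shows this is incompatible with the elliptic equation $d(*_hdW+W\beta_0)=0$ via a Green's-identity/mean-value inequality on geodesic spheres in exponential coordinates (this is where the curvature bound (5) and the bounds on $\beta_0$ enter): the spherical averages of $W$ must remain bounded as the spheres approach $x^*$, while the pointwise lower bound forces them to diverge like $-\log(r_1-R)$. Your proposal contains no counterpart of this mean-value step. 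The parts of your outline that are sound --- completeness of $d_{Wh}$ inherited from $(M,g)$, the comparison $d_h\le c_W^{-1/2}d_{Wh}$, and the Schoen--Yau-style chaining along a rapidly converging subsequence --- all appear in the paper's proof, but there they serve to establish the lower blow-up bound for $W$, not an upper bound.
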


\begin{rmk}
	The assumptions of the above proposition seem rather restrictive. However, as we will show later, aside from the lower Ricci curvature bound, they are automatically satisfied on complete regular rank one solitons.
\end{rmk}
\begin{proof}

	Let $\del N_0$ be the usual boundary of an orbifold $N_0$, and for a subset $K\subset N$ denote by $\del_h K$ its $d_h$-completion boundary, i.e.,
	\[
	\del_h K:=\bar{(K,d_h)}\backslash K,
	\]
	where $\bar{(K,d_h)}$ denotes the metric completion of $(K,d_h)$.
	
	We will prove the completeness by contradiction. The proof is rather technical, so we start with a brief overview. The first step is to get a gradient estimate for $W$. The function $W$ satisfies a second order elliptic PDE on a complete manifold $(M,g)$ which allows for the application of the standard method of Cheng-Yau \cite{ChengYau} using the lower bound on $\Rc^{M,g}$. With the gradient estimates at hand, we exploit completeness of $(M,g)$ and use an argument of Schoen-Yau~\cite{SchoenYau} to get a lower bound on the blow up rate of $W$ near $\del_h N$. Finally, relying on the control over the geometry of $(N_0,h)$ we prove that averages of $W$ over geodesic spheres in $(N_0,h)$ blow up as the boundary of the sphere approaches $\del_h N$, and obtain a contradiction with the mean value inequality. This implies that $K$ must be complete.
	
	\medskip
	\noindent\textbf{Step 1} (Gradient estimate for $W$)
	\noindent
	Assume that some closed $K\subset N$ with $d_h(K,\del N_0)>0$ is not complete so that $\del_h K$ is not empty. Since $d_h(K,\del N_0)>0$, there is a point in $x_\infty\in \del_h K$ such that $d_h(x_\infty,\del N_0)>0$, and as $K\subset N$ is closed, $x_\infty\in \del_h N$.
	
	The set $\{z_i\}$ is complete, therefore we can find $\epsilon>0$ such that
	\[
	d_h(x_\infty,\{z_i\}\cup \del N_0)>\epsilon.
	\]
	Pick a point $x_0\in N_0$ such that $d_h(x_0,x_\infty)<\epsilon/3$. Then $d_h(x_0,\{z_i\}\cup \del N_0)>2\epsilon/3$ and $d_h(x_0,\del_h N)<\epsilon/3$. Since $d_h(x_0, \{z_i\}\cup \del N_0)>2\epsilon/3$ and $h$ is a Riemannian metric on orbifold $N_0$, we can choose the largest $h$-geodesic ball around $x_0$ not containing points in $\{z_i\}\cup \del N_0\cup \del_h N$, and call it $B(x_0)$. Since $d_h(x_0,\del_h N)<\epsilon/3$ and $d_h(x_0, \{z_i\}\cup \del N_0)>2\epsilon/3$, there exists a point $x^*\in \del_h N$ such that $x^*\in \bar{B(x_0)}$. Thus, as $d_h(x_\infty, x^*)<2\epsilon/3$, we have that
	\[
	d_h(x^*,\{z_i\}\cup \del N_0)>\epsilon/3.
	\]
	
	Let $r=\epsilon/6$. We are going to prove a gradient estimate for $W$ in $U_r(x^*)=\{x\in N\ |\ d_h(x,x^*)\leq r\}$. Our plan is to apply the Cheng-Yau local gradient estimate~\cite{ChengYau} to $W$ defined on  $\pi^{-1}(U_r(x^*))\subset M$, where
	\[
	\pi\colon M\to N
	\]
	is the natural projection onto the orbit space. Recall that the metric $g$ on $M$ is given by $Wh+W^{-1}\eta^2$ with $W>c_W>0$. Furthermore, $d_h(x^*,\{z_i\}\cup\del N_0)\geq \epsilon/3=2r$. Therefore for any $y\in \pi^{-1}(U_r(x^*))$, the $g$-geodesic ball of radius $c_Wr$ centered at $y$ does not intersect the boundary $\del M$ and the fixed point set $\{y_i\}$.

	To apply the local Cheng-Yau estimate to $W$ we first note that for any function $f\colon M\to \R$ invariant under the $S^1$-action we have
	\[
	\Delta_{g}f=W^{-1}\Delta_{h}f,
	\]
	therefore $W$ solves
	\[
	\Delta_{g}W=W^{-1}\Delta_h W=-W^{-1}(\langle dW, *\beta_0\rangle_h+(*_hd\beta_0)W).
	\]
	We know that $W$ is bounded from below by $c_W>0$ and $|\beta_0|_h+|d\beta_0|_h<C_{\beta_0}$. It is straightforward now to check that there exists $C'>0$ such that $W$ satisfies the differential inequalities
	\begin{equation}
	\begin{split}
	\Delta_{g}W&\leq C'(|dW|_{g}+W),\\
	|\nabla_{g}(\Delta_{g}W)|_{g}&\leq C'
	\left(|dW|_{g}+W^{-1}|d W|^2_{g}+|\nabla^2_{g}W|_{g}+W\right),
	\end{split}
	\end{equation}
	which allow to run the proof of~\cite[Theorem 6]{ChengYau}. Therefore there exists a constant $C>0$ such that
	\[
	|d W|_{g}<CW
	\]
	in $\pi^{-1}(U_r(x^*))$. Translating this inequality back to $h$ we find that in $U_r(x^*)$
	\begin{equation}\label{f:W_gradient_estimate}
	|dW|_h<CW^{3/2}.
	\end{equation}
	
	\medskip
	\noindent\textbf{Step 2} (Lower bound on the growth rate of $W(x)$ as $x\to x^*$)
	Next we exploit the upper bound of ~\eqref{f:W_gradient_estimate} together with the completeness of $(M,g)$ to obtain a lower bound on the growth of $W$ as its argument approaches $x^*$. The manifold $(M,g)$ is complete, therefore the distance function $d_{Wh}$ on $N$ induced by the metric $Wh$ on $N\backslash\{z_i\}$ is complete.  Let $\gg\colon [0,1)\to N$ be any smooth curve of finite $h$-length, avoiding $\{z_i\}$, such that $\lim_{t\to 1}\gg(t)=x^*$. Since $(N,d_{Wh})$ is complete, and $x^*\not\in N$, we know that
	\[
	\ell_{Wh}(\gg)=\int_{0}^{1} W^{1/2}|\gg'(t)|_{h}dt=\infty.
	\]
	This implies that $W\to +\infty$ subsequentially along any curve approaching $x^*$. We now use the gradient estimate for $W$ to upgrade this subsequential blow up to a uniform lower bound on the growth rate.
	
	Consider any curve $\gg$ in ${U_r}(x^*)$ approaching $x^*$. Pick two points $x,x'$ on $\gg$. Then
	\[
	W^{-\frac{1}{2}}(x)-W^{-\frac{1}{2}}(x')=\int_x^{x'} h\left(\nabla W^{-\frac{1}{2}}, \gg'(t)\right) dt\leq \int_x^{x'}\frac{1}{2}|\nabla W|_h W^{-\frac{3}{2}}dt.
	\]
	Using the gradient estimate~\eqref{f:W_gradient_estimate}, we conclude
	\[
	W^{-\frac{1}{2}}(x)-W^{-\frac{1}{2}}(x')\leq C'' \ell_h(\gg).
	\]
	Picking $x'$ along a sequence $x_i\in\gg$ such that $W(x_i)\to \infty$, we conclude that
	\[
	W(x)\geq \frac{C}{\ell^2_h(\gg)}.
	\]
	As the curve $\gg\subset U_r(x^*)$ above is arbitrary, we conclude that for any $x\in U_r(x^*)$
	\begin{equation}\label{f:W_blowup_bound}
	W(x)\geq \frac{C}{d_{h}^2(x,x^*)}.
	\end{equation}
	
	\medskip
	\noindent\textbf{Step 3} (Contradiction with the mean value theorem)
	
	Finally, we show that the function $W$ satisfying the elliptic equation $d(*_hdW+W\beta_0)=0$ can not have a boundary blow up as in~\eqref{f:W_blowup_bound} using the bound $|\Rm^{N,h}|<b^2$.  Recall that we have a ball $B(x_0)\subset N$ such that $x^*\in\del B(x_0)$. Let $r_0:=d_h(x_0,x^*)$ be the radius of $B(x_0)$. Pick a new center $x_1$ along the $h$-geodesic segment $[x_0, x^*]$ such that
	\[
	r_1:=d_h(x_1,x^*)<\min(r_0, \pi/4b).
	\]
	Denote by $B_{r_1}(x_1)\subset N$ the $h$-geodesic ball of radius $r_1$ centered at $x_1$ and by $B^T_{r_1}(x_1)\subset T_{x_1}N$ the ball of radius $r_1$ in the tangent space $(T_{x_1}N, h_{x_1})$. By Rauch's comparison theorem, the exponential map
	\[
	\exp_{x_1}^h\colon B^T_{r_1}(x_1) \to B_{r_1}(x_1)
	\]
	is an immersion and there is a constant $\delta_b>0$ such that the norm of the differential of $\exp_{x_1}^h$ satisfies bounds:
	\[
	C_0^{-1} < |D(\exp_{x_1}^h)| < C_0,\quad C_0=C_0(b)>0.
	\]
	We use $\exp_{x_1}^h$ to pull back $W$, $\beta_0$ and $h$ to $B^T_{r_1}(x_1)\subset T_{x_1}N$. The estimate for $D(\exp_{x_1}^h)$ implies that the metric $h_{x_1}$ in $B^T_{r_1}(x_1)\subset T_{x_1}N$ is uniformly equivalent to the pull back of $h$. The estimates to follow are conducted inside $B^T_{r_1}(x_1) \subset T_{x_1} N$ with respect to the metric $(\exp_{x_1}^h)^* h$ which we still denote $h$.
	
	
	Let $\rho$ be the $h$-distance function from $x_1$, and let $G=G(\rho)$ be a function to be chosen later. Recall that $W$ solves
	\[
	L(W):=*_hd(*_hdW+ W\beta_0)=0.
	\]
	Then by Green's identity, for any $R<r_1$,
	\begin{equation}
	\begin{split}
	\int_{B_R(x_1)} (L(W)G-WL^*(G))\,d\mu_h=
	\int_{\del B_R(x_1)} \left((*_hdW+W\beta_0)G-W\frac{\del G}{\del\nu}\right)\,d\gs_h,
	\end{split}
	\end{equation}
	where
	\[
	L^*(G):=*_h(d*_hdG-dG\wedge\beta_0)
	\]
	is the dual operator. Using that $L(W)=0$ and
	\[
	G\Big|_{\del B_R(x_1)}=G(R),\quad \frac{\del G}{\del \nu}\Big|_{\del B_R(x_1)}=G'(R),
	\]
	we find 
	\[
	\int_{\del B_R(x_1)} (*_h dW+W\beta_0)Gd\gs_h=
	G(R)\int_{\del B_R(x_1)} (*_h dW+W\beta_0)d\gs_h=0.
	\]
	Therefore
	\[
	G'(R)\int_{\del B_R(x_1)}W\,d\gs_h=\int_{B_R(x_1)} W L^*(G)\,d\mu_h.
	\]
	In particular for any $0<R_1<R_2<r_1$ we have
	\begin{equation}\label{f:mean_value_ineq}
	G'(R_1)\int_{\del B_{R_2}(x_1)}W\,d\gs_h-G'(R_1)\int_{\del B_{R_2}(x_1)}W\,d\gs_h=\int_{B_{R_2}(x_1)-B_{R_1}(x_1)} W L^*(G)\,d\mu_h.
	\end{equation}
	Now consider a function
	\[
	G(\rho)=1-e^{-\gl \rho}.
	\]
	By the Laplacian comparison theorem, $\Delta \rho $ is bounded from above for $\rho\in (r_1/2, r_1)$. Then given a bound on $|\beta_0|_h$ it is easy to check that for a large constant $\gl>0$,
	\[
	L^*(G)<0
	\]
	in the annulus $B_{r_1}(x_1)-B_{r_1/2}(x_1)$. Therefore identity~\eqref{f:mean_value_ineq} implies that for any $R\in(r_1/2,r_1)$ we have a mean value inequality
	\begin{equation}\label{f:mean_value_ineq_2}
	\int_{\del B_{R}(x_1)}Wd\gs_h<C \int_{\del B_{r_1/2}(x_1)}W\,d\gs_h
	\end{equation}
	where $C$ depends only on $r_1$ and operator $L$. In particular the integral on the left hand side is bounded from above as $R\to r_1$.
	
	Fix $R<r_1$ and let $x_N\in \del B_{R}(x_1)$ be the point which is $h$-closest to $x^*$. Pick spherical coordinates $(\phi,\psi)\in [0;2\pi)\times (-\pi/2;\pi/2)$ such that $\phi$ is a longitude, $\psi$ is an latitude $x_N$ is the northern pole. Then there exists a constant $C_1$ independent of $R\in(r_1/2,r_1)$ such that for a point $p\in \del B_{R}(x_1)$ with coordinates $(\phi,\psi)$ we have
	\[
	d_h(p,x^*)<C_1\sqrt{(r_1-R)^2+(\pi/2-\psi)^2}.
	\]
	Now we estimate
	\begin{equation}
	\begin{split}
	\int\limits_{\del B_{R}(x_1)}Wd\gs_h&\geq C\int\limits_{\del B_{R}(x_1)}Wd\gs_{h_1}=CR^2 \int_{-\pi/2}^{\pi/2}\left(\int_{0}^{2\pi} W \,d\phi\right) \cos\psi\,d\psi\\ &\geq
	C'\int_{-\pi/2}^{\pi/2} \frac{\pi/2-\psi}{(r_1-R)^2+(\pi/2-\psi)^2}d\psi\geq -C''\log(r_1-R),
	\end{split}
	\end{equation}
	where in the second inequality we used the blow up estimate~\eqref{f:W_blowup_bound} and uniform equivalence of $h$ and $h_{x_1}$ in $B^T_{r_1}(x_1)$. The final term is unbounded as $R\to r_1$, contradicting the mean value inequality~\eqref{f:mean_value_ineq_2}. Thus we proved that $K\subset N$ is complete for any closed $K$ such that $d_h(K,\del N_0)>0$.
\end{proof}

\subsection{Global definition of the moment map on the non-degeneracy locus of \texorpdfstring{$M$}{}}

	Suppose $(M^4,g,I,J)$ is a complete regular rank one GK soliton.  Considering the nondegenerate domain $M\backslash \mathbf{T}$, $\mathbf{T}=\mathbf T_+\cup \mathbf T_-$, the construction of Theorem~\ref{t:nondegenerate_gk_description_v2} defines a moment map
	\[
	\pmb\mu\colon \til{M\backslash \mathbf{T}}\to \R^3_{\pmb\mu}.
	\]
	on an appropriate covering space $\Z^k\to\til{M\backslash \mathbf{T}}\to M\backslash \mathbf{T}$.  Our goal in this section is to describe the structure of the covering space $\til{M\backslash \mathbf{T}}$, and show that the moment map $\pmb{\mu}$ descends to the $\Z^k$-quotient space.

	\begin{prop} \label{p:deck_transform_mu} Given the setup above, the map $\pmb{\mu}$ descends to a map
	\begin{equation}\label{f:proj_nondegenerate}
	M\backslash \mathbf{T}\to (M\backslash \mathbf{T})/S^1\simeq \R^3_{\pmb\mu}/\Gamma,
	\end{equation}
	where $\Gamma\simeq \Z^k$ acts on $(\R^3_{\pmb\mu},h)$ by isometric translations freely, and properly discontinuously. There are two possibilities for the action of $\Gamma$ on $\R^3_{\pmb\mu}$:
	\begin{enumerate}
		\item $\Gamma_1\simeq \Z$ is generated by $g_1$ such that
		\[
		g_1(\mu_1,\mu_+,\mu_-)=(\mu_1+c_1,\mu_+,\mu_-),\quad c_1\neq 0.
		\]
		\item $\Gamma_2\simeq \Gamma_1\times \Z\simeq \Z^2$ and $a_-=0$, where $\Gamma_2$ is generated by $g_1$ and $g_2$ such that
		\begin{align*}
		g_1&(\mu_1,\mu_+,\mu_-)=(\mu_1+c_1,\mu_+,\mu_-),\quad c_1\neq 0\\
		g_2&(\mu_1,\mu_+,\mu_-)=(\mu_1+c_1',\mu_+,\mu_-+c),\quad c\neq 0.
		\end{align*}
	\end{enumerate}
	\begin{proof}
	First, let us describe the covering space
	\[
	\til{M\backslash \mathbf{T}}\to M\backslash \mathbf{T}.
	\]
	On $M\backslash \mathbf{T}$ we have three closed forms
	\[
	\alpha_1=-i_X\Omega, \quad \alpha_2=-i_X I\Omega, \quad \alpha_3=-i_X J\Omega.
	\]
	Let $K:=\{a\in H_1(M\backslash \mathbf{T}; \Z)\ |\ \langle\alpha_i,a\rangle=0, i=1,2,3\}\subset H_1(M\backslash \mathbf{T}; \Z)$. Then we have a map
	\[
	\pi_1(M\backslash \mathbf{T})\to \Gamma\to 1,
	\]
	where $\Gamma=H_1(M\backslash \mathbf{T}; \Z)/K\simeq \Z^k$.  The action $S^1\times M\to M$ is tri-Hamiltonian if and only if $\Gamma=0$. In general, $\Gamma$ is nontrivial, and in order to construct the moment map of Section~\ref{s:GKsymm} we first need to `kill' $\Gamma$ by taking a cover. Specifically, the kernel of the map $\pi_1(M\backslash \mathbf{T})\to \Gamma$ corresponds to a covering space
	\[
	\mathrm{pr}\colon \til{M\backslash \mathbf{T}}\xrightarrow{\Gamma} M\backslash \mathbf{T}
	\]
	with the deck transformation group $\Gamma$. On $\til{M\backslash \mathbf{T}}$ the forms $\mathrm{pr}^*\alpha_i$ represent the zero class in de Rham cohomology, so we can write $\mathrm{pr}^*\alpha_i=d\mu_i$ and recover the moment map $\pmb \mu$ in the construction of Section~\ref{s:GKsymm}: $\pmb\mu\colon \til{M\backslash \mathbf{T}}\to\R^3_{\pmb\mu}$. We also note that the $S^1$ action on $M\backslash \mathbf{T}$ lifts to an $S^1$ action on the covering space $\til{M\backslash \mathbf{T}}$. Indeed, for any free loop $\gamma\subset M\backslash \mathbf{T}$ represented by an orbit of $S^1$, we have $[\gamma]\in K$, since $\alpha_i(X)=0$. Therefore, the lift of $\gamma$ to $\til{M\backslash \mathbf{T}}$ is still a closed loop. It follows that the lift of the vector field $X$ to $\til{M\backslash \mathbf{T}}$ has periodic orbits.
	
	Next we claim that the image $\pmb\mu(\til{M\backslash\mathbf{T}})$ is the whole $\R^3_{\pmb\mu}$. We want to apply Proposition~\ref{p:s1_bundle_complete} to $\til{M\backslash \mathbf{T}}$. However, we can not do it directly, since, first, this manifold is not necessarily complete, and, second, $W$ might not be bounded from below by a positive constant. Thus we first modify $\til{M\backslash \mathbf{T}}$.
	
	Recall that $\mathbf{T}=\{p= \pm1\}$. For $\gd>0$ denote $\mathbf{T}_\gd=\{|p|>1-\gd\}$ and consider the complement $M\backslash \mathbf{T}_\gd\subset M\backslash \mathbf{T}$. For $\gd>0$ small enough $M\backslash \mathbf{T}_\gd$ is a connected manifold with boundary. Let $M_\gd\subset \til{M\backslash\mathbf{T}}$ be the inverse image of $M\backslash\mathbf{T}_\gd$ under the covering map $\til{M\backslash\mathbf{T}}\to M\backslash\mathbf{T}$.  By Proposition~\ref{p:invGKsoliton} characterizing rank one GK solitons, locally $\Phi=a_+\mu_++a_-\mu_-+\mbox{const}$. Since $\Phi$ and $\mu_i$ are globally defined on a connected manifold $M_\gd$, the same identity holds globally. We can absorb the constant term by redefining $\mu_i$ and assume that $\Phi=a_+\mu_++a_-\mu_-$. In particular, $p$, $h$ and $\beta_0$ all descend to $\R^3_{\pmb\mu}$.

		Next we need to modify the function $W$, and as a consequence the geometry of $M$, to ensure that $W$ is bounded from below by a positive constant.  Let $\til W$ be the baseline solution provided by Lemma \ref{l:W0_solution}. The equation for $\til W$ implies that the 2-form $\til \beta=*_h d \til W+\til W\beta_0\in \Lambda^2(M/S^1)$ is closed, and exact since it is defined on $\mathbb R^3_{\pmb \mu}$, with an antiderivative $\alpha\in \Lambda^1(N)$.  Thus we can choose a new GK structure on the manifold $M$ by setting
	\[
	W_t:=W+t\til W,\quad \eta_t:=\eta+t\alpha,\quad  t\in\R.
	\]
	\begin{lemma}\label{l:ricci_bound}
		Let $g_t=W_th+W_t^{-1}\eta_t^2$. If $(M,g)$ is complete and has $\Rc^{M,g}>-k^2$, then $(M,g_t)$ is also complete. If additionally $|\beta_0|_h, |\nabla_h\beta_0|_h, \til W, |d\til W|_h$ are bounded on $M/S^1$, then for $t>0$ large enough we have $\Rc^{M,g_t}>-k^2$.
	\end{lemma}
	\begin{proof}
		Since the fibers of the map $M\to M/S^1$ are compact, $(M,g_t)$ is complete if and only if $M/S^1$ with the distance function induced by the metric $W_th$ is complete. We know that metric $Wh$ induces a complete distance function on $M/S^1$ and since $W_t>W$ the same must be true for $W_t h$, implying that $(M,g_t)$ is complete.
		
		Now we claim that for some $t>0$ we still have
		\begin{equation}\label{eq:pf_ricci_Mt}
		\Rc^{M,g_t}>-k^2g_t.
		\end{equation}		
		It is enough to prove that bound on the subset of $M$ where $S^1$ acts freely. Let $\{E^1,E^2,E^3\}$ be an $h$-orthonormal frame of $T_xN$, $\{E_t^1,E_t^2,E_t^3\}$ are the horizontal lifts of $E_i$ normalized to the $g_t$-unit length, and $E_t^4$ is the unit vertical vector field. In order to prove the bound~\eqref{eq:pf_ricci_Mt}, we use the exact formula for $\Rc^{M,g_t}$ in terms of $\Rc^{N,h}$, $W_t$ and $\beta_t$ (see \cite{Gilkey-98}) and conclude that for any $1\leq i, j\leq 4$,
		\begin{equation}
		\Rc^{M,g_t}(E_t^i,E_t^j)=W_t^{-2} \langle dW_t,\Psi_1 \rangle_h+W_t^{-1}\Psi_2,
		\end{equation}
		where $\Psi_1$ and $\Psi_2$ are a covector and a function independent of $t$ with the $h$-norms bounded by $C(|\beta_0|^2_h+|\nabla_h\beta_0|_h)$, where $C>0$ is a universal constant.  Notably, there are no second order terms in $W$ here~--- a special artifact of this ansatz recovering that in the case $h$ is Euclidean, the metric on the total space $M$ is Ricci flat for an arbitrary harmonic function $W$.  We know that
		\begin{equation}\label{f:proof_ricci_bound}
		W^2\Rc^{M,g}(E_0^i,E_0^j)=\langle dW,\Psi_1\rangle_h+\Psi_2 W>-k^2W^2.
		\end{equation}
		If we fix
		\[
		t>\max\left(0, \sup_{\mathbf{\R^3_{\pmb\mu}}} \left\{ -\til W^{-2}\langle d\til W,\Psi_1\rangle_h-\Psi_2\til W^{-1} \right\} \right),
		\]
		then
		\[
		\langle d(t\til W),\Psi_1\rangle_h+\Psi_2 (t\til W)>-2k^2tW\til W-k^2t^2\til W^2,
		\]
		and using~\eqref{f:proof_ricci_bound} we find
		\[
		\langle dW_t,\Psi_1\rangle+\Psi_2W_t>-k^2W_t^2,
		\]
		which is equivalent to the required Ricci bound on  $(M_t,g_t)$.
	\end{proof}
	
	Furthermore, by a direct computation, on $\{|p|\leq 1-\delta\}$ all the norms $|\beta_0|_h, |\nabla_h\beta_0|_h, \til W, |d\til W|_h$ are bounded.
	Hence we can apply Lemma~\ref{l:ricci_bound} and modify the metric on $(M_\delta, g)$ obtaining $(M_\delta,g_t)$ with $\Rc^{M_\delta,g_t}>-k^2$ and $W_t>t\inf_{\R^3_{\pmb\mu}} \til W>0$. We claim that $(M_\gd, g_t)$ satisfies the assumptions of Proposition~\ref{p:s1_bundle_complete}:
	\begin{enumerate}
		\item $(M\backslash\mathbf{T}_\gd, g)$ is a closed subset of a complete manifold, therefore it is itself complete. $M_\gd$ is a covering space over $M\backslash\mathbf{T}_\gd$, therefore it is also complete with respect to $g$. By Lemma~\ref{l:ricci_bound} $M_\delta$ is also complete with respect to $g_t$
		\item $S^1$ has only isolated fixed points by Lemma~\ref{l:free_nondegenerate}.
		\item By Theorem~\ref{t:nondegenerate_gk_description_v2} and our construction of $(M_\delta,g_t)$, metric $g_t$ is given by $g=W_th+W_t^{-1}\eta^2$ and the curvature form is given by $\beta=*_hdW_t+W_t\beta_0$.
			
		\item $M$ has finitely many fixed points. Since $M_\gd\to M\backslash \mathbf{T}_\gd\supset M$ is a covering map, the set of fixed points on $M_\gd$ is complete with respect to the pull back of any distance function on $M$, in particular with respect to the distance function induced by the metric $h+\eta^2$ on $M_\gd\backslash M^{S^1}$.
		\item $\Rm^{\R^3_{\pmb\mu},h}$ has a two-sided bound and $\beta_0$, $\nabla_h\beta_0$ are both bounded with respect to $h$ on $d_h$-bounded subsets. Indeed, as we observe below (see Remark~\ref{r:Norbifold1}), the metric completion of an isometric quotient $(\R^3_{\pmb\mu}/\Z, h)$ is a smooth orbifold, such that $h$ and $\beta_0$ extend to smooth tensors. Therefore all geometric quantities naturally attached to $\beta_0$ and $h$ have a bounded norm on $d_h$-bounded subsets. 
		\item $\Rc^{M,g_t}$ is bounded from below by our standing assumption on $M$ and by Lemma~\ref{l:ricci_bound}.
	\end{enumerate}
	Given $\delta'>0$, Proposition~\ref{p:s1_bundle_complete} implies that for any $\delta>\delta'$ a closed subset
	$M_{\delta}/S^1\subset M_{\delta'}/S^1$ is complete with respect to the metric $h$. Therefore the immersion $\iota$ of Theorem~\ref{t:nondegenerate_gk_description_v2}
	\[
	\iota\colon M_\delta/S^1\to \{x\in\R^3_{\pmb\mu}\ |\ |p(x)| \leq 1-\gd\}
	\]
	is a local diffeomorphism of $h$-complete manifolds. It follows that map $\iota$ satisfies the \textit{curve lifting property}: for any smooth curve $\til{\gg}\subset \{x\in\R^3_{\pmb\mu}\ |\ |p(x)| \leq 1-\gd\}$ starting at $\til\gg(0)=\iota(x_0)$ there exists its lift $\gg\subset M/S^1$, such that $\gg(0)=x_0$. A general result~\cite[Prop.\,6, \S5-6]{Carmo} implies that $\iota$ is a covering map.
	
	Now, since $\{x\in\R^3_{\pmb\mu}\ |\ |p(x)| \leq 1-\gd\}$ is simply connected, $\iota$ must be a diffeomorphism. Letting $\gd\to 0$, we conclude that there is a diffeomorphism
	\[
	(\til{M\backslash \mathbf{T}})/S^1\simeq \R^3_{\pmb\mu}
	\]
	so that $\pmb\mu \colon \til{M\backslash \mathbf{T}}\to \R^3_{\pmb\mu}$ is just the projection on the orbit space.
	\begin{lemma}\label{l:T_nonempty}
		The set $\mathbf{T}_+=\{p=1\}$ is nonempty as long as $a_+\neq 0$. Similarly $\mathbf{T}_-=\{p=-1\}$ is nonempty as long as $a_-\neq 0$.
	\end{lemma}
	\begin{proof}
		Assume that $a_+\neq 0$ yet $\mathbf{T}_+$ is empty. Then $\mathbf T_{+,\epsilon}:=\{y\in M\ |\ p(y)\geq 1-\epsilon\}$ is complete. By the same argument as above invoking Proposition~\ref{p:s1_bundle_complete}, we have an isomorphism of $h$-complete manifolds with boundary
		\[ 
		\mathbf{T}_{+,\epsilon}/S^1\simeq\{x\in\mathbf \R^3_{\pmb\mu}\ |\ p(x)\geq 1-\epsilon\}.
		\]
		Now consider a curve $\gamma\subset \R^3_{\pmb\mu}$ such that $\mu_1$ and $\mu_-$ are constant and $a_+\mu_+\to -\infty$. Along $\gamma$ we have $p\to 1$ and by a direct computation $\gamma$ has a finite $h$-length, giving a contradiction with the $h$-completeness of $\mathbf{T}_{+,\epsilon}/S^1$.
	\end{proof}

	The action of the deck transformation group $\Gamma$ on $\til{M\backslash \mathbf{T}}$ commutes with the flow of $X$ and therefore descends to the action on $\pmb\mu(\til{M\backslash \mathbf{T}})=\R^3_{\pmb\mu}$.  By construction, the action of $\Gamma$ on $\til{M\backslash \mathbf{T}}$ must preserve $(\Omega,I,J,X)$, therefore $\Gamma$ preserves the 1-forms $d\mu_i$. Hence, the descended action of $\Gamma$ on $\R^3_{\pmb\mu}$ must be via translations in $\mu_i$ coordinates.
	
	Now we prove that the action of $\Gamma$ on $\R^3_{\pmb\mu}$ is free. Assume on the contrary that the deck transformation, corresponding to a loop $\gg\subset M\backslash \mathbf{T}$ fixes a point $x_0\in \R^3_{\pmb\mu}$. Take any $y_0\in\pmb\mu^{-1}(x_0)$, and let $\til{\gg}$ be a path between $y_0$ and some $y_1$, which corresponds to the lift of $\gg$ to $\til{M\backslash \mathbf{T}}$. Since the element corresponding to $\gg$ fixes $x_0$, we have that $\pmb\mu(y_0)=\pmb\mu(y_1)=x_0$. The fibers of $\pmb\mu$ are connected, so we can ``close up'' $\til{\gg}$ by joining $y_0$ to $y_1$ within $\pmb\mu^{-1}(x)$. The loop $\hat{\gg}$ obtained this way satisfies:
	\[
	0=\int_{\hat{\gg}}d\mu_i=\int_{\til{\gg}}d\mu_i=\int_{\gg} \alpha_i,
	\]
	where the first identity is just Stokes' Theorem and the second identity uses the fact that $d\mu_i$ vanishes on $\pmb\mu^{-1}(x)$. We see that $\langle\alpha_i,[\gg]\rangle=0$, so $[\gg]\in K\subset H_1(M\backslash \mathbf{T};\Z)$ and $\gg$ corresponds to the trivial element of $\Gamma$.  Thus the action is indeed free. Next, since the action of $\Gamma$ on $\til{M\backslash \mathbf{T}}$ is properly discontinuous (this is true for any covering space), and the fibers of $\pmb\mu$ are compact, the action of $\Gamma$ on $\R^3_{\pmb\mu}$ also must be properly discontinuous.  With this in place we can take the fiberwise quotient of the moment map $\til{M\backslash \mathbf{T}}\to \R^3_{\pmb\mu}$ to obtain a map
	\[
	\pmb\mu\colon M\backslash \mathbf{T}\to  \R^3_{\pmb\mu}/\Gamma,
	\]
	which by abuse of notation we still denote $\pmb\mu$.
	
	Next, the action of $\Gamma$ on $\til{M\backslash\mathbf{T}}$ preserves GK structure, hence the action of $\Gamma$ descends to an \emph{isometric} action on $(\R^3_{\pmb\mu}, h)$. Given the precise form of the metric $h$, and using the fact that $p$ is a function of $a_+\mu_++a_-\mu_-$, we conclude that $\Gamma\simeq \Z^k$ has rank $k\leq 2$ and is a subgroup of the group $\R^2\subset \Isom(h)$ generated by the translations in $\mu_i$-coordinates, preserving the linear function $a_+\mu_++a_-\mu_-$.
	
	Our final goal is to determine which subgroups $\Gamma\subset\Isom(h)$ could occur for a given complete GK soliton $(M,g,I,J)$. By Lemma~\ref{l:T_nonempty} we know that $\mathbf{T}$ is nonempty. For an irreducible component $T_i\subset \mathbf{T}$ of the degeneracy divisor take a small neighbourhood $U_i=U(y_i)$ of a smooth point $y_i\in T_i$ and choose a loop $\gg_i\subset U_i$ with winding number $1$ with respect to $T_i$. By Proposition~\ref{p:Omegapoles}, $i_X\Omega_I$ is a meromorphic one-form with a pole along $T_i$, therefore $\int_{\gamma_i} i_X\Omega_I\neq 0$, and the corresponding deck transformation $g_i\in\Gamma$ is nontrivial.
	
	Since we can choose $\gg_i$ in its free-loop homotopy class to be arbitrarily short with respect to the metric $g$, element $g_i\in\Gamma$ acting on $\til{M\backslash \mathbf{T}}$ satisfies the following property:
	\[
	\inf_{y\in \til{M\backslash \mathbf{T}}} d_g(y,g_i y)=0.
	\]
	By Lemma~\ref{l:X_nonvanish} we can assume that $X$ does not vanish at $y_i$ so $W$ is bounded from above and away from zero in $U_i$. Then metric $Wh$ on $\pmb\mu(U_i)\subset\R^3_{\pmb\mu}$ is uniformly equivalent to $h$. In particular, for the induced action of $g_i\in\Gamma$ on $\R^3_{\pmb\mu}$ we also must have
	\[
	\inf_{x\in \R^3_{\pmb\mu}} d_h(x,g_i x)=0.
	\]
	The only such nontrivial translation preserving $a_+\mu_++a_-\mu_-$ is
	\[
	g_i(\mu_1,\mu_+,\mu_-)=(\mu_1+c,\mu_+,\mu_-),\quad  c\neq 0.
	\]
	We have proved that such a translation always belongs to $\Gamma$. Now, since $\Gamma\simeq \Z^k$ is a proper discrete subgroup of a two-dimensional translation group, either $\Gamma\simeq \Z$ generated by a translation in $\mu_1$ coordinate, or $\Gamma\simeq \Z^2$ generated by translations preserving the linear form $a_+\mu_++a_-\mu_-$. In the latter case $\Gamma$ can be generated by translations 
	\begin{equation}\label{f:deck_proof}
	\begin{split}
	g_1&(\mu_1,\mu_+,\mu_-)=(\mu_1+c_1,\mu_+,\mu_-),\quad c_1\neq 0\\
	g_2&(\mu_1,\mu_+,\mu_-)=(\mu_1+c_1',\mu_++ca_-,\mu_--ca_+),\quad c\neq 0. 
	\end{split}
	\end{equation}
	At the same time, by Proposition~\ref{p:invGKsoliton}, the closed one-form
	\[
	\alpha=-a_+d\mu_++a_-d\mu_-+p(a_+d\mu_++a_-d\mu_-)
	\]
	must be exact on $M$ and therefore on $\R^3_{\pmb\mu}/\Gamma$. Since $g_2$ preserves $a_+\mu_++a_-\mu_-$, for any $x\in \R^3_{\pmb\mu}$ we have
	\[
	\int_{x}^{g_2x}\alpha=\int_x^{g_2x}(-a_+d\mu_++a_-d\mu_-).
	\]
	Assuming $a_+\neq 0$, the latter integral vanishes if and only if $a_-=0$. In this case, redefining the constant $c$ in~\eqref{f:deck_proof} we arrive at the second possibility in the statement of the proposition.
	\end{proof}
	\end{prop}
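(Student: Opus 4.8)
The plan is to realize $\Gamma$ as an explicit period group and then pin down its action on $\R^3_{\pmb\mu}$ in two stages: first establishing that the moment map is \emph{surjective}, so that the quotient $\R^3_{\pmb\mu}/\Gamma$ really is the orbit space, and then classifying the translations that can occur. To begin, I would work with the three closed one-forms $\alpha_1 = -i_X\Omega$, $\alpha_2 = -i_X I\Omega$, $\alpha_3 = -i_X J\Omega$ on $M\backslash\mathbf{T}$, set $K = \{a\in H_1(M\backslash\mathbf{T};\Z)\ |\ \langle\alpha_i,a\rangle=0\}$, and define $\Gamma := H_1(M\backslash\mathbf{T};\Z)/K$. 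The cover $\til{M\backslash\mathbf{T}}$ associated to the kernel of $\pi_1(M\backslash\mathbf{T})\to\Gamma$ is precisely the one on which all the periods vanish, so that $\alpha_i = d\mu_i$ there and $\pmb\mu$ is defined; because $\alpha_i(X)=0$, every $S^1$-orbit lies in $K$, its lift stays closed, and the circle action lifts to $\til{M\backslash\mathbf{T}}$.

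The main obstacle is proving that $\pmb\mu(\til{M\backslash\mathbf{T}}) = \R^3_{\pmb\mu}$, i.e.\ that the locally defined moment map is globally onto. I would like to feed $\til{M\backslash\mathbf{T}}$ directly into the completeness statement Proposition~\ref{p:s1_bundle_complete}, but two of its hypotheses fail: this cover is incomplete (it is missing $\mathbf{T}$), and $W$ need not be bounded below. To repair both I would restrict to $M_\delta$, the preimage of $\{|p|\leq 1-\delta\}$, and replace $W$ by the modified potential $W_t = W + t\til W$ (with $\til W$ the baseline solution of Lemma~\ref{l:W0_solution} and $\eta_t = \eta + t\alpha$), producing a new GK metric $g_t = W_t h + W_t^{-1}\eta_t^2$. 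The crucial technical input, which I would isolate as a separate lemma, is that $(M,g_t)$ stays complete and, for $t$ large, retains the lower Ricci bound $\Rc^{M,g_t}>-k^2$; this rests on the explicit fibered formula for $\Rc^{M,g_t}$ in terms of $\Rc^{N,h}$, $W_t$ and the curvature form, which contains \emph{no} second-order terms in $W$. With the hypotheses of Proposition~\ref{p:s1_bundle_complete} verified on $M_\delta$, the orbit space $M_\delta/S^1$ is $h$-complete, so the local diffeomorphism $\iota\colon M_\delta/S^1\to\{|p|\leq 1-\delta\}$ has the curve-lifting property, is therefore a covering map onto a simply connected target, and hence a diffeomorphism; letting $\delta\to 0$ identifies $(\til{M\backslash\mathbf{T}})/S^1$ with $\R^3_{\pmb\mu}$. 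I expect this surjectivity argument, and in particular the persistence of the Ricci bound under the modification $W\mapsto W_t$, to be the genuinely delicate part.

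With surjectivity in hand the remaining steps are structural. Since $\Gamma$ commutes with the flow of $X$ and preserves $(\Omega,I,J,X)$, it preserves each $d\mu_i$ and so acts on $\R^3_{\pmb\mu}$ by translations; freeness follows because any deck transformation fixing a point forces the corresponding loop to have vanishing $\alpha_i$-periods, hence to lie in $K$, while proper discontinuity is inherited from the covering action together with compactness of the $\pmb\mu$-fibers. To classify $\Gamma$ I would use that it acts isometrically on $(\R^3_{\pmb\mu},h)$ while preserving the linear function $a_+\mu_++a_-\mu_-$ (since $p$, and hence $h$, depends only on this combination by Proposition~\ref{p:invGKsoliton}), which confines $\Gamma$ to a lattice of translations of rank at most two. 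Lemma~\ref{l:T_nonempty} guarantees that $\mathbf{T}$ is nonempty; encircling a smooth point of a component with a short loop $\gamma_i$ and invoking the order-one pole of $i_X\Omega_I$ from Proposition~\ref{p:Omegapoles} produces a nontrivial $g_i\in\Gamma$ with $\inf_x d_h(x,g_i x)=0$ (using that $W$ is bounded near $\mathbf{T}$ by Lemma~\ref{l:X_nonvanish}), and the only such translation preserving $a_+\mu_++a_-\mu_-$ is the pure $\mu_1$-shift. This shift always belongs to $\Gamma$, giving case~(1) when the rank is one; when the rank is two, the exactness on $\R^3_{\pmb\mu}/\Gamma$ of the closed form $\alpha = -a_+d\mu_++a_-d\mu_-+p(a_+d\mu_++a_-d\mu_-)$ (again Proposition~\ref{p:invGKsoliton}) forces $\int_x^{g_2 x}(-a_+d\mu_++a_-d\mu_-)=0$, which under $a_+\neq 0$ holds precisely when $a_-=0$, yielding case~(2).
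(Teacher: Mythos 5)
Your proposal reproduces the paper's own argument essentially step for step: the same definition of $\Gamma$ via the period map on $H_1(M\backslash\mathbf{T};\Z)$, the same modification $W\mapsto W+t\til W$ to restore the lower Ricci bound and feed $M_\delta$ into Proposition~\ref{p:s1_bundle_complete}, the same covering-map argument for surjectivity of $\pmb\mu$, and the same use of the pole of $i_X\Omega_I$ together with the exactness of the one-form from Proposition~\ref{p:invGKsoliton} to classify $\Gamma$. The approach is correct and matches the paper's proof.
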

	\begin{rmk}[Scaling convention]
		After scaling the metric $g$, we can assume that the action of $\gG$ in Proposition~\ref{p:deck_transform_mu} is such that there is a primitive subgroup $\Z\subset \Gamma$ acting on $\R^3_{\pmb\mu}$ via
		\[
		\mu_1\mapsto \mu_1+2\pi m,\quad m\in \Z.
		\]
		This will be our default scaling through the rest of the paper.
	\end{rmk}

	\subsection{The metric completion of \texorpdfstring{$(\R^3_{\pmb\mu}/\Gamma ,h)$}{}} \label{ss:metcomp}

	On the complement of the fixed point set $M\backslash\{y_i\}$ we have a well-defined horizontal metric $h$, which defines a Riemannian metric and a distance function $d_h$ on the orbit space $(M\backslash\{y_i\})/S^1$. The local asymptotic of $W$ near each fixed point $y_i$ implies that $d_h$ extends to a distance function on the whole $M/S^1$. In particular, there is a natural $d_h$-isometric map 
	\begin{equation}\label{f:proj_orbit_extension}
	M/S^1\to \bar{(\R^3_{\pmb\mu}/\Gamma,h)}
	\end{equation}
	to the metric completion of $(\R^3_{\pmb\mu}/\Gamma,h)$, which extends the isomorphism~\eqref{f:proj_nondegenerate} of Proposition~\ref{p:deck_transform_mu}:
	\[
	(M\backslash\mathbf{T})/S^1 \to \R^3_{\pmb\mu}/\Gamma.
	\]
	The goal of this subsection is to prove that $\bar{(\R^3_{\pmb\mu}/\Gamma,h)}$ has a natural orbifold structure, and then in the next subsection we will show the map~\eqref{f:proj_orbit_extension} is a diffeomorphism of orbifolds.
	
	We just have proved that the orbit space of the nondegenerate part $M\backslash\mathbf{T}$ of $(M,g,I,J)$ is isomorphic to one of the
	\[
	\R^3_{\pmb\mu}/\Gamma_1,\quad \R^3_{\pmb\mu}/\Gamma_2,
	\]
	where $\Gamma_1\simeq \Z$, $\Gamma_2\simeq \Z^2$ are groups described in Proposition~\ref{p:deck_transform_mu}. Our next goal is to describe the metric completions of $(\R^3_{\pmb\mu}/\Gamma_i, h)$. Since $\R^3_{\pmb\mu}/\Gamma_2$ is an isometric quotient of  $\R^3_{\pmb\mu}/\Gamma_1$ it suffices to determine the completion of $(\R^3_{\pmb\mu}/\Gamma_1,h)$.
	
	The metric $h$ on $\R^3_{\pmb\mu}/\Gamma_1$ is characterized by two constants $a_+,a_-\in\R$ which do not vanish simultaneously (if $a_+=a_-=0$ then the underlying GK soliton is hyperK\"ahler). In what follows, we assume that $a_+\neq 0$. It turns out that the completions $(\R^3_{\pmb\mu}/\Gamma_1,h)$ are essentially different in the cases $a_-=0$ and $a_-\neq 0$ which we describe separately. To highlight the dependence of $h$ on the real parameters $a_+$ and $a_-$, we will write $h=h_{a_+,a_-}$.
	
	\subsubsection{Case $a_-=0$}
	
	Introduce new coordinates on $\R^3_{\pmb\mu}/\Gamma_1$:
	\[
	(\mu_1,\rho,\mu_-)\in S^1\times \R_{>0}\times \R,
	\]
	where
	\begin{equation*}
	\begin{split}
	\rho=\exp(a_+\mu_+/2).
	\end{split}
	\end{equation*}
	In these coordinates, $h_{a_+,0}=(1-p^2)d\mu_1^2+2(1-p)d\mu_+^2+2(1+p)d\mu_-^2$ takes form
	\begin{equation}\label{f:h_a0}
	h_{a_+,0}=\frac{4}{1+\rho^2}
	\left(
	\frac{\rho^2}{1+\rho^2}d\mu_1^2+\frac{4}{a_+^2}d\rho^2+4d\mu_-^2.
	\right)
	\end{equation}
	\begin{prop}\label{p:completion_a0}
		The metric completion of $(\R^3_{\pmb\mu}/\Gamma_1, h_{a_+,0})$ is isomorphic to
		\[
		S^1\times \R_{>0}\times \R\simeq \R^3
		\]
		with the metric~\eqref{f:h_a0} extending to a metric with the cone singularity of angle $\pi |a_+|$ along the codimension two subset $\{\rho=0\}$.
	\end{prop}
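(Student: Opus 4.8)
The plan is to work entirely on $\R^3_{\pmb\mu}/\Gamma_1$, exploiting that for $a_-=0$ the metric is rotationally symmetric in $\mu_1$ and translation invariant in $\mu_-$. First I insert the soliton value of $p$: since $\Phi=a_+\mu_+$ and $p=(1-e^{\Phi})/(1+e^{\Phi})$ by Proposition~\ref{p:invGKsoliton}, the substitution $\rho=\exp(a_+\mu_+/2)$ gives $p=(1-\rho^2)/(1+\rho^2)$, hence $1-p=2\rho^2/(1+\rho^2)$ and $1+p=2/(1+\rho^2)$, and plugging into $h_{a_+,0}=(1-p^2)d\mu_1^2+2(1-p)d\mu_+^2+2(1+p)d\mu_-^2$ reproduces the metric~\eqref{f:h_a0}. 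The cleanest coordinate is the $h$-arclength $s$ to $\{\rho=0\}$ along the $\rho$-lines; writing $\rho=\sinh u$ with $u=\tfrac{|a_+|}{4}s$ collapses the radial coefficient to a constant and yields the warped form
\begin{equation*}
h=ds^2+A(s)\,d\mu_1^2+B(s)\,d\mu_-^2,\qquad A(s)=\frac{4\sinh^2 u}{\cosh^4 u},\quad B(s)=\frac{4}{\cosh^2 u},\quad u=\tfrac{|a_+|}{4}s,
\end{equation*}
where $A$ and $B$ are smooth even functions of $s$ with $A(0)=0$ and $B(0)=4$.

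Next I identify the completion as a set. The candidate is $\hat N$, the space $S^1\times[0,\infty)_\rho\times\R_{\mu_-}$ with the circle $\{\rho=0\}\times\{\mu_-\}$ collapsed to a single point for each fixed $\mu_-$; through the polar map $(\mu_1,\rho)\mapsto(\rho\cos\mu_1,\rho\sin\mu_1)$ this is $\R^2\times\R\cong\R^3$, with $\{\rho=0\}$ a line of codimension two. The locus $\{\rho=0\}$ lies at finite distance because the radial length $s(\rho)=\int_0^\rho \tfrac{4}{|a_+|\sqrt{1+t^2}}\,dt$ is finite, the circle genuinely collapses because its circumference $\tfrac{4\pi\rho}{1+\rho^2}\to 0$, and distinct values of $\mu_-$ give distinct limit points since $B$ is bounded below on compact $\rho$-ranges.

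The key step is to show that no further points are added, so that $\hat N$ with the extended metric is exactly the metric completion. Here the warped form is decisive: the coordinate $s$ is globally $1$-Lipschitz for $d_h$ because any path $\gamma$ satisfies $\ell_h(\gamma)\ge\int|\dot s|$, so any $d_h$-Cauchy sequence $(x_n)$ has $s_n\to s_\infty<\infty$. Once $s_n$ is close to $s_\infty$, the points together with near-minimizing connecting paths lie in a region $\{s\le S\}$ on which $B\ge b_0>0$; there $\mu_-$ is Lipschitz-controlled, forcing $\mu_-^n$ to converge. Finally $\mu_1^n$ converges in $S^1$ when $s_\infty>0$ (where $A\ge a_0>0$) and is irrelevant when $s_\infty=0$ because the circle has collapsed; hence $(x_n)$ converges in $\hat N$. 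In particular the ends $\rho\to\infty$ and $|\mu_-|\to\infty$ lie at infinite distance, and the degeneration $B\sim e^{-|a_+|s/2}$ at large $s$ cannot be exploited to produce a finite-distance boundary point.

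Finally I verify the conical structure. Setting $\theta:=\tfrac{|a_+|}{2}\mu_1$, which has period $\pi|a_+|$ since $\mu_1$ has period $2\pi$, the even expansions $A(s)=\tfrac{a_+^2}{4}s^2(1+O(s^2))$ and $B(s)=4(1+O(s^2))$ give
\begin{equation*}
h=ds^2+s^2\bigl(1+O(s^2)\bigr)\,d\theta^2+4\bigl(1+O(s^2)\bigr)\,d\mu_-^2,
\end{equation*}
which is a smooth Riemannian cone metric along $\{s=0\}$ of cone angle $\pi|a_+|$, and a genuine smooth orbifold metric precisely when $\pi|a_+|=2\pi/k$, i.e. $a_+=2/k$, in agreement with the later specialization. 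I expect the \textbf{main obstacle} to be this third step: rigorously matching the abstract metric completion with the concrete cone $\hat N$ and ruling out spurious boundary points coming from paths that escape to $\rho\to\infty$ while drifting in $\mu_-$, exactly where the weight $B(s)$ degenerates. Passing to the arclength coordinate $s$, which is globally $1$-Lipschitz, is precisely what tames this degeneration and makes the completeness argument go through.
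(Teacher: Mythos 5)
Your proof is correct, but it follows a genuinely different route from the paper's. The paper proves this proposition by exhibiting $(\R^3_{\pmb\mu}/\Gamma_1, h_{a_+,0})$ (up to an overall factor of $4$) as the orbit space of a \emph{free isometric $S^1$-action} on the explicit four-manifold $\R^2\times\R\times S^1$ carrying the metric $\frac{g_1\oplus g_2}{1+\rho^2}$, where $g_1=k^2d\rho^2+\rho^2d\phi_1^2$ is a flat cone of angle $2\pi/k$ with $k=2/|a_+|$ and $g_2$ is a cylinder metric; completeness of the quotient is then inherited from completeness of the total space (the orbits being compact), and the cone singularity along $\{\rho=0\}$ is visible by construction as the image of the cone point of $g_1$. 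You instead work entirely downstairs: you pass to the arclength coordinate $s$, put $h$ in warped-product form $ds^2+A(s)d\mu_1^2+B(s)d\mu_-^2$, identify the completion by a Cauchy-sequence argument using that $s$ is globally $1$-Lipschitz, and read off the cone angle from the expansion $A(s)=\tfrac{a_+^2}{4}s^2(1+O(s^2))$. What the paper's construction buys is economy and reusability: the same four-dimensional model $\hat M$ reappears in Remark~\ref{r:N_as_quotient} and is the vehicle for lifting the Laplace equation in Propositions~\ref{p:elliptic_a0} and~\ref{p:elliptic_aneq0}. What your argument buys is that it actually substantiates the step the paper labels ``evidently complete'': ruling out finite-distance boundary points produced by paths that escape to large $\rho$ while drifting in $\mu_-$, where the conformal factor $(1+\rho^2)^{-1}$ degenerates, is exactly the content of your Lipschitz/near-minimizing-path analysis, and it is the same issue in either formulation. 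One immaterial bookkeeping point: your $B(s)=4/\cosh^2 u$ corresponds to the coefficient $\frac{4}{1+\rho^2}$ on $d\mu_-^2$, which is what actually follows from $h=(1-p^2)d\mu_1^2+2(1-p)d\mu_+^2+2(1+p)d\mu_-^2$, whereas a literal reading of~\eqref{f:h_a0} gives $\frac{16}{1+\rho^2}$; this is a typo in the displayed formula (a rescaling of $\mu_-$) and affects nothing in either proof.
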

	\begin{proof}
		We will construct the metric completion of $(\R^3_{\pmb\mu}/\Gamma_1, h_{a_+,0})$ explicitly by hand.  Pick a positive number $k\in\R$ and consider $\R^2$ and $\R^2\backslash\{0\}\simeq \R\times S^1$ equipped respectively with a cone metric and a regular cylinder metric given in the polar coordinates by
		\[
		\begin{split}
		g_1&=k^2\, d\rho^2+\rho^2\, d\phi_1^2,\\
		g_2&=4d\mu_-^2+d\phi_2^2.
		\end{split}
		\]
		Consider $M=\R^2\times \R\times S^1$ with a metric $g=\cfrac{g_1\oplus g_2}{1+\rho^2}$. The metric space $(M,g)$ is evidently complete, and admits a free isometric $S^1$ action induced by the vector field $\del_{\phi_1}+\del_{\phi_2}$.  The orbit space $M/S^1$ is naturally isomorphic to $\R^2\times \R$ and the horizontal component of metric $g$ descends to a complete metric $h_0$ with a cone singularity along $\{0\}\times \R$. 
		
		The metric $h_0$ is regular on the open part $(\R^2\backslash\{0\})\times \R$, and to express it, we choose coordinates $(\phi_1-\phi_2,\rho,d\mu_-)$ on this regular locus. Computing $|d(\phi_1-\phi_2)|^2_g$, $|d\rho|^2_g$ and $|d\mu_-|^2_g$, we find that:
		\begin{equation}\label{f:h_a0_pf}
		h_0=\frac{1}{1+\rho^2}\left(
		\frac{\rho^2}{1+\rho^2}{d(\phi_1-\phi_2)^2+k_1^2d\rho^2+4d\mu_-^2}
		\right).
		\end{equation}
		
		If we set $k=2/|a_+|$ and denote $\mu_1=\phi_1-\phi_2$, then the metric in~\eqref{f:h_a0} will coincide with ~\eqref{f:h_a0_pf} up to a constant factor $4$. Therefore, $(M/S^1, 4h_0)$ is the metric completion of $(\R^3_{\pmb\mu}/\Gamma_1, h_{a_+,0})$ as claimed.
	\end{proof}

	\subsubsection{Case $a_-\neq 0$}
	The argument in this case is very similar to the case $a_-=0$. Introduce new coordinates on $\R^3_{\pmb\mu}/\Gamma_1$:
	\[
	(\mu_1,\rho_1,\rho_2)\in S^1\times \R_{>0}\times \R_{>0}
	\]
	where
	\begin{equation*}
	\begin{split}
	\rho_1=\frac{e^{-a_-\mu_-/2}}{a_-^2e^{a_+\mu_+}+a_+^2e^{-a_-\mu_-}},\\
	\rho_2=\frac{e^{a_+\mu_+/2}}{a_-^2e^{a_+\mu_+}+a_+^2e^{-a_-\mu_-}}.
	\end{split}
	\end{equation*}
	In these coordinates, the metric $h_{a_+, a_-}=(1-p^2)d\mu_1^2+2(1-p)d\mu_+^2+2(1+p)d\mu_-^2$ takes the form
	\begin{equation}\label{f:h_a_neq0}
	h_{a_+, a_-}=\frac{4}{\rho_1^2+\rho_2^2}
	\left(
	\frac{\rho_1^2\rho_2^2}{\rho_1^2+\rho_2^2}d\mu_1^2+\frac{4}{a_-^2}d\rho_1^2+\frac{4}{a_+^2}d\rho_2^2
	\right).
	\end{equation}
	\begin{prop}\label{p:completion_aneq0}
		The metric completion of $(\R^3_{\pmb\mu}/\Gamma_1, h_{a_+, a_-})$ is isomorphic to
		\[
		(S^1\times \R_{\geq 0}\times \R_{\geq 0})\backslash (S^1\times \{0\}\times \{0\})\simeq S^2\times \R
		\]
		with the metric~\eqref{f:h_a_neq0} extending to a metric with cone singularities of angles $\pi |a_-|$ and $\pi |a_+|$ along the  codimension two subsets $\{\rho_1=0\}$ and $\{\rho_2=0\}$ respectively.
	\end{prop}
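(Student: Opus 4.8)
The plan is to mirror the proof of Proposition~\ref{p:completion_a0}, realizing $(\R^3_{\pmb\mu}/\Gamma_1, h_{a_+,a_-})$ as an $S^1$-quotient of an explicit complete model, now assembled from \emph{two} metric cones rather than a single cone and a cylinder. Concretely, I would fix angular variables $\phi_1,\phi_2\in\R/2\pi\Z$, set $k_1=2/|a_-|$, $k_2=2/|a_+|$, and endow $\R^2\times\R^2$ with the product cone metric $g_1\oplus g_2$, where
\[
g_1=k_1^2\,d\rho_1^2+\rho_1^2\,d\phi_1^2,\qquad g_2=k_2^2\,d\rho_2^2+\rho_2^2\,d\phi_2^2,
\]
each $g_i$ being a flat cone of total angle $2\pi/k_i$. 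On $M:=(\R^2\times\R^2)\setminus\{0\}$ I would then consider the conformally rescaled metric $g:=(\rho_1^2+\rho_2^2)^{-1}(g_1\oplus g_2)$ and the diagonal circle action generated by $\del_{\phi_1}+\del_{\phi_2}$. This action is isometric and free on all of $M$ (its only fixed locus would be the deleted origin), and its orbits have constant $g$-length, since $g(\del_{\phi_1}+\del_{\phi_2},\,\del_{\phi_1}+\del_{\phi_2})=(\rho_1^2+\rho_2^2)^{-1}(\rho_1^2+\rho_2^2)=1$.

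Next I would compute the Riemannian submersion metric $h_0$ on $M/S^1$ in the invariant coordinates $\rho_1,\rho_2,\psi:=\phi_1-\phi_2$. Since $g$ is diagonal in the coframe $\{d\rho_1,d\rho_2,d\phi_1,d\phi_2\}$ and the gradient of a basic function is horizontal, the quotient co-metric agrees with $g^{-1}$ on $d\rho_1,d\rho_2,d\psi$, and a direct evaluation yields
\[
h_0=\frac{1}{\rho_1^2+\rho_2^2}\left(\frac{\rho_1^2\rho_2^2}{\rho_1^2+\rho_2^2}\,d\psi^2+k_1^2\,d\rho_1^2+k_2^2\,d\rho_2^2\right).
\]
Identifying $\psi$ with $\mu_1$ and inserting $k_1^2=4/a_-^2$, $k_2^2=4/a_+^2$ gives precisely $h_{a_+,a_-}=4h_0$, matching~\eqref{f:h_a_neq0}, so it remains to describe the completion of $(M/S^1,g)$, the harmless overall factor of $4$ affecting neither the topology nor the cone angles.

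For the underlying topology I would observe that each factor $\R^2$ is a plane, so $M\cong\R^4\setminus\{0\}\cong S^3\times\R$, and that $\del_{\phi_1}+\del_{\phi_2}$ is the standard Hopf action; quotienting each sphere $\{\rho_1^2+\rho_2^2=\text{const}\}$ by it produces $S^2$, while the invariant scale $\rho_1^2+\rho_2^2$ supplies the $\R$ factor, so $M/S^1\cong S^2\times\R$. The two cone axes $\{\rho_1=0\}$ and $\{\rho_2=0\}$, over which the $\mu_1$-circle collapses, descend to the two poles of each spindle cross-section; working out the transverse model near $\rho_1=0$, where $h_0\approx \rho_2^{-2}(k_1^2\,d\rho_1^2+\rho_1^2\,d\psi^2+\cdots)$, shows the metric extends across these loci as a cone of angle $2\pi/k_i$, i.e. $\pi|a_-|$ along $\{\rho_1=0\}$ and $\pi|a_+|$ along $\{\rho_2=0\}$, giving the advertised spindle cross-section with cone angles $\pi|a_-|$ and $\pi|a_+|$.

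The only genuinely new point compared with the case $a_-=0$ — and the step I expect to require the most care — is to show that the corner $\{\rho_1=\rho_2=0\}$, where the two cone axes would meet, lies at infinite distance and is therefore absent from the completion, rather than being a finite-distance singular point one might hope to fill in. This is forced by the conformal factor $(\rho_1^2+\rho_2^2)^{-1}$: along any curve into the corner the radial length element behaves like $k\,dR/R$ with $R=(\rho_1^2+\rho_2^2)^{1/2}$, whose integral diverges, so the corner sits at infinite $h$-distance, consistent with $a_+\mu_++a_-\mu_-$ being driven to $\pm\infty$ in the original coordinates. By contrast each individual axis sits at finite distance and is a genuine cone point, and is thus adjoined. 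Together with the completeness of $(M,g)$ away from the corner, this identifies the metric completion with $(S^1\times\R_{\geq0}\times\R_{\geq0})\setminus(S^1\times\{0\}\times\{0\})\simeq S^2\times\R$ carrying the stated cone singularities, completing the proof.
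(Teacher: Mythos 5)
Your proposal is correct and follows essentially the same route as the paper's proof: the same two-cone product model $(\R^2\times\R^2)\setminus\{(0,0)\}$ with conformal factor $(\rho_1^2+\rho_2^2)^{-1}$, the same diagonal $S^1$ action, and the same identification $4h_0=h_{a_+,a_-}$ via $k_1=2/|a_-|$, $k_2=2/|a_+|$. The only difference is that you spell out the completeness near the deleted origin (the divergent $dR/R$ length element) and the transverse cone-angle check, both of which the paper treats as evident.
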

	\begin{rmk}
		The completion in the above proposition is isomorphic to a direct product $S^2\times \R$, where $S^2$ has a `spindle' metric with two cone singularities of angles $\pi a_-$ and $\pi a_+$.
	\end{rmk}
	\begin{proof}
		We will construct the metric completion of $(\R^3_{\pmb\mu}/\Gamma_1, h)$ explicitly by hand.  Pick two positive numbers $k_1,k_2\in\R$ and consider two copies of $\R^2$ equipped with cone metrics given in polar coordinates by
		\[
		\begin{split}
		g_1&=k_1^2\, d\rho_1^2+\rho_1^2\, d\phi_1^2,\\
		g_2&=k_2^2\, d\rho_2^2+\rho_2^2\, d\phi_2^2.
		\end{split}
		\]
		Each copy $(\R^2,g_i)$ has a cone angle $2\pi/k_i$ at the origin.	Consider $M=(\R^2\times \R^2)\backslash\{(0,0)\}\simeq S^3\times \R$ with a metric $g=\cfrac{g_1\oplus g_2}{\rho_1^2+ \rho_2^2}$. The metric space $(M,g)$ is evidently complete, and admits a free isometric $S^1$ action induced by the vector field $\del_{\phi_1}+\del_{\phi_2}$. The orbit $M/S^1$ space is naturally isomorphic to $S^2\times \R$ and the horizontal component of metric $g$ descends to a complete metric $h_0$ with cone singularities along $((\R^2\backslash\{0\})\times \{0\})/S^1$ and $(\{0\}\times (\R^2\backslash\{0\}))/S^1$. 
		
		The metric $h_0$ is regular on the open part $((\R^2\backslash\{0\})\times (\R^2\backslash\{0\}))/S^1$, and to express it, we choose coordinates $(\phi_1-\phi_2,\rho_1,\rho_2)$ on this regular locus. Computing $|d(\phi_1-\phi_2)|^2_g$, $|d\rho_1|^2_g$ and $|d\rho_2|^2_g$, we find that:
		\begin{equation}\label{f:h_a_neq0_pf}
		h_0=\frac{1}{\rho_1^2+\rho_2^2}\left(
		\frac{\rho_1^2\rho_2^2}{\rho_1^2+\rho_2^2}{d(\phi_1-\phi_2)^2+k_1^2d\rho_1^2+k_2^2\rho_2^2}
		\right).
		\end{equation}
		
		If we set $k_1=2/|a_-|$, $k_2=2/|a_+|$ and denote $\mu_1=\phi_1-\phi_2$, then the metric in~\eqref{f:h_a_neq0} will coincide with~\eqref{f:h_a_neq0_pf} up to a constant factor $4$. Therefore, $(M/S^1, 4h_0)$ is the metric completion of $(\R^3_{\pmb\mu}/\Gamma_1, h_{a_+,a_-})$ as claimed.
	\end{proof}
	
	We have just described the metric completions of $(\R^3_{\pmb\mu}/\Gamma_1,h)$ in the cases $a_-=0$ and $a_-\neq 0$, where group $\Gamma_1\simeq \Z$ acts on $\R^3_{\pmb\mu}$ via translations in $\mu_1$ direction. By Proposition~\ref{p:deck_transform_mu}, in the case $a_-=0$, we also have to deal with the completion $(\R^3_{\pmb\mu}/\Gamma_2,h)$, where $\Gamma=\Gamma_2\simeq \Gamma_1\times \Z$, with $\Z$ acting on $\R^3_{\pmb\mu}$ via
	\begin{equation}\label{f:Z_action}
	(\mu_1,\mu_+,\mu_-)\mapsto (\mu_1+c_1',\mu_+,\mu_-+c),\quad c\neq 0
	\end{equation}
	translations preserving the linear form $a_+\mu_++a_-\mu_-$. In the latter case, $\R^3_{\pmb\mu}/\Gamma_2$ is an isometric $\Z$-quotient of $\R^3_{\pmb\mu}/\Gamma_1$, thus the same is true for the completions:
	\[
	\bar{(\R^3_{\pmb\mu}/\Gamma_2,h)}\simeq \bar{(\R^3_{\pmb\mu}/\Gamma_1,h)}/\Z,
	\]
	where the action of the factor $\Z$ of $\Gamma_2$ is given by Proposition~\ref{p:deck_transform_mu}. In either case, the completion $\bar{(\R^3_{\pmb\mu}/\Gamma,h)}$ has a natural smooth structure, and the metric $h$ extends to a metric with cone singularities along codimension two submanifolds.

\subsection{The global definition of the moment map on \texorpdfstring{$M$}{}} \label{ss:globalmoment}

	\begin{defn}\label{d:N(a+,a-)}
		Define $N(a_+,a_-)$ to be the metric completion $\bar{(\R^3_{\pmb\mu}/\Gamma_1,h_{a_+,a_-})}$. The metric $h_{a_+,a_-}$ induces a smooth metric with cone singularities on $N(a_+,a_-)$.
	\end{defn}
	
		Using the explicit description of $N(a_+,a_-)$ above, we can rewrite $p$ as
		\[
		p=\frac{1-\rho^2}{1+\rho^2},\quad p=\frac{\rho_1^2-\rho_2^2}{\rho_1^2+\rho_2^2}
		\]
		in the cases $a_-=0$ and $a_-\neq 0$ respectively. It follows that function $p$ defined initially on the open dense part has a continuous extension to the whole space, and the completion locus $N(a_+,a_-)\backslash (\R^3_{\pmb\mu}/\Gamma_1)$ consists of disjoint subsets $\{p=1\}$ and $\{p=-1\}$ (the latter possibly empty if $a_-=0$). The metric $h_{a_+,a_-}$ extends to a metric with cone singularities along $\{p=1\}$ and $\{p=-1\}$.  Note that since $(\R^3_{\pmb\mu}/\Gamma_2,h_{a_+,a_-})$ is an isometric $\Z$-quotient of $(\R^3_{\pmb\mu}/\Gamma_1,h_{a_+,a_-})$, the same is true for the completion.  Furthermore, with the explicit description of the completion, it is easy to see that the induced action of $\Z$ on $N(a_+,a_-)$ is free proper and isometric.
	
	Let $(M,g,I,J)$ be a regular compete rank one soliton. By our assumption $a_+\neq 0$ and by Lemma~\ref{l:T_nonempty} the degeneracy set $\mathbf{T_+}$ is nonempty. Consider any point $x\in \mathbf{T_+}$ such that $X\neq 0$ at $x$. Let $G_x\simeq \Z_{k_+}$ be the stabilizer of $x$. Then the orbit space $\pi\colon M\to N$ has an orbifold structure in a neighourhood $U$ of $[x]\in N$ and the horizontal metric $Wh$ as well as $h$ have a cone singularity of angle $2\pi/{k_+}$ along the codimension two subset $(\pi^{-1}(U)\cap \mathbf{T}_+)/S^1$. On the other hand, there is an isometric embedding $N\to N(a_+,a_-)$ (or $N\to N(a_+,a_-)/\Z$) (see~\eqref{f:proj_orbit_extension}), and by the above propositions $N(a_+,a_-)$ has cone singularity of angle $\pi |a_+|$ along $\{p=1\}$. Therefore $2/k_+=|a_+|$. Arguing similarly in the case $a_-\neq 0$, we obtain the following \emph{quantization} result for $a_+,a_-\in \R$.
	\begin{prop}[Quantization of parameters $a_+$ and $a_-$]\label{p:quantization}
		If $a_+\neq 0$, then $\mathbf{T_+}$ is nonempty, and 
		\[
		a_+=2/k_+,\ k_+\in \Z,
		\] where $|k_+|$ is the order of the stabilizer of any point $x\in\mathbf{T_+}\backslash M^{S^1}$. 
		If $a_-\neq 0$, then $\mathbf{T_-}$ is nonempty, and \[
		a_-=2/k_-,\ k_-\in \Z\]
		where $|k_-|$ is the order of the stabilizer of any point $x\in\mathbf{T_-}\backslash M^{S^1}$.		
	\end{prop}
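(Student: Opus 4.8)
The plan is to compute the cone angle of the horizontal metric $h$ along the degeneracy locus $\mathbf{T}_+$ in two independent ways and to equate them: one value, $2\pi/k_+$, comes from the local $S^1$-orbifold structure near a point of $\mathbf{T}_+$ with finite stabilizer, while the other, $\pi|a_+|$, comes from the explicit metric completion $N(a_+,a_-)$ computed in Propositions \ref{p:completion_a0} and \ref{p:completion_aneq0}. First I would fix a suitable base point. By Lemma \ref{l:T_nonempty} the hypothesis $a_+\neq 0$ gives $\mathbf{T}_+\neq\emptyset$, and by Lemma \ref{l:X_nonvanish} the vector field $\mathbf{X}$ does not vanish identically along $\mathbf{T}_+$; since $\mathbf{X}(x)=0$ exactly when $X(x)=0$, i.e. when $x\in M^{S^1}$, there is a point $x\in\mathbf{T}_+\setminus M^{S^1}$, which after shrinking I take to be a smooth point of the analytic curve $\mathbf{T}_+$. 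As $x$ is not a fixed point, its stabilizer is finite cyclic, say $G_x\simeq\Z_{k_+}$ by Lemma \ref{l:free_nondegenerate}.

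Next I would extract the orbifold cone angle. The slice theorem (Theorem \ref{t:slice}) gives a canonical neighbourhood $U_x\simeq S^1\times_{\Z_{k_+}}(\C\times\R)$, so a neighbourhood of $[x]$ in $N=M/S^1$ is $(\C/\Z_{k_+})\times\R$ with the cyclic orbifold structure of \S\ref{ss:s1_actions}. Because $g$ is smooth and $X$ is a nonvanishing Killing field near $x$, the Riemannian-submersion quotient metric on $N$ is precisely the horizontal part $Wh$ in $g=Wh+W^{-1}\eta^2$, and it lifts to a smooth $\Z_{k_+}$-invariant metric on the uniformizing chart $\C\times\R$. Hence $Wh$ is a genuine smooth orbifold metric and has cone angle $2\pi/k_+$ along $(\mathbf{T}_+\cap U_x)/S^1$. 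Since $X(x)\neq 0$, the function $W=g(X,X)^{-1}$ is smooth, positive and bounded near $x$, so $h=W^{-1}(Wh)$ is a smooth positive conformal rescaling of $Wh$; as the cone angle along a codimension-two locus is a conformal invariant, $h$ too has cone angle $2\pi/k_+$.

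Then I would bring in the completion. The isometric map \eqref{f:proj_orbit_extension} embeds $(N,h)$ into $N(a_+,a_-)$ (or $N(a_+,a_-)/\Z$), carrying $\mathbf{T}_+/S^1$ into the locus $\{p=1\}$, and by Propositions \ref{p:completion_a0} and \ref{p:completion_aneq0} the extended metric $h_{a_+,a_-}$ has cone angle $\pi|a_+|$ there. Isometric invariance of the cone angle then forces $2\pi/k_+=\pi|a_+|$, i.e. $|a_+|=2/k_+$; absorbing the sign into the integer yields $a_+=2/k_+$ with $k_+\in\Z$. Since $a_+$ is a global constant, $|k_+|=2/|a_+|$ is the stabilizer order at every point of $\mathbf{T}_+\setminus M^{S^1}$. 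The claim for $a_-$ is proved identically: $a_-\neq 0$ forces $\mathbf{T}_-\neq\emptyset$ (Lemma \ref{l:T_nonempty}) and places us in the case $a_-\neq 0$ of Proposition \ref{p:completion_aneq0}, whose cone angle $\pi|a_-|$ along $\{p=-1\}$ matches $2\pi/k_-$.

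The step I expect to be the main obstacle is the second one: rigorously identifying the quotient metric $Wh$ as a smooth orbifold metric whose \emph{metric} cone angle coincides with the \emph{topological} orbifold angle $2\pi/k_+$. The point is that only the leading-order behaviour of the smooth $S^1$-invariant metric $g$ survives the $\Z_{k_+}$ quotient as the flat-model cone, with the higher-order terms contributing nothing to the angle, and one must check that passing to the horizontal part and rescaling by the smooth bounded factor $W$ leaves this leading behaviour intact. Everything else is bookkeeping: the existence of the base point, and the transport of cone angles along the isometric embedding into $N(a_+,a_-)$.
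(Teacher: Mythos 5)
Your proposal is correct and follows essentially the same route as the paper: the paper's argument also picks a point of $\mathbf{T}_+$ where $X\neq 0$ (via Lemmas \ref{l:T_nonempty} and \ref{l:X_nonvanish}), reads off the cone angle $2\pi/k_+$ of $h$ along $(\mathbf{T}_+\cap U)/S^1$ from the local orbifold structure of the orbit space, and equates it with the cone angle $\pi|a_+|$ of the completion $N(a_+,a_-)$ from Propositions \ref{p:completion_a0} and \ref{p:completion_aneq0} via the isometric embedding \eqref{f:proj_orbit_extension}. Your extra care about the conformal rescaling by $W$ and the identification of metric versus topological cone angle only makes explicit what the paper leaves implicit.
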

\begin{rmk} \label{r:Norbifold1}
		$N(a_+,a_-)$ has a natural manifold structure and a metric with cone angles of value $2\pi/k, k\in \Z$ along its codimension two submanifolds. Therefore we can think of $N(a_+,a_-)$ as an orbifold, and of $h$ as an orbifold metric. It is also straightforward to check that $\beta_0=d\mu_1\wedge(p_+ d\mu_-+p_-d\mu_+)$ defined on $\R^3_{\pmb\mu}/\Gamma$ extends to a smooth 2-form on the orbifold $N(a_+,a_-)$.
\end{rmk}

	\begin{rmk}[$N(a_+,a_-)$ as a base space of a Seifert fibration]\label{r:N_as_quotient}
		So far, in Propositions~\ref{p:completion_a0} and~\ref{p:completion_aneq0} we have obtained $(N(a_+,a_-), h)$ as a global quotient of a smooth manifold equipped with a cone metric. With the quantization of parameters $a_+$ and $a_-$ established above, there is an alternative description of $(N(a_+,a_-), h)$ as a global quotient of a smooth manifold equipped with a smooth metric:
		
		\begin{enumerate}
			\item Case $a_+=2/k_+, a_-=0$. Consider $\hat M=\C\times \C^*$ with coordinates $z,w\in \C$ and define an almost free circle action $S^1\times \hat M\to \hat M$ as $u\cdot (z,w)=(uz,u^{k_+}w)$. Then there is an isomorphism between the smooth parts of $\hat M/S^1$ and $N(2/k_+,0)$ given by the coordinate change
			\begin{equation*}
			\begin{split}
			k_+&\arg z-\arg w=\mu_1,\\
			|z|&=\exp(\mu_+/k_+),\\
			|w|&=\exp (2\mu_-).
			\end{split}
			\end{equation*}
			As in the proof of Proposition~\ref{p:completion_a0}, this isomorphism transforms the horizontal component $\hat g_{\mathrm{hor}}$ of the metric
			\[
			\hat g=\frac{4}{1+|z|^2}\Re\left(
			k_+^{2} dz\otimes d\bar z + \frac{dw\otimes d\bar w}{|w|^2}
			\right)
			\]
			to $h$ and extends to an isometry of complete Riemannian orbifolds
			\begin{equation*}
			(\hat M/S^1,\hat g_{\mathrm{hor}})\simeq (N(2/k_+,0), h).
			\end{equation*}
			
			\item Case $a_+=2/k_+, a_-=2/k_-$. Assume first that $\mathrm{gcd}(k_+,k_-)=1$. Consider $\hat M=\C^2\backslash\{0\}$ with coordinates $z,w$ and define an almost free circle action $S^1\times \hat M\to \hat M$ as $u\cdot (z,w)=(u^{k_+}z,u^{k_-}w)$. Then there is an isomorphism between the smooth parts of $\hat M/S^1$ and $N(2/k_+,2/k_-)$ given by the coordinate change
			\begin{equation*}
			\begin{split}
			k_-&\arg z-k_+\arg w=\mu_1,\\
			|z|&=\frac{e^{-\mu_-/k_-}}{4(k_-^{-2}e^{2\mu_+/k_+}+k_+^{-2}e^{-2\mu_-/k_-})},\\
			|w|&=\frac{e^{\mu_+/k_+}}{4(k_-^{-2}e^{2\mu_+/k_+}+k_+^{-2}e^{-2\mu_-/k_-})}.
			\end{split}
			\end{equation*}
			As in the proof of Proposition~\ref{p:completion_aneq0}, this isomorphism transforms the horizontal component $\hat g_{\mathrm{hor}}$ of the metric
			\[
			\hat g=\frac{4}{|z|^2+|w|^2}\Re\left(k_-^2dz\otimes d\bar z+k_+^2dw\otimes d\bar w\right)
			\]
			into $h$, and extends to an isomorphism of the underlying orbifolds:
			\begin{equation*}
			(\hat M/S^1,\hat g_{\mathrm{hor}})\simeq (N(2/k_+,2/k_-), h).
			\end{equation*}
			
			Finally, if $\mathrm{gcd}(k_-,k_+)=d>1$, then orbifold $(N(2/k_+,2/k_-), h)$ is a $\Z_d$ quotient of $(N(2d/k_+,2d/k_-), h)$, where $\Z_d$ acts on $N(2d/k_+,2d/k_-)$ via rotations in $\mu_1$ coordinate.
		\end{enumerate}
	\end{rmk}
	
	Finally we are ready to show the surjectivity of the moment map.  Assume for concreteness that in Proposition~\ref{p:deck_transform_mu} we have group $\Gamma=\Gamma_1$. We aim to show that the natural isometric embedding of the orbit space $N\to N(a_+,a_-)$ is an isometry. We know that the map $(M\backslash\mathbf T)/S^1\to \R^3_{\pmb\mu}/\Gamma$ is a diffeomorphism, and the metric $h$ can be intrinsically defined both on $N_0=(M\backslash M^{S^1})/S^1$ and on $N(a_+,a_-)$. Since $N\to N(a_+,a_-)$ is an isometric embedding, smooth on an open dense part, it must be smooth on the whole $N_0$. This can be seen, e.g., by considering exponential charts centered outside of the locus $\{p=\pm 1\}$.
	
	To prove surjectivity of $N\to N(a_+,a_-)$, we are going to invoke Proposition~\ref{p:s1_bundle_complete} once again. Consider $\mathbf{T}_{+,\ge}:=\{p\geq 1-\ge\}\subset M$. For a generic constant $\ge>0$, this is a connected complete manifold with boundary.  As in Remark \ref{r:Norbifold1}, $\beta_0$ and $h$ are well-defined smooth tensors on $N_0$. Using the same argument as before in the proof of Proposition~\ref{p:deck_transform_mu}, we add a multiple of the baseline solution $\til W$ to function $W$, and modify the metric $g$ and connection $\eta$ accordingly. Repeating the argument in the proof of Proposition~\ref{p:deck_transform_mu}, we conclude that $N\cap \{p\geq 1-\ge\}$ is complete with respect to the distance function $d_h$ induced by the metric $h$. Similarly the pieces $N\cap \{p\leq -1+\ge\}$ and $N\cap \{|p|\leq 1-\ge\}$ are $d_h$-complete. Therefore $N$ is also complete with respect to $d_h$, hence $N\to N(a_+,a_-)$ is an isometry.

	The orbifold $N(a_+,a_-)$ comes equipped with a distinguished metric $h=h_{a_+,a_-}$, function $p$ and 2-form $\beta_0$ on an open dense part. Using the coordinates of Proposition~\ref{p:completion_a0} and~\ref{p:completion_aneq0}, it is straightforward to check that $h$, $p$, $\beta_0$ (see equation~\eqref{f:beta0_def}), and $\theta_I$ (see equation~\eqref{f:theta_through_p}) extend to smooth (in the orbifold sense) tensors on $N(a_+,a_-)$.
	
	Collecting Theorem~\ref{t:nondegenerate_gk_description}, Proposition~\ref{p:deck_transform_mu} and Proposition~\ref{p:quantization}, we obtain the following.
	\begin{thm}\label{t:M_orbifold_quotient}
		Let $(M,g,I,J)$ be a complete regular rank one soliton. Then there exist constants $a_+,a_-\in \{2/k\ |\ k\in\Z \}\cup \{0\}$, $(a_+,a_-)\neq (0,0)$ such that either
		\[
		M/S^1\simeq N(a_+,a_-),
		\]
		or
		\[
		M/S^1\simeq N(a_+,0)/\Z,
		\]
		where $\Z$ acts on $N(a_+,0)$ by translations in $(\mu_1, \mu_-)$ coordinates preserving $a_+\mu_++a_-\mu_-$.
		
		Furthermore on the complement of the fixed point set $M^{S^1}$ there exists an $S^1$-invariant function $W$ such that the Seifert fibration
		\[
		M\backslash M^{S^1}\to(M\backslash M^{S^1})/S^1
		\]
		has curvature $\beta=*_hdW+W\beta_0$ with connection $\eta$ and the underlying GK structure is given by
		\begin{equation*}
		\begin{split}
		g=&\ Wh+W^{-1}\eta^2\\
		\Omega_{I}=&
		(-d\mu_1+\sqrt{-1}d\mu_2)\wedge(\eta+\sqrt{-1}W(d\mu_3-pd\mu_2)),\\
		\Omega_J=&
		(-d\mu_1+\sqrt{-1}d\mu_3)\wedge(\eta+\sqrt{-1}W(-d\mu_2+pd\mu_3)).
		\end{split}
		\end{equation*}
	\end{thm}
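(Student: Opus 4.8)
The plan is to assemble this statement from the structural results already established, since all the essential analytic work has been carried out in the preceding propositions. First I would invoke Proposition~\ref{p:deck_transform_mu}, which for a complete regular rank one soliton identifies the orbit space of the nondegeneracy locus, $(M\backslash\mathbf{T})/S^1$, with a quotient $\R^3_{\pmb\mu}/\Gamma$ where $\Gamma$ is either $\Gamma_1\simeq\Z$ (generated by a translation in $\mu_1$) or $\Gamma_2\simeq\Z^2$, the latter forcing $a_-=0$. Passing to metric completions and using the explicit descriptions of \S\ref{ss:metcomp} (Definition~\ref{d:N(a+,a-)} together with Propositions~\ref{p:completion_a0} and~\ref{p:completion_aneq0}), this yields the two alternatives $M/S^1\simeq N(a_+,a_-)$ and $M/S^1\simeq N(a_+,0)/\Z$, respectively.

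Next I would apply Proposition~\ref{p:quantization} to pin down the parameters. Since $a_+\neq 0$ by our standing convention, Lemma~\ref{l:T_nonempty} guarantees $\mathbf{T}_+\neq\emptyset$, and matching the cone angle $2\pi/k_+$ coming from the isotropy of a generic point of $\mathbf{T}_+$ against the cone angle $\pi|a_+|$ of the completion forces $a_+=2/k_+$ with $k_+\in\Z$; the identical argument applied to $\mathbf{T}_-$ (when $a_-\neq 0$) gives $a_-=2/k_-$. This places $a_\pm\in\{2/k\ |\ k\in\Z\}\cup\{0\}$ with $(a_+,a_-)\neq(0,0)$, as claimed.

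The one step requiring genuine verification, rather than mere bookkeeping, is that the isometric embedding $N\to N(a_+,a_-)$ supplied by~\eqref{f:proj_orbit_extension} is in fact \emph{onto}, i.e. that the moment map, which a priori is defined only after deleting $\mathbf{T}$ and passing to the $\Gamma$-cover, truly exhausts the completion including the degeneracy divisors and the images of the fixed points. To establish this I would stratify $N$ into the three pieces $\{p\geq 1-\epsilon\}$, $\{p\leq -1+\epsilon\}$, and $\{|p|\leq 1-\epsilon\}$, and rerun the argument from the proof of Proposition~\ref{p:deck_transform_mu} on each: after adding a sufficiently large multiple of the baseline solution $\til W$ of Lemma~\ref{l:W0_solution} to restore both a positive lower bound on the fiber length $W^{-1}$ and a lower Ricci bound, Proposition~\ref{p:s1_bundle_complete} shows each piece is $d_h$-complete. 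Hence $N$ itself is $d_h$-complete, and the embedding into the equidimensional $N(a_+,a_-)$ must be surjective, upgrading it to an isometry of orbifolds.

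Finally, with the orbit space identified, the explicit form of the GK structure is read off directly from the Gibbons--Hawking ansatz: on $M\backslash M^{S^1}$ the Seifert fibration carries a connection $\eta$ whose curvature is $\beta=*_hdW+W\beta_0$, and Theorem~\ref{t:nondegenerate_gk_description} (in its fixed-point version Theorem~\ref{t:nondegenerate_gk_description_v2}) reproduces $g=Wh+W^{-1}\eta^2$ together with the stated expressions for $\Omega_I$ and $\Omega_J$. The hard part is thus concentrated entirely in the surjectivity step, which rests on the delicate completeness Proposition~\ref{p:s1_bundle_complete}; the remainder is a synthesis of the preceding results.
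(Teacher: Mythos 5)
Your proposal is correct and follows essentially the same route as the paper: the theorem is obtained by collecting Proposition~\ref{p:deck_transform_mu}, the completion analysis of \S\ref{ss:metcomp}, the quantization of Proposition~\ref{p:quantization}, and Theorem~\ref{t:nondegenerate_gk_description}, with the only substantive new step being the surjectivity of $N\to N(a_+,a_-)$. You handle that step exactly as the paper does — stratifying by the three regions $\{p\geq 1-\epsilon\}$, $\{p\leq -1+\epsilon\}$, $\{|p|\leq 1-\epsilon\}$, adding a multiple of the baseline solution $\til W$, and invoking Proposition~\ref{p:s1_bundle_complete} to conclude $d_h$-completeness of each piece.
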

	\begin{rmk}
		Assume as before $a_+\neq 0$.  The topology of the orbit space $N(a_+,a_-)/H$ depends on $a_+,a_-$ and $H$ as follows:
		\begin{enumerate}
			\item $H=\{\mathrm{id}\}$, $a_+=2/k_+$, $a_-=0$. In this case orbifold $N(a_+,a_-)$ is diffeomorphic to a global quotient of $\C\times \R$ by a linear $\Z_{k_+}$-action in the first coordinate. The underlying smooth manifold is diffeomorphic to $\R^3$;
			\item $H=\Z$, $a_+=2/k_+$, $a_-=0$. In this case $N(a_+,a_-)$ is diffeomorphic to a quotient $(\C/\Z_{k_+}\times \R)/\Z$, where $\Z$ acts via rotation on the first  factor and via nontrivial translation on the second. The underlying smooth manifold is diffeomorphic to $\R^2\times S^1$;
			\item $H=\{\mathrm{id}\}$, $a_+=2/k_+$, $a_-=2/k_-$. In this case orbifold $N(a_+,a_-)$ is diffeomorphic to a product $S^2(k_+,k_-)\times \R$, where $S^2(k_+,k_-)$ is a \emph{spindle} 2-sphere with cone angles $2\pi/k_+$ and $2\pi/k_-$.
			The underlying smooth manifold is diffeomorphic to $S^2\times \R$;
		\end{enumerate}
	\end{rmk}
	\begin{cor}\label{c:fixed_smooth}
		Let $(M,g,I,J)$ be a complete regular rank one soliton, and denote by $\pi\colon M\to N(a_+,a_-)/H$ the projection onto the orbit space. If $y\in M$ is a fixed point of the $S^1$ action, then the representation of $S^1$ in $T_yM$ has weights $(\pm 1,\pm 1)$ and $\pi(y)$ is a smooth point of orbifold $N(a_+,a_-)/H$.
	\end{cor}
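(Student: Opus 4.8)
The plan is to reduce both assertions to the single topological fact that fixed points avoid the degeneracy locus, i.e. $M^{S^1}\cap\mathbf T=\emptyset$, and then to invoke results already established on the nondegeneracy locus. Indeed, suppose this is known. Then near any $y\in M^{S^1}$ the Poisson tensor $\gs$ is nondegenerate, so Lemma~\ref{l:free_nondegenerate} applies and forces the isotropy weights at $y$ to be $(\pm1,\pm1)$. With weights $(\pm1,\pm1)$ the slice theorem (Corollary~\ref{c:slice} together with item~(4) of the local structure list in \S\ref{ss:s1_actions}) identifies a canonical neighbourhood $U_y\to U_y/S^1$ with the Hopf projection $\C^2\to\R^3$, so $U_y/S^1$ is a smooth $3$-manifold chart around $\pi(y)$. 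Finally, since $p$ is $S^1$-invariant with $\{p=\pm1\}=\mathbf T_\pm$, the hypothesis $y\notin\mathbf T$ gives $p(y)\neq\pm1$, so $\pi(y)$ lies off the cone loci $\{p=1\}\cup\{p=-1\}$, which by the discussion preceding Theorem~\ref{t:M_orbifold_quotient} is precisely the orbifold singular set of $N(a_+,a_-)/H$. Thus $\pi(y)$ is a smooth orbifold point, as claimed.

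It remains to prove $M^{S^1}\cap\mathbf T=\emptyset$. I would argue by contradiction: assume $y\in M^{S^1}\cap\mathbf T_+$ (the case of $\mathbf T_-$ is identical after exchanging $(\Omega_I,I)$ for $(\Omega_J,J)$). Let $\gamma$ be a small meridian linking $\mathbf T_+$, oriented so that its associated deck transformation is the generator $g_1$ of Proposition~\ref{p:deck_transform_mu}. On the one hand, $i_X\Omega_I=d\mu_1-\sqrt{-1}\,d\mu_2$ on the nondegeneracy locus, and since $g_1$ acts by $\mu_1\mapsto\mu_1+c_1$ with $c_1\neq0$ while fixing $\mu_2$, the period satisfies
\[
\int_\gamma i_X\Omega_I=c_1\neq 0;
\]
this is exactly the nonvanishing exploited in the proof of Proposition~\ref{p:deck_transform_mu}, which in turn rests on $\Omega_I$ having a genuine pole along $\mathbf T_+$ (Proposition~\ref{p:Omegapoles}). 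On the other hand, writing $\mathbf X=\tfrac12(X-\sqrt{-1}\,IX)$ and using $i_X\Omega_I=i_{\mathbf X}\Omega_I$ (since $\Omega_I$ is of type $(2,0)$), I would estimate this same period as the meridian is slid toward $y$ along $\mathbf T_+$. Because $\Omega_I$ has a pole of order \emph{exactly} one, $|\Omega_I|_g\lesssim d_g(\cdot,\mathbf T_+)^{-1}$, while $\gamma$ has $g$-length $\lesssim d_g(\gamma,\mathbf T_+)$ and $|\mathbf X|_g\to0$ as $\gamma\to y$ (because $X(y)=0$). Hence
\[
\Big|\int_\gamma i_{\mathbf X}\Omega_I\Big|\lesssim \sup_\gamma|\mathbf X|_g\longrightarrow 0,
\]
so the homotopy-invariant period would be $0$, contradicting $c_1\neq0$.

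In adapted $I$-holomorphic coordinates $(z,w)$ centred at $y$ with $\mathbf T_+=\{w=0\}$ tangent to a weight eigenspace, this estimate becomes an exact residue computation: one has $\Omega_I=\tfrac{f}{w}\,dz\wedge dw$ with $f(z,0)\not\equiv0$, $\mathbf X=A\,\partial_z+B\,\partial_w$ with $A(z,0)=O(z)$ and $B(z,0)=0$, and a meridian at fixed $z=z_0$ yields $\int_\gamma i_{\mathbf X}\Omega_I=2\pi\sqrt{-1}\,f(z_0,0)A(z_0,0)$; constancy of the period in $z_0$ together with $A(0,0)=0$ then forces $c_1=0$. The main obstacle is that $\mathbf T_+$ may be \emph{singular} at the fixed point $y$, so that such coordinates need not exist; this is precisely why I favour the coordinate-free metric estimate above, which uses only the order-one pole bound on $\Omega_I$, the vanishing of $\mathbf X$ at $y$, and the fact that meridians approaching $y$ along $\mathbf T_+$ stay in a fixed free-homotopy class in $M\setminus\mathbf T_+$. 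Verifying this last homotopy statement and the uniformity of the pole and length bounds as the loop degenerates is the technical heart of the argument.
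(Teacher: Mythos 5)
Your reduction rests on the claim that $M^{S^1}\cap\mathbf T=\emptyset$, and this claim is false. The paper's own existence construction produces complete regular rank one solitons whose fixed points lie \emph{on} the degeneracy locus: in Proposition \ref{p:gensoln1}, when $k_+=\pm 1$ the locus $\Sigma=\{p=1\}$ consists of smooth points of $N(a_+,0)$ and the poles $z_{m+1},\dots,z_n$ of $W$ are allowed to lie on $\Sigma$, so the corresponding fixed points $y_i$ satisfy $p(y_i)=1$, i.e.\ $y_i\in\mathbf T_+$; the remark immediately following the corollary (``if there is a fixed point on the degeneracy divisor $\mathbf T_+$ then $k_+=\pm1$'') confirms that this configuration is genuinely realized. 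Since the conclusion you aim for is false, the period estimate must break down, and it does so at a precise point: the ``pole of order one'' is a statement about the divisor of the meromorphic form, not a uniform metric bound. From $\Omega_I\wedge\overline{\Omega_I}=C(1-p^2)^{-1}dV_g$ one gets $|\Omega_I|_g\sim(1-p)^{-1/2}$ near $\mathbf T_+$ \emph{independently of} $W$, while the $g$-length of the meridian (the horizontal lift of the $\mu_1$-circle) is $\sim W^{1/2}(1-p)^{1/2}$ and $|\mathbf X|_g=W^{-1/2}$. Hence $\mathrm{length}(\gamma)\cdot\sup_\gamma|\mathbf X|_g|\Omega_I|_g=O(1)$, not $o(1)$: the blow-up $W\to\infty$ at the fixed point makes $\mathrm{length}(\gamma)\cdot|\Omega_I|_g\sim W^{1/2}\to\infty$, exactly cancelling the decay of $|\mathbf X|_g$, so no contradiction with $c_1\neq 0$ arises. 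Relatedly, your residue computation presupposes that $\mathbf T_+$ is smooth at $y$ with $f(z,0)\not\equiv 0$ holomorphic; at a fixed point $\mathbf T_+$ is typically nodal (the preimage of the two rays of $\Sigma\setminus\{z_i\}$ under the Hopf projection is a pair of complex lines through $y$), and the correct local model, e.g.\ $\Omega_I=c\,\frac{dz\wedge dw}{zw}$ with $\mathbf X=\sqrt{-1}\,(z\del_z-w\del_w)$, gives the constant nonzero period $-2\pi c$.

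The paper's proof sidesteps all of this with a purely local comparison of orbifold models. By the slice theorem, a fixed point with weights $(w_1,w_2)$, $\gcd(w_1,w_2)=1$, has punctured orbit-space neighbourhood isomorphic as an orbifold to $\R\times S^2(|w_1|,|w_2|)$; on the other hand Theorem \ref{t:M_orbifold_quotient} shows that a punctured neighbourhood of any point of $N(a_+,a_-)/H$ is either smooth or of the form $\R\times S^2(k,k)$ with $k\in\{k_+,k_-\}$. Matching the two forces $|w_1|=|w_2|$, hence $w_i=\pm1$ by coprimality, and then the link $S^2(1,1)$ is smooth, so $\pi(y)$ is a smooth point whether or not $y\in\mathbf T$. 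Your treatment of the case $y\notin\mathbf T$ via Lemma \ref{l:free_nondegenerate} is correct and agrees with the paper, but the case $y\in\mathbf T$ must be handled, not excluded.
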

	\begin{proof}
		We already know by Lemma~\ref{l:free_nondegenerate} that any fixed point $y$ in the nondegenerate part $M\backslash\mathbf T$ must have weights $(\pm 1,\pm 1)$. Now assume that $y\in\mathbf{T}\subset M$ is a fixed point with weights $\mathbf{w}=(w_1,w_2)$, $\mathrm{gcd}(w_1,w_2)=1$. Let $y\in U\simeq \C^2$ be a neighbourhood of $y$ provided by the slice theorem. Then $(U\backslash\{y\})/S^1$ is an orbifold isomorphic to a product $\R\times S^2(|w_1|,|w_2|)$ of $\R$ and a spindle $S^2$ with two orbifold points of orders $|w_1|$ and $|w_2|$.
		
		On the other hand, given a point $z\in N(a_+,a_-)$ there are two options. Either $z$ is a smooth point, and its punctured neighbourhood is smooth. Or a punctured neighbourhood of $z$ is isomorphic as an orbifold to a product $\R\times S^2(k,k)$, $k\in\{k_+,k_-\}$. Hence at $y\in M$ we must have $|w_1|=|w_2|$ which is possible only if $w_i=\pm 1$.
	\end{proof}
	The above corollary implies that if there is a fixed point on the degeneracy divisor $\mathbf T_+$ (resp.\,$\mathbf T_-$), then $k_+=\pm 1$ (resp.\,$k_-=\pm 1$).

\section{Construction and classification of solutions} \label{s:construction}

In this section we give a construction of complete rank one generalized K\"ahler-Ricci solitons, proving Theorem \ref{t:existence}.  Having already determined the function $p$ using Proposition \ref{p:invGKsoliton}, our task is to find suitable functions $W$ solving (\ref{f:W_laplace2}), which extend naturally over the degeneracy loci.  Note that in Lemma \ref{l:W0_solution} we already identified the baseline solution $\til{W}$ to equation (\ref{f:W_laplace2}).  We first observe that for any other solution $W$, the function $V = W/\til{W}$ satisfies a Laplace equation for a certain metric conformal to to the induced metric on the quotient space.    Using this fact and analyzing the explicit constructions of the spaces $N(a_+, a_-)$ in \S \ref{s:completion}, we classify the possible solutions to equation (\ref{f:W_laplace2}).  With the explicit construction of the horizontal geometry and the classification of possible functions $W$ in hand, we give the proof of the main existence theorem (Theorem \ref{t:existence}), and the classification theorem (Theorem \ref{t:uniqueness}).

\subsection{Reduction of the equation for \texorpdfstring{$W$}{} to a Laplace equation} \label{ss:Wlaplace}

The first step in our analysis is to better understand the structure of the equation for $W$ in the case of solitons. Recall that by Lemma~\ref{l:W0_solution} the function
\[
\til W:=\left(a_+^2(1+p)+a_-^2(1-p)\right)^{-1}
\]
solves equation~\eqref{f:W_laplace2} provided $p$ corresponds to a rank one GK soliton as in Proposition~\ref{p:invGKsoliton}. With the use of this baseline solution we can reduce the equation for $W$ to the Laplace equation for a metric $\til h$ conformally related to $h$.

\begin{prop} \label{p:laplace_reduction} Equation~\eqref{f:W_laplace2} for $W$ is equivalent to the Laplace equation
	\[
	\Delta_{\til h}V=0
	\]
	for the function $V:=W/\til W$, where
	\[
	\til h:= \left(\frac{2\til W^2}{e^{a_+\mu_+}+e^{-a_-\mu_-}} \right)^2 h.
	\]
\end{prop}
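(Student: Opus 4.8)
The plan is to exploit the divergence form of the equation for $W$ together with a ground-state (Doob) substitution $W = \til W V$, and then to recognize the resulting operator as a conformal Laplacian. Recall from \eqref{f:beta0_def} that the equation \eqref{f:W_laplace2} for $W$ is precisely the vanishing of the operator $L(W) := *_h d\big(*_h dW + W\beta_0\big)$. Since $\dim\R^3_{\pmb\mu} = 3$ we have $*_h^2 = \mathrm{id}$ on $1$- and $2$-forms, so setting $\xi := *_h\beta_0$ (a $1$-form) and $X_0 := \xi^{\#}$ its metric dual, we may rewrite $*_h dW + W\beta_0 = *_h(dW + W\xi)$ and hence
\[
L(W) = *_h d *_h (dW + W\xi) = \pm\,\divg_h\big(\nabla_h W + W X_0\big),
\]
i.e. $L$ is, up to an overall sign, a divergence-form operator with drift $X_0$.

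First I would carry out the ground-state substitution. By Lemma \ref{l:W0_solution} the baseline function $\til W$ is a positive solution, so $\divg_h(\nabla_h\til W + \til W X_0) = 0$. Writing $W = \til W V$ and using the identity $\nabla_h W + W X_0 = \til W\,\nabla_h V + V(\nabla_h\til W + \til W X_0)$, the Leibniz rule for $\divg_h$ together with the vanishing of the divergence of the second term gives
\[
\frac{L(W)}{\til W} = \Delta_h V + \big\langle \nabla_h V,\ 2\nabla_h\log\til W + X_0\big\rangle_h .
\]
Thus $L(W) = 0$ is equivalent to a drift-Laplace equation for $V$ with drift field $2\nabla_h\log\til W + X_0$.

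Next I would match this with the conformal Laplacian. In dimension three, for $\til h = \Phi^2 h$ with $\Phi > 0$ one has the conformal change formula $\Delta_{\til h} V = \Phi^{-2}\big(\Delta_h V + \langle \nabla_h\log\Phi,\ \nabla_h V\rangle_h\big)$, so that $\Delta_{\til h}V = 0$ is equivalent to $\Delta_h V + \langle\nabla_h\log\Phi, \nabla_h V\rangle_h = 0$. Comparing with the previous display, the two equations $L(W) = 0$ and $\Delta_{\til h}V = 0$ coincide precisely when the drift fields agree, i.e. when
\[
*_h\beta_0 = \xi = d\log\Phi - 2\,d\log\til W = d\log\!\big(\Phi/\til W^2\big).
\]
Since $\Phi/\til W^2 = 2\big(e^{a_+\mu_+} + e^{-a_-\mu_-}\big)^{-1}$ by the definition of $\Phi$, the whole proposition reduces to the single identity $*_h\beta_0 = -\,d\log\big(e^{a_+\mu_+} + e^{-a_-\mu_-}\big)$.

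It then remains to verify this identity by a direct computation, and this sign-sensitive bookkeeping is the main (if routine) obstacle. Taking the orientation $d\mu_1\wedge d\mu_2\wedge d\mu_3$ and the diagonal form \eqref{f:h_diagonal} of $h$, a direct computation of the Hodge star of $\beta_0 = d\mu_1\wedge(p_+ d\mu_- + p_- d\mu_+)$ gives $*_h\beta_0 = \tfrac{p_+}{1+p}\,d\mu_+ - \tfrac{p_-}{1-p}\,d\mu_-$. By Proposition \ref{p:invGKsoliton} one has $\Phi = a_+\mu_+ + a_-\mu_-$ and $p = (1 - e^{\Phi})/(1 + e^{\Phi})$, whence $p_\pm = -\tfrac12 a_\pm(1-p^2)$ and therefore $*_h\beta_0 = -\tfrac12 a_+(1-p)\,d\mu_+ + \tfrac12 a_-(1+p)\,d\mu_-$. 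On the other hand, factoring $e^{a_+\mu_+} + e^{-a_-\mu_-} = e^{-a_-\mu_-}(1 + e^{\Phi})$ and using $e^{\Phi}/(1+e^{\Phi}) = (1-p)/2$, one finds $-d\log(e^{a_+\mu_+} + e^{-a_-\mu_-})$ to be exactly this same $1$-form. This establishes the required identity and hence the equivalence claimed in the proposition. The only delicate point throughout is to fix consistent conventions for the orientation and for $*_h$, $\divg_h$ and $\Delta_h$ (in particular the sign relating $X_0$ to $\beta_0$ and the orientation sign in $*_h$, since $d\mu_1\wedge d\mu_2\wedge d\mu_3 = -2\,d\mu_1\wedge d\mu_+\wedge d\mu_-$); once these are fixed, all signs align.
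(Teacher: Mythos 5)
Your argument is correct, and its core coincides with the paper's: both proofs hinge on the ground-state (Doob) substitution $W=\til W V$ using the baseline solution of Lemma~\ref{l:W0_solution}. Where you differ is in how the resulting first-order operator is matched with $\Delta_{\til h}$. The paper writes out $\Delta_{\til h}V$ explicitly in the coordinates $(\mu_1,\mu_+,\mu_-)$ and checks the two identities $(\psi(1\pm p))_\pm/\psi=2(\til W(1\pm p))_\pm/\til W$ for $\psi=2\til W^2/(e^{a_+\mu_+}+e^{-a_-\mu_-})$; you instead use the divergence form $L(W)=\divg_h(\N_h W+WX_0)$ with $X_0=(*_h\beta_0)^{\#}$ together with the conformal covariance of the Laplacian in dimension three, so that everything collapses to the single invariant identity $*_h\beta_0=-d\log\bigl(e^{a_+\mu_+}+e^{-a_-\mu_-}\bigr)$. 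I verified your Hodge-star computation against the paper's conventions: with orientation $d\mu_1\wedge d\mu_2\wedge d\mu_3=-2\,d\mu_1\wedge d\mu_+\wedge d\mu_-$ one indeed gets $*_h(d\mu_1\wedge d\mu_+)=-\tfrac{1}{1-p}d\mu_-$ and $*_h(d\mu_1\wedge d\mu_-)=\tfrac{1}{1+p}d\mu_+$, and the identity then follows from $p_\pm=-\tfrac12 a_\pm(1-p^2)$ exactly as you say. Your organization buys a little conceptual clarity: the $\til W^2$ in the conformal weight is explained by the Doob transform (the drift $2\,d\log\til W$ matches the $(n-2)=1$ conformal drift), and the soliton ansatz enters only in the final one-form identity. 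Two small points: you use $\Phi$ both for the conformal factor and, in the same paragraph, for the paper's Ricci potential $\log\tfrac{1-p}{1+p}$, which should be disambiguated; and since both equations are set to zero, the overall signs of $L$ and of $*_hd*_h$ are harmless --- the only sign that must be pinned down is the one in the $*_h\beta_0$ identity, which you got right.
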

\begin{proof}
	Using that $\til W$ is a solution, we can rewrite equation~\eqref{f:W_laplace2} as an equation for $V = W/\til W$:
	\begin{equation}\label{f:V_laplace2}
	\begin{split}
	\til W\bigl(V_{11}+\tfrac{1}{2}(1+p)V_{++}+\tfrac{1}{2}(1-p)V_{--}\bigr)
	+(\til W(1+p))_+V_++(\til W(1-p))_-V_-=0.
	\end{split}
	\end{equation}
	Now let 
	\[
	\psi = \frac{2\til W^2}{e^{a_+\mu_+}+e^{-a_-\mu_-}} = \til W^2(1-p)e^{-a_+\mu_+} = \til W^2(1+p)e^{a_-\mu_-},
	\]
	so that the conformal metric is $\til h=\psi^2h$.  Then we find that
	\begin{equation*}
	\begin{split}
	\Delta_{\til h}V&=\frac{1}{2\psi^3(1-p^2)}
	\left(
	2\psi V_{11}+(\psi(1+p)V_{+})_{+}+(\psi(1-p)V_{-})_{-}
	\right)\\
	&=\frac{1}{\psi^2(1-p^2)}\left(V_{11}+\tfrac{1}{2}(1+p)V_{++}+\tfrac{1}{2}(1-p)V_{--}+
	\frac{(\psi(1+p))_{+}}{2\psi}V_{+}+
	\frac{(\psi(1-p))_{-}}{2\psi}V_{-}\right).
	\end{split}
	\end{equation*}
	Using the different expressions for $\psi$ above it is straightforward to check that
	\[
	\frac{(\psi(1+p))_{+}}{\psi}=2\frac{(\til W(1+p))_+}{\til W}, \qquad \frac{(\psi(1-p))_{-}}{\psi}=2\frac{(\til W(1-p))_-}{\til W},
	\]
	which implies that \eqref{f:V_laplace2} is equivalent to $\Delta_{\til h}V=0$.
\end{proof}

The function $\psi:=2\til W^2/(e^{a_+\mu_+}+e^{a_-\mu_-})$ is defined initially on $\R^3_{\pmb\mu}$, and extends to a smooth function on the orbifold $N(a_+,a_-)$. Thus $\til h$ can be thought of as an orbifold metric on $N(a_+,a_-)$. If $\{a_+,a_-,W\}$ corresponds to a complete rank one regular GK soliton $(M,g,I,J)$, then $V=W/\til W$ is a positive smooth solution to the Laplace equation $\Delta_{\til h} V=0$ on $N(a_+,a_-)\backslash\{z_i\}$, where $\{z_i\}$ is the set of fixed point orbits. Furthermore, since the vector field $X$ generating the $S^1$ action has a simple zero at each $z_i$, we have $V(x)\sim d_h(x,z_i)^{-1}$ in a neighbourhood of $z_i$.

Thus, to proceed with the construction and classification of GK solitons, we need to construct and classify positive solutions to the Laplace equation $\Delta_{\til h}V=0$ on $N(a_+,a_-)$ with poles on a finite set $\{z_i\}$. There are two difficulties with answering this question. The first one is technical: $N(a_+,a_-)$ is an orbifold rather than a manifold, therefore we have to be somewhat cautious in developing the elliptic theory on $N(a_+,a_-)$. The second one is more substantial~--- $N(a_+,a_-)$ equipped with the metric $\til h$ is not necessarily complete, its completion is not an orbifold, and we do not have any a priori knowledge about the behavior of $V$ near the completion. Thus a large part of the general well-developed elliptic theory is not available to us.

Luckily, thanks to a very special structure of orbifold $N(a_+,a_-)$ and metric $\til h$, we are able to surpass these difficulties by `lifting' the Laplace equation $\Delta_{\til h}V=0$ on an incomplete 3-dimensional orbifold to an equivalent Laplace equation on a complete flat 4-dimensional manifold. This idea works equally well if either $a_-=0$ or $a_-\neq 0$, however, since the details differ slightly, we consider the two cases separately.

\subsection{Equation \texorpdfstring{$\Delta_{\tilde h}V=0$}{} in the case \texorpdfstring{$a_- = 0$}{}}

\begin{prop}\label{p:elliptic_a0}
	Let $N(a_+,0)$ be the orbifold from Definition~\ref{d:N(a+,a-)}. 
	\begin{enumerate}
		\item Given any point $z\in N(a_+,0)$ there exists a unique smooth positive function 
		\[G_z(x)\colon N(a_+,0)\backslash\{z\}\to (0;\infty)\] such that:
		\begin{enumerate}
			\item $\Delta_{\til h}G_z(x)=-2\pi\delta_z(x)$ in the sense of distributions;
			\item $\inf_{x\in N(a_+,0)} G_z(x)=0$;
		\end{enumerate}
		\item If $V$ is a positive solution to the equation $\Delta_{\til h}V=0$ on the complement of a finite set $\{z_i\}$, then there exist unique positive constants $c_V,c_i$ such that
		\[
		V(x)=c_V+\sum_{i} c_i G_{z_i}(x).
		\]
		\item Any positive superharmonic function on $N(a_+,0)/\Z$ is constant.
	\end{enumerate}
\end{prop}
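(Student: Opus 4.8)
The plan is to combine two structural facts already established: the conformal reduction of Proposition~\ref{p:laplace_reduction}, which recasts \eqref{f:W_laplace2} as $\Delta_{\til h}V=0$ with $V=W/\til W$; and the description in Remark~\ref{r:N_as_quotient} of $N(a_+,0)$ as the $S^1$-quotient of $\hat M=\C\times\C^*$ equipped with the \emph{flat} metric $g_0=k_+^2|dz|^2+|dw|^2/|w|^2$, so that $(\hat M,g_0)\cong\R^3\times S^1$ is complete and the orbifold cone locus of $N(a_+,0)$ is resolved by the almost free circle action. The first step is to set up the dictionary between the two problems. A direct computation shows that on $S^1$-invariant functions the operator $\Delta_{g_0}$ is a positive multiple of the weighted operator $V\mapsto\divg_h(\psi\,\nabla_h V)$, where the weight $\psi\propto 1+|z|^2$ agrees up to scale with the squared fibre-length $g_0(X,X)=k_+^2(1+|z|^2)$; but this is exactly the operator produced by Proposition~\ref{p:laplace_reduction}, since there $\psi=2\til W^2/(e^{a_+\mu_+}+e^{-a_-\mu_-})\propto 1+|z|^2$ as well. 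Consequently $\til h$-harmonic (resp.\ superharmonic) functions on $N(a_+,0)$ are precisely the descents of $g_0$-harmonic (resp.\ superharmonic) $S^1$-invariant functions on the complete flat manifold $\hat M$, with no analysis needed at the incomplete ends of $\til h$ or at the cone locus: everything is transported to the smooth, complete manifold $\hat M$.

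With this dictionary in hand, parts (1) and (2) become classical statements on $\hat M\cong\R^3\times S^1$, which is flat with cubic volume growth $\Vol(B_R)\sim R^3$, hence non-parabolic and carrying a minimal positive Green's function $\Gamma$ decaying at infinity. For part~(1) I would take $\hat G_z$ to be the average of $\Gamma$ over the $S^1$-orbit $C_z=\pi^{-1}(z)$, which is $S^1$-invariant, positive, $g_0$-harmonic off $C_z$, and tends to $0$ at infinity; its descent $G_z$ then satisfies $\inf G_z=0$, and a transverse computation near the codimension-three circle $C_z$ (where the $2\pi$ arises from the length of the fibre) fixes the normalization $\Delta_{\til h}G_z=-2\pi\delta_z$. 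Uniqueness follows because the difference of two candidates descends from a globally $g_0$-harmonic, bounded, $S^1$-invariant function on $\R^3\times S^1$, which must be constant (bounded harmonic functions on a complete flat manifold are constant, e.g.\ by Yau's gradient estimate), and the constant is $0$ by decay.

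For part~(2) I would treat the singularities locally on $N$ and the global part on $\hat M$. Since each puncture $z_i$ is a smooth point of the orbifold by Corollary~\ref{c:fixed_smooth} and $\Delta_{\til h}$ is a smooth elliptic operator there, the classical Bôcher theorem gives $V=c_iG_{z_i}+(\text{bounded harmonic})$ near $z_i$ for some $c_i\geq 0$. Subtracting $\sum_i c_iG_{z_i}$ produces a function harmonic on all of $N(a_+,0)$; lifting it to $\hat M\cong\R^3\times S^1$ yields a globally harmonic $S^1$-invariant function that is bounded below (continuous on compacta, and near infinity $V>0$ while each $G_{z_i}\to 0$), hence a constant $c_V\geq 0$ by Liouville, giving $V=c_V+\sum_i c_iG_{z_i}$; uniqueness of the coefficients is read off from the value at infinity and the residues at the poles. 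Finally, for part~(3) I would pass to the further quotient: by Remark~\ref{r:N_as_quotient} and Proposition~\ref{p:deck_transform_mu}, $N(a_+,0)/\Z$ corresponds to $\hat M/\Z$, a complete flat $4$-manifold with only two non-compact directions (topologically $\R^2\times T^2$), so now $\Vol(B_R)\sim R^2$. By the volume test for parabolicity (Cheng--Yau/Grigor'yan), such a manifold admits no non-constant positive superharmonic function; a positive superharmonic function on $N(a_+,0)/\Z$ lifts, via the dictionary (which sends superharmonic to superharmonic since the conformal factor is positive), to a positive $S^1$-invariant superharmonic function on $\hat M/\Z$, which is therefore constant, and descending gives the claim.

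The main obstacle I anticipate is not the soft function-theoretic input (Yau's Liouville theorem, the volume test for parabolicity, Bôcher's theorem) but making the dictionary fully rigorous: verifying that harmonicity, superharmonicity, the prescribed $\delta$-source together with its $2\pi$-normalization, and the local expansions at the punctures all transfer correctly between the incomplete orbifold $(N(a_+,0),\til h)$ and the complete flat manifold $(\hat M,g_0)$, in particular across the resolved cone locus $\{z=0\}$ and despite the incompleteness of $\til h$ itself. Once the equivalence of the two Laplace problems is established, the remaining arguments are standard.
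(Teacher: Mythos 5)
Your proposal is correct and follows essentially the same route as the paper: lift the problem via the $S^1$-bundle identity $\Delta_g f = W^{-1}\Delta_h f$ to the complete flat manifold $\C\times\C^*\cong\R^3\times S^1$ of Remark~\ref{r:N_as_quotient}, construct $G_z$ by averaging the Green's function over the orbit, use Liouville-type results for uniqueness and the decomposition, and invoke the Cheng--Yau quadratic-volume-growth parabolicity criterion for the $\Z$-quotient in part (3). The only cosmetic differences are your use of B\^ocher's theorem at the poles where the paper invokes the Riesz representation theorem, and Yau's gradient estimate where the paper uses the maximum principle together with the Liouville property.
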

\begin{rmk}
	Borrowing terminology from the standard elliptic theory on smooth complete manifolds, we can say that $(N(a_+,0),\til h)$ is \emph{non-parabolic}, i.e., admits a positive Green's function and satisfies a \emph{Liouville property}, i.e., any positive harmonic function is constant; while $(N(a_+,0)/\Z,\til h)$ is \emph{parabolic}~--- any positive superharmonic function on it is constant.
\end{rmk}
\begin{proof}
	We start by recalling an observation from the proof of Proposition~\ref{p:s1_bundle_complete}. If $M\to N$ is a principal $S^1$ bundle with a connection one-form $\eta$, and the metric $g$ on $M$ is of the form
	\[
	g=Wh+W^{-1}\eta^2,
	\]
	where $W\colon M\to N$ is a smooth function and $h$ is a metric on $N$, then $f\colon N\to \R$ solves the Laplace equation $\Delta_h f=0$ on $M$ if and only if its lift to $M$ solves equation $\Delta_g f=0$, and the same is true for inequality. Hence there is a bijection:
	\begin{equation}\label{f:laplace_equivalence}
	\{\mbox{harmonic functions on }(N,h)\}\longleftrightarrow \{S^1\mbox{ invariant harmonic functions on }(M,g)\}.
	\end{equation}
	
	We use the constructions of Remark~\ref{r:N_as_quotient} and realize $N(a_+,0)$ as a quotient of $\hat M=\C\times \C^*$ by a circle action $u\cdot (z,w)=(uz,u^{k_+}w)$. Under this identification the conformal factor $\psi^2$ of Proposition~\ref{p:laplace_reduction} becomes
	\[
	\psi^2=\frac{(1+|z|^2)^2}{a_+^8}.
	\]
	Let $X\in \Gamma(T\hat M)$ be the vector field generating the $S^1$ action, and denote by $\eta$ the connection induced by the metric $\hat g$. Then $|X|^2_{\hat g}=4k_+^2=16a_+^{-2}$, and on $\hat M$ we can express the metric $\hat g$ as
	\[
	\hat g=h+\frac{16}{a_+^2}\eta^2.
	\]
	Thus the flat metric on $\hat M$,
	\[
	g_f:=\frac{1+|z|^2}{4}\hat g=\Re(k_+^2dz\otimes d\bar z+\frac{dw\otimes d\bar w}{|w|^2}),
	\]
	can be expressed as
	\[
	g_f=\frac{1+|z|^2}{4}h+\frac{4(1+|z|^2)}{a_+^2}\eta^2=\frac{a_+^2}{4(1+|z|^2)} \underbrace{\frac{(1+|z|^2)^2}{4a_+^2}h}_{\mbox{const} \cdot \til h}\ +\ \frac{4(1+|z|^2)}{a_+^2}\eta^2.
	\]
	Now, using observation~\eqref{f:laplace_equivalence} we find that $V$ on $N(a_+,0)$ solves equation $\Delta_{\til h}V=0$ if and only if its lift to $\hat M\simeq \R^3\times S^1$ solves the Laplace equation
	\[
	\Delta_{g_f}V=0
	\]
	with respect to the flat metric $g_f$.
	
	Now we use the well-developed elliptic theory on the flat $\R^3\times S^1$ to conclude the proof. It is known $\R^3\times S^1$ is non-parabolic, i.e., given any point $y\in \R^3\times S^1$ there exists an everywhere positive Green's function $\Gamma_y(x)$ solving the equation
	\[
	\Delta_{g_f}\Gamma_y(x)=-2\pi\delta_y(x).
	\]
	Such $\Gamma_y(x)$ can be either constructed explicitly as a convergent sum of Green's functions of $\R^4$, or its existence can be proved using a general result, e.g.,~\cite[\S 3]{GrigoryanParabolicity}. Furthermore the flat $\R^4$ satisfies the Liouville property~--- any entire positive harmonic function must be constant, therefore its isometric quotient $\R^3\times S^1$ also satisfies the Liouville property.
	
	Let $z=[S^1y]\in \hat M/S^1$ be the corresponding point in the orbit space. Averaging $\Gamma_y(x)$ with respect to the $S^1$ action
	\[
	G_{z}(x):=\frac{1}{2\pi}\int_{S^1} \Gamma_{y}(u\cdot x)du
	\]
	we obtain an $S^1$-invariant function $G_{z}(x)$ which descends to the orbifold quotient $\hat M/S^1$ and solves $\Delta_{g_f} G_z(x)=-2\pi\delta_{S^1y}(x)$ in $\hat M$ or equivalently
	\[
	\Delta_{\til h} G_z(x)=-2\pi\delta_z(x),
	\]
	in $\hat M/S^1$. Moreover $\lim G_z(x)=0$ as $d_{g_f}(x,z)\to \infty$. If $G'_z(x)$ is another function solving the same equation with $\inf G'_z(x)=0$, then the difference $G'_z(x)-G_z(x)$ is an entire harmonic function on $\hat M$. By the maximum principle, the difference is bounded from below by $0$, therefore by the Liouville property the two functions must differ by a constant. Since $\inf G'_z(x)=0$, the constant is zero.
	
	Let $V$ be any $S^1$-invariant positive harmonic function on the complement of finitely many $S^1$-orbits $\hat M\backslash \{S^1z_i\}$. Using the Riesz representation theorem and the $S^1$ invariance of $V$, we can decompose $V$ as
	\[
	V=V_0+\sum c_i G_{z_i},
	\]
	where $c_i$ are constants and $V_0$ is an entire harmonic function. Since $V$ is positive, $c_i>0$. Hence by the maximum principle $V_0$ is positive, and must be constant.
	
	Now consider $N(a_+,0)/\Z$. Action of $\Z$ on $N(a_+,0)$ lifts to a $g_f$-isometric action on $\hat M$. Using the same argument as above we have a bijection between superharmonic functions with poles at $\{z_i\}$ on $(N(a_+,0)/\Z,\til h)$ and $S^1$-invariant superharmonic functions on $(\hat M/\Z, g_f)$ with poles along $S^1$ orbits. It remains to note that manifold $\hat M/\Z=(\C\times \C^*)/\Z$ with metric $g_f$ has quadratic volume growth thus by a result of Cheng and Yau~\cite{ChengYau} does not admit any non-constant superharmonic functions.
\end{proof}

%
%
%
%
%
%
%
%

%
%
\subsection{Equation \texorpdfstring{$\Delta_{\tilde h}V=0$}{} in the case \texorpdfstring{$a_- \neq 0$}{}}

\begin{prop}\label{p:elliptic_aneq0}
	Let $N(a_+,a_-)$ be the orbifold from Definition~\ref{d:N(a+,a-)}. Consider a finite collection of points $\{z_i\}\subset N(a_+,a_-)$.
	\begin{enumerate}
		\item Given any point $z\in N(a_+,a_-)$ there exists a unique smooth positive function \[G_z(x)\colon N(a_+,a_-)\backslash\{z\}\to (0;\infty)\] such that:
		\begin{enumerate}
			\item $\Delta_{\til h}G_z(x)=-2\pi\delta_z(x)$ in the sense of distributions;
			\item $\inf_{x\in N(a_+,a_-)} G_z(x)=0$;
		\end{enumerate}

		\item If $V$ is a positive solution to the equation $\Delta_{\til h}V=0$ on the complement of a finite collection of points $\{z_i\}$, then there exist unique positive constants $c'_V,c_V'',c_i$ such that
		\[
		V(x)=c_V'+c_V''G_0(x)+\sum_{i} c_i G_{z_i}(x),
		\]
		where $G_0(x)=k_+^2e^{2\mu_+/k_+}+k_-^2e^{-2\mu_-/k_-}$.
		\item Any positive superharmonic function on $N(a_+,a_-)/\Z$ is constant.
	\end{enumerate}
\end{prop}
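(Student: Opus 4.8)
The plan is to follow the strategy of Proposition~\ref{p:elliptic_a0}, reducing the equation $\Delta_{\til h}V=0$ on the incomplete three-dimensional orbifold $N(a_+,a_-)$ to the flat Laplace equation on a complete four-dimensional space. Assuming first $\mathrm{gcd}(k_+,k_-)=1$, I would use Remark~\ref{r:N_as_quotient}(2) to realize $N(a_+,a_-)\simeq(\C^2\backslash\{0\})/S^1$ under $u\cdot(z,w)=(u^{k_+}z,u^{k_-}w)$, with flat metric $g_f=\Re(k_-^2\,dz\otimes d\bar z+k_+^2\,dw\otimes d\bar w)$ on $\hat M=\C^2\backslash\{0\}$. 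The key computation is that the generator $X$ has \emph{constant} squared length $|X|^2_{g_f}=k_+^2k_-^2(|z|^2+|w|^2)$, so that with $\eta$ the induced connection one-form the flat metric takes the ansatz form $g_f=W_{\mathrm{eff}}\til h+W_{\mathrm{eff}}^{-1}\eta^2$ up to a constant, after checking that the conformal factor of Proposition~\ref{p:laplace_reduction} is $\psi^2=\tfrac{1}{4}k_+^2k_-^2(|z|^2+|w|^2)^2$. By the identity $\Delta_g f=W^{-1}\Delta_h f$ for $S^1$-invariant $f$ (as in Proposition~\ref{p:s1_bundle_complete}), $S^1$-invariant $g_f$-(super)harmonic functions on $\hat M$ then correspond bijectively to $\til h$-(super)harmonic functions on $N(a_+,a_-)$. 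The essential new feature relative to the case $a_-=0$ is that $g_f$ extends across the origin to the complete flat metric on $\R^4$, and the origin is an $S^1$-\emph{fixed} point; its image is not a point of $N(a_+,a_-)$ but lies in the metric completion, and this is precisely what will produce the anomalous solution $G_0$.

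For (1), given $z\in N(a_+,a_-)$ I would pick $y\in\hat M$ with $[S^1 y]=z$, take the explicit positive Green's function $\Gamma_y\sim c\,r^{-2}$ of flat $\R^4$, and average over the circle, $G_z(x):=\tfrac{1}{2\pi}\int_{S^1}\Gamma_y(u\cdot x)\,du$; this descends to the orbifold, solves $\Delta_{\til h}G_z=-2\pi\delta_z$, and has $\inf G_z=0$ since $\Gamma_y\to 0$ at infinity. Uniqueness follows because the difference of two such functions is bounded near the punctured origin, hence extends harmonically across it by removable singularity in $\R^4$, giving a nonnegative entire harmonic function, which is constant by the Liouville property of $\R^4$; the normalization forces that constant to vanish.

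For (2), I would lift a positive $\til h$-harmonic $V$ on $N(a_+,a_-)\backslash\{z_i\}$ to an $S^1$-invariant positive harmonic function on $\R^4$ punctured at the origin and at the finitely many orbits $S^1 z_i$. Now the representation follows from the classical local description (B\^ocher's theorem) of a positive harmonic function near an isolated singularity in $\R^4$: at each puncture $V$ equals a locally harmonic function plus a nonnegative multiple of the Green's function based there. Applying this at the $z_i$ \emph{and} at the origin and subtracting, the remainder is a nonnegative entire harmonic function, hence a constant $c_V'$. Since the origin is $S^1$-fixed, the flat Green's function $\Gamma_0$ based there is already $S^1$-invariant and descends to a multiple of the extra solution: the coordinate change of Remark~\ref{r:N_as_quotient}(2) gives $k_-^2|z|^2+k_+^2|w|^2=\tfrac{1}{16}k_+^4k_-^4\,G_0^{-1}$, so $\Gamma_0\sim c\,(k_-^2|z|^2+k_+^2|w|^2)^{-1}$ is proportional to $G_0=k_+^2e^{2\mu_+/k_+}+k_-^2e^{-2\mu_-/k_-}$. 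This yields $V=c_V'+c_V''G_0+\sum_i c_i G_{z_i}$ with uniquely determined nonnegative coefficients, positive wherever the corresponding singularity is genuinely present. If $\mathrm{gcd}(k_+,k_-)=d>1$ one first passes to the $d$-fold cover as in Remark~\ref{r:N_as_quotient}(2) and averages over the residual $\Z_d$.

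Finally, (3) would be handled as in Proposition~\ref{p:elliptic_a0}: the $\Z$-action reduces to the flat model, the relevant quotient is parabolic (e.g.\ of at most quadratic volume growth, or compact), and the criterion of Cheng--Yau \cite{ChengYau} then forbids nonconstant positive superharmonic functions. I expect the main obstacle to be the correct bookkeeping at the $S^1$-fixed origin of $\R^4$: one must verify that the reduction of Proposition~\ref{p:laplace_reduction} indeed extends smoothly across this fixed point to the flat structure on $\R^4$, establish the removable-singularity and B\^ocher steps in the mild orbifold setting present along $\{z=0\}$ and $\{w=0\}$, and pin down the normalization so that the descended Green's function based at the origin is exactly the claimed $G_0$ rather than merely proportional to it.
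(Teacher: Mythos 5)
Your proposal is correct and follows essentially the same route as the paper: realize $N(a_+,a_-)$ as $(\C^2\backslash\{0\})/S^1$ via Remark~\ref{r:N_as_quotient}, transfer $\Delta_{\til h}$ to the flat Laplacian on $\R^4$ through the correspondence~\eqref{f:laplace_equivalence}, average Green's functions over the circle, identify the origin-based Green's function $(k_-^2|z|^2+k_+^2|w|^2)^{-1}$ with the anomalous solution $G_0$, and conclude with Liouville/maximum-principle arguments (the paper invokes the Riesz representation where you use B\^ocher's theorem, an inessential difference). Your explicit attention to the removable singularity at the $S^1$-fixed origin and to the normalization constant is, if anything, slightly more careful than the paper's "proceeds verbatim" shortcut.
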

\begin{proof}
	The proof is essentially the same as the proof in the case $a_-=0$. For simplicity, we assume that $\mathrm{gcd}(k_+,k_-)=1$. The proof in the general case is analogous.
	
	As before, we use Remark~\ref{r:N_as_quotient} to realize $N(a_+,a_-)$ as the quotient of $\hat M=\C^2\backslash\{0\}$ by a circle action $u\cdot (z,w)=(u^{k_+}z,u^{k_-}w)$. Under this identification, the conformal factor $\psi^2$ becomes
	\[
	\psi^2=\frac{(|z|^2+|w|^2)^2}{4}.
	\]
	Let $X$ be the vector field generating the $S^1$ action on $\hat M$, and denote by $\eta$ the principal connection induced by $\hat g$. Then $|X|_{\hat g}=2k_+k_-$ and on $\hat M$ we can express metric $\hat g$ as
	\[
	\hat g=h+(2k_+k_-)^2\eta^2.
	\]
	Thus the flat metric on $\hat M$,
	\[
	g_f:=\frac{|z|^2+|w|^2}{4}\hat g=\Re\left(k_-^2dz\otimes d\bar z+k_+^2dw\otimes d\bar w\right),
	\]
	can be expressed as
	\[
	\begin{split}
	g_f&=\frac{|z|^2+|w|^2}{4}h+k_+^2k_-^2(|z|^2+|w|^2)\eta^2\\&=
	\frac{1}{k_+^2k_-^2(|z|^2+|w|^2)} \underbrace{\frac{k_+^2k_-^2(|z|^2+|w|^2)^2}{4} h}_{\mbox{const}\cdot \til h}\ +\ k_+^2k_-^2(|z|^2+|w|^2)\eta^2
	\end{split}	
	\]
	
	Using the same observation~\eqref{f:laplace_equivalence}, we find that $V$ on $N(a_+,a_-)$ solves $\Delta_{\til h}V=0$ if and only if its lift to $\hat M$ solves the Laplace equation with respect to the flat metric $g_f$.
	
	At this point we can repeat the argument of Proposition~\ref{p:elliptic_a0} with one difference~--- $\hat M=\C^2\backslash\{0\}$ is not complete with respect to the flat metric $g_f$. To this end, we have to consider $V$ on its completion $\C^2\supset \C^2\backslash\{0\}$. We observe that $V$ is a positive harmonic function (possibly with poles along a finite collection of $S^1$ orbits), undefined at the origin. Therefore we can decompose it as a linear combination of the Green's function of $\C^2$ with the pole at $\{0\}$ and a positive harmonic function (with the same poles as $V$) well-defined at $\{0\}$. The rest of the proof proceeds verbatim as the proof of Proposition~\ref{p:elliptic_a0}. Thus we have to add an extra term $G_0(x)$ in the decomposition of $V(x)$.  It remains to note that up to a constant multiple the Green's function on $\R^4$ is given by $G_0(x)=d_{g_f}(x,0)^{-2}=  (k_-^2|z|^2+k_+^2|w|^2)^{-1}$, which (again up to a constant multiple) equals $k_+^2e^{2\mu_+/k_+}+k_-^2e^{-2\mu_-/k_-}$ as in the statement of the proposition.
	
	In the second case of $N(a_+,a_-)/\Z$, the action of group $\Z$ lifts to a $\Z$-action on $\C^2\backslash \{0\}$ via contractions fixing the origin:
	\[
	|z|\mapsto \gl|z|, \quad |w|\mapsto \gl |w|,\quad \arg z\mapsto \arg z+\alpha_z, \quad \arg w\mapsto \arg w+\alpha_w
	\]
	It remains to note if a positive superharmonic function on $\R^4$ is invariant under these contractions, then it attains a minimum at an interior point and must be constant by the maximum princple.
	\end{proof}
	
	\begin{rmk}
	In either case, whether $a_-=0$ or $a_-\neq 0$, let $G_{z_i}$ be a Green's function on $N(a_+,a_-)\backslash\{z_i\}$ constructed in Propositions~\ref{p:elliptic_a0} and~\ref{p:elliptic_aneq0}. For $W=\til W G_{z_i}$ we have
	\[
	\beta=*_hdW+W\beta_0=\frac{\psi}{\til W}*_{\til h}dG_{z_i}+G_{z_i}(*_hd\til W+\til W\beta_0).
	\]
	Consider $z_i\in N(a_+,a_-)$~--- a smooth point, and a sphere $S^2_\ge$ of $\til h$-radius $\ge$ enclosing $z_i$ oriented using the outer normal. Then using the closedness of $\beta$ and the standard estimate $G_{z_i}(z)\sim d_{\til h}(z,z_i)^{-1}$ near $z_i$  we compute
	\[
	\int_{S^2_\ge}\beta=\lim_{\ge\to 0}\int_{S^2_\ge}\beta=\lim_{\ge\to 0}\int \frac{\psi}{\til W}*_{\til h}dG_{z_i}=-2\pi \frac{\psi(z_i)}{\til W(z_i)}.
	\]
	In particular, $c_{z_i}\beta\in H^2(N(a_+,a_-)\backslash\{z_i\},2\pi\Z)$ is a generator, where the normalization constant is given by
	\begin{equation}\label{f:c_zi_normalization}
	c_{z_i}:=\til W(z_i)/\psi(z_i).
	\end{equation}
	\end{rmk}

\subsection{Existence and uniqueness}

We conclude with the proofs of the main existence and classification results.  As above our discussion splits into the cases where $a_-$ is either vanishing or nonvanishing.  Furthermore, the case $a_- = 0$ splits into the cases according to the possible quotient spaces $N(a_+, 0)$ and $N(a_+, 0) / \mathbb Z$ as in Theorem \ref{t:nondegenerate_gk_description}.  We point to Figures \ref{f:fig1} and \ref{f:fig2} for a brief summary.

\begin{prop} \label{p:gensoln1} Let $(k_+,l_+)$ be a pair of coprime integers, $0\leq l_+<|k_+|$ and denote $a_+ = \frac{2}{k_+}$. Given $\gl\geq0$ and a finite collection of points $\{z_1, \dots, z_n\}$ in the smooth locus of $N(a_+, 0)$, let
\[
W=\til{W} \Bigl( \gl + \sum_{i=1}^nc_{z_i}G_{z_i} \Bigr),
\]
where $G_{z_i}$ is the Green's function centered at $z_i$ constructed in Proposition \ref{p:elliptic_a0} and $c_{z_i}>0$ is the normalization constant as in~\eqref{f:c_zi_normalization}.

There exists a unique complete regular rank one soliton $(M^4, g, I, J)$ such that
\begin{enumerate}
\item the orbit space $\pi\colon M\to M/{S^1}$ is homeomorphic to $N(a_+, 0)$,
\item $\pi^{-1} \{z_1, \dots, z_n\} = M^{S^1}$,
\item $\mathbf{T_+}\backslash \pi^{-1}\{z_1, \dots, z_n\}$ is the set of points of type $(k_+, l_+)$ (cf.\,Remark \ref{r:local_s1_quotient}),
\item In the image of the nondegeneracy locus we have
\begin{align*}
\Phi = a_+ \mu_+,
\end{align*}
\item With the above choice of functions $W$ and $p$, locally in the nondegeneracy locus the GK structure on $(M,g,I,J)$ is given by the construction of Theorem~\ref{t:nondegenerate_gk_description_v2}.
\end{enumerate}
\begin{proof} 

Condition (5) uniquely prescribes the curvature form $\beta=*_hdW+W\beta_0$ on a dense open part of $N(a_+,0)$, and given the explicit form of $\beta_0$ (see Remark \ref{r:Norbifold1}) and since $W$ is a smooth function on orbifold $N(a_+,0)$, the curvature form also extends to a smooth two-form on $N(a_+,0)$. Since $H^2(N(a_+,0),\Z)=0$, by our normalization of $G_{z_i}$, $\beta$ represents a class in
\[
H^2(N(a_+,0)\backslash\{z_1,\dots,z_n\},2\pi\Z).
\]
Thus the assumptions of Proposition~\ref{p:s1_bundle_seifert_dg} are satisfied, and there exists a unique Siefert fibration
\[
\pi : M_0 \to N(a_+,0)\backslash\{z_1,\dots,z_n\}
\]
together with a smooth connection form $\eta$ with curvature $\beta$. Since $N(a_+,0)$ is simply connected, $\eta$ is also unique up to a gauge transform.

Let $\Sigma = \{p = 1\}  = \{\rho = 0\} \subset N(a_+, 0)$. We set $\Phi = a_+ \mu_+$, which in turn defines a smooth function $p$ on $N(a_+,0)$:
\[
p=\frac{1-e^{a_+\mu_+}}{1+e^{a_+\mu_+}}.
\]
In the case $k_+ = \pm 1$, $\Sigma$ consists of smooth points, and by reordering we assume $\{z_1, \dots, z_m\} \in \Sigma^c, \{z_{m+1}, \dots, z_n\} \in \Sigma$. Otherwise if $|k_{\pm}|>1$, we have $m=n$.

By definition, the complement $N(a_+, 0) \backslash \Sigma$ is isomorphic to $\R^3_{\pmb\mu}/\Gamma_1$, where $\Gamma_1$ acts by translations in $\mu_1$ coordinate.
In particular, we can identify a simply connected open subset $U$ of $N(a_+, 0) \backslash \Sigma$ with an open subset in $\R^3_{\pmb\mu}$. By our choice of function $W$ and the corresponding form $\beta$, we can apply Theorem~\ref{t:nondegenerate_gk_description_v2} and endow $\pi^{-1}(U)$ with a GK structure determined by $p$ and $W$. Since $p$ does not depend on $\mu_1$, this GK structure is independent of the identification between $U$ and a subset of $\R^3_{\pmb\mu}$, yielding a GK structure on 
\begin{align*}
M_1 := \left(M_0 \backslash \pi^{-1} (\Sigma) \right) \cup \{y_1, \dots, y_m\},
\end{align*}
where $\{y_i\}$ are the fixed points of the natural $S^1$ action on $M_1$. The orbit space of this action is homeomorphic to $N(a_+, 0) \backslash \Sigma$ with $\pi(y_i)=z_i$, $1\leq i\leq m$. The constructed GK structure is smooth away from the points $\{y_1, \dots, y_m\}$ and is $C^{1,1}$ globally.

We now prove that $(g, I, J)$ extends smoothly across $\pi^{-1}(\Sigma)$.  As explained above, we know that $W, h$, and $\gb$ are smooth on the orbifold $N(a_+, 0) \backslash \{z_1, \dots, z_n\}$, so the metric $g$ will extend smoothly across $\pi^{-1} \left(\Sigma \backslash \{z_{m+1}, \dots, z_n\} \right)$. Next for our specific choice of $p$, the Lee form $\theta_I$ (see~\eqref{f:theta_through_p}) descends to $N(a_+,0)$ and extends smoothly across $\Sigma$, thus $H=-*_g\theta_I$ is a smooth 3-form on $M_0$. Arguing using parallel transport as in Proposition \ref{p:removable_singularity_GK}, it follows that the complex structures $I$ and $J$ extend smoothly (in fact real analytically) across this locus as well. This gives a possibly incomplete smooth GK structure on
\[
M_0\cup\{y_1,\dots,y_m\}
\]
with an orbit space $N(a_+,0)\backslash\{z_{m+1},\dots, z_n\}$.

Next, we prove that we can glue in fixed points $\{y_{m+1},\dots y_n\}$ such that
\[
M_0\cup \{y_1,\dots,y_{n}\}
\]
is a complete $C^{1,1}$ GK manifold with orbit space $N(a_+,0)$. Using the normalization of the Green's functions at $\{z_{m+1}, \dots, z_{n}\}$, and the fact that the locus $\Sigma\subset N(a_+,0)$ must be smooth as long as $m<n$ (see Corollary~\ref{c:fixed_smooth}), we observe that preimage $\pi^{-1}(B^3\backslash \{z_i\})$ of a small punctured ball around $z_i$ is diffeomorphic to $B^4\backslash \{0\}$. The function $W$ evidently satisfies the required local behavior of Proposition~\ref{p:removable_singularity} near each $x_i\in\{x_1,\dots,x_m\}$, thus we can extend the metric across $B^4$ after gluing in a point $y_i$.  
Therefore $H=-*_g\theta_I$ admits a $C^{1,1}$ extension on $M_0\cup \{y_1,\dots,y_{n}\}$, and we can apply the first part of Proposition \ref{p:removable_singularity_GK} to show that the GK structure extends in $C^{1,1}$ sense. Thus we have constructed a $C^{1,1}$ GK structure $(M,g,I,J)$ with an orbit space
\[
\pi\colon M\to N(a_+,0)
\]
which is smooth outside of a fixed point set $\{y_i\}$.

We now show that this GK structure extends smoothly across $\{y_i\}$. Given our choice of function $\Phi$, Proposition~\ref{p:invGKsoliton} implies that $(M,g,I,J)$ is a GK soliton on the nondegerate locus. Since the nondegenerate locus is open and dense, $(M,g,I,J)$ is a soliton globally. Given the explicit form of $df$ in Proposition \ref{p:invGKsoliton}, we know that the extension of $f$ is smooth.  With $C^{1,1}$ regularity for the metric, we can construct a harmonic coordinate system for $g$, in which the soliton system takes the form
\begin{align*}
0 =&\ g^{kl} \frac{\del^2 g_{ij}}{\del x^k \del x^l} + Q(g, \del g, H) + \del^2 f + \del g \star  \del f,\\
0 =&\ g^{kl} \frac{\del^2 H_{ijk}}{\del x^k \del x^l} + \del^2 g \star H + \del g \star \del H + d (i_{\N f} H).
\end{align*}
The first equation is a strictly elliptic equation for $g$ with $C^{1,\ga}$ coefficients and a $C^{\ga}$ inhomogeneous term, so by Schauder estimates we conclude $C^{2,\ga}$ estimates for $g$.  The second equation is then a strictly elliptic equation for $H$ with  $C^{2,\ga}$ coefficients and a $C^{\ga}$ inhomogeneous term, thus we conclude $C^{2,\ga}$ estimates for $H$.  Differentiating this system and applying a standard bootstrap argument gives $C^{\infty}$ regularity for both $g$ and $H$.  By the parallel transport argument for extending $I$ and $J$ from Proposition \ref{p:removable_singularity_GK}, it follows that $I$ and $J$ are smooth as well. Thus $(M,g,I,J)$ is a smooth regular rank one soliton.

It remains to address the completeness of $(M,g)$. The projection onto the orbit space
\[
\pi\colon M\to N(a_+,0)
\]
has compact fibers, therefore $(M,g)$ is complete if and only if $N(a_+,0)$ with the distance function induced by the horizontal metric is complete. By the construction, in the complement of the finite set $\{z_i\}$ the horizontal metric is given by
\[
g_{\mathrm{hor}}=Wh.
\]
Since $h$ is complete, an appropriate lower bound on $W$ will imply the completeness of $Wh$. Let us identify $(N(a_+,0), h)$ with a quotient of $(\C\times \C^*, \hat g)$ by a circle action (see Remark~\ref{r:N_as_quotient}) and denote coordinates on $\C\times \C^*$ by $z$ and $w$. Then
\[
\hat g=\frac{4}{1+|z|^2}\Re\left(
k_+^{2} dz\otimes d\bar z + \frac{dw\otimes d\bar w}{|w|^2}
\right)
\]
and
\[
\til W= C(|z|^2+1).
\]
On the other hand, using the explicit form of the Green's function on $\C\times \C^*$ with respect to a flat metric $g_f$ (see the proof of Proposition~\ref{p:elliptic_a0}) we observe that for any $z_i\in N(a_+,0)$, there is a constant $C'>0$ such that outside a compact subset $K$
\[
G_{z_i}\geq C'(|z|^2+\log|w|^2)^{-1/2}.
\]
This implies that
\[
W\hat g=C(\gl+c_i\sum G_{z_0})\Re\left(
k_+^{2} dz\otimes d\bar z + \frac{dw\otimes d\bar w}{|w|^2}
\right)
\]
is a complete metric on $\C\times \C^*$ as long as at least one of the numbers $(\gl,c_1,\dots,c_n)$ is positive. Since $Wh$ on $N(a_+,0)$ corresponds to the horizontal component of $W\hat g$ under the projection onto the orbit space $\C\times \C^*\to N(a_+,0)$, this implies the completeness of $Wh$, which in turn is equivalent to the completeness of $(M,g)$.
\end{proof}
\end{prop}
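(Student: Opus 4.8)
The plan is to assemble the soliton directly from the explicit data $(\Phi,W)$ via the Gibbons--Hawking ansatz of Theorem~\ref{t:nondegenerate_gk_description_v2}, and then to dispatch the two genuinely nontrivial passages: extending the generalized K\"ahler structure across the degeneracy locus $\mathbf{T}_+=\{p=1\}$, and promoting the a priori $C^{1,1}$ structure to $C^\infty$. First I would fix $\Phi=a_+\mu_+$, which determines $p=(1-e^{a_+\mu_+})/(1+e^{a_+\mu_+})$ as a smooth function on all of $N(a_+,0)$, and take $W=\til W(\lambda+\sum_i c_{z_i}G_{z_i})$ with $\til W$ the baseline solution of Lemma~\ref{l:W0_solution}. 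The curvature form $\beta=*_h dW+W\beta_0$ is then smooth on the orbifold $N(a_+,0)$ away from the $\{z_i\}$; the normalization constants $c_{z_i}$ of \eqref{f:c_zi_normalization} are chosen precisely so that $\beta$ pairs to $-2\pi$ with small spheres about each $z_i$, so that $[\beta/2\pi]$ is integral and Proposition~\ref{p:s1_bundle_seifert_dg} furnishes a Seifert fibration $\pi\colon M_0\to N(a_+,0)\backslash\{z_i\}$ (unique, since $N(a_+,0)$ is simply connected) carrying a connection $\eta$ of curvature $\beta$. Applying Theorem~\ref{t:nondegenerate_gk_description_v2} chart-by-chart over the nondegeneracy locus, and observing that $p$ is independent of $\mu_1$ so the local structures are gauge-independent, yields a smooth GK structure over $N(a_+,0)\backslash(\Sigma\cup\{z_i\})$, where $\Sigma=\{p=1\}$, with fixed points glued in over those $z_i$ lying off $\Sigma$.

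The first substantive step is the extension across the degeneracy divisor $\Sigma$. The idea is that the metric $g=Wh+W^{-1}\eta^2$, the form $\beta$, and $W$ are all smooth in the orbifold sense across $\Sigma$ (away from the $z_i$), so $g$ extends smoothly; and crucially the Lee form $\theta_I$ of \eqref{f:theta_through_p} is expressed purely through $p$ and its first derivatives, which remain smooth up to $\Sigma$. Hence $H=-*_g\theta_I$ extends smoothly, and the parallel transport argument of Proposition~\ref{p:removable_singularity_GK}(1) extends $I$ and $J$ (in fact real-analytically) across $\pi^{-1}(\Sigma)$. This produces a possibly incomplete smooth GK structure whose orbit space is $N(a_+,0)$ with only the fixed-point images on $\Sigma$ removed.

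Next I would glue in the remaining fixed points lying over $\Sigma$. Near each such $z_i$ the product $\til W\,G_{z_i}$ behaves like $1/(2r)$ with $r=d_h(\cdot,z_i)$, so $W$ satisfies the asymptotic hypotheses of Proposition~\ref{p:removable_singularity}; using Corollary~\ref{c:fixed_smooth} (which forces $\Sigma$ to be a smooth locus whenever it carries a fixed point, i.e.\ $k_+=\pm1$), the preimage of a punctured ball is a Hopf fibration $B^4\backslash\{0\}\to B^3\backslash\{0\}$, so the metric and, by Proposition~\ref{p:removable_singularity_GK}(2), the entire GK structure extend in $C^{1,1}$ across each added point. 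To upgrade from $C^{1,1}$ to $C^\infty$ at the fixed points, I would use that $\Phi=a_+\mu_+$ makes $(M,g,I,J)$ a soliton on the open dense nondegeneracy locus by Proposition~\ref{p:invGKsoliton}, hence a soliton globally by density; in $g$-harmonic coordinates the soliton system becomes a strictly elliptic system for $(g,H)$ with $C^{1,\ga}$ coefficients, and Schauder estimates together with a bootstrap give $C^\infty$ regularity of $g$ and $H$, whence $I$ and $J$ are smooth by the same parallel-transport argument.

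Finally, completeness reduces, the fibers being compact circles, to completeness of $(N(a_+,0),Wh)$. Realizing $(N(a_+,0),h)$ as the $S^1$-quotient of $(\C\times\C^*,\hat g)$ of Remark~\ref{r:N_as_quotient}, with $\til W=C(1+|z|^2)$ and the flat-space estimate $G_{z_i}\geq C'(|z|^2+\log|w|^2)^{-1/2}$ from Proposition~\ref{p:elliptic_a0}, one checks that $W\hat g$ is complete provided at least one of $\lambda,c_1,\dots,c_n$ is positive, giving completeness of $g$. Uniqueness follows because the data $(p,W,\eta)$ determine the GK structure through Theorem~\ref{t:nondegenerate_gk_description_v2}, while conditions (1)--(5) pin down $\Phi$ and the integral curvature data uniquely. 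I expect the extension across $\Sigma$ to be the conceptual crux: for a general nondegenerate GK structure the degeneracy locus cannot be glued in, and it is only the soliton constraint, manifested in the explicit dependence of $\theta_I$ on $p$, that makes $H$ --- and therefore the complex structures --- extend smoothly.
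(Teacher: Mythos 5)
Your proposal is correct and follows essentially the same route as the paper's proof: the same construction of the Seifert fibration from the integrality of $\beta/2\pi$, the same application of Theorem~\ref{t:nondegenerate_gk_description_v2} on the nondegeneracy locus, the same extension across $\Sigma$ via the smoothness of $\theta_I$ and hence $H$ followed by parallel transport for $I$ and $J$, the same gluing of fixed points via Propositions~\ref{p:removable_singularity} and \ref{p:removable_singularity_GK}, the same elliptic bootstrap in harmonic coordinates to upgrade to $C^\infty$, and the same completeness argument on $\C\times\C^*$. No substantive differences or gaps.
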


\begin{rmk} \label{r:polesonT} In the result above, it is necessary to restrict the points $\{z_1, \dots, z_n\}$ to the smooth locus of $N(a_+, 0)$.  Given a point $z$ on the orbifold locus, the Seifert fibration in the complement of $z$, near $z$, is not locally trivial, thus no neighborhood of $p$ can be homeomorphic to a punctured ball, and thus the metric completion is not a smooth manifold.  The same restrictions apply to the construction in the case $a_- \neq 0$ below individually on the two components $\mathbf{T_{\pm}}$.
\end{rmk}

Fix an action $\Z\colon N(a_+,0)\to N(a_+,0)$ induced by~\eqref{f:Z_action}. Now we are going to describe the set of GK solitons fibered over $N(a_+,0)/\Z$.

\begin{prop} \label{p:gensoln2}
	Let $(k_+,l_+)$ be a pair of coprime integers, $0\leq l_+<|k_+|$, and denote $a_+ = \frac{2}{k_+}$. Given $\gl>0$ let
	\[
	W=\lambda \til{W}.
	\]
	There exists an $S^1$ worth of complete regular rank one solitons $(M^4, g, I, J)$ such that
	\begin{enumerate}
		\item the orbit space $\pi\colon M\to M/{S^1}$ is homeomorphic to $N(a_+, 0)/\Z$,
		\item the action of $S^1$ on $M$ is free,
		\item $\mathbf{T_+}$ is the set of points of type $(k_+, l_+)$ (cf.\,Remark \ref{r:local_s1_quotient}),
		\item in the image of the nondegeneracy locus we have
		\begin{align*}
		\Phi = a_+ \mu_+,
		\end{align*}
		\item with the above choice of functions $W$ and $p$, locally in the nondegeneracy locus the GK structure on $(M,g,I,J)$ is given by the construction of Theorem~\ref{t:nondegenerate_gk_description_v2}.
	\end{enumerate}
	\begin{proof}
		By Proposition~\ref{p:gensoln1}, we know that there is a unique GK soliton $(\til M,g,I,J)$ such that
		\begin{enumerate}
		\item the orbit space $\pi\colon \til M\to \til M/{S^1}$ is homeomorphic to $N(a_+, 0)$,
		\item the action of $S^1$ on $\til M$ is free,
		\item $\mathbf{T_+}$ is the set of points of type $(k_+, l_+)$,
		\item In the image of the nondegeneracy locus we have
		\begin{align*}
		\Phi = a_+ \mu_+,
		\end{align*}
		\item With the above choice of functions $\til W$ and $p$, locally in the nondegeneracy locus the GK structure on $(\til M,g,I,J)$ is given by the construction of Theorem~\ref{t:nondegenerate_gk_description_v2}.
		\end{enumerate}
		Now, since the action of $\Z$ on $N(a_+,0)$ preserves $\Phi$, the curvature form $\beta$ is also invariant under $\Z$, and there is a family of lifts of this action to an action
		\begin{equation}\label{f:Z_action_lift}
		\Z\times \til M\to \til M
		\end{equation}
		parametrized by $\mathrm{Hom}(\pi_1(N(a_+,0)/\Z), S^1)\simeq S^1$. Using again the invariance of $\Phi$ and $W$ under $\Z$ we see that the lifted action~\eqref{f:Z_action_lift} preserves $(g,I,J)$, thus induces a GK structure on $M:=\til M/\Z$, which we denote by the same symbols $(g,I,J)$. Finally, since the one-form
		\[
		 \tfrac{1}{2}\left(p a_+ d\mu_+-a_+d\mu_+\right)
		\]
		from Proposition~\ref{p:invGKsoliton} is exact on $N(a_+,0)/\Z$, we conclude that $(M,g,I,J)$ is a GK soliton.
	\end{proof}
\end{prop}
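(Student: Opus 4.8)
The plan is to obtain the desired family as quotients of the soliton already produced in Proposition~\ref{p:gensoln1}. First I would apply Proposition~\ref{p:gensoln1} with the empty collection of poles ($n=0$) and weight $\lambda>0$ on the constant, yielding a complete regular rank one soliton $(\til M,g,I,J)$ with free $S^1$-action, orbit space $N(a_+,0)$, degeneracy locus $\mathbf{T_+}$ consisting of points of type $(k_+,l_+)$, and $\Phi=a_+\mu_+$ on the nondegeneracy locus. Since both $W=\lambda\til W$ and $p$ are functions of $\mu_+$ alone, all the defining data — namely $h$, $p$, $\beta_0$, $W$, and hence the curvature $\beta=*_hdW+W\beta_0$ — are invariant under the $\Z$-action on $N(a_+,0)$ given by~\eqref{f:Z_action}, which translates in the $(\mu_1,\mu_-)$ directions and fixes $\mu_+$.

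The central step is to lift this $\Z$-action from the base $N(a_+,0)$ to the total space $\til M$ so that the whole GK structure is preserved. Because $\beta$ is $\Z$-invariant, a generator of $\Z$ lifts to a bundle automorphism of $\til M$ covering it, and any two such lifts differ by an element of the $S^1$-gauge group; concretely the set of lifts is a torsor under $\mathrm{Hom}(\pi_1(N(a_+,0)/\Z),S^1)\simeq S^1$, which is precisely the promised $S^1$-worth of solutions (equivalently, the freedom in choosing the connection $\eta$ on the Seifert bundle over $N(a_+,0)/\Z$ with the prescribed curvature). I would then check that each lift is free and properly discontinuous — inherited from the base action together with freeness of the $S^1$-action on $\til M$ — so that $M:=\til M/\Z$ is a smooth manifold carrying an induced free $S^1$-action with orbit space $N(a_+,0)/\Z$. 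Invariance of $(g,I,J)$ under the lift, which follows from the invariance of $W$, $p$, and $\eta$ (up to gauge) together with the explicit formulas of Theorem~\ref{t:nondegenerate_gk_description_v2}, guarantees that the GK structure descends to $M$.

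It remains to confirm that the descended structure is again a soliton and is complete. By Proposition~\ref{p:invGKsoliton}, $(M,g,I,J)$ is a rank one soliton exactly when the closed one-form $\tfrac{1}{2}\left(pa_+d\mu_+-a_+d\mu_+\right)$ is exact on the quotient orbit space. This form involves only $\mu_+$, so it admits an antiderivative that is a function of $\mu_+$ alone; since the $\Z$-action fixes $\mu_+$, this potential descends to $N(a_+,0)/\Z$, and the form remains exact there. Hence the soliton equations continue to hold on $M$. Completeness of $(M,g)$ is immediate, as $M$ is the quotient of the complete manifold $(\til M,g)$ by a free, proper, isometric action.

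The main obstacle is the lifting step. One must argue carefully that a deck transformation of the base extends to a bundle automorphism precisely because $\beta$ is invariant, that the collection of such extensions forms an $S^1$-torsor (this is exactly where the moduli parameter in the statement originates), and — most importantly — that these lifts preserve not merely the metric $g$ but the full biHermitian data $(I,J)$. Once this is in hand, the soliton property and completeness are routine consequences of the corresponding facts for $(\til M,g,I,J)$ established in Proposition~\ref{p:gensoln1}.
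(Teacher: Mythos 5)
Your proposal follows essentially the same route as the paper's proof: invoke Proposition~\ref{p:gensoln1} with no poles to obtain $(\til M,g,I,J)$ over $N(a_+,0)$, lift the $\Z$-action to the total space using the $\Z$-invariance of $\beta$ (with the $S^1$-worth of lifts coming from $\mathrm{Hom}(\pi_1(N(a_+,0)/\Z),S^1)$), pass to the quotient, and verify the soliton condition via exactness of the one-form $\tfrac{1}{2}\left(pa_+d\mu_+-a_+d\mu_+\right)$ on $N(a_+,0)/\Z$. Your additional remarks on properness, freeness, and completeness of the quotient are correct and only make explicit what the paper leaves implicit.
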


Now we focus on the case $a_-\neq 0$. Recall that $N(a_+,a_-)$ is isomorphic to the quotient of $\C^2\backslash\{0\}$ by a linear circle action with weights $k_{\pm}=2/a_\pm$. In particular, $N(a_+,a_-)$ is homeomorphic to $S^2\times \R$. Let $\{z_1,\dots,z_n\}\subset N(a_+,a_-)$ be a finite set of points in the smooth locus, and choose a two-cycle $S_0\in C_2(N(a_+,a_-),\Z)$, $S_0=S^3/S^1$, where $S^3\subset \C^2$ is a sphere of small radius enclosing the origin. As before, let $W$ be a function on $N(a_+,a_-)\backslash\{z_1,\dots, z_n\}$ such that
\[
\beta=*_hdW+W\beta_0
\]
is closed and denote
\[
S(W):=\frac{1}{2\pi}\int_{S_0}\beta\in \R.
\]
Let $\Sigma_{\pm}=\{p=\pm 1\}\subset N(a_+,a_-)$ be a disjoint union of two curves. We orient each of the components so that $S_0\cap \Sigma_{\pm}=+1$.

\begin{prop} \label{p:gensoln3} 
Let $(k_+,l_+)$, $(k_-,l_-)$ be two pairs of coprime integers, $0\leq l_\pm<|k_\pm|$ and denote $a_{\pm} = \frac{2}{k_{\pm}}$. Given $\lambda>0$, $\lambda_0\geq 0$ and a finite collection of points $\{z_1, \dots, z_n\}$ in the smooth locus of $N(a_+, a_-)$, let
\[
W=\til W\left(\lambda+\lambda_0G_0+\sum_{i=1}^n c_{z_i}G_{z_i}\right),
\]
where $G_{z_i}$ is the Green's function centered at $z_i$ constructed in Proposition~\ref{p:elliptic_aneq0} and $c_{z_i}>0$ is the normalization constant as in~\eqref{f:c_zi_normalization}. Assume that
\begin{equation}\label{f:W_quantization}
S(W)-\frac{l_+}{k_+}-\frac{l_-}{k_-}\in \Z.
\end{equation}

Then there exists a unique complete regular rank one soliton $(M^4, g, I, J)$ such that 
\begin{enumerate}
\item the orbit space $\pi\colon M\to M/{S^1}$ is homeomorphic to $N(a_+, a_-)$,
\item $\pi^{-1} \{z_1, \dots, z_n\} = M^{S^1}$,
\item $\mathbf{T_{\pm}}\backslash \pi^{-1}\{z_1, \dots, z_n\}$ is the set of points of type $(k_{\pm}, l_{\pm})$ (cf.\,Remark \ref{r:local_s1_quotient}),
\item in the image of the nondegeneracy locus we have
\begin{align*}
\Phi = a_+ \mu_+ + a_- \mu_-,
\end{align*}
\item with the above choice of functions $W$ and $p$, locally in the nondegeneracy locus the GK structure on $(M,g,I,J)$ is given by construction of Theorem~\ref{t:nondegenerate_gk_description_v2}.
\end{enumerate}
\begin{proof}
As in the proof of Proposition~\ref{p:gensoln1} we start by constructing the smooth manifold $M$. We are given an orbifold $N(a_+,a_-)$ with cooriented submanifolds $\Sigma_{\pm}=\{p=\pm 1 \}$ of codimension two. Each submanifold $\Sigma_{\pm}$ comes equipped with a pair of integers $(k_\pm,l_\pm)$. We claim that under assumption~\eqref{f:W_quantization},
\[
[\beta/2\pi]-\frac{l_+}{k_+}[\Sigma_+]-\frac{l_-}{k_-}[\Sigma_-]\in H^2(N(a_+,a_-)\backslash\{z_1,\dots,z_n\},\Z).
\]
Indeed, $H_2(N(a_+,a_-)\backslash\{z_1,\dots,z_n\},\Z)\simeq \Z^{n+1}$ is generated by $S_0$ and a collection of $n$ small spheres $\{S_i\}_{i=1}^n$ each enclosing one point $z_i$. If $z_i\not\in \Sigma_{\pm}$, we can assume that the curves $\Sigma_{\pm}$ do not intersect the spheres $S_i$. Otherwise, if $z_i\in \Sigma_+$ ($z_i\in \Sigma_-$), we necessarily have $k_+=\pm 1$, $l_+=0$ (resp.\,$k_-=\pm 1$, $l_-=0$). In either case, by our choice of normalization constants $c_{z_i}$, we have
\[
\left\langle [\beta/2\pi]-\frac{l_+}{k_+}[\Sigma_+]-\frac{l_-}{k_-}[\Sigma_-], S_i \right\rangle=\frac{1}{2\pi}\int_{S_i} \beta=-1.
\]
It remains to check that
\[
\left\langle [\beta/2\pi]-\frac{l_+}{k_+}[\Sigma_+]-\frac{l_-}{k_-}[\Sigma_-], S_0 \right\rangle\in \Z
\]
but this is exactly the statement of equation~\eqref{f:W_quantization}.

Thus we have all the ingredients in place to apply Proposition~\ref{p:s1_bundle_seifert_dg} and construct a smooth manifold $M_0$ and a Seifert fibtration
\[
\pi\colon M_0\to N(a_+,a_-)\backslash\{z_1,\dots,z_n\},
\]
with connection $\eta$ and curvature form $\beta$ (if either of $k_\pm$ is negative, we reverse the coorientation of the corresponding submanifold $\Sigma_{\pm}$). The rest of the construction of the GK soliton $(M,g,I,J)$ goes exactly as in the proof of Proposition~\ref{p:gensoln1}.

Up to this point the construction works for any constants $\lambda,\lambda_0\geq 0$. It remains to understand if the constructed manifold $(M,g)$ is complete. Repeating the argument in Proposition~\ref{p:gensoln1}, completeness of $(M,g)$ is equivalent to the completeness of $\C^2\backslash\{0\}$ equipped with the metric $W\hat g$, where
\[
\hat g=\frac{4}{|z|^2+|w|^2}\Re\left(k_-^2dz\otimes d\bar z+k_+^2dw\otimes d\bar w\right)
\]
is a complete metric on $\C^2\backslash \{0\}$ (see part (2) of Remark~\ref{r:N_as_quotient}). Since $\til W$ is bounded below by a positive constant, function
\[
W=\til W\left(\lambda+\lambda_0G_0+\sum_{i=1}^n c_{z_i}G_{z_i}\right)
\]
is also bounded from below by a positive constant, as long as $\lambda>0$. Thus, for $\lambda>0$ the metric $W\hat g$ induces a complete distance function on $\C^2\backslash\{0\}$, and $(M,g)$ is also complete.

We claim that if $\lambda=0$, then $g$ is not complete. To this end, we estimate $W$ from above. Choose a compact set $K\subset \C^2\supset \C^2\backslash\{0\}$ containing preimages of all the points $\{z_i\}$. Using the explicit form of Green's function on $\C^2$ equipped with a flat metric, we observe that there exists a constant $C=C(K)>0$ such that in the complement of $K$,
\[
G_{z_i}<C(|z|^2+|w^2|)^{-1}.
\]
Therefore, as long as $\lambda=0$
\[
W<(n+1)C(|z|^2+|w^2|)^{-1}.
\]
Thus $(\C^2\backslash\{0\}, W\hat g)$ is incomplete, as any radial curve $(tz_0,tw_0)$, $t\in[1,+\infty)$ has a finite length.
\end{proof}
\end{prop}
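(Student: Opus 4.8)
The plan is to mirror the construction of Proposition~\ref{p:gensoln1}, carried out now over the base $N(a_+, a_-) \cong S^2(k_+, k_-) \times \R$, with the essential new ingredient being the quantization condition~\eqref{f:W_quantization}, which is what allows one to build a Seifert fibration realizing the prescribed orbifold data along \emph{both} degeneracy loci $\Sigma_\pm = \{p = \pm 1\}$.

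First I would construct the underlying smooth manifold $M_0$. The function $W = \til W(\lambda + \lambda_0 G_0 + \sum_i c_{z_i} G_{z_i})$ is smooth on the orbifold $N(a_+, a_-) \backslash \{z_i\}$, and since $p$, $h$, $\beta_0$ all extend smoothly (Remark~\ref{r:Norbifold1}), so does the closed form $\beta = *_h dW + W \beta_0$. To invoke Proposition~\ref{p:s1_bundle_seifert_dg} I must verify
\[
[\beta/2\pi] - \tfrac{l_+}{k_+}[\Sigma_+] - \tfrac{l_-}{k_-}[\Sigma_-] \in H^2(N(a_+,a_-) \backslash \{z_i\}, \Z).
\]
This reduces to pairing against the generators of $H_2 \simeq \Z^{n+1}$: the small spheres $S_i$ and the central sphere $S_0$. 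By the normalization~\eqref{f:c_zi_normalization}, each $S_i$ pairing equals $-1$ (the orbifold contributions vanishing, since any $z_i \in \Sigma_\pm$ forces $k_\pm = \pm 1$ and hence $l_\pm = 0$ by Corollary~\ref{c:fixed_smooth}), while the $S_0$ pairing is precisely $S(W) - l_+/k_+ - l_-/k_-$, which lies in $\Z$ by hypothesis. Proposition~\ref{p:s1_bundle_seifert_dg} then yields a Seifert fibration $\pi\colon M_0 \to N(a_+,a_-) \backslash \{z_i\}$ with connection $\eta$ of curvature $\beta$, carrying the prescribed types $(k_\pm, l_\pm)$ along $\Sigma_\pm$.

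Next I would place the GK structure on $M_0$ exactly as in Proposition~\ref{p:gensoln1}: identifying simply connected pieces of $N(a_+,a_-) \backslash \mathbf{T}$ with open subsets of $\R^3_{\pmb\mu}$, Theorem~\ref{t:nondegenerate_gk_description_v2} produces a GK structure from $(p, W, \eta)$; since $p$ depends only on $a_+\mu_+ + a_-\mu_-$ it is independent of the identification and globalizes. The smooth extension across each $\pi^{-1}(\Sigma_\pm)$ follows because $g = Wh + W^{-1}\eta^2$ and the Lee form $\theta_I$ of~\eqref{f:theta_through_p} descend to smooth orbifold tensors, so $H = -*_g \theta_I$ is smooth and $I, J$ extend by parallel transport (Proposition~\ref{p:removable_singularity_GK}); the fixed points $y_i$ over the $z_i$ are glued in using Proposition~\ref{p:removable_singularity}, whose asymptotic hypotheses are met by the simple poles of $G_{z_i}$. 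This yields a $C^{1,1}$ soliton (a soliton by Proposition~\ref{p:invGKsoliton}, as $\Phi = a_+\mu_+ + a_-\mu_-$), which I would then bootstrap to $C^\infty$ via the harmonic-coordinate/Schauder argument already carried out in Proposition~\ref{p:gensoln1}.

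The final and most delicate point is completeness, where the hypothesis $\lambda > 0$ enters. Since the fibers of $\pi$ are compact, $(M,g)$ is complete iff $(N(a_+,a_-), Wh)$ is, and via Remark~\ref{r:N_as_quotient} I would transfer this to completeness of $(\C^2 \backslash \{0\}, W\hat g)$, where $\hat g$ is the complete flat quotient metric. Because $\til W$ is bounded below by a positive constant, $\lambda > 0$ forces $W \geq \lambda \til W \geq c > 0$, so $W\hat g$ dominates a complete metric and is itself complete. I expect this completeness step---together with the bookkeeping confirming that the \emph{two} components $\Sigma_\pm$ carry exactly the declared Seifert data and that~\eqref{f:W_quantization} is the sharp integrality constraint---to be the main obstacle; the presence of the anomalous Green's function $G_0$ (the flat Green's function at the origin of $\C^2$, absent when $a_- = 0$) is precisely what makes the $S_0$-pairing nonzero and forces that constraint. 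Uniqueness I would read off from Proposition~\ref{p:elliptic_aneq0}: any competing soliton has orbit space $N(a_+,a_-)$ by Theorem~\ref{t:M_orbifold_quotient}, so $V = W/\til W$ is a positive harmonic function with the prescribed poles, and the classification there pins down all the constants.
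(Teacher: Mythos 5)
Your proposal is correct and follows essentially the same route as the paper: verify the integrality of $[\beta/2\pi]-\tfrac{l_+}{k_+}[\Sigma_+]-\tfrac{l_-}{k_-}[\Sigma_-]$ by pairing against the spheres $S_i$ and $S_0$ (with~\eqref{f:W_quantization} handling $S_0$), apply Proposition~\ref{p:s1_bundle_seifert_dg}, transplant the rest of the construction from Proposition~\ref{p:gensoln1}, and deduce completeness from the positive lower bound $W\geq \lambda\til W$ when $\lambda>0$. The only item the paper includes that you omit is the converse observation that $\lambda=0$ yields an incomplete metric (via the upper bound $G_{z_i}<C(|z|^2+|w|^2)^{-1}$ off a compact set), but since the hypothesis assumes $\lambda>0$ this does not affect the validity of your argument.
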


With the above propositions we have Theorem~\ref{t:existence}:

\begin{proof}[Proof of Theorem~\ref{t:existence}]
	Propositions~\ref{p:gensoln1},~\ref{p:gensoln2} and~\ref{p:gensoln3} give explicit existence statements for complete regular rank one solitons as stated in Theorem~\ref{t:existence}.
\end{proof}

Finally we give the proof of the classification statement of Theorem~\ref{t:uniqueness}:

\begin{proof}[Proof of Theorem~\ref{t:uniqueness}]
	Let $(M,g,I,J)$ be a 4-dimensional complete generalized K\"ahler-Ricci soliton, such that its Poisson tensor does not vanish identically. Assume that the Ricci curvature of $(M,g)$ is bounded from below and $\dim\Isom(g)\leq 1$. As before, denote by $V_I=\tfrac{1}{2}(\theta_I^\#-\N f)$ and $V_J=\tfrac{1}{2}(-\theta_I^\#-\N f)$ the corresponding soliton vector fields. By Proposition~\ref{p:solitonVF}, $IV_I$ and $JV_J$ are Killing.
	
	If $\dim\Isom(g)=0$, then both Killing vector fields $IV_I$ and $JV_J$ must vanish, thus $\theta_I^\#-\N f=-\theta_I^\#-\N f=0$. Hence $\theta_I=0$ and $\N f=0$, so $(M,g,I,J)$ is a usual Calabi-Yau metric  with respect to either of complex structure $I$ and $J$. Since $I$ and $J$ are linearly independent, this forces the holonomy of $g$ to be contained in $SU(2)$, so $(M,g,I)$ is indeed hyperK\"ahler.
	
	From now on we can assume that $\mathrm{span}(IV_I, JV_J)$ is generically one-dimensional, so $(M,g,I,J)$ is a rank one soliton. By Proposition~\ref{p:one-diml_isometry}, we have two possibilities:
	
	\begin{enumerate}
		\item $\Isom(g)\simeq S^1$,
		\item $\Isom(g)\simeq \R$.
	\end{enumerate}
	
	In the first case there is a vector field $X$ generating an $S^1$ action on $(M,g,I,J)$ preserving the GK structure, such that $IV_I$ and $JV_J$ are constant multiples of $X$. Furthermore, since $M$ has finite topology~--- $\dim H^*(M,\R)<\infty$~--- the action of $S^1$ has finitely many fixed points. Thus $(M,g,I,J)$ is a complete regular rank one soliton (cf.\,Definition~\ref{d:regular_rank_one}). Therefore, we can invoke Theorem~\ref{t:M_orbifold_quotient} and conclude that the orbit space $\pi\colon M\to N$ is given by one of the spaces $N(a_+,0)$, $N(a_+,0)/\Z$ or $N(a_+,a_-)$ and in the complement of the degeneracy locus the GK structure is locally given by Theorem~\ref{t:nondegenerate_gk_description_v2}. Let $\pi\colon M_0\to N_0$ be the Seifert fibration in the complement of the isolated fixed points. By Propositions~\ref{p:elliptic_a0} and~\ref{p:elliptic_aneq0}, the function $W$ solving the equation $d(*_hdW+W\beta_0)=0$ must be a linear combination of $\til W$ and $\til W G_{z_i}$, and if $N=N(a_+,0)/\Z$, $W$ must be a multiple of $\til W$. Since $G_{z_i}(x)\to +\infty$ as $x\to z_i$, and $W$ is positive, the constants $c_{z_i}$ in front of $\til W G_{z_i}$ in the expansion of $W$ must be positive. Furthermore, the local structure of $(M,g,I,J)$ near fixed points (Lemma~\ref{l:free_nondegenerate} and Corollary~\ref{c:fixed_smooth}) uniquely determines the exact values of constants $c_{z_i}$, $z_i\in N$. If $N=N(a_+,a_-)$, $a_-\neq 0$, there is also a necessary integrality condition
	\[
	\frac{1}{2\pi}\int_{S^2_0}\beta - \frac{l_+}{k_+} -\frac{l_-}{k_-}\in \Z
	\]
	where $S^2_0$ is a 2-sphere generating $H^2(N(a_+,a_-),\Z)$. Due to Theorem~\ref{t:M_orbifold_quotient}, the choice of $W$ and the prescribed form of $\Phi$ uniquely determine the GK structure $(M,g,I,J)$ up to an isomorphism generated by translations in $\mu$-coordinates.
	
	In the second case, the $\Z$-quotient $M/\Z$ of $M$ admits a complete regular rank one GK soliton structure $(M/\Z,g,I,J)$ with a free $S^1$ action such that the bundle $M/\Z\to (M/\Z)/S^1$ is trivial. Thus $(M/\Z, g,I,J)$ is given by one of the constructions of Propositions~\ref{p:gensoln1}, $\ref{p:gensoln2}$ and $\ref{p:gensoln3}$ with $k_{\pm 1}=\pm 1$, $\{z_i\} = \emptyset$, and $[\beta]$ representing the zero class in the second cohomology. The initial manifold $(M,g,I,J)$ is then a $\Z$ cover of $(M/\Z,g,I,J)$.
\end{proof}



\end{document}